\documentclass{amsart}

\usepackage{amssymb}
\usepackage{xypic}
\usepackage{url}
\usepackage[dvipsnames]{color}
\usepackage{epsfig}
\usepackage{etoolbox}
\usepackage{mathtools}
\usepackage{hyperref}

\newtheorem{theorem}[equation]{Theorem}
\newtheorem{lemma}[equation]{Lemma}
\newtheorem{proposition}[equation]{Proposition}
\newtheorem{prop}[equation]{Proposition} 
\newtheorem{corollary}[equation]{Corollary}
\theoremstyle{definition}
\newtheorem{algorithm}[equation]{Algorithm}
\newtheorem{assumption}[equation]{Assumption}
\newtheorem{question}[equation]{Question}
\newtheorem{definition}[equation]{Definition}
\newtheorem{example}[equation]{Example}
\newtheorem{method}[equation]{Method}
\newtheorem{notation}[equation]{Notation}
\newtheorem{remark}[equation]{Remark}
\theoremstyle{plain}
\numberwithin{equation}{section} 
\preto{\table}{\stepcounter{equation}}

\preto{\figure}{\stepcounter{equation}}

\begin{document}

\let\myS\S 

\def\({\left(}
\def\){\right)}

\def\iso{\simeq}
\def\rightiso{\buildrel{\simeq}\over{\rightarrow}}

\def\ol{\overline}

\def\a{\alpha}
\def\b{\beta}
\def\cc{\gamma}
\def\d{\delta}
\def\e{\varepsilon}
\def\s{\sigma}
\def\t{\tau}
\def\o{\omega}
\def\G{\Gamma}
\def\l{\lambda}

\def\A{\mathbb{A}}
\def\F{\mathbb{F}}
\def\H{\mathbb{H}}
\def\M{\mathbb{M}}
\def\P{\mathbb{P}}
\def\Q{\mathbb{Q}}
\def\R{\mathbb{R}}
\def\Z{\mathbb{Z}}

\def\Cp{{\mathbb{C}_p}}
\def\Qp{\Q_p}
\def\Zp{\Z_p}

\def\Abar{\overline{A}}
\def\Adag{A^\dagger}
\def\Ahat{\widehat A}
\def\phi{\varphi}
\def\phit{\phi_t^\s}
\def\phibar{\overline{\phi}} 
\def\myphicr #1 {\phi_{\cry, #1}^*}
\def\phihat{\phi_\s\hbox{\hskip-6pt\textasciicircum}} 
\def\phihatt{\phi_\t\hbox{\hskip-6pt\textasciicircum}} 
\def\phihatA{\phi_{\s,\hat A}} 
\def\phidag{\phi_\s^\dagger} 
\def\phidagA{\phi_{\s,A^\dagger}} 
\def\phidagAt{\phi_{\t,A^\dagger}} 

\def\D{\mathcal{D}}
\def\ee{\mathcal{E}}
\def\I{\mathcal{I}}
\def\O{\mathcal{O}}
\def\OCq{\mathcal{O}_{C,q}}
\def\OCgen{\mathcal{O}_{C,\eta}}

\def\bK{\bf K}
\def\bL{\bf L}

\def\tK{\widetilde K}
\def\tL{\widetilde L}
\def\tR{\widetilde R}
\def\ta{\tilde \alpha}
\def\tb{\tilde \beta}
\def\tM{\widetilde M}

\def\an{\textup{an}}
\def\coker{\textup{coker}}
\def\cry{\textup{cr}}
\def\dd{\textup{d}}
\def\id{\textup{id}}
\def\im{\textup{im}}
\def\pr{\textup{pr}}
\def\Frac{\textup{Frac}}
\def\Id{\textup{Id}}
\def\rig{\textup{rig}}
\def\Jac{\textup{Jac}}
\def\MW{\textup{MW}}
\def\ord{\textup{ord}}
\def\reduction{\textup{red}}
\def\rk{\textup{rk}}
\def\Spec{\textup{Spec}}
\def\Tr{\textup{Tr}} 
\def\tr{\textup{tr}} 

\def\maxp{\operatornamewithlimits{\textup{max}^+}}
\def\sump #1 #2 {\operatornamewithlimits{}{{\sum}}^{{\smash{\prime}}#2}_{#1}}

\def\la{\langle}
\def\ra{\rangle}

\def\tint{{\textstyle\int}} 

\def\Rx{R\la \x \ra}
\def\Rtildex{\tR\la \x \ra}
\def\Rxdag{R\la \x \ra ^\dagger}
\def\Rtab #1 #2 {R_{#1,#2}((t))}
\def\RtabN #1 #2 #3 {R_{#1, #2; #3}((t))}
\def\Rtstar{R_*((t))}

\def\Rt{R((t))}
\def\Rthat{\widehat{R((t))}}
\def\Stab #1 #2 #3 {S_{#1,#2}^{#3}((t))}

\def\RN #1 #2 {#1_{#2}}

\def\VV#1{V(#1)}
\def\W#1{W(#1)}

\def\ex{\xi}

\def\Aut{\textup{Aut}}
\def\dr{{\textup{dR}}}
\def\Gal{\textup{Gal}}
\def\hdr{H_\dr}
\def\hdre{H_{\dr,a}}
\def\hdree{H_{\dr,\textup{alg}}}
\def\hrig{H_\rig}
\def\hcr{H_\cry}
\def\hmw{H_{\MW}}
\def\res{\operatorname{Res}}

\def\pair#1{\langle #1 \rangle}
\def\myproj{\pi}
\def\mynewmap{\lambda}

\def\pa{p'}

\def\fpb{{\overline{\F}_p}}

\def\realpos{\R_{>0}}
\def\x{\mathbf{x}}
\def\S{\mathbf{S}}
\def\sol{\mathbf{s}} 
\def\soll{\tilde\sol}
\def\ts{\tilde{s}}   

\def\tf{\tilde f}
\def\th{\tilde h}
\def\tu{\tilde u}
\def\tx{\tilde x}
\def\ty{\tilde y}
\def\tN{\tilde N}


\def\Ia #1 {I_{#1}}
\def\tom{\widetilde \omega}
\def\ot{\widetilde{O}}

\title{Frobenius lifts and point counting for smooth curves}

\author{Amnon Besser}
\address{Department of Mathematics\\Ben-Gurion University of the Negev\\P.O.B. 653\\Be'er-Sheva 84105\\Israel}

\author{Rob de Jeu}
\address{Faculteit der B\`etawetenschappen\\Afdeling Wiskunde\\Vrije Universiteit Amsterdam\\De Boelelaan~1111\\1081 HV Amsterdam\\The Netherlands}

\author{Pengju Guan}
\address{Faculteit der B\`etawetenschappen\\Afdeling Wiskunde\\Vrije Universiteit Amsterdam\\De Boelelaan~1111\\1081 HV Amsterdam\\The Netherlands}

\author{Muxi Li}
\address{Department of Mathematics\\School for Mathematics Science\\Tiangong University\\Binshuixi Road 399\\300190 Tianjin\\China}

\thanks{\emph{Funding acknowledgement.}
This publication is part of the project `Computing syntomic regulators for curves and surfaces' with file number 613.009.139 of the research programme `Mathematics
Clusters, PhD Positions', which is financed by the Dutch Research Council (NWO).
Rob de Jeu would like to thank the Isaac Newton Institute for Mathematical Sciences, Cambridge, for support and hospitality during the programme
`$K$-theory, algebraic cycles and motivic homotopy theory'
(KAH and KAH2)
where work on this paper was undertaken. This work was supported by EPSRC grant no EP/R0146051.
This author is also partially supported by the COGENT project which has
received funding from the European Union’s Horizon Europe Programme
under the Marie Sklodowska-Curie actions HORIZON-MSCA-2023-DN-01 call
(Grant agreement ID: 101169527), and from UK Research and Innovation.
}

\begin{abstract}
We describe an algorithm for computing the zeta function of a
proper, smooth curve over a finite field~$ k $ of characteristic~$ p $,
when the curve is given together with some auxiliary data, including a lift~$ C $ to
the valuation ring in a finite extension of~$ \Q_p $.
The algorithm is denominator-free if the ramification is at most~$ p $.
Our method computes
the matrix of the action of a semilinear Frobenius on the first
de Rham cohomology group of the curve by means of Poincar\'e duality,
using cup products that can be computed from
local expansions of a globally defined lift of Frobenius.
Its complexity is softly cubic in the field degree
for (general) smooth planar curves, for which we work out our
general estimates in more detail.

We make explicit how to compute a suitable basis of the first de
Rham cohomology group of~$ C $, based on 1-forms with `locally integrable polar parts',
in both the general case and when the curve is smooth planar.

We show the crystalline Frobenius preserves the first de Rham
cohomology group of~$ C $ if the ramification is at most~$ p $,
improving upon known results.

In an appendix we prove a well-known formula for the cup product,
and a formula by Serre, 
on the first de Rham cohomology for a curve in characteristic
zero, for which no reference seems to exist.
\end{abstract}

\subjclass[2010]{Primary: 11G25, 14F30, 14F40, 14G10, 14G15, 14Q50; secondary 14G22}

\keywords{Curve, finite field, zeta function, rigid cohomology, lift of Frobenius, de Rham cohomology}

\maketitle

\tableofcontents

\section{Introduction} \label{sec:intro}

Let $ p $ be a prime number and let  $ k $ be  a finite field of
characteristic~$p$. An important problem of algorithmic 
number theory is to count the number of points of a smooth (and
usually proper) variety $Y$ defined over $k$. By point
counting we mean, 
more precisely, the computation of the matrix of a suitable
Frobenius map, acting on some \'etale or crystalline cohomology
group of $Y$.
It is well-known that obtaining such a matrix to a sufficiently
high precision allows an exact determination of its characteristic
polynomial as its coefficients satisfy the Weil bounds (see
Section~\ref{algorithm-estimates}).

The modern theory of point counting begins with the paper of
Schoof~\cite{Schoof85} for counting points on an elliptic curve $E$ by effectively
computing the action of Frobenius on the first \'etale cohomology group
of $E$. This direction of using \'etale cohomology was pursued by various
other authors, and still provides the best method when the field is prime
($q=p$) or close to being prime.

Other point counting methods, beginning with the work of Satoh~\cite{Sat00},
use crystalline cohomology. To describe these methods, let us fix some
more notation.

\begin{notation} \label{basicnot}
  Let $K$ be a finite extension of the field $\Qp$ of $p$-adic
  numbers, with ramification index $ e $, valuation ring $ R$,
  uniformiser $\pi$, and 
  residue field $R/\pi R$ isomorphic to $k$. We normalise the
  valuation $ v $ on $ K $ by $v(p)=1$. We fix an automorphism~$\s$
  of $K$, and denote by $\ol{\s} $ the induced map on $ k $,
  which is given by~$x\mapsto x^{\pa}$ with $\pa$ a power of $p$.
\end{notation}

If~$ K/\Qp $ is Galois then~$ \s $ in~$ \Aut(K/\Qp) $ with~$ \ol{\s} $
the~$ \pa $-th power map on~$ k $ always exists. If~$ e = 1 $
then~$ K/\Qp $ is Galois and~$\s$ is unique.
For the point counting method below, we take~$ \pa' $ equal to the order of a subfield
of~$ k $ (and most naturally~$ p = \pa $), but a large part of what we want to describe
works without this assumption. Our generality, including allowing
for ramification, will be useful in future work
regarding syntomic regulators and Coleman integration.

In counting methods based on crystalline cohomology, one computes an
effective representation for the crystalline cohomology group
$\hcr^i(Y/W(k)) \otimes K$. This has a $\s$-semilinear endomorphism
$\myphicr {\pa} \otimes \s $, and the sought-after $ k $-linear Frobenius is then obtained as a
power of this (see Method~\ref{zeta-method} below).

When $Y$ can be lifted to characteristic $0$, and if~$e\le p-1$
(which always holds if  $e=1$), then crystalline cohomology
is the de Rham cohomology of the lift by \cite[Theorem~2.8]{Ber-Ogu83}.  If the Frobenius
endomorphism can be lifted as well, then the endomorphism $\myphicr {\pa} \otimes \s $ corresponds
to the action of the lift on this de Rham cohomology. This is
the case in, for example, Satoh's algorithm~\cite{Sat00}.

Finding a lift of Frobenius for a proper variety is rarely possible. An
alternative is to only lift Frobenius on an affine open part. As
crystalline cohomology is infinite dimensional in this case, one has
to use a more refined cohomology theory, the Monsky-Washnitzer
cohomology~\cite{Mon-Was68}, which is a special case of Berthelot's
rigid cohomology~\cite{Ber97}. This cohomology theory associates to
the affine variety $\Spec(\Abar)$ de Rham cohomology groups of
$\Adag_K = \Adag \otimes_R K$, where $\Adag$ is a ``weakly complete''~$R$-algebra whose
reduction modulo $\pi$ is $\Abar$. The action of $\myphicr {\pa} \otimes \s $
corresponds to the action on these Monsky-Washnitzer cohomology
groups of a $\s$-semilinear endomorphism of $\Adag$
that reduces to the $\pa$-th power map on~$ \Abar $.

The use of Monsky-Washnitzer cohomology
in point counting algorithms was pioneered in the seminal paper
of Kedlaya~\cite{Ked01} on counting points on hyperelliptic
curves. Kedlaya's ideas can be extended to more general
curves~\cite{Ger03,Den-Ver06,CDV06, tuit16} (see also the overview~\cite{Cha08}).

Kedlaya type algorithms generally consist of two main components.
\begin{enumerate}
\item An explicit lift of Frobenius to an endomorphism of $\Adag$,
 usually given in a straightforward manner.

\item A reduction algorithm that uses a basis for
  $\hdr^i(\Adag_K)$ and shows how to explicitly write any $i$-form as
  a linear combination of basis elements plus an exact differential.
\end{enumerate}
Extending each of these steps from hyperelliptic curves to more
general curves in an efficient way proved to be a non-trivial task.

Note that by Corollaire~7.4 of \cite[Expos\'e~III]{SGA1}, any proper, smooth curve
over~$ k $ lifts to a smooth, proper curve over the Witt vectors~$ W(k) $
of~$ k $, necessarily
with geometrically irreducible generic fibre if the curve over $ k $ is geometrically
irreducible.
We shall therefore describe a point counting algorithm for a curve in the following rather general situation.
This also fixes notation for in the rest of the paper.

We let $f:C \to \Spec(R)$ be a proper, smooth curve 
over our valuation ring~$ R $ with geometrically irreducible fibres.
\begin{notation} \label{globalnotation}
(1)
We let~$ g $ be the genus of the generic fibre $C_K$, which equals
that of the special fibre $C_k$.

(2)
We assume given a Zariski open affine $ X = \Spec(A) $ in $ C $ that
contains the generic point of $ C_k $, 
with~$ A = R[x_1,\dots,x_n]/(f_2,\dots,f_n) $
for given~$ f_2,\dots,f_n $.
Note that the reduction~$ \ol{X} = \Spec(\Abar) $ with
$ \Abar = A \otimes_R k = k[x_1,\dots,x_n]/(\ol{f_2},\dots,\ol{f_n})$
is a smooth complete intersection.
(See Assumption~\ref{Aassumption} for the terminology and
Remark~\ref{smoothlcirem} for the existence of such an~$ X $.)

(3)
We assume given $\o_1,\ldots, \o_{2g}$ in~$ \Omega_{A/R}^1 $
that form an~$ R $-basis for the cohomology group $\hdr^1(C_R/R)$.%
\footnote{%
We refer to Remark~\ref{ComputewithKbasis} if one wants to, instead, use such
elements in~$ \Omega_{A_K/K}^1 $ that form a~$ K $-basis of~$ \hdr^1(C_K/K) $;
or to Section~\ref{de-rham}
(in particular Proposition~\ref{newOCDprop}, as well as
Algorithms~\ref{KtoRbasis} and~\ref{KtoRbasisOmega})
if one wants to effectively
compute the data for~$ A $ from that of~$ A_K $.}%
\end{notation}

To perform point counting on these curves, we introduce three new
techniques.

The first is a general, explicit procedure, inspired by Section~2 of~\cite{Arabia01},
for lifting the~$ \pa $-th power Frobenius for smooth, complete intersections.
This method introduces variables corresponding to the defining equations,
and uses these to find a correction to the naive approximate lift given by raising
to the power $ \pa $ and applying~$ \s $ to the coefficients. Using
this correction gives a~$ \s $-semilinear map~$ \phidag $ on~$ R\la x_1,\dots,x_n \ra^\dagger $
that induces a map~$\phidagA$ on~$ \Adag $, which itself reduces to the desired~$ \pa $-th
power map on~$ \Abar $.
We make this procedure explicit and provide 
estimates on the coefficients in~$ \phidag $.

The second technique is only for curves, and avoids a 
(generally computationally expensive)
reduction algorithm by
replacing it with computations of cup products by means of residues.
One observes that in order to know the matrix of $\myphicr {\pa} \otimes \s $
above,
it suffices to compute all cup products $ [\o_i] \cup [\o_j] $
and~$ [\o_i] \cup [ (\myphicr {\pa} \otimes \s) ( \o_j)] $.
Our techniques reduce the computation of these cup products to a
computation of the residues of the forms~$ (\int \o_i) \o_j $ and~$ (\int \o_i) \phidagA (\o_j) $
on certain annuli, called ends, which are 
``at the boundary'' of the rigid space associated with~$\Adag$.

Finally, using that we only need to calculate explicitly with $ \phidagA $ at
the ends, we compute its action directly at the ends using expansions
in (one) local parameter, as opposed to first computing it globally
as a power series in several variables and then substituting local
expansions at the ends.

Overall, the resulting algorithm for point counting is asymptotically
softly cubic in the field degree. This is the same complexity as
Kedlaya's algorithm~\cite{Ked01}, which is restricted to the case of
hyperelliptic curves, and the algorithm of Castryck, Denef and
Vercauteren for non-degenerate curves~\cite{CDV06}.
The dependence on
the genus is somewhat worse for general curves but reduces in specific
situations. Finally, the dependence on $p$ is essentially linear. We
have not attempted an improvement in this direction in the style of~\cite{Har07}. Our overall complexity estimate in Proposition~\ref{complprop} is $\ot(p g^5 l^3)$, compared with $\ot(p g^6 l^3) $ obtained in~\cite[p.962]{tuit16}

The paper is organised as follows.
In Section~\ref{sec:cup-res} we explain how to obtain the matrix
of $ \myphicr {\pa} \otimes \s $
from cup products on rigid analytic spaces, and in Section~\ref{algorithm}
we describe this method algorithmically, given suitable input.
In Section~\ref{global-frobenius} we discuss how to obtain the
required lift~$ \phidagA $ on~$ \Adag $. Although we shall need it
only in the case of curves, we present the result and the estimates
on the coefficients involved for more general $ R $-algebras $ A $ 
that reduce to smooth complete intersections over~$ k $.
Section~\ref{global-examples} makes the resulting maps and estimates
more explicit in the situation of an affine plane curve or a localisation
of such a curve.  It also briefly discusses how to recover Kedlaya's
approach to hyperelliptic curves in \cite{Ked01} from our work.
Sections~\ref{expansions} and~\ref{local-frobenius} discuss how to obtain expansions
of the action of our global lift~$ \phidag $ of Section~\ref{global-frobenius}
at the ends, thus avoiding the explicit computation (in several variables) of $ \phidag $
or the induced~$ \phidagA $ on~$ \Adag $.
Section~\ref{local-examples} returns to 
some of the examples discussed in Section~\ref{global-examples},
considering them from the points of view of leaving out only
one point, or obtaining a simpler lift by localising more.
We obtain estimates for computing cup products using expansions
in Section~\ref{cup-product-estimates}.
In Section~\ref{eisp} we combine this with a condition on (expanded)
1-forms to have `integrable polar part' to bound from below,
at each end, the valuation that can occur when computing the
local contribution to a cup product, after applying our lift
of Frobenius to one of the 1-forms. As new, purely theoretical result, we find that
the lift of the crystalline Frobenius maps the first de Rham cohomology group
to itself also when the ramification is equal to the residue
characteristic (see Corollary~\ref{stable-cor}).
In Section~\ref{algorithm-estimates} we turn our earlier estimates
into a finite precision version, with finite expansions, of the algorithm in Section~\ref{algorithm} that
computes the zeta function of $ C_k $.
Sections~\ref{de-rham} discusses how to compute a basis of~$ \hdr^1(C/R) $
in the form that is required for our algorithm. This is based
on using forms of the second kind with an additional integrability
condition on the polar parts. We make this
more explicit if~$ C $ is smooth planar in Section~\ref{smooth-planar}.
These two sections are mostly independent of the rest of the paper,
and may be of independent interest.
Section~\ref{complexity} discusses the complexity of the algorithm
for (general) smooth, planar curves, for which we work out our
general estimates in more detail in Example~\ref{gen-planar-ex}.
Finally, Appendix~\ref{algebraic-cup-products} describes
the first de Rham cohomology group of a curve over a field of
characteristic zero, in terms of differential forms of the second kind, as well as a formula, due to Serre,
for the cup product pairing,  in terms of residues and integrals.
This material is well-known, but we could not find a suitable reference.

We would like to thank
Ahmed Abbes,
Fabrizio Andreatta,
Bruno Chiarellotto,
Kiran Kedlaya,
Deepam Patel,
and
Jan Tuitman
for interesting, useful correspondence and discussions, the
late Bas Edixhoven for bringing~\cite{Ka-Lu} to our attention,
and the late Pierre Berthelot for doing this with~\cite{Ber-Ogu83}.

\bigskip

Throughout the paper, we use the following notation.
\begin{notation} \label{global-notation2}
 We let $ R[[x_1,\dots,x_n]] $ denote
 the formal power series ring in variables~$ x_1,\dots,x_n $ with coefficients in~$ R $,
 $ R\la x_1,\dots,x_n \ra $ the subring where the coefficients
 tend to 0 in $ R $, and $ R\la x_1,\dots,x_n \ra^\dagger $ the
 subring of $ R\la x_1,\dots,x_n \ra $ consisting of overconvergent
 power series.  We shall often use multi-index notation, writing $ \x $
 for $ x_1,\dots,x_n $, and $ \x^I $ for $ x_1^{i_1} \dots x_n^{i_n} $
 if $ I = (i_1,\dots,i_n) $. With $ |I| = i_1 + \dots + i_n $,
the elements of~$ \Rxdag $ are those $ \sum_I a_I \x^I $ in
 $ \Rx $ for which $ \cc $ and $ \d $ exist with $ \cc > 0 $ and
 $ v(a_I) \ge \cc |I| + \d $ for all $ I $.
Equivalently, there exists a $ D $ in $ (\realpos)^n $ and $ \d $
in $ \Q $ such that $ v(a_I) \ge D \cdot I + \d $ for all $ I $,
where $ D \cdot I  $ is the inner product.
If~$ A $ is as in Notation~\ref{globalnotation}(2), then we let~$ \Ahat $
(respectively~$ \Adag) $ denote the quotient of~$ R\la x_1, \dots, x_n \ra $
(respectively~$ R\la x_1, \dots, x_n \ra ^\dagger $)
modulo the ideal~$ (f_2, \dots, f_n) $.
\end{notation}

\section{Computing the matrix of Frobenius using cup products and residues} \label{sec:cup-res}

In this section we describe the
strategy for computing the matrix of the Frobenius operator $\myphicr {\pa} \otimes \s $ on
$\hcr^1(C_k / {W(k)})\otimes K $. The cohomology itself is relatively easy to compute, as
it is isomorphic to de Rham cohomology of $C_K$. However, the action of Frobenius is
not easy to capture because usually Frobenius will not lift to $C$. Instead we use Berthelot's rigid cohomology~\cite{Ber97}. This is a functorial cohomology theory $Y \mapsto \hrig^1(Y / K)  $ on the category of $k $-varieties, taking values in finite dimensional $K $-vector spaces with an action of a semilinear operator (Frobenius). This cohomology forms the bridge between various cohomology theories required for our algorithm to work. In particular, we have
(see \cite[(2.3.1)]{Ber97} and \cite[Corollaire~5.7]{Ber97}) that
\begin{equation} \label{hcr-hrig-hdr}
  \hcr^1(C_k / {W(k)})\otimes K \cong 
  \hrig^1(C_k / K) \cong
  \hdr^1(C_K / K)\;.
\end{equation} 
The leftmost isomorphism is compatible with the action of Frobenius by~\cite[Corollary~5.3.4]{cais}. Both isomorphisms exist for proper, smooth varieties in any dimension.
On the other hand, for a smooth affine variety $U $, and in particular  for the open affine $U = C_k \setminus Z$, with~$Z$ 
non-empty of codimension~1, we have an isomorphism~\cite[Proposition~1.10]{Ber97},
\begin{equation}\label{rigtoMW}
  \hrig^1(U / K) \cong\hmw^1(U /K)\;.
\end{equation}
The group on the right is Monsky-Washnitzer cohomology~\cite{Mon-Was68,Put86}, which is the
cohomology group  of a complex of differential forms of a certain
``overconvergent algebra'' $ R $-algebra $\Adag$, which, in degree $1$ and for $U$ of
dimension $1$, translates into a functorial isomorphism 
\begin{equation}\label{eq:coker}
  \hmw^1(U / K)\cong \coker \left(\Adag \otimes_R K \xrightarrow{\dd} \Omega^1(\Adag \otimes_R K / K) \right)
\,.
\end{equation}
The isomorphism~\eqref{rigtoMW} is again compatible with the Frobenius operator by~\cite[Proposition~8.3.12]{lestum}.

We now explain in more detail the Frobenius action on all three cohomology theories. For rigid  cohomology this is a combination of functoriality and a change of coefficients. Namely, for any $k $-variety $Y $ consider the relative Frobenius $\phibar$ on $Y$, i.e., the morphism of $k $-varieties
\begin{equation}\label{relfrob}
  \phibar: Y\to Y^{(\pa)}, \text{ where } Y^{(\pa)} =  Y\times_{k,\ol{\s}} k
\,,
\end{equation}
obtained by raising to the $\pa$-th power on the structure sheaf. By functoriality, the $k $-morphism $\phibar $ of~\eqref{relfrob} induces a $K $-linear map
\begin{equation}
  \phibar^\ast:  \hrig^1( Y^{(\pa)}/ K) \to  \hrig^1(Y / K)\;. 
\end{equation}
By~\cite[Proposition~1.8]{Ber97}, the morphism $\sigma : K \to K $ induces an isomorphism
\begin{equation}
  \hrig^1(Y /K) \otimes_{K,\sigma } K  \to \hrig^1(  Y^{(\pa)} /K)\;,
\end{equation}
which is the same thing as a $\s $-semilinear isomorphism $ \hrig^1(Y /K) \to \hrig^1(  Y^{(\pa)} /K) $. Composing, we get the $\s $-semilinear Frobenius. For crystalline cohomology we use the same functoriality and change of coefficients approach, first taking coefficients in $W(k) $ and the restriction of $\sigma  $ to this ring, to get a $\sigma  $-semilinear map $\myphicr {\pa}: \hcr^1(C_k/W(k)) \to \hcr^1(C_k/W(k))  $. We note that this map may be associated directly to the absolute $\pa $-th power Frobenius of $Y $ and the compatible morphism $\sigma  $ on $R $. We then tensor up to $K $ with $\sigma  $ acting on $K $ to get
\begin{equation*}
\myphicr {\pa} \, \otimes \, \s : \hcr^1(C_k/W(k)) \otimes_{W(k)} K \to\hcr^1(C_k/W(k)) \otimes_{W(k)} K
.
\end{equation*}

In contrast to this inexplicit description of Frobenius on crystalline and rigid cohomologies,  the action of Frobenius on Monsky-Washnitzer cohomology is given directly by
the induced action of a certain $\s $-semilinear endomorphism $\phidagA$ (called a lift of Frobenius) of the
algebra $\Adag$. The explicit computation of this map is one of the key
ingredients of our algorithm. Overall, using rigid cohomology as an intermediate, we get a map of cohomology theories, commuting with a $\sigma  $-semilinear Frobenius;
or equivalently, a commutative diagram
\begin{equation} \label{robswish1}
\begin{split}
\xymatrix{
\hcr^1(C_k/W(k)) \otimes_{W(k)} K \ar[r] \ar[d]^-{\myphicr {\pa} \, \otimes \, \s } & \hmw^1(U/K) \ar[d]^-{\phidagA}
\\
\hcr^1(C_k/W(k)) \otimes_{W(k)} K \ar[r] & \hmw^1(U/K)
,}
\end{split}
\end{equation}
where we used the identication in~\eqref{eq:coker} in order to
let~$ \phidagA $ act on the Monsky-Washnitzer cohomology.
This diagram is the basis of our computation of the crystalline Frobenius.

Past methods for point counting using Monsky-Washnitzer cohomology,
starting with Kedlaya's algorithm~\cite{Ked01}  relied
on effective reduction -- identifying the image
of~$\omega $ in $ \Omega^1(\Adag / K)$ under~$ \phidagA $
in~\eqref{eq:coker} explicitly as a
linear combination of the elements of a fixed basis of $\hmw^1(U/K)$.
Another key component of our algorithm is to avoid this
reduction, and instead use the computation of a cup product pairing.

Poincar\'e duality (see~\cite[\href{https://stacks.math.columbia.edu/tag/0FW7}{Tag 0FW7}]{stacks-project})
tells us that the cup product in algebraic de Rham cohomology,
followed by the trace map, gives a non-degenerate pairing
\begin{equation}\label{eq:cppair}
\cup : \hdr^1(C_K /K) \times \hdr^1(C_K /K) \longrightarrow \hdr^2(C_K
/K) \xrightarrow{\tr} K
.
\end{equation}

The map $U \hookrightarrow C_k $ induces by functoriality a map in rigid cohomology, sitting in a short exact sequence,
\begin{equation}\label{shorti}
    0\to \hrig^1(C_k / K) \to \hrig^1(U / K) \xrightarrow{\myproj}  \hrig^0(Z/K)
,
\end{equation}
that identifies, as vector space with a Frobenius action,  $\hdr^1(C_K /K)$ with the kernel of $\pi$ in
\eqref{shorti}. We may therefore view the cup product as a non-degenerate pairing
on $\ker(\pi)$. We shall show how, starting from the description
\eqref{eq:coker}, we can explicitly do the following.
\begin{enumerate}
\item
Describe $\pi$, hence $\ker(\pi)$.

\item
Describe the cup product pairing~\eqref{eq:cppair} in terms of~$\ker(\pi)$.
\end{enumerate}
The space~$\ker(\pi)$ is thus the appropriate place to do point counting, as it, on the
one hand, has an explicit description of the lift of Frobenius, coming from
  the description of $\phidagA$ on $\Omega^1(\Adag/K)$, and has computable
  coordinates, coming from the cup producs with a fixed basis of
  $\hdr^1(C_K/K)$.

\subsection{Rigid analytic residues and cup products}\label{sec:rigcup}
We now turn to an explicit description of the map $\myproj$ in~\eqref{shorti}, and the cup
product pairing on its kernel. We shall use a geometric description, based on
Coleman's work~\cite{Col-de88,Col89}. We then translate this into the more algebraic
language that will be used in the rest of the paper.

To rely directly on Coleman's work, it is convenient to first change
scalars to the field $\Cp$ of ``complex $p$-adic numbers''. Recall
that this is the completion of an algebraic closure of~$\Qp$. Its
residue field is an algebraic closure $\fpb$ of the finite field~$ \F_p $
with $p$ elements. Thus, we shall work with a curve $C$ defined over $\Cp$
and only later assume that it is obtained by a base change from a
curve defined over $K$.

We shall use the theory of rigid analytic spaces, with~\cite{Fre-Put04}
as reference. Recall that a rigid analytic space $Y$ has rings of
functions $\O(Y)$ and differential forms~$\Omega^i(Y)$ with obvious
differentials. Recall further from~\cite[Example 4.3.3]{Fre-Put04} that an
algebraic variety $Y$ over our complete non-archimedean field
$K$ has an associated rigid analytic space $Y^{\an}$. We shall often abuse
notation when there is no danger of confusion to say that we consider $Y$
as a rigid analytic space. In particular, we consider~$C$ as a rigid analytic space over $\Cp$
by means of a fixed embedding of~$ K $ into~$ \Cp $.

We shall assume that the curve $C$ has good reduction. This means that it
has a smooth $\O_{\Cp}$-model. Denote by $Z$ the special fibre of this
model, which is a smooth~$\fpb$-variety (which in our application is obtained
from the closed fibre~$ C_k $ by extending the coefficients~$ k $
to~$ \fpb $). There is a reduction map (of
topological spaces with respect to the appropriate topologies),
$\reduction : C^\an \to Z$~\cite[p.100]{Fre-Put04}. For a closed point $q$ in~$ Z$, the
inverse image $\reduction^{-1}(q)$ is a rigid
analytic space $\D$.
Because $C$ has good reduction, the space $\D$ is
isomorphic, via a parameter $t$, to an open unit disc,
\begin{equation}
  t: \D \to \{z \text{ in } \Cp \text{ with } |z|<1\}\; .
\end{equation} 
Coleman calls $\D$ a residue disc.
Let $ S=\{q_1,\dots,q_m\} \subset Z $ be a finite non-empty set of
closed points,
$\D_i \subset C^\an $ the corresponding residue discs,
and fix corresponding parameters
\begin{equation}\label{tidef}
  t_i: \D_i \to \{z \text{ in } \Cp \text{ with } |z|<1\}
.
\end{equation} 
For each $0 \le r < 1$
we let $U_r$ be the rigid analytic
subspace of $C^\an$ obtained
by removing the subsets $\{x \text{ in } \D_i \text{ with } |t_i(x)|\le r\} $
(cf.~\cite[2.1]{Col-de88}). These~$U_r$ are examples of ``basic wide open
spaces'' in Coleman's terminology. We will write $U_1 $ for the space obtained by removing the \emph{open} discs of radius $1 $. This is what Coleman calls an underlying affinoid for the basic wide open $U_r $.

A rigid analytic space $Y$ over a field $K$ has associated to it de Rham
cohomology groups, $\hdre^\ast(Y/K)$~\cite{Kie67c}. These are given, just as in the algebraic setting,
as the hypercohomology of the complex of rigid differential forms.
If $X$ is a smooth algebraic variety, then by~\cite[Theorem~2.4]{Kie67c}
there is a natural isomorphism
\begin{equation}\label{eq:iso1}
  \hdr^\ast(X/K) \xrightarrow{\sim} \hdre^\ast(X^\an /K)
.
\end{equation}

On the other hand, for the space $U_r$, $0\le r < 1 $, we have explicitly, by the last line on page~225
of~\cite{Col89} that
\begin{equation} \label{hdreUr}
  \hdre^1(U_r / \Cp) = \frac{\Omega^1(U_r)}{\dd \O(U_r)}\;.
\end{equation}

Let $U_0$ be the space obtained from the rigid analytic space $C^\an$ by removing just the points
defined by~$t_i=0$ (in fact, we can remove a single arbitrary point in each disc
by using suitable~$ t_i $).
Note that $U_0$ is the analytification of an algebraic curve, which we also
denote by~$U_0$.
Coleman proves the following theorem.

\begin{theorem}[\bf Coleman]
  For any $0< r<1$ we have an isomorphism 
    \begin{equation*}
	\hdr^1(U_0 / \Cp) \xrightarrow{\sim} \hdre^1(U_0 / \Cp)
	\xrightarrow{\sim} \hdre^1(U_r / \Cp)
	\,.
    \end{equation*}
\end{theorem}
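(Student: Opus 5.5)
The first isomorphism is the comparison isomorphism~\eqref{eq:iso1} applied to the smooth algebraic curve $U_0$, so the work lies in the second map, restriction from $U_0$ to $U_r$. I would write $C$ as the union of $U_r$ and the closed discs $D_i^{\le r'}=\{|t_i|\le r'\}$ for some $r<r'<1$. Their intersection is a disjoint union of annuli $A_i=\{r<|t_i|\le r'\}$; in practice I would use open annuli and wide-open discs so that Mayer--Vietoris applies. The same cover with the centres removed gives $U_0 = U_r \cup \bigcup_i (D_i^{<r'}\setminus\{t_i=0\})$.

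The two ingredients are explicit computations. First, a punctured open disc $D^*=\{0<|t|<r'\}$ has $\hdre^0=\Cp$ and $\hdre^1$ one-dimensional, spanned by $dt/t$. One checks this by integrating Laurent series $\sum a_n t^n\,dt$ converging on $0<|t|<r'$ term by term when $n\ne -1$: the coefficients $a_n/(n+1)$ still converge on the open punctured disc because $|n+1|^{-1}$ grows only polynomially. Second, an open annulus $\{r<|t|<r'\}$ likewise has $\hdre^0=\Cp$ and $\hdre^1=\Cp\,dt/t$, by the same Laurent series argument.

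Now I compare the Mayer--Vietoris sequences for the two covers: $U_0 = U_r\cup\bigcup_i D_i^*$ with intersections the annuli, and, for contrast, $C=U_r\cup\bigcup_i D_i$. The restriction map from the punctured disc $D_i^*$ to the annulus $A_i$ is an isomorphism on $\hdre^0$ and on $\hdre^1$, since both groups are spanned by $1$ and $dt_i/t_i$ respectively. Using the sequence
\[
\hdre^0(U_r)\oplus\bigoplus\hdre^0(D_i^*)\to\bigoplus\hdre^0(A_i)\to\hdre^1(U_0)\to\hdre^1(U_r)\oplus\bigoplus\hdre^1(D_i^*)\to\bigoplus\hdre^1(A_i)\to\hdre^2(U_0),
\]
the five lemma, or a direct diagram chase, shows that $\hdre^1(U_0)\to\hdre^1(U_r)$ is an isomorphism. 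The connecting term from $\bigoplus\hdre^0(A_i)$ is killed by the surjectivity of $\bigoplus\hdre^0(D_i^*)\to\bigoplus\hdre^0(A_i)$. For injectivity, a class restricting to zero on $U_r$ has components on the $D_i^*$ that die on $A_i$, hence are zero. For surjectivity, given a class on $U_r$, I take its restriction to $A_i$, lift it uniquely to $D_i^*$, and glue. To be careful I would phrase this with Čech/hypercohomology for the admissible covering, which is legitimate because all the pieces are quasi-Stein, so their coherent cohomology vanishes and $\hdre$ is the cohomology of global sections, consistent with~\eqref{hdreUr}.

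The main obstacle I expect is the analytic foundation: showing that the open punctured disc and the open annulus are quasi-Stein, so that their de Rham cohomology is computed by global forms modulo exact forms, and that the Mayer--Vietoris sequence holds for this admissible covering in Kiehl's theory. Here I would rely on Coleman's results on basic wide opens together with the standard exhaustion of these spaces by affinoid annuli. The convergence checks for term-by-term integration are routine.
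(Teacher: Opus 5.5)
Your argument is correct, and it takes a genuinely different route from the paper. The paper gives no argument of its own: it simply invokes Coleman's Theorem~4.2 in \cite{Col89} and its proof, so it relies on Coleman's theory of basic wide opens, including the description \eqref{hdreUr} of $\hdre^1(U_r/\Cp)$ as global forms modulo exact ones.

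You instead use the Mayer--Vietoris sequence for Kiehl's de Rham hypercohomology, attached to the admissible covering $U_0=U_r\cup\bigsqcup_i\{0<|t_i|<r'\}$. Everything then reduces to two explicit Laurent series computations: the punctured open disc and the open annulus both have $\hdre^0=\Cp$ and $\hdre^1=\Cp\,\dd t/t$, and restriction between them is an isomorphism. The diagram chase goes exactly as you describe. The connecting map out of $\bigoplus\hdre^0(A_i)$ vanishes, and injectivity and surjectivity of $\hdre^1(U_0)\to\hdre^1(U_r)$ both follow from the isomorphisms on the punctured discs.

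What your route buys is self-containedness and economy of input. Theorem~B is needed only for the punctured discs and the annuli, which are visibly quasi-Stein, and not for $U_r$ or $U_0$. So you do not actually need \eqref{hdreUr} to prove this statement. What the citation buys is brevity, and it places the result inside Coleman's framework, which the paper uses anyway for residues and the cup product pairing. The cost of your route is the foundational checks you list, both standard:
\begin{itemize}
\item Mayer--Vietoris for hypercohomology on a G-topology.
\item Admissibility of the covering. This needs $r<r'$ strictly, so that an affinoid radius $\rho\in|\Cp^*|$ with $r<\rho<r'$ can be inserted. Taking $r'=1$ is allowed and makes the overlaps exactly the ends $\ee_i$.
\end{itemize}

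One clean-up: drop the closed-disc version of the covering from your first paragraph. The reason to use open pieces is not that Mayer--Vietoris needs them; it holds for any admissible covering. The reason is that a half-open annulus $\{r<|t_i|\le r'\}$ has infinite-dimensional $\hdre^1$. There, termwise integration can destroy convergence on the closed boundary circle, so your one-dimensionality computation fails. The chase you actually run uses only the open pieces, so this is a presentation issue, not a gap.
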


\begin{proof}
\cite[Theorem 4.2]{Col89} and its proof.
\end{proof}

We note that the first isomorphism is
  just~\eqref{eq:iso1}, and is given by sending an algebraic 1-form on
  $U_0$ to the same form viewed as an analytic form. Let us define
\begin{equation} \label{lambda-def}
\mynewmap :   \hdr^1(C/ \Cp) \to \hdre^1(U_r / \Cp)
\end{equation}
as the composition~$ \hdr^1(C/ \Cp) \to \hdr^1(U_0/ \Cp) \to \hdre^1(U_r / \Cp) $.
Note that this is the same as the composition~$ \hdr^1(C/ \Cp) \to \hdre^1(C/ \Cp) \to \hdre^1(U_r / \Cp) $.
Together with Corollary~\ref{rest} we deduce the following.

\begin{corollary}\label{sectorig}
    Let $\omega$ be a form of the second kind on  $C$ with all of its
    singularities outside $U_r$.
Then its class~$[\omega] $ in~$ \hdr^1(C / \Cp)$ is mapped under $\mynewmap$  to the class
of~$\omega|_{U_r}$ in~\eqref{hdreUr}.
\end{corollary}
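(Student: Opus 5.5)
We prove Corollary~\ref{sectorig} by unwinding the definition of $\mynewmap$ in~\eqref{lambda-def} and using Corollary~\ref{rest}. By that corollary (from the appendix), a form of the second kind $\omega$ on $C$ represents a class $[\omega]$ in $\hdr^1(C/\Cp)$, and restriction to an open affine $V \subset C$ containing no poles of $\omega$ sends $[\omega]$ to the class of $\omega|_V$ in $\hdr^1(V/\Cp) = \Omega^1(V)/\dd\O(V)$.

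\emph{Step 1: choose a suitable affine.} Let $P$ be the finite set of singularities of $\omega$. By hypothesis each point of $P$ lies outside $U_r$, so it lies in some disc $\{|t_i|\le r\}$ inside $\D_i$. Since we may remove an arbitrary point in each residue disc, we choose the parameters so that $U_0$ is $C$ minus one point per disc. We also take the algebraic open $V = C\setminus(P\cup\{t_i=0\}_i)$. Then $V\subset U_0$, and $V^\an\supset U_r$, because the removed points all lie in the closed discs removed from $U_r$. Replacing $U_0$ by $V$ in Coleman's theorem is harmless: the theorem and its proof apply verbatim when finitely many further points are removed from the discs $\{|t_i|\le r\}$. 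One can see this via the Mayer--Vietoris/excision argument, or simply by noting that the composite $\hdr^1(V)\to\hdre^1(U_r)$ is still defined by restriction.

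\emph{Step 2: compatibility of restriction maps.} We have a commutative diagram of restrictions
\[
\hdr^1(C)\to\hdr^1(U_0)\to\hdr^1(V)\to\hdre^1(V^\an)\to\hdre^1(U_r),
\]
and the composite through $U_0$ is $\mynewmap$. This holds by functoriality of~\eqref{eq:iso1} for the open immersion $V\subset U_0$, together with the fact that analytic restriction maps compose. The map $\hdr^1(V)\to\hdre^1(U_r)$ sends the class of an algebraic form to the class of the same form viewed analytically and restricted. In the presentation~\eqref{hdreUr}, this is simply $\eta\mapsto\eta|_{U_r}$ modulo $\dd\O(U_r)$.

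\emph{Step 3: conclude.} By Corollary~\ref{rest}, $[\omega]$ maps to $[\omega|_V]$ in $\hdr^1(V)$. By Step~2 this is sent to $[\omega|_{U_r}]$, as claimed.

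The main obstacle is Step~2. There we must check that the first isomorphism of Coleman's theorem and the identification~\eqref{hdreUr} are compatible with the naive ``restrict the form'' description on $V$. This amounts to knowing that for an affine $V$ the hypercohomology is computed by global sections, and that the comparison~\eqref{eq:iso1} is induced by the map of de Rham complexes. Both statements are standard.
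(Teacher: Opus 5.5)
Your proof is correct and follows the paper, which deduces the corollary from the definition~\eqref{lambda-def} of $\mynewmap$ as a composite of restriction maps together with Corollary~\ref{rest}. Your detour through the smaller affine $V=C\setminus(P\cup\{t_i=0\})$, with $U_r\subseteq V^{\an}\subseteq U_0$, usefully makes explicit a point the paper glosses over: $\omega$ need not be holomorphic on $U_0$ itself when a disc contains poles other than its centre.

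The only slip is your aside that Coleman's theorem ``applies verbatim'' to $V$. As an isomorphism statement this is false: if $C\setminus V$ has more than $m$ points, then $\dim\hdr^1(V/\Cp)=2g+|C\setminus V|-1$ exceeds $\dim\hdre^1(U_r/\Cp)=2g+m-1$. You never use this claim, though, since the argument only needs the compatibility of the restriction maps.
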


Recall that an annulus is a rigid analytic space $\ee$, isomorphic, via a
parameter~$t$, to a space
of the form $\{z \text{ in } \Cp \text{ with } r<|z|<1\}$ for some $0\le r < 1 $.
The space of rigid analytic functions
on the annulus~$ \ee $ can then be described as
\begin{equation}
  \label{eq:annulus-fun}
  \left\{
\begin{aligned}
& \sum_{m\in \Z} a_m t^m\text{ with }a_m \text{ in } \Cp \text{ satisfying }
\\
&   \lim_{m\to -\infty} |a_m|  s^m = 0 \text{ for all } s>r \text{ and}
\\
& \lim_{m\to \infty} |a_m|  s^m = 0 \text{ for all } s<1
\end{aligned}
\right\}
\end{equation}
and 1-forms are just rigid functions multiplied by $\dd t$.
\begin{definition} \label{rigid-res-def}
Let~$\ee$ be an annulus with parameter~$t$.
For a rigid analytic 1-form~$ \omega = \sum_{m\in \Z} a_m t^m \dd t $ on~$ \ee $
we set $\res_{\ee} \omega $, the \emph{residue} of $ \omega $ on $\ee$ with respect to the
parameter $t$, to be~$ a_{-1}$.
\end{definition}

The set of all parameters on  an annulus $\ee$ breaks into two classes,
known as 
orientations~\cite[Lemma~2.1 and ensuing remarks]{Col89} such that the
residues with respect to
any two parameters are identical if they are in the same orientation
and differ by a sign otherwise.
An annulus with a choice of orientation is called an oriented annulus.

\begin{remark}\label{shrinkrm}
If an annulus $\ee$ with parameter~$t$ is defined by~$ r < |t| < 1 $,
and we define~$\ee' \subseteq \ee$ for $ r < r' < 1 $ by~$ r' < |t| < 1 $,
then clearly~$\res_\ee = \res_{\ee'}$, if we give both~$ \ee $
and~$\ee'$ the orientation induced by~$ t $.
\end{remark}

Let $t_i$ be the parameters introduced in~\eqref{tidef} on the rigid
analytic discs $\D_i$. For any $0<r<1$ and any $i$, the conditions $r< |t_i|<1$ define annuli~$\ee_i$
inside~$ \D_i $. We clearly have
\begin{equation*}
  U_r = U_1 \cup \bigcup_{i} \ee_i \;.
\end{equation*}
The annuli $\ee_i$ are
oriented by the parameters~$t_i$, and all parameters that are obtained
on the~$\ee_i$ as restrictions of parameters on~$\D_i$ give the same
orientation~\cite[Cor.~3.7a]{Col89}. The choice of $t_i$ is therefore
irrelevant for the residue.

\medskip

The analogue of the residue theorem holds (see \cite[Proposition~4.3]{Col89}).

\begin{theorem} \label{resthm}
  For $ 0 \le r < 1 $ and a rigid analytic 1-form $\omega$ on $U_r$ we have
  $\sum_i \res_{\ee_i} \omega = 0$.
\end{theorem}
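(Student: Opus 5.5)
The plan is to reduce the statement to the case of an algebraic (or at least globally meromorphic) form, where the classical residue theorem on the proper curve $C$ applies, and then pass to general rigid forms by an approximation that respects residues.

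\emph{Step 1: independence of $r$.} By Remark~\ref{shrinkrm}, replacing $r$ by any larger $r'<1$ changes no residue $\res_{\ee_i}\omega$, so we may take $r$ as close to $1$ as we like. This freedom is used in Step~3.

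\emph{Step 2: exact forms.} If $\omega = \dd F$ with $F$ in $\O(U_r)$, then on each $\ee_i$ we write $F=\sum_m b_m t_i^m$. Then $\dd F = \sum_m m b_m t_i^{m-1}\dd t_i$ has no $t_i^{-1}\dd t_i$ term, so every residue vanishes. Hence each $\res_{\ee_i}$ factors through $\hdre^1(U_r/\Cp)=\Omega^1(U_r)/\dd\O(U_r)$ (see~\eqref{hdreUr}).

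\emph{Step 3: reduction to algebraic forms.} By Coleman's theorem, the map $\hdr^1(U_0/\Cp)\to\hdre^1(U_r/\Cp)$ is an isomorphism. So $\omega = \eta|_{U_r} + \dd F$, where $\eta$ is an algebraic $1$-form on $U_0 = C\setminus\{t_1=0,\dots,t_m=0\}$, that is, a meromorphic form on $C$ with poles only at the points $t_i=0$. By Step~2 it suffices to treat $\eta$. On $\D_i$, the form $\eta$ is meromorphic with its only pole at $t_i=0$. Its Laurent expansion in $t_i$ therefore converges on the punctured disc, and its coefficient of $t_i^{-1}\dd t_i$ is the algebraic residue of $\eta$ at the point $t_i=0$. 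The classical residue theorem for the proper smooth curve $C$ over $\Cp$ then gives $\sum_i\res_{\ee_i}\eta=0$.

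\emph{Main obstacle.} The delicate point is the identification in Step~3: the rigid residue on $\ee_i$ must agree with the algebraic residue at the centre. This requires that the analytic Laurent expansion of $\eta$ on $\ee_i$ coincide with the formal expansion in the completed local ring at $t_i=0$. That holds because $t_i$ is a coordinate on the whole disc $\D_i$ and $\eta$ is analytic on $\D_i$ minus that point, so $t_i^N\eta/\dd t_i$ is analytic on $\D_i$ for $N$ large. I would also check the orientation convention: each $\ee_i$ is oriented by the restriction of the disc parameter $t_i$, which is the orientation Coleman's residue uses. If one wants to avoid Coleman's theorem, an alternative is to cover $U_1$ by affinoids and use a Mittag-Leffler decomposition $\omega = \omega_0 + \sum_i \omega_i$, with $\omega_0$ extending over a slightly larger space. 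I expect this route to be harder, so I would go through the cohomological reduction.
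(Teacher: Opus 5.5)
Your argument is correct. The paper gives no proof of its own; it only cites Coleman's Proposition~4.3, which is stated there for arbitrary wide opens. You instead derive the statement from three ingredients: the comparison theorem the paper quotes just before it (you only need surjectivity of $\hdr^1(U_0/\Cp)\to\hdre^1(U_r/\Cp)$), the vanishing of residues of exact forms, and the classical residue theorem on the proper curve $C$. Step~1 correctly handles $r=0$, which that comparison theorem excludes.

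The identification in Step~3 is the special case of Proposition~\ref{resone} with a single pole at the centre of $\D_i$. The paper deduces Proposition~\ref{resone} \emph{from} Theorem~\ref{resthm}, so it matters that you prove this special case directly from the Laurent expansion; that is what keeps your argument non-circular. The precise reason it works is that $\eta$ is algebraic, so it has a pole of finite order at $t_i=0$. Analyticity on the punctured disc alone would not make $t_i^N\eta/\dd t_i$ analytic on $\D_i$. Also, $t_i$ is a uniformiser of the completed local ring at that point, so the Laurent coefficient of $t_i^{-1}\dd t_i$ is the algebraic residue.

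Your route thus reduces the rigid statement for the spaces $U_r$ to classical algebraic facts plus Coleman's comparison theorem. The paper's citation covers general wide opens at no extra cost.
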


\begin{prop}\label{resone}
  Let $\D$ be one of the residue discs~$ \D_i $ above and let $\ee = \ee_i $ be the
    corresponding annulus. Let $\omega$ be a meromorphic analytic form on
    $\D$ whose restriction to $\ee$ is analytic, and such that $\omega$ has a
    finite number of poles on $\D$.
Then 
    \begin{equation*}
      \res_\ee(\omega)= \sum_x \res_x \omega\;,
    \end{equation*} 
    where $x$ runs over all points of $\D$.
\end{prop}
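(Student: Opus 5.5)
Identify $\D$ with the open unit disc via $t=t_i$, so $\ee$ is $r<|t|<1$ and $\res_\ee$ is taken with respect to $t$. Let $x_1,\dots,x_s$ be the finitely many poles of $\omega$ in $\D$. Since $\omega$ is analytic on $\ee$, every $|t(x_j)|\le r$, and at all other points of $\D$ the local residue is zero. So the sum on the right is finite, equal to $\sum_j \res_{x_j}\omega$. The plan is to split off the principal parts at the $x_j$ and reduce to a form that is analytic on the whole disc.

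First, write $\omega = h(t)\,\dd t$ with $h$ meromorphic on $\D$. For each pole $x_j$, with $a_j=t(x_j)$ in $\Cp$ and $|a_j|\le r<1$, let $P_j(t)=\sum_{k=1}^{N_j} c_{j,k}(t-a_j)^{-k}$ be the principal part of $h$ at $a_j$. Here $c_{j,1}=\res_{x_j}\omega$, because $t-a_j$ is a local parameter at $x_j$ and the residue at a point does not depend on the parameter. Then $g=h-\sum_j P_j$ has no poles on $\D$, so $g\,\dd t$ is an analytic form on the open disc and $g=\sum_{m\ge0} b_m t^m$ is a power series converging on $|t|<1$. Its restriction to $\ee$ has no $t^{-1}$ term, so $\res_\ee(g\,\dd t)=0$. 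This step uses the standard fact that a meromorphic function on the open disc with finitely many poles becomes analytic after subtracting its principal parts; it follows from the Weierstrass preparation/division on closed subdiscs $|t|\le s$ for $s<1$ close to 1.

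Second, compute $\res_\ee$ of each term $(t-a)^{-k}\dd t$ with $|a|\le r$. On $|t|>r\ge|a|$ (use $|t|>|a|$; if $|a|=r$ this still holds strictly on $\ee$) we expand
\[
(t-a)^{-k}=t^{-k}(1-a/t)^{-k}=\sum_{n\ge0}\binom{-k}{n}(-a)^n t^{-k-n}.
\]
This series converges on $\ee$, since $|a/t|<1$ there and the binomial coefficients are integers. The coefficient of $t^{-1}$ is nonzero only when $k=1$ and $n=0$, where it equals $1$. Hence $\res_\ee\big((t-a_j)^{-k}\dd t\big)=\delta_{k,1}$.

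Finally, $\res_\ee$ is linear, since it takes the coefficient of a convergent Laurent expansion. Therefore $\res_\ee\omega=\sum_j c_{j,1}=\sum_x\res_x\omega$. The result is independent of the choice of $t_i$ by the orientation remark preceding Theorem~\ref{resthm}. The only step needing care is the first one, producing an analytic $g$ on the whole open disc after removing principal parts. I would handle it by applying Weierstrass theory on the affinoid discs $|t|\le s$ and noting that the resulting power series agree as $s\to1$. A more careful treatment is needed only if one wants to avoid assuming the poles lie strictly inside every closed subdisc $|t|\le s$ with $s>r$, which here they do.
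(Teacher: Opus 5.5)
Your argument is correct, but it takes a different route from the paper's. The paper proves this in one line as a consequence of the rigid residue theorem (Theorem~\ref{resthm}) and refers to \cite{Bes98b} for the method. In effect one applies Coleman's residue theorem to $\D$ with small closed discs around the poles removed. The ends of that space are $\ee$ and small annuli around the poles, and one matches the residue on the annulus around a pole $x$ with $\res_x\omega$. Since Theorem~\ref{resthm} is only stated for the $U_r$, this really uses Coleman's theorem for general wide opens.

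You instead compute directly. You use the Mittag-Leffler decomposition $h=g+\sum_j P_j$, where $g$ is a power series convergent on $|t|<1$ and so has no $t^{-1}$ term on $\ee$. You combine this with the explicit expansion of $(t-a)^{-k}$ on $|t|>|a|$, which gives $\res_\ee\bigl((t-a)^{-k}\,\dd t\bigr)=1$ for $k=1$ and $0$ otherwise.

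Your route is self-contained and elementary. It only uses that analytic functions on the open disc are power series convergent on $|t|<1$, and that Laurent expansions on an annulus are unique. It also needs no orientation bookkeeping beyond the fixed parameter $t$. What it does rely on is that $\D$ is a disc with a global coordinate in which principal parts are rational functions. The residue-theorem argument is not tied to discs and adapts to more general wide opens; this is the kind of situation the paper points to in \cite{Bes98b}.

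The step you flag as delicate is easier than you suggest. $g$ is meromorphic on $\D$ without poles, hence analytic on each closed subdisc $|t|\le s$, hence a power series convergent on $|t|<1$. No Weierstrass preparation is needed.
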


\begin{proof}
    This is an easy consequence of the residue Theorem~\ref{resthm}. For
    the method of proof in a more complicated situation see the proof of
    Proposition~5.5 in \cite{Bes98b}
\end{proof}

Since the residue of an exact form is $0$, the residues factor through the
analytic de Rham cohomology of $U_r$, and summing over $i$ we get a map
\begin{equation*}
   \res : 
    \hdre^1(U_r / \Cp)
    \to  \oplus_{i=1}^m \Cp
\end{equation*}

\begin{theorem}\label{basicshort}
With~$ \mynewmap $ as in~\eqref{lambda-def}, there is an exact sequence
  \begin{equation*}
    0\to \hdr^1(C / \Cp) \xrightarrow{\mynewmap} \hdre^1(U_r / \Cp)
    \xrightarrow{\res} \oplus_{i=1}^m \Cp\;.
  \end{equation*}
\end{theorem}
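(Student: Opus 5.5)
Via Coleman's theorem, $\hdre^1(U_r/\Cp)\cong\hdr^1(U_0/\Cp)$, so I would work algebraically on $U_0=C\setminus\{P_1,\dots,P_m\}$, where $P_i$ is the point $t_i=0$. Under this identification the residue $\res_{\ee_i}$ becomes the algebraic residue at $P_i$. Indeed, an algebraic form $\omega$ on $U_0$ is meromorphic on $\D_i$ with its only pole at $P_i$. Proposition~\ref{resone} therefore gives $\res_{\ee_i}\omega=\res_{P_i}\omega$, and both sides are independent of the parameter because all parameters coming from $\D_i$ induce the same orientation. The statement then reduces to exactness of
\begin{equation*}
0\to \hdr^1(C/\Cp)\to\hdr^1(U_0/\Cp)\xrightarrow{(\res_{P_i})_i}\oplus_{i=1}^m\Cp ,
\end{equation*}
since $\mynewmap$ factors as $\hdr^1(C)\to\hdr^1(U_0)$ followed by Coleman's isomorphism.

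First, the composite is zero. A class in $\hdr^1(C/\Cp)$ is represented by a form of the second kind (Appendix), and I may move its poles away from the $P_i$ by subtracting exact forms. By Corollary~\ref{sectorig} its image under $\mynewmap$ is represented by the restriction of a form that is holomorphic on each $\D_i$. Proposition~\ref{resone} then gives residue zero on each $\ee_i$. Alternatively, representing the class by a form $\omega$ regular at the $P_i$ gives $\res_{P_i}\omega=0$ directly.

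Second, injectivity of $\hdr^1(C)\to\hdr^1(U_0)$. Suppose a form $\omega$ of the second kind, regular near the $P_i$, becomes exact on $U_0$, say $\omega=\dd f$ with $f$ regular on $U_0$. Near $P_i$ the form $\omega$ is regular, so $f$ has no pole at $P_i$: a pole of order $n$ in $f$ would produce a pole of order $n+1$ in $\dd f$. Hence $f$ is regular on $C$ outside the poles of $\omega$, so $\omega$ is exact in the description of $\hdr^1(C)$ by forms of the second kind modulo exact forms. This uses the Appendix description, which I take as given.

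Third, exactness in the middle, which I expect to be the main step. Let $\omega$ be an algebraic form on $U_0$ with $\res_{P_i}\omega=0$ for all $i$. Then $\omega$ is a meromorphic form on $C$ whose poles all have zero residue, so it is of the second kind. By the Appendix (forms of the second kind modulo exact forms), it defines a class in $\hdr^1(C)$. I then need that this class maps to $[\omega]$ in $\hdr^1(U_0)$. This is compatibility of the second-kind description with restriction, which can be arranged by subtracting $\dd g$ for a rational function $g$ matching the polar parts at the $P_i$; such a $g$ exists by Riemann--Roch after allowing poles elsewhere. Equivalently, one can use the localisation sequence $0\to\hdr^1(C)\to\hdr^1(U_0)\to\oplus_i\hdr^0(P_i)(-1)$ with the residue map, and the bulk of the write-up is making this identification explicit.
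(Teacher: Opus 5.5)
The paper gives no argument of its own here; it simply cites Coleman's Proposition~4.4 in \cite{Col89}. Your reduction is a sound way to supply a proof, and that reduction step is correct. It has two parts: first, Coleman's isomorphism $\hdr^1(U_0/\Cp)\cong\hdre^1(U_r/\Cp)$ together with Proposition~\ref{resone} identifies $\res_{\ee_i}$ with the algebraic residue at $P_i$; second, you prove the algebraic sequence $0\to\hdr^1(C)\to\hdr^1(U_0)\to\oplus_i\Cp$ using forms of the second kind.

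\textbf{Gap.} Your first and second steps, as written, use the wrong representatives. You move the poles of a second-kind representative \emph{away} from the $P_i$ (``holomorphic on each $\D_i$'', ``regular near the $P_i$''). But Corollary~\ref{sectorig} requires all poles to lie \emph{outside} $U_r$, i.e.\ in the removed closed discs. Likewise, an algebraic form defines an element of $\Omega^1(U_0)$ only if it is regular on $U_0$. Since $U_r$ and the $\D_i$ (respectively $U_0$ and neighbourhoods of the $P_i$) cover $C$, a representative that is admissible and also regular near the $P_i$ is a global holomorphic differential. Such classes span only the $g$-dimensional image of $H^0(C,\Omega^1)$ in the $2g$-dimensional space $\hdr^1(C/\Cp)$. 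So your first two steps only treat that subspace. In particular, in the second step the hypothesis $\omega=\dd f$ with $f$ regular on $U_0$ already forces $\omega$ to be regular on $U_0$, hence holomorphic.

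\textbf{Repair.} Take the opposite representative. By Proposition~\ref{prop:ctof1} (or Riemann--Roch, using $m\ge 1$), every class of $\hdr^1(C/\Cp)$ is represented by a form $\omega$ of the second kind that is regular on $U_0$, i.e.\ with poles only at the $P_i$.
\begin{itemize}
\item \emph{Composite is zero.} Corollary~\ref{rest} (respectively Corollary~\ref{sectorig}) says the restricted class is represented by $\omega|_{U_0}$ (respectively $\omega|_{U_r}$). Then $\res_{P_i}\omega=0$ simply because $\omega$ is of the second kind; no holomorphy on $\D_i$ is needed.
\item \emph{Injectivity.} If $\omega|_{U_0}=\dd f$ with $f$ in $\O(U_0)$, then $\omega=\dd f$ as rational forms, so $[\omega]=0$ by Theorem~\ref{secondkindthm}. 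Your pole-order argument is unnecessary.
\item \emph{Exactness in the middle.} The compatibility you want is exactly Corollary~\ref{rest} with $U=U_0$, applied to the second-kind form $\omega$ itself. Your proposed subtraction of $\dd g$, with $g$ matching the polar parts at the $P_i$, is not needed. It would in fact reintroduce poles in $U_0$.
\end{itemize}
With these corrections your argument is complete.
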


\begin{proof}
  This is~\cite[Proposition~4.4]{Col89}.
\end{proof}

\begin{definition} \label{rigid-sec-kind}
(1)  A rigid analytic form $\omega$ on $U_r$ will be called of the second
  kind if~$\res_{\ee_i} \omega = 0 $ for every annulus
  $\ee_i$.

(2) We denote the image of $\mynewmap$ in Theorem~\ref{basicshort}
by~$\hdree^1(U_r / \Cp)$ and call it the subspace of  cohomology classes
of the second kind. 
\end{definition}

By Theorem~\ref{basicshort} the cohomology classes of the second kind in~$ \hdre^1(U_r / \Cp) $
are precisely those represented by
rigid analytic forms of the second kind. It
also follows from Proposition~\ref{resone} that an algebraic form of the second kind
on $C $ that is regular on $U_r$ restricts to a form of the second kind on~$U_r$.

\begin{definition}
  If $\omega,\eta$ are rigid analytic forms of the second kind on $U_r$,
  in the sense of Definition~\ref{rigid-sec-kind}(1), their cup product pairing is defined by
  \begin{equation} \label{serre-rigid-formula}
    \pair{\omega,\eta}_{U_r} = \sum_i \res_{\ee_i} \left( \eta \int \omega \right)
.
  \end{equation}
\end{definition}

It is easy to check that~$ \res_{\ee_i} ( \eta \int \omega ) = - \res_{\ee_i} ( \omega \int \eta ) $
for each~$ i $ (see~\eqref{cupreslocal}). It then follows from the rigid analytic residue theorem,
Theorem~\ref{resthm}, that, just like in the algebraic setting of Theorem~\ref{seckindcup},
the pairing~\eqref{serre-rigid-formula} is well-defined, alternating,
and factors via $\hdre^1(U_r / \Cp)$.
It is further clear from
Remark~\ref{shrinkrm} that if $U_{r'}$ is a smaller wide open space as above
then 
\begin{equation}\label{paircr}
  \pair{\omega|_{U_{r'}},\eta|_{U_{r'}}}_{U_{r'}} = \pair{\omega,\eta}_{U_r}
\,.
\end{equation}

The pairings in~\eqref{serre-rigid-formula} and~\eqref{eq:cppair} are
connected by the following result.

\begin{proposition}[{\cite[Proposition~4.5]{Col89}}]\label{cuppair}
  Let $\alpha_1 $ and $\alpha_2 $ be in $ \hdr^1(C/\Cp)$.
  If $\omega_1$ and $\omega_2$ are rigid analytic forms of the second kind
  on $U_r$ such
  that the class of $\omega_i$ in $\hdre^1(U_r/\Cp)$
is $\mynewmap(\alpha_i)$, then
$\tr(\alpha_1\cup \alpha_2) = \pair{\omega_1,\omega_2}_{U_r}$.
\end{proposition}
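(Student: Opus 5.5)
The plan is to reduce to algebraic representatives and then compare the rigid residue formula~\eqref{serre-rigid-formula} with Serre's algebraic formula for the cup product, which the paper refers to as Theorem~\ref{seckindcup} in the appendix.

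\emph{Step 1: reduce to algebraic representatives.} The pairing $\pair{\cdot,\cdot}_{U_r}$ depends only on classes in $\hdre^1(U_r/\Cp)$, as noted above. So we may replace $\omega_1$ and $\omega_2$ by any forms of the second kind representing $\mynewmap(\alpha_1)$ and $\mynewmap(\alpha_2)$. Choose algebraic forms $\eta_1,\eta_2$ of the second kind on $C$ representing $\alpha_1,\alpha_2$ whose singularities lie outside $U_r$; for instance, take all poles in the discs $|t_i|\le r$, or even at the points $t_i=0$. This is possible because $\hdr^1(C/\Cp)$ is described by forms of the second kind modulo exact forms, and by Riemann--Roch we may move the poles into any given non-empty finite set of points. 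By Corollary~\ref{sectorig}, $\eta_j|_{U_r}$ represents $\mynewmap(\alpha_j)$. Using~\eqref{paircr}, we may also shrink $r$ if needed.

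\emph{Step 2: compute locally on each disc.} Serre's formula gives
\[
\tr(\alpha_1\cup\alpha_2)=\sum_{x}\res_x\Bigl(\eta_2\int\eta_1\Bigr),
\]
where $x$ runs over the poles, with the local integrals taken formally at each point. All these poles lie in the discs $\D_i$. On each $\D_i$, $\eta_1$ has zero residues at every point, so it has a meromorphic primitive $F_i$ on $\D_i$ with finitely many poles. This is the key analytic input: on an open disc, a rigid meromorphic form with finitely many poles and all residues zero is exact up to a meromorphic function. To see this, subtract the polar parts, which are algebraic in $t_i$ because the residues vanish, and integrate the remaining power series on the open disc.

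\emph{Step 3: compare with the rigid pairing.} Apply Proposition~\ref{resone} to $\eta_2F_i$ on $\D_i$. This gives
\[
\res_{\ee_i}\bigl(\eta_2 F_i\bigr)=\sum_{x\in\D_i}\res_x\bigl(\eta_2F_i\bigr).
\]
Near each point $x$, the function $F_i$ differs from the formal local integral by a constant $c$. That constant contributes $c\,\res_x\eta_2=0$, since $\eta_2$ is of the second kind. Likewise, on $\ee_i$ the restriction of $F_i$ is a primitive of $\eta_1$, which is determined up to a constant. That constant contributes nothing, because $\res_{\ee_i}\eta_2=0$. Summing over $i$ identifies $\pair{\eta_1,\eta_2}_{U_r}$ with Serre's sum, and hence with $\tr(\alpha_1\cup\alpha_2)$.

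\emph{Main obstacle.} I expect the hard points to be, first, justifying the existence of the primitive $F_i$ on the disc, and second, matching the sign and orientation conventions between the algebraic residue at a point and the residue on $\ee_i$. The latter is handled by orienting each $\ee_i$ via $t_i$, as above. The step of arranging representatives with poles inside the removed discs is routine.
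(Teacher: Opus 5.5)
Your proof is correct. The paper gives no argument of its own and simply cites \cite[Proposition~4.5]{Col89}. Your route is therefore genuinely different, since it derives the statement from results proved elsewhere in the paper. The steps are as follows.
\begin{itemize}
\item You use that $\pair{\cdot,\cdot}_{U_r}$ factors through $\hdre^1(U_r/\Cp)$, together with Corollary~\ref{sectorig}, to replace $\omega_1,\omega_2$ by restrictions of algebraic forms of the second kind $\eta_1,\eta_2$ whose poles are only at the centres $t_i=0$. Poles can be moved there by Riemann--Roch because this set of points is non-empty.
\item You then apply Serre's formula, Theorem~\ref{seckindcup} of the appendix, which is valid over $\Cp$ because $\Cp$ is algebraically closed of characteristic~$0$.
\item Finally you match point residues with annulus residues through a meromorphic primitive $F_i$ of $\eta_1$ on the open disc $\D_i$. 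The constants of integration contribute nothing because $\res_x\eta_2=0$ and $\res_{\ee_i}\eta_2=0$.
\end{itemize}

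What your approach buys is that the rigid formula~\eqref{serre-rigid-formula} becomes a genuine corollary of the algebraic formula that the appendix proves. It needs only two further inputs: power series on an open disc can be integrated, and Riemann--Roch. The citation keeps the paper short, but it leaves this compatibility resting entirely on Coleman's paper.

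Two minor points.
\begin{itemize}
\item Because you place all poles at the centres, both forms are Laurent series in $t_i$ with finite principal part on $\D_i\setminus\{t_i=0\}$. Hence $\res_{\ee_i}(\eta_2F_i)$ is just the residue at the point $t_i=0$, and Proposition~\ref{resone} is not needed. It would only be needed if you allowed poles elsewhere in the removed discs.
\item \eqref{paircr} only lets you pass from $U_r$ to a smaller $U_{r'}$ with $r'>r$. Your argument never actually uses it.
\end{itemize}
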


\subsection{Putting it all together}
We return to the setting of a smooth curve~$ C/R $ as at the
beginning of Section~\ref{sec:cup-res}.
Then the constructions of Section~\ref{sec:rigcup} make sense already over
some finite extension~$ L $ of~$ K $ such that each of the residue
discs corresponding to the ends has an~$ L $-rational point.
To simplify the discussion, suppose first that $K=L $.
In particular, the domains~$ U_{r} $ and the annuli $ \ee_i $
are defined over~$ K $, and we have residue maps $ \res_{\ee_i} : \Omega^1(\ee_i) \to K $  as in Definition~\ref{rigid-res-def}, a residue theorem as in Theorem~\ref{resthm}, a short exact sequence
  \begin{equation}\label{Kshort}
    0\to \hdr^1(C_K / K) \xrightarrow{\mynewmap} \hdre^1(U_r / K)
    \xrightarrow{\res} \oplus_{i=1}^m K
  \end{equation}
as in Theorem~\ref{basicshort}, with the map $ \mynewmap $ satisfying Corollary~\ref{sectorig} (over $ K $). Finally, we have a pairing
\begin{equation} \label{serre-over-L}
  \pair{~,~}_{U_r}: \ker(\res) \times \ker(\res) \to K,\quad
    \pair{\omega,\eta}_{U_r} = \sum_i \res_{\ee_i} \left( \eta \int \omega \right)
\,,
\end{equation}
satisfying the analogue of Proposition~\ref{cuppair}.
All of this is immediately proved by base changing to $ \Cp $ and applying the corresponding results there.

The connection with Monsky-Washnitzer cohomology is made by the formulae
\begin{equation}\label{relwithMW}
 \Adag = \varinjlim_{r\uparrow 1} \O(U_r),\quad 
   \Omega^1(\Adag /K) = \varinjlim_{r\uparrow 1} \Omega^1(U_r)
,
\end{equation}
which are proved in the course of proving Proposition 1.10 in~\cite{Ber97}.

\begin{definition}
(1) Define a map $ \mynewmap: \hdr^1(C_K /K) \to \hmw^1(U /K) $  as follows. Let~$ \omega $ be a form of the second kind on $ C_K $ that has no poles in $ U_1 $. Then there is some $ r<1 $ such that $ \omega  $ has no poles in $ U_r $ and we define $ \mynewmap ([\omega]) $ to be the class represented by the image of $ \omega|_{U_r} $ in $ \Omega^1(\Adag / K)  $.

(2) Define a map $ \res: \hmw^1(U /K) \to  \oplus_{i=1}^m K $  as follows. Suppose~$ x $ in~$\hmw^1(U /K) $
is represented by some $ \eta $ in~$ \Omega^1(U_r) $ for some $ r<1 $. Then let $ \res x = \res \eta  $.
\end{definition}

\begin{lemma}
    The maps $ \mynewmap $  and $ \res $  are well-defined and fit in a short exact sequence
  \begin{equation}\label{MWshort}
    0\to \hdr^1(C / K) \xrightarrow{\mynewmap}  \hmw^1(U /K)
    \xrightarrow{\res} \oplus_{i=1}^m K
  \end{equation}
\end{lemma}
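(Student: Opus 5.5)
The plan is to reduce everything to the analytic exact sequence~\eqref{Kshort} for a single~$U_r$ and then pass to the direct limit over $r \uparrow 1$ using~\eqref{relwithMW}. Direct limits are exact, and the relevant transition maps are compatible with the residue maps by Remark~\ref{shrinkrm}.

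For well-definedness of~$\mynewmap$, first note that every class in $\hdr^1(C_K/K)$ is represented by a form of the second kind with no poles in~$U_1$. Indeed, by the Appendix (Theorem~\ref{seckindcup} and the description of $\hdr^1$ via forms of the second kind), classes are represented by forms of the second kind modulo exact forms of meromorphic functions, and by a Riemann--Roch or moving argument we may take all poles inside $\bigcup_i \D_i$ away from~$U_1$. Since the poles are finitely many and lie in the open discs, some $r<1$ exists with all poles in $|t_i|\le r$. Independence of choices then has two parts.
\begin{itemize}
\item \emph{Choice of~$r$.} Shrinking~$r$ does not change the image in the direct limit.
\item \emph{Choice of representative.} If $\omega,\omega'$ are two such forms representing the same class, then $\omega-\omega' = \dd h$ with~$h$ a rational function. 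Since $\omega-\omega'$ has no poles in $U_1$, $h$ has no poles there either (a pole of $h$ would give a pole of $\dd h$ in characteristic zero). Hence $h$ is regular on some $U_{r'}$ and $\dd h$ dies in~\eqref{eq:coker}.
\end{itemize}
Alternatively, and more cleanly, $\mynewmap$ is the composite of the analytic $\mynewmap$ of~\eqref{Kshort} for a fixed~$r$ (which is well defined by Corollary~\ref{sectorig}) with the natural map $\hdre^1(U_r/K)\to \hmw^1(U/K)$ coming from~\eqref{relwithMW}. The argument above only checks that this is independent of~$r$, via the compatibility of the restriction maps $\hdre^1(U_r)\to\hdre^1(U_{r'})$ with Coleman's isomorphism.

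For~$\res$, well-definedness follows from two facts. Residues of exact forms vanish. If $\eta\in\Omega^1(U_r)$ and $\eta'\in\Omega^1(U_{r'})$ have the same image in the limit, they agree on some smaller $U_{r''}$, and by Remark~\ref{shrinkrm} their residues agree. By~\eqref{eq:coker}, a class is zero in $\hmw^1$ iff some representative is $\dd f$ with $f\in \O(U_{r''})$, which has residue zero.

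For exactness, write $\hmw^1(U/K)=\varinjlim_r \hdre^1(U_r/K)$. This uses~\eqref{relwithMW} together with~\eqref{hdreUr} and the exactness of filtered colimits applied to the complexes $\O(U_r)\to\Omega^1(U_r)$. Each sequence~\eqref{Kshort} is exact, and for $r<r'$ the restriction maps form a morphism of sequences: the identity on $\hdr^1(C_K/K)$ because~$\mynewmap$ is compatible with restriction, the identity on $\oplus K$ by Remark~\ref{shrinkrm}, and restriction in the middle. Taking the filtered colimit of these exact sequences gives exactness of~\eqref{MWshort}.

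The injectivity of~$\mynewmap$ in the limit is the point that deserves care, but it follows formally from exactness of colimits, since each term in the system is injective. Equivalently, the transition maps $\hdre^1(U_r)\to\hdre^1(U_{r'})$ are isomorphisms by Coleman's theorem, so the colimit is already attained at any single~$r$.

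The main obstacle is the identification $\hmw^1(U/K)\cong\varinjlim_r\hdre^1(U_r/K)$ compatibly with all maps. I would handle it by noting that Coleman's theorem makes the system constant, so no subtle limit arguments are needed.
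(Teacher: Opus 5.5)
Your proposal is correct and follows the same route as the paper. The paper likewise gets well-definedness of $\res$ from the compatibility of residues under shrinking (Remark~\ref{shrinkrm}) and obtains~\eqref{MWshort} as the limit over $r$ of the compatible exact sequences~\eqref{Kshort} via~\eqref{relwithMW}, only with much less detail than you supply. One minor slip: the description of $\hdr^1$ by forms of the second kind is Theorem~\ref{secondkindthm}, not Theorem~\ref{seckindcup}.
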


\begin{proof}
The fact that $ \mynewmap $ is well-defined is clear. That $ \res $ is well-defined follows from the compatibility of
the residue maps as~$ r $ grows, for which we extend scalars to $ \Cp $ and use Remark~\ref{shrinkrm}.The short exact sequence~\eqref{MWshort} is now a consequence of the compatible short exact sequences~\eqref{Kshort}. 
\end{proof}

\begin{definition}
    The pairing $ \pair{~,~} $ on $ \ker(\res) : \hmw^1(U /K) \to  \oplus_{i=1}^m K $  is defined as follows.
    Represent~$ x $ and~$y $ in~$ \ker(\res) $ by forms $ \omega$ and~$\eta $ in~$ \Omega^1(U_r) $ for a sufficiently large $ r $, and set
\begin{equation} \label{newpair}
  \pair{x,y} = \pair{\omega ,\eta }_{U_r}
\end{equation}
\end{definition}

Finally, let us address the general case, where residue discs acquire rational points only over a finite extension $L $ of $K $. The domains $U_r $ can still be defined over $K $ (see the construction of the domains $V_{\lambda } $ in the proof of Proposition~1.10 in~\cite{Ber97}) and the relation~\eqref{relwithMW} with Monsky-Washnitzer cohomology still holds by the above reference to~\cite{Ber97}.
\begin{proposition}
  We have a short exact sequence
   \begin{equation}\label{MWshortK}
    0\to \hdr^1(C / K) \xrightarrow{\mynewmap}  \hmw^1(U /K)
    \xrightarrow{\res} V
  \end{equation}
  with $V $ a finite dimensional $K $-vector space, and a pairing
\begin{equation} \label{serre-over-K}
\pair{~,~} :  \ker(\res) \times \ker(\res) \to  K
\end{equation}
obtained by changing scalars to a finite Galois extension $L $ of $K $ where residue discs acquire rational points and computing the pairing~\eqref{newpair} over $L $.
\end{proposition}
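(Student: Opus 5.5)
The plan is Galois descent from $L$ to $K$. Choose a finite Galois extension $L/K$ over which every residue disc meeting the complement of $U_1$ contains an $L$-rational point. Over $L$ the situation of the previous lemma holds, so we have the exact sequence~\eqref{MWshort} with $K$ replaced by $L$:
\begin{equation*}
0\to \hdr^1(C_L/L) \xrightarrow{\mynewmap_L} \hmw^1(U/K)\otimes_K L \xrightarrow{\res_L} \oplus_{i=1}^{m} L ,
\end{equation*}
and the pairing~\eqref{newpair} on $\ker(\res_L)$. Here $m$ is the number of ends over $L$, that is, the geometric points of $Z$ together with their discs. The input for the middle term is~\eqref{relwithMW}: it holds over $K$ by the reference to~\cite{Ber97}, and $\Omega^1(U_r)$ commutes with the finite flat base change $K\to L$. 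Together these give $\hmw^1(U/K)\otimes_K L \cong \hmw^1(U_L/L)$. Likewise $\hdr^1(C_K/K)\otimes_K L\cong \hdr^1(C_L/L)$ by flat base change.

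Next I let $G=\Gal(L/K)$ act on all three terms, through the second tensor factor on the left and middle terms. The key point is to make the residue map equivariant for a suitable action on the target. An element $\tau\in G$ permutes the residue discs of $C_L$, carrying $\D_i$ to $\D_{\tau(i)}$ and the parameter $t_i$ to a parameter ${}^\tau t_i$ on $\D_{\tau(i)}$. Because any two parameters on a disc give the same orientation on the corresponding annulus~\cite[Cor.~3.7a]{Col89}, the residue satisfies $\res_{\ee_{\tau(i)}}(\tau\omega)=\tau(\res_{\ee_i}\omega)$. So if $G$ acts on $\oplus_i L$ by permuting coordinates and applying $\tau$ to the entries, then $\res_L$ is $G$-semilinearly equivariant.

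I expect this orientation compatibility to be the main obstacle, since it is the only place where the choice of parameters could introduce signs. I would settle it by applying Coleman's orientation result to the disc $\D_{\tau(i)}$ with the two parameters $t_{\tau(i)}$ and ${}^\tau t_i$. Similarly, $\mynewmap_L$ is equivariant, because it is induced by restriction of forms, which commutes with $G$.

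Now define $V=(\oplus_i L)^G$. This is a finite-dimensional $K$-vector space, and by Hilbert 90 / Galois descent it satisfies $V\otimes_K L=\oplus_i L$; concretely, it is a product of fields $L^{\mathrm{Stab}(i)}$ over the $G$-orbits. Define $\res$ on $\hmw^1(U/K)$ as the restriction of $\res_L$, which lands in $V$ by equivariance. Taking $G$-invariants is left exact, and $(W\otimes_K L)^G=W$ for any $K$-space $W$. Applying invariants to the sequence over $L$ therefore yields~\eqref{MWshortK}, and this sequence is independent of the choice of $L$ because enlarging $L$ just base changes everything.

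Finally, for the pairing I take $x,y\in\ker(\res)\subset\hmw^1(U/K)$ and compute $\pair{x\otimes1,y\otimes1}$ over $L$ via~\eqref{newpair}. Applying $\tau\in G$ permutes the terms $\res_{\ee_i}(\eta\int\omega)$ in the sum, using the same equivariance of residues and the fact that $\int$ commutes with $\tau$ up to a constant, which does not change residues of forms of the second kind. Hence the value is $G$-invariant and lies in $K$. Independence of $L$ follows from comparing over a common extension, and independence of representatives follows from the results over $\Cp$. One could also argue more directly: by Proposition~\ref{cuppair} the pairing agrees with the algebraic cup product and trace, which are already defined over $K$.
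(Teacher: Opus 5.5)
Your proposal is correct and takes essentially the same approach as the paper. The paper's proof is only the remark that Galois equivariance lets one descend $\oplus_{i=1}^m L$ to $V$ (where the descended residue map takes values) and shows that the pairing computed over $L$ is $\Gal(L/K)$-invariant, hence $K$-valued. Your argument fills in those details, namely the action on $\oplus_i L$ by permuting discs and applying $\tau$ to coefficients (justified by Coleman's orientation result), left exactness of invariants, and invariance of the sum of local residues because $\eta$ has zero residue at every end; your alternative via Proposition~\ref{cuppair} for the pairing is also valid.
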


\begin{proof}
  Galois equivariance will allow us to descend $ \oplus_{i=1}^m L $ to~$ V $,
in which the descended  residue map takes values. It also shows
that the pairing~\eqref{serre-over-K} indeed takes values in $K\subseteq L $.
\end{proof}

\begin{remark}\label{robswish2}
Note that the short exact sequence~\eqref{MWshortK} is just the sequence~\eqref{shorti} but with explicit maps. Thus, we have a commutative diagram
\begin{equation*}
\xymatrix{
\hcr^1(C_k/W(k)) \otimes_{W(k)} K \ar[r] \ar[d] & \hmw^1(\Abar/K)
\\
\hdr^1(C_K/K) \ar@{=}[r] & \hdr^1(C_K/K) \ar[u]_-{\mynewmap}
,
}
\end{equation*}
where the top horizontal map is the horizontal map in~\eqref{robswish1}.
Also note that we do not prove, nor do we need to prove, that the two sequences \eqref{MWshortK} and~\eqref{shorti}
are the same.
(The above diagram is  compatible with~\eqref{hcr-hrig-hdr}, which we also do not need to prove.) Finally, the extension of scalars to $L $ will be needed to compute the pairing, but we will not need to compute the residue map directly, as the elements we willl be using will come via $\mynewmap $ and the application of Frobenius, and will live in the kernel of $\res $ without explicitly computing it.
\end{remark}

\medskip

We shall later need the following compatibility result.

\begin{lemma} \label{compatible-pairings}
The pairing~\eqref{newpair} is well-defined. Moreover, for $ x' $ and~$y' $ in
$ \hdr^1(C /K) $ we have
\begin{equation}\label{cupMW}
   \tr(x'\cup y') = \pair{\lambda (x'),\lambda(y')}\;.
\end{equation}
\end{lemma}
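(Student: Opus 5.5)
The plan has two parts: well-definedness of~\eqref{newpair}, then the comparison~\eqref{cupMW}. Throughout I will first assume the residue discs have $K$-rational points ($K=L$). The general case then follows by extending scalars to $L$: the pairing~\eqref{serre-over-K} is by definition computed over $L$, and both the cup product and $\tr$ commute with base change of $\hdr^1(C_K/K)$ to $L$.

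\emph{Well-definedness.} Let $x,y\in\ker(\res)$ be represented by $\omega,\eta\in\Omega^1(U_r)$. The choices to control are the radius $r$ and the representatives within their classes.
\begin{itemize}
\item Representatives of the same class in $\Omega^1(\Adag/K)=\varinjlim_r\Omega^1(U_r)$ become equal on some $U_{r'}$ with $r'\ge r$, by~\eqref{relwithMW}. By~\eqref{paircr} the value $\pair{\omega,\eta}_{U_r}$ does not change when we shrink to $U_{r'}$.
\item So it suffices to show that $\pair{~,~}_{U_r}$ depends only on the classes in $\hdre^1(U_r/K)$.
\item Since $\res x=0$, $\omega$ has zero residue on each $\ee_i$, so $\int\omega$ exists on each annulus, unique up to an additive constant. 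That constant changes $\res_{\ee_i}(\eta\int\omega)$ by a multiple of $\res_{\ee_i}\eta=0$.
\item The paper already states that $\pair{~,~}_{U_r}$ is alternating and factors through $\hdre^1(U_r/K)$, using~\eqref{cupreslocal} and Theorem~\ref{resthm}. I would just recall this. The key case is $\omega=\dd h$ with $h\in\O(U_r)$: then $\sum_i\res_{\ee_i}(h\eta)=0$ by the residue theorem, since $h\eta$ is a global form on $U_r$. Antisymmetry handles exactness in the other slot.
\end{itemize}

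\emph{Compatibility~\eqref{cupMW}.}
\begin{itemize}
\item Write $x'=[\omega_1]$ and $y'=[\omega_2]$ with $\omega_j$ algebraic forms of the second kind on $C_K$. Such representatives exist by the appendix description of $\hdr^1(C_K/K)$.
\item Moving poles by adding exact forms, arrange that the poles of $\omega_j$ avoid $U_1$. This needs some care: we may have to subtract $\dd f$ for suitable rational functions $f$, using Riemann--Roch to shift the polar divisor into the discs $\D_i$.
\item Then for $r$ close to $1$, both $\omega_j$ are analytic on $U_r$. Proposition~\ref{resone} shows they are of the second kind on $U_r$, and by definition $\mynewmap(x')$, $\mynewmap(y')$ are represented by $\omega_j|_{U_r}$.
\item By Corollary~\ref{sectorig}, the class of $\omega_j|_{U_r}$ in $\hdre^1(U_r)$ is the image of $x'$, resp.\ $y'$, under the map~\eqref{lambda-def}.
\item Proposition~\ref{cuppair}, transported to $K$ by base change to $\Cp$, then gives $\tr(x'\cup y')=\pair{\omega_1,\omega_2}_{U_r}$. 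This equals $\pair{\mynewmap(x'),\mynewmap(y')}$ by definition~\eqref{newpair}.
\end{itemize}

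\emph{Main obstacle.} The substantive step is checking that the two maps denoted $\mynewmap$ agree: the algebraic–rigid one in~\eqref{lambda-def} and the Monsky–Washnitzer one. This amounts to the compatibility of~\eqref{relwithMW} with Coleman's isomorphism. By Corollary~\ref{sectorig} it reduces to the existence of representatives with no poles in $U_1$. I would treat that as a standard moving lemma, or else note that any class of the second kind has such a representative because $U_0\setminus U_1$ meets every residue disc.
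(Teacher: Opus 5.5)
Your proposal is correct and follows essentially the same route as the paper. Independence of $r$ comes from~\eqref{paircr}, independence of representatives from the residue theorem (Theorem~\ref{resthm}), and~\eqref{cupMW} from Proposition~\ref{cuppair} together with Corollary~\ref{sectorig}. Your extra care over the base change to $L$, and over second-kind representatives with no poles in $U_1$ (which the paper absorbs into its remark that $\mynewmap$ is clearly well-defined), is sound but does not change the argument.
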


\begin{proof}
The pairing is independent of $ r $ by~\eqref{paircr} and factors through cohomology by Theorem~\ref{resthm}. The formula~\eqref{cupMW} follows immediately
from Proposition~\ref{cuppair}.
\end{proof}

Suppose now that  $\omega_1,\ldots,\omega_{2g}$ in~$ \Omega^1(U_r) $
for some~$ 0 < r < 1 $ are forms such that their classes in~$ \hmw^1(U/K) $
form a basis of~$ \ker(\res) $ in~\eqref{MWshort}.
Let  $M_1$ be the matrix with entries~$ \pair{[\o_i], [\o_j]} $,
and let $ M $ be the matrix representation, acting on column vectors, of the operator $ \phidagA $ on $ \ker(\res) $
with respect to this basis. (Note that this operator is semilinear.)
Then the matrix with entries~$ \pair{ [ \omega_i] ,\phidagA ( [\omega_j])  }$ is~$M_2=M_1 M$. This suggest the following method for computing~$ M $.

\begin{method} \label{cupmethod}
(1)
Compute the matrix $M_1$ with entries $\pair{ [\omega_i] ,  [\omega_j] } $ using~\eqref{newpair}.

(2)
Compute an explicit lift~$ \phidagA $ of the~$ \pa $-th power
map.

(3)
Compute the matrix $M_2$ with entries $ \pair{[ \omega_i] ,\phidagA( [\omega_j] ) } $ using~\eqref{newpair}.

(4)
Compute the matrix $M=M_\s$ as $ M_1^{-1}M_2 $.
\end{method}

From~(3.2.6) on~\cite[p.583]{Ber74} (or \cite[(1.3.4)]{Ill94})
we know that the numerator of the zeta function of~$ C_k $ is~$ \det(1 - \myphicr {|k|} T) $,
and~$ \myphicr {|k|} = \left( \myphicr {\pa} \right)^l $ if~$ |k| = \pa^l $.
(Note that the formula for the denominator is based on the Lefschetz trace formula~\cite[Th\'eor\`eme~3.1.9]{Ber74},
which requires an endomorphism of~$ C_k $ as a~$ k $-scheme,
so we cannot allow an action of~$ \, \ol{\s} $.)
But instead of the action of~$ \myphicr {\pa} $ on~$ L_{\cry} = \hcr^1(C_k/W(k)) $,
we have an action of~$ \myphicr {|k|} \otimes \t $ on~$ L_{\cry} \otimes_{W(k)} K \simeq \hdr^1(C_K/K) $ for some~$ \t $
in~$ \Aut(K/\Qp) $ that reduces to the identity on~$ k $ and~$ W(k) $.
If~$ \t $ is the identity,\footnote{%
It seems not easy to recover even~$ L_{\cry} \otimes_{W(k)} K_0 \subseteq L_{\cry} \otimes_{W(k)} K $
for~$ K_0 $ the field of fractions of~$ W(k) $. E.g., if~$ K/K_0 $
is of degree~2, and we decompose~$ K $ into~$ K_0 \oplus K^\chi $ corresponding
to the characters of the Galois group~$ \{\id_K, \t \} $,
then~$ L_{\cry} \otimes_{W(k)} K $ decomposes into the~$ K_0 $-subspaces~$ L_{\cry} \otimes_{W(k)} K_0 $,
which is the one we want and has the correct action of~$ \myphicr {|k|} \otimes \t $, and~$ L_{\cry} \otimes_{W(k)} K^\chi $,
of the same~$ K_0 $-dimension but where the action is multiplied by~$ \chi(\t) = -1 $.}
then we can compute
the required determinant by going from a~$ W(k) $-basis of~$ L_{\cry} $
to a~$ K $-basis of~$ \hdr^1(C_K/K) $.

The zeta function can therefore now be obtained as follows.

\begin{method} \label{zeta-method}
Using Method~\ref{cupmethod}, compute the zeta function of $C_k$ as follows.
Let~$ \pa $ be the order of a subfield of~$ k $, so~$ |k| = \pa^l $ for~$ l $ the order of~$ \ol{\s} $.
Assume that~$ \s $ also has order~$ l $.

(1)
Compute the matrix $M = M_\s$ of~$\myphicr {\pa}  \otimes \s $.

(2)
Compute the matrix of the linear Frobenius as
\begin{equation*}
      M' = \s^{l-1}(M)\times \s^{l-2}(M)\times \cdots \times
      \s(M)\times M
.
\end{equation*}

(3)
Let $P_1(T)= \det(1-T M')$.
Its coefficients, a priori in $\Zp$, are integers.

(4)
Deduce the zeta function as $ Z(T) = \frac{P_1(T)}{(1-T)(1-|k| T)}$.
\end{method}

\begin{remark} \label{non-order}
If~$ e \ne 1 $, and~$ |k| = \pa^l $, then it can happen that~$ \s $ has order divisible
by, but not equal to, the order~$ l $ of~$ \ol{\s} $. In that
case, let~$ \t = \s^{1-l} $, so that~$ \ol{\t} = \ol{\s} $, and~$ \t \s^{l-1} = \id_K $.
In that case also~$ \t $ can be used in~\eqref{robswish1} instead of~$ \s $,
and we can obtain a lift of~$ \myphicr {|k|} \otimes \id_K $ by first using a~$ \t $-semilinear lift~$ \phidagAt $, followed
by~$ l-1 $ times a~$ \s $-semilinear lift~$ \phidagA $. This gives the matrix
\begin{equation*}
M' = \s^{l-1}(M_\t) \times \s^{l-2}(M_\s)\times \cdots \times \s(M_\s)\times M_\s
,
\end{equation*}
with~$ M_\t $ computed as in Method~\ref{cupmethod} for~$ \phidagA $.
Of course, if~$ \s^l = \id_K $ then~$ \t = \s $ and we can recover~$ M' $
as in Method~\ref{zeta-method}.
\end{remark}

In Section~\ref{algorithm} we shall work out into a detailed
algorithm how to use Methods~\ref{cupmethod} and~\ref{zeta-method}. In particular,
we shall use as a~$ K $-basis of~$ \ker(\res) $ in~\eqref{MWshort}
the image under~$ \mynewmap $ of an~$ R $-basis of~$ \hdr^1(C/R) \subseteq \hdr^1(C_K/K) $
as this will help us to avoid, or at least bound below, the valuations
of the entries of our matrices, as we shall discuss in Remarks~\ref{detrem}
and~\ref{h-remark}.
After obtaining more auxiliary results, in Section~\ref{algorithm-estimates},
we then give a version of this algorithm that uses finite precision version
and truncated expansions.

\begin{remark} \label{detrem}
If we use an~$ R $-basis of~$ \hdr^1(C/R) $ as basis for~$ \ker(\mynewmap) $
in~\eqref{MWshort}, then Lemma~\ref{compatible-pairings} implies that the entries of~$ M_1 $
in Method~\ref{cupmethod} are in~$ R $. Moreover, by Poincar\'e
duality, its determinant is in~$ R^* $.
\end{remark}

\section{The algorithm} \label{algorithm}

We now work out Methods~\ref{cupmethod}
and~\ref{zeta-method} into an algorithm, based on infinite precision,
and full expansions.
A version that uses finite precision and truncated expansions will be given in Section~\ref{algorithm-estimates}.

We shall formulate the algorithm below using the image in~\eqref{MWshort}
under~$ \mynewmap $ of an~$ R $-basis of $ \hdr^1(C/R) $ because then
the matrix~$ M_1 $ introduce in Method~\ref{cupmethod} has entries
in~$ R $ and determinant in~$ R^* $, which will simplify these estimates.
We shall discuss how to compute such a basis in practice in Section~\ref{de-rham}
in the general case, and in Section~\ref{smooth-planar} in the case of a smooth
planar curve.

The algorithm here also works if, instead, one uses a~$ K $-basis of~$ \hdr^1(C_K/K) $
but then the estimates for convergence in Section~\ref{algorithm-estimates} get more complicated
and a higher $p$-adic precision may be needed;
see Remark~\ref{ComputewithKbasis} for details.

The main setup for the algorithm, and some of the techniques
that will be used, were already described in Section~\ref{sec:cup-res}.
Others will be introduced and described in detail before we can
give the estimates in Section~\ref{algorithm-estimates}.
In order to help the reader, we here briefly describe the later assumptions
or constructions in the algorithm.

We assume that we are given an open affine part of~$ C $ as a
smooth global complete intersection (see Assumption~\ref{Aassumption}),
which contains the generic point of the closed fibre~$ C_k $.
Using the smoothness assumption, and extending~$ K $ if
necessary, for each end~$ \ee $ we can choose a suitable local parameter~$ t $,
and find local expansions of the $ x_i $ in terms of~$ t $.
From these we can compute the contribution of~$ \ee $ to the pairing
on~$ \hdr^1(C/R) $ by means of~\eqref{serre-rigid-formula} (see
Lemma~\ref{compatible-pairings}). Summing over the~$ \ee $ we can then compute
the matrix~$ M_1 $ in Step~2 of the algorithm.

For the matrix~$ M_2 $ we proceed as follows.
From the smoothness assumption we obtain certain matrices $ P $
and~$ \Delta_j $ that enable us to write down a system of equations~$ G(\S) = 0 $
(see~\eqref{globalG}) that determines by~\eqref{eq:phiact}
a (then) unique lift of Frobenius~$ \phidagA $ on~$ \Adag $,
obtained from such a lift~$ \phidag $ on~$ R\la \x \ra^\dagger $.
For each end~$ \ee $, using the local expansions we can transform
this into a local system~$ H(\S) = 0 $, which
determines the local expansions of the $ \phidagA(x_j) $
(see Theorem~\ref{locallift0} or Step~1(c) of the algorithm).
Solving~$ H(\S) = 0 $ through Newton iteration we can 
then compute the contribution of~$ \ee $ to the pairing~\eqref{serre-rigid-formula}
if we first apply~$ \phidagA $ to~$ \eta $.
Summing over the~$ \ee $ we can then compute
the matrix~$ M_2 $ in Step~2 of the algorithm.

In order to complete the algorithm we compute~$ M = M_1^{-1} M_2$ as in Method~\ref{cupmethod},
as well as~$ M' $ and~$ P_1(T) = \det(1-T M') $
as in Method~\ref{zeta-method}.

We emphasise that the system~$ G(\S) = 0 $ is never solved globally
in this approach, which would involve Newton iteration in a power series in several
variables. Apart from some calculations in a polynomial
ring, all of our calculations take place in (infinite but converging)
Laurent series in one local parameter instead.
Similarly, the matrices $ \Delta_j $ are actually never used
in the local calculations.

We make explicit how to calculate Method~\ref{zeta-method},
so we are assuming that~$ \s $ and~$ \ol{\s} $ have the same order~$ l $.

\begin{algorithm} \label{mainalgorithm}
Method~\ref{zeta-method} can be carried out as follows.
For simplicity, if~$ \o $ is a 1-form representing a class of~$ \hdr^1(C/R) $
or~$ \hdr^1(C_K/K) $, we simplify the notation by 
writing~$ [\o] $ for what is~$ \mynewmap([\o]) $ in~$ \hmw^1(U/K) $
in~\eqref{MWshort}.

\medskip

\noindent
{\bf Input}
\begin{itemize}
\item
A presentation of an $ R $-algebra $ A = R[\x]/(f) = R[x_1,\dots,x_n] / (f_2, \dots, f_n) $
as in Notation~\ref{globalnotation}
that satisfies 
Assumption~\ref{Aassumption}, and such that $ A $ corresponds to an open affine~$ X \subset C $
with~$ X \cap C_k $ non-empty.

\item
Matrices $ P $ and $ \Delta_{r+1},\dots,\Delta_n $ in $ \M^{n,n-1}(R[\x]) $ and
$ \M^{n-1}(R[\x]) $ respectively, such that
$ \Jac_{(f)}\times P \equiv \Id_{n-r} + \sum_{j=r+1}^n f_j\Delta_j $
modulo~$ \pi $, from which we obtain the system~$ G(\S) = 0 $
(see~\eqref{globalG}).

\item
Elements~$ \o_1,\dots,\o_{2g} $ in~$ \Omega_{A/R}^1 $ that give
a basis of~$ \hdr^1(C/R) $.
(Such elements can be computed using Proposition~\ref{compdr}
together with Algorithms~\ref{KtoRbasis}  and~\ref{KtoRbasisOmega}.)

\item 
For each end~$ \ee $ a suitable local parameter~$ t = t_\ee $ (see Section~\ref{expansions}),
together with the local expansions~$ \ex_\ee(x_i) $.
(In this step, we may have to use a finite extension of~$ K $.)
\end{itemize}

\medskip

\noindent
{\bf Step 1: local calculations.}

\noindent
For each end~$ \ee $ do the following, using the local expansions~$ \ex_\ee(x_1), \dots, \ex_\ee(x_n) $.

\begin{enumerate}
\item[(a)]
Compute all~$ \res_{\ee} (\o_j \int \o_i ) $
as in Definition~\ref{rigid-res-def}.

\item[(b)]
Compute~$ H(\S) $ (see Theorem~\ref{locallift0})
from~\eqref{globalG} by using the~$ \ex_\ee(x_i) $.

\item[(c)]
Using Newton iteration, compute the solution $ \soll $ in $ \Rtstar^n $
(see Notation~\ref{Ralphabetanotation}) of $ H(\S) = 0 $,
as in Corollary~\ref{locesti}, starting from~$ z = 0 $.

\item[(d)]
Compute all~$ \ex(\phidagA(x_i)) = x_i(t)^p + \sum_{j=2}^n \ex(\psi(P_{i,j})) \soll_j $
where~$\psi$ is as in~\ref{eq:phiact}.

\item[(e)]
Compute all $ \res_{\ee} ( \phidagA (\o_j) \int \o_i ) $
as in Definition~\ref{rigid-res-def}, using the results of~(d).
\end{enumerate}

\medskip

\noindent
{\bf Step 2: global calculations.}

\begin{enumerate}
\item[(a)]
Compute all $ \pair{ \o_i , \o_j } = \sum_\ee \res_{\ee} ( \o_j \int \o_i ) $
in~$ M_1 $ using the results of 1(a).

\item[(b)]
Compute all $ \pair{ \o_i , \phidagA \o_j } = \sum_\ee \res_{\ee} ( \phidagA\o_j \int \o_i ) $
in~$ M_2 $ using the results of 1(e).

\item[(c)]
Compute~$ M = M_1^{-1}M_2 $
and~$ M' = \s^{l-1}(M)\times \s^{l-2}(M)\times \cdots \times \s(M)\times M $.

\item[(d)]
Compute $ P_1(T) = \det(1 - T M') $.
\end{enumerate}

\medskip

\noindent
{\bf Output}
The numerator $ P_1(T) $ of the zeta function of $ C_k $.
\end{algorithm}

Here Steps 1(a) and 2(a) compute~$ M_1 $ because of~\ref{serre-rigid-formula}
and Lemma~\ref{compatible-pairings}, and its determinant is in~$ R^* $ by Remark~\ref{detrem}. 
Step~1(d) is justified by Theorem~\eqref{locallift0}, so that
Steps~1(e) and~2(b) compute~$ M_2 $.
Then~$ M $ is the matrix of~$ \myphicr {\pa} \otimes \s $ because of~\eqref{robswish1},
so that Steps~2(c) and~(d) are just parts~(2) and~(3) of Method~\ref{zeta-method}.

We treat how to write down a lift~$ \phidagA $ of the~$ \pa $-th power map
based on the system~$ G(\S) = 0 $ in Section~\ref{global-frobenius}.
But as evident from the algorithm, we only need to compute
each resulting local expansion. This we do in Section~\ref{local-frobenius},
based on each of the systems~$ H(\S) = 0 $,
after preparatory material on expansions in Section~\ref{expansions}.
We give examples of the global Frobenius and its local
expansions in Section~\ref{global-examples} and~\ref{local-examples}.
Those sections come with estimates that are needed for finite
precision calculations. Together with the results in Section~\ref{cup-product-estimates},
we can then combine everyting into a version of the algorithm
with finite precision and finite expansions in Section~\ref{algorithm-estimates}.

\section{The global lift of Frobenius} \label{global-frobenius}

Let $R$, $\pi$, $k$, $\s$ and $ \pa $ be as in Notation~\ref{basicnot}.
In this section we explain our strategy for computing a lift of
Frobenius on the dagger algebras~$ \Rxdag $ and $ \Adag $ over $R$ as introduced in Section~\ref{sec:intro},
which was inspired by the work of Arabia~\cite{Arabia01}.
Even though we ultimately apply this
only for curves, given the current limitation of our cup product
method for computing the resulting action on the cohomology, the method
of lifting applies (and we describe it here) in greater generality,
for any~$ A $ as in Assumption~\ref{Aassumption}
below.

By and large, this method was already developed in the master thesis of
F.-R.~Escriva. Later we discovered that another approach,
but with a less transparent presentation,
is contained in the unpublished PhD thesis of R.~Gerkmann~\cite{Ger03}.
In this thesis a lift of Frobenius to~$ \Adag $ instead 
of one to~$ \Rxdag $ that induces the one on~$ \Adag $ is discussed, and the computations to find it
are based on contractions, not on Newton iteration (where the precision doubles
each time). Of course, carrying out the Newton iteration in $ \Adag $
can reduce the complexity due to the relations involved,
but we only want to compute this lift explicitly at the ends of
a curve, where we work with (infinite) Laurent series in one
variable (see Sections~\ref{expansions} and~\ref{local-frobenius}).

Given~$\Adag= R \la x_1,\ldots,x_n \ra^\dagger /(f_{r+1},\ldots,f_n)$,
we want to lift the~$\pa$-power endomorphism $ \phibar $ of $ \Abar $ to a $\s$-semilinear endomorphism $ \phidagA $ of
$ \Adag $, induced by an $ \s $-semilinear endomorphism $ \phidag $ of
$ R \la x_1,\ldots,x_n \ra^\dagger $,
and obtain estimates on the coefficients of the images of the~$ x_i $
under the latter endomorphism.
But for the sake of exposition we first explain our approach
in the simplest possible case of one equation in two variables over $\Zp$
with $ \s $ the identity.
Also, we at first ignore the issue of overconvergence,
so look for maps $ \phihat $ and $ \phihatA $ on the complete
algebras.

Suppose then that $f(x,y)$ in $ \Zp[x,y]$ is such that the reduction
$\ol{f}(x,y)$ defines an non-singular curve in $\A_{\F_p}^2$. Our goal is to
lift the $ p $-th power Frobenius morphism to a morphism
of the affine curve defined by $f$, viewed as a rigid analytic variety.
With $f_x$ and $f_y$ the two partial derivatives of $f$,
the non-singularity of the curve defined by $\ol{f}$ means that
there exist polynomials $\ol{P}_1$, $\ol{P}_2$ and $\ol{\Delta}$ in
$ \F_p[x,y]$
such that
\begin{equation*}
 \ol{f}_x \ol{P}_1 + \ol{f}_y \ol{P}_2 = 1 + \ol{f}  \, \ol{\Delta}
\,.
\end{equation*}
We arbitrarily lift $\ol{P}_1$, $\ol{P}_2$ and $\ol{\Delta}$ to polynomials
$P_1$, $P_2$ and $\Delta$ in $\Zp[x,y]$, so that the congruence
\begin{equation} \label{congeq1}
  f_x P_1 + f_y P_2  \equiv 1 + f \Delta 
\end{equation}
holds modulo $ p $. We now seek our lift of Frobenius on $ \Zp \la x , y \ra  $ of the form
\begin{equation*}
  \phihat(x,y) = (x^p + P_1(x^p,y^p) s , y^p + P_2(x^p,y^p) s )
\end{equation*}
where $ s $ in $ p\Zp \la x,y \ra$ is chosen to solve the equation
\begin{equation*}
  f( (x^p + P_1(x^p,y^p) S , y^p + P_2(x^p,y^p) S ) - f(x,y)^p - f(x,y)^p \Delta(x^p,y^p) S = 0
\end{equation*}
in the variable $S$.
Clearly, if $s$  is a solution, then $ \phihat(f) $ is
divisible by $f$ (and even by $f^p$), so that $ \phihat $ induces a homomorphism
$ \phihatA $ of the algebra $ \Ahat = \Zp\la x, y \ra / (f) $
to itself. Furthermore, since by assumption the coefficients of
$s$ are divisible by $p$, we see that $\phihat(x,y) \equiv (x^p,y^p) $
modulo $ p$, so $ \phihat $ and $ \phihatA $ are indeed lifts
of the respective $ p $-th power Frobenius.

The above equation in $S$ has coefficients in 
$ \Zp[x,y] \subseteq \Zp\la x,y \ra$, and has~$0$ as a solution modulo~$p$. Its derivative with respect
to $S$ at $S=0$ is
\begin{equation*}
  f_x(x^p,y^p) P_1(x^p,y^p) + f_y(x^p,y^p) P_2(x^p,y^p)  - f(x^p,y^p)
  \Delta(x^p,y^p)
\,,
\end{equation*}
which, in light of~\eqref{congeq1}, reduces to~1 modulo~$p$. The
existence and uniqueness of the solution in $ p \Zp\la x, y \ra $
are thus guaranteed by Hensel's lemma, and it
can be computed efficiently using Newton iteration, starting from the
approximate solution~$0$.

We shall consider the following simple (and for point counting
rather uninteresting) example as a running example at various
points in this paper.

\begin{example} \label{runningexample1}
Consider $ f(x,y) = x^2 - y^2 -1 $ in $ \Zp[x,y] $ with $ p \ne 2 $.
Then we have~$ \ol{f}_x \cdot \ol{2}^{-1} x + \ol{f}_y \cdot \ol{2}^{-1} y = \ol{1} + \ol{1} \cdot \ol{f} $
in $ \F_p[x,y] $. Now we write down
\begin{equation} \label{runexeq}
\begin{aligned}
   G(S)
& =
   f( x^p + 2^{-1} x^p S, y^p + 2^{-1} y^p S) - f(x,y)^p - f(x,y)^p S 
\\
& =  
    4^{-1} (x^{2p} - y^{2p}) S^2 + (x^{2p} - y^{2p} -  f(x,y)^p ) S +  f(x^p,y^p) - f(x,y)^p 
\end{aligned}
\end{equation}
in $ \Zp[x,y][S] $. We solve this for the unique solution $ S = s $
in $ p \Zp\la x, y \ra $. Then $ \phihat(x,y) = (x^p+2^{-1}x^p s, y^p+2^{-1}y^p s) $
defines an endomorphism of $ \Zp\la x,y \ra $ that descends
to an endomorphism $ \phihatA $ of $ \Zp\la x,y \ra / (f) $
and reduces to the Frobenius morphism $ \phibar(x,y) = (x^p,y^p) $ modulo $ p $.
\end{example}

We now describe the general case, while still ignoring
overconvergence for the moment. 
Recall the shorthand $ R[\x] $ of Notation~\ref{global-notation2}.
We shall write $\M^{a,b} $ and $\M^{a}$ for matrices of sizes~$a\times b$
and $a\times a$ respectively, and for given $ f_{r+1}, \dots, f_n $ in~$ R[\x] $
we let~$\Jac_f $ in~$ \M^{n-r,n}(R[\x])$ be the resulting Jacobian matrix.
We want to lift the $\pa$-power endomorphism $ \phibar $ of $ \Abar = k[\x] / (\ol{f_{r+1}}, \dots, \ol{f_n} ) $
to a $\s$-semilinear endomorphism $ \phihatA $ of~$ \Rx/(f_{r+1},\ldots,f_n) $, induced by a $ \s $-semilinear endomorphism
$ \phihat $ of $ \Rx $, if $ A = R[\x]/(f_{r+1},\ldots,f_n) $ satisfies the following assumption.
In particular, by Remark~\ref{smoothlcirem} below,
this will apply (with $ r = 1 $) to a suitable Zariski open part~$ \Spec(A) $ of~$ C/R $.
See Section~\ref{loc-plane-curves} for a practical approach using localisation in the plane.

\begin{assumption} \label{Aassumption}
In $ R[\x] $, for some $ r = 0, \dots, n-1 $
we are given $ f_{r+1}, \dots, f_n $,
such that 
\begin{equation*}
 A = R[\x] / (f_{r+1}, \dots, f_n) 
\,,
\end{equation*}
and the ideal of~$ \Abar $ generated by the determinants
of the $(n-r) \times (n-r)$ minors of~$ \Jac_{\ol f} $ is~$ \Abar $.
\end{assumption}

\begin{remark} \label{Abarnonzero}
Assumption~\ref{Aassumption} does not imply that $ \Abar \ne 0 $ 
(it fails, for example, for~$ A = \Zp[x,y] / (px-1) \simeq \Qp[y] $).
Also, it does not imply that~$ A \otimes_R K $ is smooth
over~$ K $ (take, e.g., $ A = \Zp[x,y] / ( x (px-1)^2 ) $).
We shall make such additional assumptions in later sections,
where appropriate.
\end{remark}

Assumption~\ref{Aassumption} implies that
\begin{equation} \label{congeq2}
    \Jac_f \times P \equiv \Id_{n-r}+\sum_{j=r+1}^n f_j \Delta_j \textup{ modulo }\pi
\end{equation}
for some~$ P $ in $ \M^{n,n-r}(R[\x]) $ and $ \Delta_{r+1},\ldots,\Delta_n $ in $ \M^{n-r}(R[\x]) $,
which can be obtained as follows.
For each $ I \subseteq \{1,\dots,n\} $ with $ n-r $ elements,
let~$ d_I $ be the determinant of the corresponding $ (n-r) \times (n-r) $
of $ \Jac_f $. Putting the rows of the adjugate matrix of this
minor into the rows indexed by $ I $ of a matrix in $ \M^{n,n-r}(R[x]) $
gives a matrix $ P_I $ with~$ \Jac_f \times P_I = d_I \Id_{n-r} $.
By Assumption~\ref{Aassumption} there
are $ p_I $ and $ c_{r+1}, \dots, c_n $ in $ R[\x] $ with
$ \sum_I p_I d_I = 1 + c_{r+1} f_{r+1} + \dots + c_n f_n $ modulo~$ \pi $,
so that~\eqref{congeq2} holds with~$ P = \sum_I p_I P_I $
and $ \Delta_j = c_j \Id_{n-r} $.
However, the more general shape in~\eqref{congeq2} allows for
other choices of $ P $ and the $ \Delta_j $, which may result
in better convergence of the resulting solution of~\eqref{globalG}
or its local equivalents at the ends in Theorem~\ref{locallift0} below
(see Propositions~\ref{itprop} and~\ref{hensel2} together with
their corollaries and remarks for estimates on
the convergence).

Let $\psi$ be the $\s$-semilinear endomorphism of $ \Rx $ that sends each
$ x_i $ to $ x_i^{\pa} $, so that it maps
an element $g(\x)$ to $g^\s(\psi(\x))$, where the superscript
$ \s $ means we apply $ \s $ to the coefficients.
We shall look for a $ \s $-semilinear endomorphism $\phihat$ of $ \Rx $
that is defined by its action on the column vector of variables $\x$ as
\begin{equation} \label{eq:phiact}
  \phihat(\x) = \psi(\x)+\psi(P)\sol
\,,
\end{equation}
where $\sol$ is a suitable column vector in $\pi \Rx^{n-r}$.
In order to describe~$ \sol $, we let~$ \S $ be the column vector $ (S_{r+1},\dots,S_n) $
for variables~$ S_{r+1}, \dots, S_n $, and we let
$ G(\S) $ be the column vector~$ (G_{r+1}(\S),\dots, G_n(\S)) $
with entries in $ R[\x] [\S] $ given by
\begin{equation} \label{globalG}
  G(\S) = f^\s (\psi(\x)+\psi(P)\S) - f^{\pa} -
 \sum_{j=r+1}^n f_j^{\pa} \psi(\Delta_j) \S  
\,,
\end{equation}
where $ f^\s = (f_{r+1}^\s,\dots,f_n^\s) $
and $ f^{\pa} = (f_{r+1}^{\pa},\dots,f_n^{\pa}) $ as column vectors.
We then want $ \sol $ to satisfy $G(\sol)=0$.
Because $ \ol{\s} $ is raising to the $ \pa $-th power on $ k $,
one finds in a way similar to the case of one equation in two variables discussed
before that:
\begin{itemize}
\item
$G(0) \equiv 0 $ modulo $ \pi $;

\item
$ \Jac_G(0)  = \Jac_{f^\s}(\psi(\x)) \times \psi(P) - \sum_{j=r+1}^n f_j^{\pa} \psi(\Delta_j) $,
and this is equivalent to $ \Id_{n-r} $ modulo $ \pi $.
\end{itemize}
Therefore~\eqref{globalG} has a unique solution $ \sol $ in $ \pi \Rx^{n-r} $ by Hensel's lemma,  and it
can be approximated effectively using Newton iteration:~starting
with~$ z_0 = (0,\dots,0) $ as solution of $ G(\S) \equiv 0 $ modulo $ \pi $,
one puts $ z_{i+1} = z_i - \Jac_G(z_i)^{-1} G(z_i) $ for $ i \ge 0 $.

It is then clear from the construction that the resulting endomorphism $ \phihat $
of $ \Rx $ is $ \s $-semilinear,
maps the ideal $ (f_{r+1},\dots,f_n) $
into~$ (f_{r+1}^{\pa}, \dots,f_n^{\pa}) \subseteq (f_{r+1},\dots,f_n) $,
and
reduces to the~$ \pa $-power map~$ \phibar : k[\x] \to k[\x] $.
So we proved the following.

\begin{theorem}\label{globallift0}
  Let $f_{r+1}, \dots, f_n $ in $ R[x_1,\ldots, x_n]$
  with $ 0 \le r \le n-1 $ be given, and suppose
  that $A=R[\x]/(f_{r+1},\ldots,f_n)$ satisfies
  Assumption~\ref{Aassumption}.
  Fix $ P $ as well as $ \Delta_{r+1}, \dots, \Delta_n $
  as in~\eqref{congeq2}, and let $G(\S)$ be given by~\eqref{globalG}.
  Then the following hold.
\begin{enumerate}
\item
The $ \s $-semilinear endomorphism $ \phihat $ of $ \Rx $ given by~\eqref{eq:phiact} with~$ \sol $ the
unique solution in~$ \pi \Rx^{n-1} $ of~$ G(\sol) = 0 $
lifts the $ \pa $-power map on $ k[\x] $.

\item
It maps the ideal $ (f_{r+1},\dots,f_n) $ into
$ (f_{r+1}^{\pa}, \dots,f_n^{\pa}) \subseteq (f_{r+1},\dots,f_n) $,
hence induces a $ \s $-semilinear endomorphism $ \phihatA $ on
$ R\la \x \ra / (f_{r+1},\dots,f_n) $ that lifts the $ \pa $-power endomorphism on $ k[\x]/ (\ol{f_{r+1}}, \dots, \ol{f_n} ) $.
\end{enumerate}
\end{theorem}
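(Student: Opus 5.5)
The plan is to make the argument sketched just before the statement fully precise, in three steps: check the two Hensel hypotheses for $G(\S)$, apply a multivariable Hensel lemma in the $\pi$-adically complete ring $\Rx$, and then read off the properties of $\phihat$ from the equation $G(\sol)=0$.

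\emph{Step 1: $G(0)\equiv 0$ modulo $\pi$.} By definition, $G(0)=f^\s(\psi(\x))-f^{\pa}$. Write $f_j=\sum a_I\x^I$. Then $f_j^\s(\psi(\x))=\sum \s(a_I)\x^{\pa I}$. Since $\ol{\s}$ is the $\pa$-th power map on $k$, this is congruent to $\sum a_I^{\pa}\x^{\pa I}$ modulo $\pi$. Because $\pa$ is a power of $p$ and we are in characteristic $p$ modulo $\pi$, the latter is congruent to $f_j^{\pa}$.

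\emph{Step 1': the Jacobian.} Next I compute $\Jac_G(0)$ by the chain rule. The only terms of $G$ that are linear in $\S$ come from the first-order Taylor expansion of $f^\s$ at $\psi(\x)$ and from the $\Delta$-term, so
\[
\Jac_G(0)=\Jac_{f^\s}(\psi(\x))\,\psi(P)-\sum_j f_j^{\pa}\psi(\Delta_j).
\]
I then reduce modulo $\pi$ in the same way as in Step 1. Applying $\psi$ to any polynomial $h$ and reducing gives $\ol{h}^{\pa}$, since $\psi(h)=h^\s(\x^{\pa})\equiv h^{\pa}$. Also $\Jac_{f^\s}(\psi(\x))=\psi(\Jac_f)$, and $f_j^{\pa}\equiv\psi(f_j)$. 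Hence
\[
\Jac_G(0)\equiv \psi\Bigl(\Jac_f P-\sum_j f_j\Delta_j\Bigr)\equiv \psi(\Id_{n-r})=\Id_{n-r}
\]
modulo $\pi$, using \eqref{congeq2} and the fact that $\psi$ is a ring endomorphism preserving $\pi R$. (I expect a sign convention for $\Delta_j$ to need matching with \eqref{congeq2}; only the reduction to $\Id$ matters.)

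\emph{Step 2: Hensel's lemma.} The ring $\Rx$ is $\pi$-adically complete and separated. $G$ is a polynomial system in $\S$ with coefficients in $\Rx$, and $\Jac_G(0)$ is invertible, because it is $\Id$ plus a matrix in $\pi\M^{n-r}(\Rx)$ and a Neumann series converges. The multivariable Hensel lemma for complete rings then gives a unique $\sol\in\pi\Rx^{n-r}$ with $G(\sol)=0$. Uniqueness comes from the contraction estimate $G(a)-G(b)=\Jac_G(0)(a-b)+O(\pi\cdot(a-b))$ for $a,b\in\pi\Rx^{n-r}$; the Newton iteration gives convergence.

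The main subtlety lies here. It is the only place where completeness is needed, and I must check that the multivariable Hensel argument needs only invertibility of $\Jac_G(0)$ modulo $\pi$, not modulo a higher power. That holds because the starting point satisfies $G(0)\in\pi\Rx^{n-r}$ and $\Jac_G(0)$ is a unit. (Note that the statement writes $\pi\Rx^{n-1}$; this should be read as $n-r$.)

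\emph{Step 3: consequences.} A $\s$-semilinear endomorphism of $\Rx$ is determined by continuity by the images of the $x_i$. So $\phihat$, defined by \eqref{eq:phiact}, exists, since these images lie in $\Rx$; it is continuous because $\s$ preserves valuations. Since $\sol\equiv0$ modulo $\pi$, we get $\phihat(x_i)\equiv x_i^{\pa}$, and for a general $g$ we get $\phihat(g)\equiv g^\s(\x^{\pa})\equiv g^{\pa}$ modulo $\pi$. This proves (1).

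For (2), $\phihat(f_j)=f_j^\s(\phihat(\x))=f_j^\s(\psi(\x)+\psi(P)\sol)$. The relation $G(\sol)=0$ rewrites this as
\[
f_j^{\pa}+\Bigl(\sum_i f_i^{\pa}\psi(\Delta_i)\sol\Bigr)_j\in(f_{r+1}^{\pa},\dots,f_n^{\pa}).
\]
By semilinearity, $\phihat$ therefore maps the ideal $(f_{r+1},\dots,f_n)$ into $(f_{r+1}^{\pa},\dots,f_n^{\pa})$. Hence it descends to the quotient, and the lifting property passes to the quotient from (1).
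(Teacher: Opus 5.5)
Your proposal is correct and follows the same route as the paper, which gives this argument in the discussion just before the statement. Both verify $G(0)\equiv 0$ and $\Jac_G(0)\equiv \Id_{n-r}$ modulo $\pi$, apply Hensel's lemma in the $\pi$-adically complete ring $\Rx$, and read the lifting property and the inclusion $\phihat(f_j)\in(f_{r+1}^{\pa},\dots,f_n^{\pa})$ directly off $G(\sol)=0$. Your version just supplies more detail (the chain-rule computation, the contraction argument for uniqueness, and well-definedness of $\phihat$ on $\Rx$); the signs in \eqref{globalG} and \eqref{congeq2} already match as written, and you are right that $\pi\Rx^{n-1}$ should read $\pi\Rx^{n-r}$.
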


\begin{remark}
The fact that the solution $ \sol $ is unique once $ P $ and $ \Delta_{r+1} , \dots , \Delta_n $ have been fixed
is crucial to our approach, as we only compute the local expansion
of~$ \sol $ at the ends of $ C $ (see Sections~\ref{expansions}
and~\ref{local-frobenius})
and not~$ \sol $ itself.
This has the advantage of working with Laurent expansions in
one variable instead of power series in several.
\end{remark}

\begin{remark} \label{liftcurve}
In Theorem~\ref{globallift0} one can sometimes
prescribe that $ \phihat(x_i) = x_i^{\pa} $ for several~$ i $. Suppose,
for notational simplicity, that in
the discussion on how to obtain~\eqref{congeq2} one only needs $ I \subseteq \{m+1, \dots, n \} $
for some positive integer $ m \le r - 1 $, i.e., $ \Abar $ is
generated by the classes of the determinants of the minors involving only the
last $ n - m $ columns of $ \Jac_f $.
Then one can can arrange for~$ P $ to have its first~$ m $ rows
identically~$ 0 $, so that $ \phihat(x_i) = x_i^{\pa} $
for $ i = 1,\dots, m $ by~\eqref{eq:phiact}.
\end{remark}

In order to use the lift $ \phihat $ of the $ \pa $-th power Frobenius effectively, we need to know
that it preserves overconvergence,
i.e., that it induces an endomorphism~$ \phidag $ of $ \Rxdag \subseteq \Rx $ and, hence, also
an endomorphism~$ \phidagA $ of $ \Adag \subseteq \Ahat $.
Moreover, we need to have explicit estimates for the rate of overconvergence,
as well as for the speed of convergence for the Newton iteration in Theorem~\ref{globallift0}.

We prove such statements in greater generality in Proposition~\ref{itprop} and Corollary~\ref{estimate-cor} below.
We shall explain in Remark~\ref{Giscovered} how these results apply to~\eqref{globalG}.
In particular, we can solve~\eqref{globalG} using Newton iteration,
starting with~$ z_0 $ having all its entries equal to~0, and
obtain explicit estimates on the coefficients of the resulting
solution.

In order to state the results we introduce some notation.

\begin{definition} \label{Wab-def}
For any $ \a $ in $ (\realpos)^n $
and arbitrary $ \b $ in $ \R^n $, we let
\begin{equation*}
 \W{\a,\b} =  \bigl \{ \textstyle{\sum}_{D \ge 0} \, a_D \x^D \text{ in } \Rx \text{ with } v(a_D) \ge \a^{-1} \cdot (D - \b) \bigr \}
 \,,
\end{equation*}
and for any $ c \ge 0 $ we let $ \VV{c} = \{ \sum_{D \ge 0} \, a_D \x^D \text{ in } \Rx \text{ with } v(a_D) \ge c \} $.
\end{definition}

Even though~$ \b $ is not determined uniquely by the number~$ \a^{-1} \cdot \b $
if $ n \ge 2 $, we prefer to write this number
in this way as it makes clear how it changes when scaling~$ \a $. Of course, one could take~$ \b = b (1,\dots,1) $ for a uniquely
determined real number~$ b $. Also, the reason to use $ \a^{-1} $
in the definition of but not the notation for~$ \W{\a, \b} $
is that this simplifies notation later on.

Each $ \W{\a, \b} \cap \VV{c} $ is $ p $-adically closed because convergence in~$ \Rx $ is in the coefficient of each monomial $ \x^D $.
We have~$ \W{\a, \b} \subseteq \W{\a, \tilde\b} $ if $ \a^{-1} \cdot \b \le \a^{-1} \cdot \tilde \b $,
as well as~$ \W{\a, \b} \subseteq \W{\tilde\a, \b} $ if $ \tilde\a = (1+\l) \a $ with $ \l \ge 0 $
because all elements considered are in~$ \Rx $.

The product in $ \Rx $ maps $ \W{\a,\b_1} \times \W{\a,\b_2} $ to $ \W{\a,\b_1 + \b_2} $,
and the units of the ring~$ \W{\a,0} $ consists of its elements with constant term in~$ R^* $.

For our estimates, we need some statements about inclusions for~$ \W{\a,\b} \cap \VV{c} $ with $ c > 0 $.
For this, we take~$ \sum_{D \ge 0} a_D \x^D $ in this intersection,
and let $ \tilde\b $ be arbitrary.
If $ \a^{-1} \cdot ( \b - \tilde\b) \ge 0 $, then
$ c^{-1} \a^{-1} \cdot (\b - \tilde\b) v(a_D) \ge \a^{-1} \cdot (\b - \tilde\b) $,
so that $ (1+\l) v(a_D) \ge \a^{-1} \cdot (D - \tilde\b) $ with~$ \l = c^{-1} \a^{-1} \cdot (\b - \tilde\b) $.
Therefore we have~$ \W{\a, \b} \cap \VV{c} \subseteq \W{\tilde \a, \tilde \b} $
with $ \tilde\a  = (1+\l) \a $.
(Note that one can either compute $ \l $ from $ \tilde\b $, or
find a~$ \tilde\b $ that matches a given $ \l \ge 0 $.)
If~$ \a^{-1} \cdot (\b - \tilde\b) \le 0 $ then one has $ \W{\a,\b} \subseteq \W{\a,\tilde\b} $
as discussed above. Hence, for any~$ \tilde\b $ we have
\begin{equation} \label{alphabetac}
 \W{\a, \b} \cap \VV{c} \subseteq \W{\tilde\a, \tilde\b}
\end{equation}
with $ \tilde \a - \a = \l \a $ when~$ \l \ge c^{-1} \maxp\{ \a^{-1} \cdot (\b - \tilde\b) \} $,
i.e., when $ \l \ge 0 $ and $ \a^{-1} \cdot \tilde\b \ge \a^{-1} \cdot \b - \l c $.
In particular, we have~$ \W{\a, \b} \cap \VV{c} \subseteq \W{\tilde \a, 0 } $
for $ \tilde\a = \a + \l \a $ with $ \l = \maxp\{  c^{-1} \a^{-1} \cdot \b \} $.
We observe that in all cases~$ \l \a $ is invariant under scaling
of~$ \a $. (It is in fact the identity~$ \tilde\a = \a +  \l \a  $
and this behaviour of $ \l \a $ that prompted us to
use $ \a^{-1} $ instead of $ \a $ in the definition of~$ \W{\a, \b} $.)

The following estimate for Newton iteration is not beautiful,
but can be sharp (see, e.g., Example~\ref{runningexample2}).
In fact, the estimate is essentially forced if one formulates it
in terms of the~$ \W{\a,\b} $, so one can hope it is sharp `in general'.
Corollary~\ref{estimate-cor} provides a simpler (but worse) estimate.

\begin{prop} \label{itprop}
Let $ G(\S) $ be a column vector~$ (G_{r+1}(\S),\dots, G_n(\S)) $
with entries in $ R[\x] [\S] $.
Suppose that~$ \a $ in $ (\realpos)^n $,
$ b > 0 $, $ c > 0 $, $ \b $, $ \cc $ in $ \R^n $, and $ z $ in~$ \Rx^{n-r} $, are
such that
\begin{enumerate}
\item
  $ z $ has entries in $ \W{\a,\b} \cap  \VV{b} $,
  
\item
  each $ G_j(z) $ is in  $ \W{\a, \cc} \cap \VV{c} $,

\item
  $ \Jac_G(z) $ has entries in $ \W{\a,0} $ and determinant in $ \W{\a, 0}^* $,

\item
 $ \a^{-1} \cdot \cc \le \a^{-1} \cdot \b $.
\end{enumerate}
Let $ \e = - \Jac_G(z)^{-1} G(z) $ and $ z' = z + \e $. 
Then~(1) through~(4)  hold for $ z' $ instead
of~$ z $ if we replace~$ b $ with $ b' = \min\{b,c\} $,
$ c $ with $ c' = 2 c $, $ \a $ with $ \a' = \a + \l \a $,
$ \b $ with~$ \b' $, and~$ \cc $ with $ \cc' $, where
$ \l $, $ \b' $ and $ \cc' $ are determined as follows.

Write each~$ G_j(\S) = \sum_{l=0}^N \sum_{|J| = l} G_{j,J}(\x) \S^J $
for some $ N \ge 2 $, and assume that each~$ G_{r+i,J}(\x) $ with~$ |J| \ge 2 $ is in $ \W{\a, \d_{r+i,J}} $
for some $ \d_{r+i, J} $ in $ \R^n $.
For~$ j = 1,\dots,n-r $, let~$ J^{(j)} $ be obtained from~$ J $
by subtracting~$ 1 $ in its $ j $-th position.
Then
\begin{equation} \label{lambdadef}
 \l = \max_{i=1, \dots, n-r \atop j=1, \dots, n-r} \maxp_{ |J| = 2,\dots,N \atop 0 < K \le J^{(j)} }
              \left\{ \a^{-1} \cdot \frac {\d_{r+i,J} + (|J|-|K|-1) \b + |K| \cc}{(|J|-|K|-1)b + |K| c }  \right\}
 \,,
\end{equation}
we choose~$ \b' $ such that~$ \a^{-1} \cdot \b' = \a^{-1} \cdot \b - \l b' $, and
$ \cc' $ such that~$ \a^{-1} \cdot \cc' $ equals
\begin{equation*}
 \max_{ { { i=1, \dots, n-r } \atop { |J| = 2,\dots,N } } \atop { { 0 \le K \le J } \atop { |K|\ge2 } } }
\bigl\{
 \a^{-1} \cdot \bigl( \d_{r+i,J} + (|J|-|K|) \b + |K| \cc \bigr) - \l \bigl( (|J|-|K|)b + |K| c \bigr)
\bigr\}
\,.
\end{equation*}
Moreover, if $ b' = b $ then further Newton iteration does not change~$ \a' $, $ \b' $ and $ b' $.
\end{prop}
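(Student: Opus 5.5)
The plan is to check conditions~(1) through~(4) for~$ z' $ one at a time. Each time, $ z' $, $ G(z') $ and $ \Jac_G(z') $ are expanded around~$ z $, and the only tools are two facts from the discussion before the proposition: products map $ \W{\a,\b_1}\times\W{\a,\b_2} $ into~$ \W{\a,\b_1+\b_2} $, and the inclusion~\eqref{alphabetac} trades a positive lower bound on valuations for a rescaling~$ \a\mapsto(1+\l)\a $.

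\emph{Step 1: the correction~$ \e $ and condition~(1).} By~(3), $ \Jac_G(z)^{-1} $ equals the adjugate divided by a unit of~$ \W{\a,0} $, so its entries lie in~$ \W{\a,0} $. Hence by~(2) the entries of~$ \e $ lie in $ \W{\a,\cc}\cap\VV{c} $, and by~(4) also in~$ \W{\a,\b} $. Therefore the entries of $ z' $ lie in $ \W{\a,\b}\cap\VV{b'} $ with $ b'=\min\{b,c\} $. Applying~\eqref{alphabetac} with the chosen~$ \b' $, which satisfies $ \a^{-1}\cdot\b'=\a^{-1}\cdot\b-\l b' $, gives entries in $ \W{\a',\b'}\cap\VV{b'} $.

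\emph{Step 2: Taylor expansion, giving conditions~(2) and~(3).} Write
\begin{equation*}
G_j(z+\e)=G_j(z)+\Jac_G(z)\e+\sum_{|K|\ge2}c_{j,K}(z)\e^K,
\end{equation*}
where $ c_{j,K}(z) $ is an integer combination of the terms $ G_{j,J}z^{J-K} $ with $ J\ge K $.
\begin{itemize}
\item The first two terms cancel by the definition of~$ \e $.
\item Each remaining term lies in $ \W{\a,\d_{j,J}+(|J|-|K|)\b+|K|\cc}\cap\VV{(|J|-|K|)b+|K|c} $.
\item The valuation bound is at least $ 2c $, since $ |K|\ge2 $.
\item Applying~\eqref{alphabetac} with this~$ c $-value, the term lies in $ \W{\a',\cc'} $ for the stated~$ \cc' $.
\end{itemize}
This proves~(2). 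For~(3), the $ (i,j) $ entry of $ \Jac_G(z')-\Jac_G(z) $ is a sum of terms $ G_{r+i,J}z^{J^{(j)}-K}\e^K $ with $ 0<K\le J^{(j)} $. Each such term lies in $ \W{\a,\d+(|J|-|K|-1)\b+|K|\cc}\cap\VV{(|J|-|K|-1)b+|K|c} $. By the very definition~\eqref{lambdadef} of~$ \l $ and~\eqref{alphabetac}, it lies in~$ \W{\a',0} $. Moreover it has positive valuation, so the constant term of the determinant changes only by elements of positive valuation and therefore remains a unit.

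\emph{Step 3: condition~(4), which I expect to be the main obstacle.} Here one must compare~$ \cc' $ with~$ \b' $ using only the definition of~$ \l $. Scaling by~$ 1+\l $ is harmless, so it suffices to bound each term in the maximum defining $ \a^{-1}\cdot\cc' $ by $ \a^{-1}\cdot\b-\l b' $. Given $ K $ with $ |K|\ge2 $, I would choose $ j $ with $ K_j\ge1 $ and put $ \tilde K=K $ minus one in position~$ j $. Then $ 0<\tilde K\le J^{(j)} $, so~\eqref{lambdadef} gives
\begin{equation*}
\a^{-1}\cdot(\d+(|J|-|K|)\b+(|K|-1)\cc)\le\l\bigl((|J|-|K|)b+(|K|-1)c\bigr).
\end{equation*}
Adding $ \a^{-1}\cdot\cc-\l c $ to both sides bounds the term by $ \a^{-1}\cdot\cc-\l c $. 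By~(4) this is at most $ \a^{-1}\cdot\b-\l c $, which in turn is at most $ \a^{-1}\cdot\b-\l b' $ because $ b'\le c $.

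\emph{Step 4: the final claim.} If $ b'=b $, then $ b\le c<2c $, so all subsequent $ b $-values stay equal to~$ b $. I would then recompute~\eqref{lambdadef} with the new data $ (\a',\b',\cc',c') $ and show it is~$ \le0 $, so that~$ \a' $ and~$ \b' $ do not move. The argument reuses the shift trick of Step~3: after rescaling, the numerator drops by~$ \l $ times the denominator, using $ c'\ge c $ and $ \cc' $ bounded as in Step~3.
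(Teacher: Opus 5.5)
Your proposal is correct and is essentially the paper's proof: the same expansions of $G_j(z')$ and $\Jac_G(z')$ around $z$, the same applications of~\eqref{alphabetac} against the definitions of $\l$ and $\cc'$, and the same index shift (your $K\mapsto K-e_j$ is the paper's $K\mapsto K+e_j$ read backwards) giving $\a^{-1}\cdot\cc'\le\a^{-1}\cdot\cc-\l c\le\a^{-1}\cdot\b'$. In Step~4, that bound on $\cc'$ together with $b'=b$ is what makes the new numerators non-positive, and the argument is repeated at each later step because $b\le c<2c$; the inequality $c'\ge c$ plays no role, since only the sign of the numerators matters.
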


We postpone the proof of this proposition as it is somewhat notational, and first derive a more explicit estimate and make some remarks.

\begin{corollary} \label{estimate-cor}
If there exists a $ z $ as in Proposition~\ref{itprop},
then there is a unique solution $ \sol $ of $ G(\S) = 0 $ with
entries in $ \pi \Rxdag $.
It can be computed using Newton iteration with~$ z_0 = z $, and
$ z_{m+1} = z_m + \e_m $ for $ m \ge 0 $ with $ \e_m = - \Jac_G(z_m)^{-1} G(z_m) $.

Then $ z_2 = z'' $ comes with~$ \a'' $, $ \b'' $ and $ b'' $
that no longer change under further iteration, and
all $ z_m $ with $ m \ge 2 $, as well as~$ \sol $, have entries in~$ \W{\a'', \b''} $.

As a simple estimate, if we let
\begin{equation*}
 M  =  \maxp_{ i = 1, \dots, n-r \atop  |J| = 2,\dots,N } \left\{ \a^{-1} \cdot ( \d_{r+i,J} + (|J|-1) \b )  \right\}
\,,
\end{equation*}
then all $ z_m $, as well as~$ \sol $, have entries in~$ \W{ (1+\mu) \a, \b } $
for $ \mu = 3 c^{-1} M / 2 $.
\end{corollary}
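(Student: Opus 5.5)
The plan is to apply Proposition~\ref{itprop} repeatedly along the Newton sequence and then pass to the limit, using that each $ \W{\a,\b} \cap \VV{c} $ is $ p $-adically closed. Put $ z_0 = z $ with data $ (\a_0,\b_0,\cc_0,b_0,c_0) = (\a,\b,\cc,b,c) $. By induction, Proposition~\ref{itprop} shows that every $ z_m $ satisfies~(1)--(4) with data $ (\a_m,\b_m,\cc_m,b_m,c_m) $, where $ c_m = 2^m c $ and $ b_{m+1} = \min\{b_m,c_m\} $.

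In particular $ b_1 = \min\{b,c\} $ and $ b_2 = \min\{b,c,2c\} = b_1 $. So from the second step on we have $ b' = b $, and the ``moreover'' clause of Proposition~\ref{itprop} gives that $ \a_m, \b_m, b_m $ equal $ \a'',\b'',b'' $ for all $ m \ge 2 $. Since $ \a_m $ only grows and $ \a^{-1}\cdot\b $ only decreases along the way, the inclusions discussed after Definition~\ref{Wab-def} put $ z_0 $ and $ z_1 $ in $ \W{\a'',\b''} $ as well.

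For convergence, note that $ \e_m = -\Jac_G(z_m)^{-1}G(z_m) $. The matrix $ \Jac_G(z_m)^{-1} $ has entries in $ \W{\a_m,0} \subseteq \Rx $, because its determinant is a unit there, and $ G(z_m) $ has entries in $ \VV{2^m c} $. Hence $ \e_m $ lies in $ \VV{2^m c} $, so $ z_m $ converges coefficientwise to some $ \sol $. By closedness, $ \sol $ lies in $ \W{\a'',\b''} \cap \VV{b''} $ with $ b'' > 0 $. Since $ \a'' $ has positive entries, this gives $ \sol \in \Rxdag $. Because valuations on $ R $ lie in $ e^{-1}\Z $, the bound $ v \ge b'' > 0 $ means the entries lie in $ \pi\Rxdag $. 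Continuity gives $ G(\sol) = 0 $, as $ G(z_m) \in \VV{2^m c} $.

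For uniqueness, I would take two solutions $ \sol, \sol' $ in $ \pi\Rx^{n-r} $ and expand
\[
0 = G(\sol') - G(\sol) = \Jac_G(\sol)(\sol'-\sol) + Q,
\]
where $ Q $ consists of terms at least quadratic in $ \sol'-\sol $. The matrix $ \Jac_G(\sol) $ is invertible over $ \Rx $: it is the limit of the $ \Jac_G(z_m) $, whose determinants are units with constant term in $ R^* $, and this property is closed. Let $ \nu $ be the minimal valuation of a coefficient of $ \sol'-\sol $. Then $ Q $ has valuation at least $ \nu + 1/e $, so applying $ \Jac_G(\sol)^{-1} $ gives a contradiction unless $ \sol' = \sol $. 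This step is the main technical point I would check carefully, making sure the Jacobian condition really transfers to the limit.

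For the simple estimate, I would bound $ \l $ in~\eqref{lambdadef} at the first step. Using~(4), i.e.\ $ \a^{-1}\cdot\cc \le \a^{-1}\cdot\b $, each numerator is at most $ \a^{-1}\cdot(\d_{r+i,J} + (|J|-1)\b) $, hence at most $ M $. Since $ K > 0 $, each denominator is at least $ |K|c \ge c $. So $ \l \le M/c $.

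At the second step the same argument applies with $ c' = 2c $. Because $ \a'^{-1}\cdot x = (1+\l)^{-1}\a^{-1}\cdot x $ and $ \a^{-1}\cdot\b' \le \a^{-1}\cdot\b $, the new quantity satisfies $ M' \le M/(1+\l) $. Hence $ \l'\a' = \l'(1+\l)\a \le (M/2c)\,\a $. Adding up,
\[
\a'' = \a + \l\a + \l'\a' \le \bigl(1 + M/c + M/(2c)\bigr)\a = (1+\mu)\a .
\]
Finally, $ \a^{-1}\cdot\b'' \le \a^{-1}\cdot\b $, and the monotonicity of $ \W{\cdot,\cdot} $ in both arguments gives $ \W{\a'',\b''} \subseteq \W{(1+\mu)\a,\b} $. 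This yields the stated bound for all $ z_m $ and for $ \sol $.
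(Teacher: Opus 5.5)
Your proposal is correct and follows the paper's proof. You iterate Proposition~\ref{itprop} twice to get $b''=b'$, so that $\a''$, $\b''$ and $b''$ stabilise; you bound $\l\le M/c$ and $\l'\a'\le (M/2c)\,\a$; and you conclude via $\a^{-1}\cdot\b''\le\a^{-1}\cdot\b$. You also spell out the convergence and the Hensel-type uniqueness that the paper calls standard.

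One justification needs fixing. The claim $z_0,z_1\in\W{\a'',\b''}$ does not follow from ``$\a^{-1}\cdot\b$ only decreases'', since lowering $\a^{-1}\cdot\b$ shrinks $\W{\cdot,\cdot}$. It follows instead from~\eqref{alphabetac} applied with the bound $\VV{b'}$. For the final estimate you can avoid the claim altogether: $z_0\in\W{\a,\b}$ and $z_1\in\W{\a',\b'}\subseteq\W{\a',\b}$ already lie in $\W{(1+\mu)\a,\b}$.
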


\begin{proof}
The existence and  uniqueness of a solution with entries in $ \pi \Rx $ and that
it can be computed using Newton iteration from $ z $ are standard.
That those entries are in~$ \pi \Rxdag $ follows from the given
estimates, which we shall now prove.

Applying the proposition to~$ z' = z_1 $ instead of~$ z $ we
find~$ b'' = \min\{b',c'\} = b' $, so that $ \a'' $, $ \b'' $
and $ b'' $ are stable under further iteration.

Replacing $ \a^{-1} \cdot \cc $ with $ \a^{-1} \cdot \b $ 
in~\eqref{lambdadef}, and replacing the denominator with~$ c $, shows that~$ \l \le M /c $.
Therefore~$ \a' = \a + \l \a $ with~$ \l \le M /c $.
We similarly have $ \a'' = \a' + \l' \a' = (1 + \l ) \a + \l' \a' $ with~$ \l' \le M' /c' $.
Using $ \a^{-1} \cdot \b' \le \a^{-1} \cdot \b $, and scaling~$ \a' $ to~$ \a $,
one sees that $ \l' \a' = \tilde\l \a $ with $ \tilde\l \le M/c' $.
Because $ c' = 2 c $ we see that $ \a'' = (1+\mu) $ with $ \mu \le 3 c^{-1} M / 2 $.
Since $ \a'' $and $ \b'' $ do not change under further iteration, and $ \a^{-1} \cdot \b'' \le  \a^{-1} \cdot \b' \le  \a^{-1} \cdot \b $,
all $ z_m $ are in $ \W{ \a'', \b } \subseteq \W{ (1 + \mu) \a , \b } $,
and the same holds for~$ \sol $ by completeness.
\end{proof}

Because Corollary~\ref{estimate-cor} applies to~\eqref{globalG}
for~$ z = 0 $, combining it with Theorem~\ref{globallift0} now gives one of our main goals of this section.

\begin{corollary} \label{phidagA-phihatA}
The map~$ \phihatA $ in Theorem~\ref{globallift0}(2), induced
by\eqref{eq:phiact}, gives a~$ \s $-semilinear
endomorphism~$ \phidagA $ of~$ \Adag $ that reduces to the~$ \pa $-th
power map on~$ \Abar $.
The rate of convergence of the underlying endomorphism~$ \phihat $
of~$ R\la \x \ra^\dagger $ can be explicitly estimated from the
system~\eqref{globalG} using Proposition~\ref{itprop}
and Corollary~\ref{estimate-cor}.
\end{corollary}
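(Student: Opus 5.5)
The plan is to show that the system~\eqref{globalG}, with the starting point~$ z = 0 $, satisfies the hypotheses of Proposition~\ref{itprop}. Corollary~\ref{estimate-cor} then produces a solution with entries in~$ \pi\Rxdag $. By uniqueness of the Hensel solution in~$ \pi\Rx^{n-r} $, this solution is the vector~$ \sol $ of Theorem~\ref{globallift0}. Hence~$ \phihat(x_i) $ is overconvergent for every~$ i $. From this I deduce that~$ \phihat $ preserves~$ \Rxdag $ and descends to~$ \Adag $.

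\emph{Verifying the hypotheses at~$ z=0 $.} Every coefficient~$ G_{j,J}(\x) $ of~$ G $ is a polynomial in~$ R[\x] $, because~$ f $, $ P $ and the~$ \Delta_j $ are polynomials. Hence each~$ G_{j,J} $ lies in~$ \W{\a,\d_{j,J}} $ for any~$ \a $, provided~$ \a^{-1}\cdot\d_{j,J} $ is taken large enough; this is possible since only finitely many monomials occur. The hypotheses are then checked as follows.
\begin{enumerate}
\item[(1)] This is trivial for~$ z = 0 $, taking any~$ \b $ and~$ b = v(\pi) = 1/e $.
\item[(2)] Since~$ G(0) = f^\s(\psi(\x)) - f^{\pa} \equiv 0 $ modulo~$ \pi $, we have~$ G(0) \in \VV{1/e} $, so we may take~$ c = 1/e $. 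Because~$ G(0) $ is a polynomial, it also lies in~$ \W{\a,\cc} $ once~$ \a^{-1}\cdot\cc $ is large.
\item[(3)] We have~$ \Jac_G(0) = \Jac_{f^\s}(\psi(\x))\,\psi(P) - \sum_j f_j^{\pa}\psi(\Delta_j) \equiv \Id_{n-r} $ modulo~$ \pi $. So every non-constant monomial in its entries has coefficient of valuation at least~$ 1/e $. Choose~$ \a $ with all components large enough that~$ \a^{-1}\cdot D \le 1/e $ for the finitely many exponents~$ D $ occurring. Then the entries lie in~$ \W{\a,0} $. The determinant is likewise a polynomial congruent to~$ 1 $ modulo~$ \pi $, so it lies in~$ \W{\a,0} $ with constant term in~$ R^* $, i.e.\ in~$ \W{\a,0}^* $.
\item[(4)] Take~$ \b $ with~$ \a^{-1}\cdot\b \ge \a^{-1}\cdot\cc $; this is allowed since~(1) imposes nothing on~$ \b $ for~$ z = 0 $.
\end{enumerate}
Corollary~\ref{estimate-cor} then gives the unique solution~$ \sol $ with entries in~$ \W{(1+\mu)\a,\b} \subseteq \pi\Rxdag $, where~$ \mu $ is given explicitly there. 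Uniqueness in~$ \pi\Rx^{n-r} $ identifies it with the~$ \sol $ of Theorem~\ref{globallift0}.

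\emph{Preserving overconvergence.} By~\eqref{eq:phiact}, each~$ \phihat(x_i) = x_i^{\pa} + \sum_j \psi(P_{i,j})\,\sol_j $ lies in some common~$ \W{\tilde\a,\tilde\b} $. Take~$ g = \sum_I a_I \x^I $ in~$ \Rxdag $ with~$ v(a_I) \ge \cc_0|I| + \d_0 $ for some~$ \cc_0 > 0 $. By semilinearity,~$ \phihat(g) = \sum_I \s(a_I)\,\phihat(\x)^I $, and~$ v(\s(a_I)) = v(a_I) $. Multiplicativity of the~$ \W{\cdot,\cdot} $ gives~$ \phihat(\x)^I \in \W{\tilde\a,|I|\tilde\b} $.

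The main obstacle is the loss~$ -|I|\,\tilde\a^{-1}\cdot\tilde\b $ in these bounds, which must be absorbed by the gain~$ \cc_0|I| $ from the coefficients~$ a_I $. I would first enlarge~$ \tilde\a $ by a scalar factor, using~\eqref{alphabetac} applied to~$ \phihat(x_i) \in \VV{0} $. Alternatively, I would simply take~$ \tilde\a $ so large that~$ \tilde\a^{-1}\cdot\tilde\b < \cc_0 $. In either case the sum converges in some~$ \W{\a''',\b'''} $ with positive decay rate, so~$ \phihat(g) \in \Rxdag $. This defines~$ \phidag $.

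\emph{Descending to~$ \Adag $.} By Theorem~\ref{globallift0}(2), the map~$ \phidag $ sends~$ (f_{r+1},\dots,f_n)\Rxdag $ into itself: the images~$ \phihat(f_j) $ lie in~$ (f^{\pa}) $, with coefficients that are overconvergent by the construction through~\eqref{globalG}. Hence~$ \phidag $ descends to~$ \phidagA $ on~$ \Adag $. Its reduction modulo~$ \pi $ is the~$ \pa $-th power map, because~$ \sol \equiv 0 $ modulo~$ \pi $. The explicit rate statement in the corollary is then exactly the bound supplied by Proposition~\ref{itprop} and Corollary~\ref{estimate-cor}.
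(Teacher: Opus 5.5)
Your proposal is correct and follows the paper's own route. The paper only observes that Corollary~\ref{estimate-cor} applies to~\eqref{globalG} at $z=0$ (with $\a$ a suitable scalar multiple of any given vector, as in Remark~\ref{Giscovered}) and combines this with Theorem~\ref{globallift0}. It leaves implicit the hypothesis check, the substitution estimate showing that $\phihat$ preserves $\Rxdag$, and the descent to~$\Adag$, all of which you supply correctly.

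The only slip is your first option for enlarging $\tilde\a$: \eqref{alphabetac} requires $c>0$, so it cannot be applied with $\VV{0}$. Your second option is the one that works. Pass to $(1+\l)\tilde\a$ using the plain inclusion $\W{\tilde\a,\tilde\b}\subseteq\W{(1+\l)\tilde\a,\tilde\b}$; this rescales $\tilde\a^{-1}\cdot\tilde\b$ by $(1+\l)^{-1}$, which is what you need.
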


\begin{remark} \label{longremark}
(1)
If we replace~$ b $ with $ \min\{ b , c \}$ in the assumptions
of~Proposition~\ref{itprop} then we may change~$ \l $, hence also~$ \a' $
and~$ \b' $.  But~$ b' = b $ now, so that these~$ \a' $ and~$ \b' $ are already stable.
We did not compare them with the stable values~$ \a'' $ and~$ \b'' $
obtained in Corollary~\ref{estimate-cor} from the original~$ b $.

(2)
The formula in~\eqref{lambdadef} is natural for the proof of
Proposition~\ref{itprop} but can be simplified. For example, using $ j $
and $ J^{(j)} $ is redundant: one could simply impose $ 0 < K < J $ instead. Also, for fixed $ i $ and $ J $, the expression
involved is a fractional linear function in $ |K| $ for which
the maximum will be attained for either $ |K| = |J| - 1 $ or $ |K| = 1 $.
Similar remarks apply to the condition defining $ \a^{-1} \cdot \cc' $.

(3)
For $ z = 0 $ one can ignore all $ K \ne J^{(j)} $ in~\eqref{lambdadef}
and all~$ K \ne J $ in the condition defining~$ \a^{-1} \cdot \cc' $.
Of course, this does not apply to any consequences based on
the full definition of $ \l $, but one can compute the resulting~$ z' $,
$ \a' $, etc., and then apply the proposition again to those.
\end{remark}

\begin{proof}[Proof of Proposition~\ref{itprop}]
We first prove part~(3) for $ z' $ instead of~$ z $. 
From its definition we see~$ \e $ has entries in $\W{\a,\cc} \cap \VV{c} $.
Expanding with $ z' = z + \e $ gives
\begin{alignat*}{1}
  G_j(z') & = \sum_{l=2}^N \sum_{|J| = l} \sum_{0 \le K \le J \atop |K|\ge 2} \binom{J}{K} G_{j,J}(\x) z^{J-K}\e^K
\\
\Jac_G(z')_{i,j}
  & =
   \Jac_G(z)_{i,j}
  +
   \sum_{l=2}^N \sum_{|J|=l}\sum_{0 \le K \le J^{(j)} \atop |K|\ge1} J_j \binom {J^{(j)}} K G_{r+i,J}(\x) z^{J^{(j)}-K} \e^K
\end{alignat*}
where $ \binom JK = \binom{j_{r+1}}{k_{r+1}} \dots \binom{j_n}{k_n} $
for $ J = (j_{r+1},\dots,j_n)$ and $K = (k_{r+1},\dots,k_n)$,
we use multi-index notation in the exponents of $ z $ and~$ \e $,
and $ J_j $ is the $ j $-th coordinate of~$ J $.
Here every $ G_{r+i,J}(\x) z^{J^{(j)}-K} \e^K $ is in
\begin{equation*}
\W{\a, \d_{r+i,J} + (|J|-|K|-1) \b + |K| \cc }  \cap \VV{(|J|-|K|-1)b + |K|c}
\,.
\end{equation*}
Applying~\eqref{alphabetac} to this and using the definition
of~$ \l $, we see that $ \Jac_G(z') $ has entries in~$ \W{\a', 0} $.
Its determinant is a unit because it is congruent modulo~$ \pi $
to that of $ \Jac_G(z) $ as~$ \e $ is in $ \VV{c} $.

For part~(2), we note that, similarly, in the expansion of $ G_j(z') $, 
every~$ z^{J-K} \e^K $ is in
$ \W{\a, \d_{r+i,J} + (|J|-|K|) \b + |K| \cc }  \cap \VV{(|J|-|K|)b + |K|c} $.
This is in $ \VV{c'} $ because $ |K| \ge 2 $.
One sees the intersection is also in $ \W{\a', \cc'} $
by rewriting the condition for the inclusion given by~\eqref{alphabetac} with
the current $ \l \ge 0 $, and comparing with 
the definition for~$ \a^{-1} \cdot \cc' $.

For part~(1), we note that we already know that~$ \cc $ has entries in $ \W{\a,\cc} \cap \VV{c} $.
As~$ z $ has entries in $ \W{\a,\b} \cap \VV{b} $, $ \a^{-1} \cdot \cc \le \a^{-1} \cdot \b $,
and~$ b' = \min\{b,c\} $, we have that $ z' = z + \e $ has entries in~$ \W{\a, \b } \cap  \VV{b'} $.
From~\eqref{alphabetac} we see that this intersection is contained in $ \W{\a', \b' } $
because~$ \a^{-1} \cdot \b' = \a^{-1} \cdot \b - \l b' $ by
definition.

We now show that~$ \a^{-1} \cdot \cc' \le \a^{-1} \cdot \cc - \l c $.
From~\eqref{lambdadef} we have
\begin{equation*}
 \a^{-1} \cdot \bigl( \d_{r+i,J} + (|J|-|K|-1) \b + |K| \cc  \bigr) \le \l \bigl( (|J|-|K|-1)b + |K| c \bigr)
\end{equation*}
for all $ i = 1, \dots, n-r $, $ J \ge 0 $ with $ |J| =2,\dots,N $ and $ K >0 $
with $ K \le J^{(j)} $, where~$ j $ runs from $ 1 $ to $ n-r $.
But for $ K \le J^{(j)} $ with $ K > 0 $ we can
add~$ 1 $ to $ K $ in its~$ j $-th position
and obtain $ K' $ with $ |K'| = |K| + 1 \ge 2 $
and $ 0 \le K' \le J $. Conversely, any such~$ K' $
is obtained this way for some $ K $ and~$ j $.
So we can rewrite the above as
\begin{equation*}
\a^{-1} \cdot \bigl( \d_{r+i,J} + (|J|-|K'|) \b + |K'| \cc  \bigr) \le \l \bigl( (|J|-|K'|)b + |K'| c \bigr)
  + \a^{-1} \cdot \cc - \l c 
\end{equation*}
for all $ i = 1, \dots, n-r $, $ J \ge 0 $ with $ |J| =2,\dots,N $ and $ 0 \le K' \le J $
with~$ { |K'| \ge 2 } $.
Comparing this with our condition on~$ \a^{-1} \cdot \cc' $,
we see that~$ \a^{-1} \cdot \cc' \le \a^{-1} \cdot \cc - \l c $.
Because~$ \a^{-1} \cdot \cc \le \a^{-1} \cdot \b $ and $ b' \le c $,
we also have~$ \a^{-1} \cdot \cc' \le \a^{-1} \cdot \b' $.
Scaling $ \a $ to~$ \a' $ then proves part~(4).

Finally, using primes to indicate further Newton iteration,
we show that~$ \l' = 0 $ if~$ b' = b $, so that~$ \a'' = \a' $
and~$ \b'' = \b' $.
For this it is enough to show that the expressions in~\eqref{lambdadef}
(but for $ \l' $) are non-positive.
We scale from~$ \a' $ to~$ \a $ and consider
\begin{alignat*}{1}
& \a^{-1} \cdot \bigl( \d_{r+i,J} + (|J|-|K|-1) \b' + |K| \cc' \bigr)
\\
\le \,\,
 & 
\a^{-1} \cdot \bigl( \d_{r+i,J} + (|J|-|K|-1) \b + |K| \cc \bigr)
- \l \bigl( (|J|-|K|-1) b' + |K| c \bigr)
\,,
\end{alignat*}
where we used that~$ \a^{-1} \cdot \b' = \a^{-1} \cdot \b - \l b' $ 
and~$ \a^{-1} \cdot \cc' \le \a^{-1} \cdot \cc - \l c $.
Using that~$ b' = b $ and comparing with~\eqref{lambdadef} 
we see this last expression is non-positive, showing $ \l ' = 0 $
as desired.
Further iteration does not change $ b' $ because $ c' $ doubles
each time, hence the above applies at every step, so
both~$ \a' $ and $ \b' $ remain unchanged.
\end{proof}

\begin{remark} \label{finiteprecisionremark}
With our assumption on $ \Jac_G(S) $ in Proposition~\ref{itprop},
if $ z $ and $ \tilde z $ have entries in $ \pi \Rx $ and~$ c > 0 $, then $ \tilde z - z $ has entries in $ \VV{c} $
if and only if this holds for~$ G(\tilde z) - G( z) $.
In particular,~$ \sol - z $ has such entries if and only
if $ G(z) $ does.

Suppose that we want to compute $ \sol $ up to precision~$ \pi^M $, i.e., we want
to know its coefficients in $ R_M $ with $ R_M = R / (\pi^M) $,
which corresponds to knowing its class in~$ \Rx / \VV{c} $ for~$ c = M v(\pi) = M / e $.
Then we can start with $ z_0 = 0 $ and perform Newton iteration
where, at every step, Corollary~\ref{estimate-cor} can be used
to explicitly truncate the power series expansion of~$ \Jac_G(z_m)^{-1} $
to an element of $ R_M[\x] $. As the precision doubles at every
step, one needs at most $ \lceil \log_2(M) \rceil $ iterations,
and one can even use lower precision than~$ \pi^M $ at intermediate
steps.

This method also applies if~$ G(\S) $ itself is known only up
to precision~$ \pi^M $.
\end{remark}

\begin{remark} \label{Giscovered}
For any $ z $ in $ \pi R[\x]^{n-r} $, 
Proposition~\ref{itprop}, Corollary~\ref{estimate-cor} and Remarks~\ref{longremark}
and~\ref{finiteprecisionremark}
apply to~\eqref{globalG} with suitable $ \a $ in $ \realpos^n $.
In fact, because all entries of any given element of~$ \realpos^n $ are positive,
they will apply with~$ \a $ equal to a suitable scalar multiple of that
element.
Using this one can obtain various estimates on the Newton iterates
and~$ \sol $ that are based on the monomials occurring in the~$ G_j(\S) $.
\end{remark}

We now illustrate the estimates using our running example.

\begin{example} \label{runningexample2}
The one equation $ 0 = G(S) = G_2 S^2 + G_1 S + G_0 $
in Example~\ref{runningexample1}~has
\begin{alignat*}{1}
   G_2 & = 4^{-1} (x^{2p} - y^{2p})                  \\
   G_1 & = x^{2p} - y^{2p} -  (x^2 - y^2 -1)^p       \\
   G_0 & = (x^{2p}  - y^{2p} - 1) - (x^2 - y^2 -1)^p 
\,.
\end{alignat*}
We apply Proposition~\ref{itprop} with~$ z $ in $ \pi R $.
We can use~$ \a = (2p, 2p) $,
$ \b = \cc = (0, 0)  $, and $ b = c = 1 $.
With~$ \d_{2, 2} = (p, p) $ we find~$ \l = 1 $ and
\begin{equation*}
 (\a', \b', \cc', b', c') =  ( (4p, 4p), (-p, -p) , (-p, -p), 1, 2)
 \,.
\end{equation*}
Iterating now doubles the last entry each time, the others being
stable. So the solution~$  s = \sum_{i,j \ge 0} a_{i,j} x^i y^j $
is in $ \W{(4p, 4p), (-p,-p)} $, and~$ v(a_{i,j}) \ge \frac{i+j}{4p} + \frac12 $.
By contrast, Corollary~\ref{estimate-cor} leads to $ M = 1 $
and the estimate~$ v(a_{i,j}) \ge \frac{i+j}{5p}$.

The first estimate is sharp.
For this, note
$ s = (2 G_2)^{-1} (\sqrt{G_1^2 - 4 G_0 G_2} - \sqrt{G_1^2} ) $ 
with the roots in~$ 1 + p \Zp\la x, y \ra $.
As $ G_1^2 = 1 + p A $ and $ - 4 G_0 G_2 = p (x^{2p} - y^{2p}) B $ for~$ A $ and $ B $ in $ \Zp[x,y] $,
expanding the roots gives
\begin{alignat*}{1}
s & =
2 (x^{2p} - y^{2p})^{-1} \sum_{m \ge 1} p^m \binom {\frac12} m [(A + (x^{2p} - y^{2p}) B)^m - A^m ]
\\
& =
2 B \sum_{m \ge 1} p^m  \binom {\frac12} m  \sum_{l=0}^{m-1} \binom m l A^l ( (x^{2p} - y^{2p}) B)^{m-l-1} 
\,.
\end{alignat*}
The highest degree terms of $ A $ and $ B $ are
$ p^{-1} (x^{2p} - y^{2p} - (x^2 - y^2)^p )^2 = p x^{4p-4} y^4 + \dots $ and
$ p^{-1} ( (x^2 - y^2)^p - (x^{2p} - y^{2p}) ) = - x^{2p-2} y^2 + \dots $,
hence the summand for $ m $ has degree at most $ 4p m - 2p $.
In fact, the monomial in it of that degree with the highest power of~$ x $ is
\begin{equation*}
 (-1)^{m-1} 2 \binom {\frac12} m p^m  x^{(m-1)(4p-2) + 2p} y^{2(m-1)}
 \,.
\end{equation*}
This does not occur for lower $ m $, and if $ \binom {\frac12} m = (-1)^m (1 - 2m)^{-1} 4^{-m} \binom {2m} m $ is in~$ \Zp^* $,
then it occurs in $ s $ with a coefficient of valuation~$ m $.
This is the case, e.g., for all~$ m = p^k $ with $ k \ge 1 $ because
then $ \binom {2m} m \equiv 2 $ modulo~$ p $.
Keeping track of the precise grades we find~$ v(a_{i,j}) \ge \frac{i+j}{4p}+ \frac12 $
with equality for infinitely many $ a_{i,j} $.
\end{example}

\begin{example} \label{runningexample5}
We provide another example to be compared in Section~\ref{expansions}.
Consider the curve defined by $xy - 1$ in $\Zp[x,y]$ with $p \not= 2$.
The system $G(S)$ is then $\frac{1}{4}x^py^pS^2 + (x^py^p - (xy - 1)^p)S - (xy - 1)^p + x^py^p - 1$.
So that $G_2 = \frac{1}{4}x^py^p$, $G_1 = x^py^p - (xy - 1)^p = \sum_{k = 1}^p \binom{p}{k} (-1)^kx^ky^k$ and $G_0 = \sum_{k=1}^{p-1} \binom{p}{k} x^ky^k(-1)^k$.
Let us carry out the Newton iteration.
Start with $\a^{-1} = (\frac{1}{p-1}, \frac{1}{p-1}), \b = \gamma = (0,0)$, $b = c = 1$,
and $\d_{2,2} = (p, p)$ so that $\lambda = \frac{p}{p-1}$. Therefore, at the next stage we have
\begin{equation*}
\textstyle (\a', \b', \gamma', b', c') = \( (2p-1, 2p-1), (-\frac{p}{2}, -\frac{p}{2}), (-\frac{p}{2}, -\frac{p}{2}), 1, 2 \)
.
\end{equation*}
Thus the final solution $s = \sum_{i,j\ge 0} a_{i,j}x^iy^j$ lies in $W((2p-1, 2p-1), (\frac{p}{2}, \frac{p}{2}))$,
and $v(a_{i,j}) \ge \frac{i + j - p}{2p - 1}$.
\end{example}

We conclude this section by showing that our theorems apply
to suitable open parts of smooth, Noetherian schemes over~$ R $.

\begin{remark} \label{smoothlcirem}
Suppose that $ Y $ is a smooth, Noetherian scheme $ Y $ over $ R $ of
relative dimension $ r \ge 0 $. Then
there exists a Zariski open affine part of the form~$ \Spec(A) $ with
$ A $ satisfying Assumption~\ref{Aassumption} for some $ f_{r+1} , \dots , f_n $.  Moreover,
there exist matrices $ P $ in $ \M^{n,n-r}(R[\x]) $ and $ \Delta_{r+1},\ldots,\Delta_n$
in $ \M^{n-r}(R[\x]) $ such that
\begin{equation*}
    \Jac_f \times P = \Id_{n-r}+\sum_{j=r+1}^n f_j \Delta_j 
\end{equation*}
in $ \M^{n-r} (R[\x]) $.
(This is stronger than the consequence~\eqref{congeq2} of Assumption~\ref{Aassumption}.)
Moreover, if~$ Y_k $ is non-empty, then we can take~$ A $ such
that~$ \Abar $ is non-zero, thus also satisfying the condition
in Remark~\ref{Abarnonzero}.

In order to see this, let $ y $ be the generic point of the special fibre $ Y_k $
if this fibre is non-empty, and of $ Y_K = Y $ otherwise.
According to~\cite[Example~3.18]{Liu}, there is a Zariski open
neighbourhood of~$ y $ that is isomorphic to~$ R[x_1, \dots, x_n] / (f_{r+1}, \dots, f_n) $,
and by
\cite[\href{https://stacks.math.columbia.edu/tag/02H4}{Tag 02H4}]{stacks-project}
$ R[x_1, \dots, x_n] / (f_{r+1}, \dots, f_n) $ is a formally
smooth~$ R $-algebra. Then
according to \cite[Corollaire~(20.5.14)]{EGA4}, the sequence
\begin{equation*}
0 \to J/J^2 \xrightarrow{\dd} \Omega^1_{R[\x]/R}\otimes_{R[\x]} A \to \Omega^1_{A/R} \to 0
\end{equation*}
of~$ A $-modules is split exact.
Therefore, $ A\cdot\dd f_{r+1} \oplus \dots \oplus A\cdot\dd f_n \cong A^{n-r} $ 
is a direct summand of $ \Omega^1_{R[\x]/R}\otimes_{R[\x]} A\cong A^n $, and~$ \Jac_f $
has a right inverse $ P $ in~$ \M^{n,n-r}(A) $.
\end{remark}

\section{Examples of the global Frobenius} \label{global-examples}

In this section we make the construction of $ \phihat $ and~$ \phihatA $ in
Theorem~\ref{globallift0} and the resulting estimates from Proposition~\ref{itprop}
and Corollary~\ref{estimate-cor} more explicit for plane curves and their localisations,
for which we can take $ r=1 $ and $ n=2 $ or~3.

\subsection{Smooth plane curves} \label{planecurvesection}
Let us first treat the case of a smooth curve in $ R[x,y] $
defined by~$ f(x,y) $.
There then exist $ P_1 $, $ P_2 $ and $ \Delta $ in
$ R[x,y] $ such that from~\eqref{congeq2} we
have~$ \ol{P_1} \, \ol{f_x} + \ol{P_2} \, \ol{f_y} = 1 + \ol{f} \, \ol{\Delta} $
in $ k[x,y] $. Now~\eqref{globalG} becomes
\begin{alignat*}{1}
 G(S)
& =
f^\s \bigl( x^{\pa} +  \psi(P_1) S, y^{\pa} + \psi(P_2) S \bigr) - f^{\pa} - f^{\pa} \psi(\Delta) S 
\\
& =
     \psi(f) - f^{\pa}
   + \(f^\s_{x}\bigl(x^{\pa}, y^{\pa}\bigr) \psi(P_1) 
                   + f^\s_{y}\bigl(x^{\pa}, y^{\pa}\bigr)\psi(P_2) - f^{\pa} \psi(\Delta) \) S
\\
& \phantom{\,=\,}   + (\cdots) S^2 + \cdots
\,.
\end{alignat*}
With $ s $ the unique solution in $ \pi R\la x, y \ra ^\dagger $
of $ G(S) = 0 $,
the map~$ \phihat $ from $ R\la x, y \ra^\dagger $ to itself is given
by mapping $x$ to $x^{\pa} + P_1^\s(x^{\pa} , y^{\pa}) s $
and $y$ to $y^{\pa} + P_2^\s(x^{\pa} , y^{\pa}) s $, and this
induces the map $ \phihatA $ on~$\Adag = R\la x , y \ra^\dagger / (f)$.
If~$ s = \sum_{i,j\ge 0} a_{i,j} x^i y^j $ then we can estimate
$ v( a_{i,j} ) $ by applying Proposition~\ref{itprop} and/or
Corollary~\ref{estimate-cor} to $ G(S) $, starting with
$ z = 0 $. Note that we choose various~$ \a $, etc., based on the coefficients
in $ G(S) $.

\begin{example} \label{ellipticexample}
Consider the elliptic curve over $\Zp$ defined by $f(x,y) = y^2-x^3-1$
for~$ p \ne 2,3 $. Here
$ \frac13x f_x + \frac12 y f_y = 1 + f$ already in~$ \Zp[x,y] $.
Let us take $ \pa = p $.
As~$ \s $ is the identity,
we have to find the unique solution $ s $ in $ p\Zp\la x,y \ra^\dagger $
of~$ G(S) = 0 $ for
\begin{alignat*}{1}
 G(S)
& =
 f\left( x^p + \tfrac13 x^p S, y^p + \tfrac12 y^p S \right) - f^p-f^p S 
\\
& =
 -\tfrac{1}{27} x^{3p} S^3 + \left(\tfrac14 y^{2p} - \tfrac13 x^{3p}\right)
   S^2 + (1+f(x^p,y^p) - f^p) S + f(x^p,y^p)-f^p
\,.
\end{alignat*}
The map of $ \Zp \la x,y\ra^\dagger/(y^2-x^3-1) $ to
itself is induced by mapping $ x $  
to $ x^p + \frac13 x^p s $ and~$ y $ to~$ y^p + \frac12 y^p s $.
Starting in Proposition~\ref{itprop} with~$ z = 0 $, we can use~$ \a = (3p, 2p) $, $ \b = \cc = (0, 0) $, 
$ b = c = 1 $, and use~$ \d_{2,2} = \d_{2,3} =  \d $ for $ \d = \frac12 \a $
because $ \a^{-1} \cdot \d  = 1 $. Then~$ b' = 1 $, $ c' = 2 $,
$ \l = 1 $, $ \a' =  2 \a = (6p, 4p) $,
and~$ \a^{-1} \cdot \b' = \a^{-1} \cdot \b - \l b' = -1 $.
Then~$ \a' $ and $ \b' $ are stable because~$ b' = b $, and we
find~$ s = \sum_{i,j\ge0} a_{i,j} x^i y^j $ with
$ v(a_{i,j}) \ge \a'^{-1} \cdot (i, j) - \a'^{-1} \cdot \b' =  \frac{2i+3j}{12 p} + \frac12 $.
\end{example}

\subsection{Smooth localisations of plane curves} \label{loc-plane-curves}
Let $ f(x,y) $ define a (possibly singular) irreducible, reduced plane curve, and
suppose all of its singularities are contained in the curve defined by%
\footnote{The genus of the curve plays no role here, so we allow~$ g $ as notation for a polynomial.}~$ g $ in $ R[x,y] $
with non-zero reduction modulo~$ \pi $.
Then~$ g $ lies in the radical of the ideal $ (f,f_x,f_y) $ in
$ R[x,y] $. Replacing~$ g $ with a suitable power and subtracting
an element in $ (f) $ if necessary, we may assume
that~$ g = a f_x + b f_y $ for $ a $ and $ b $ in $ R[x,y] $.
Conversely, the curve defined by any such non-zero~$ g $ contains
the singularities of the curve defined by~$ f $.
We then consider the localisation $ A = R[x,y]/(f,zg - 1) $,
which satisfies Assumption~\ref{Aassumption}. Indeed, we have
$ \Jac 
=
\begin{pmatrix}
  f_{x} & f_{y} & 0       \\
  z g_x & z g_y & g \\
\end{pmatrix}
$,
so we can take
$ P =
\begin{pmatrix}
    za                          &  0  \\
    zb				&  0  \\
  - \bigl(ag_x + bg_y\bigr) z^3 &  z  \\
\end{pmatrix}
$,
$ \Delta_2 = 0 $
and
$ \Delta_3 = 
\begin{pmatrix}
   1                           & 0  \\
 - \bigl(ag_x + bg_y\bigr) z^2 & 1 \\
\end{pmatrix}
$.
With $ \x = (x,  y, z) $, the system~\eqref{globalG} corresponds to
\begin{alignat*}{1}
   G_2(S_2, S_3) 
   & = f^\s\(\psi(\x) + \psi(P)\S\) - f^{\pa} - \(zg-1\)^{\pa} S_2\\
& =
	f^\s\(\psi(x) + \psi(za)S_2, \psi(y) + \psi(zb)S_2\)
   - f^{\pa} 
   - \(z g - 1 \)^{\pa} S_2
\\
\intertext{and}
   G_3(S_2, S_3)
& =  
   f_3^\s\(\psi(\x) + \psi(P)\S\)
   - \(zg-1\)^{\pa} \( 1 - \psi(z^2 (ag_x + bg_y))S_2 + S_3 \)
\end{alignat*}
for $ f_3(x,y,z) = z g -1 $,
and we have to find the unique
$ s_2 $ and $ s_3 $ in~$ \pi R\la x,y,z \ra^\dagger $
with~$ G_2(s_2, s_3) = G_3(s_2, s_3) = 0 $.
Making $ G_3(S_2, S_3) $ more explicit is somewhat messy, but
it turns out we do not really need it.

Observe that $ G_2 $ is a polynomial in $S_2$ only, and using
Corollary~\ref{estimate-cor} with $ z = 0 $ one checks that~$ G_2(S_2) = 0 $ has
a unique solution in $ \pi R\la x,y,z\ra ^\dagger $.
This solution must be~$ s_2 $, and
the map $ \phihat $ from $ R \la x, y, z \ra ^\dagger $
to itself maps $ x $ to
$ x^{\pa} + \psi(za)s_2 $, and~$ y $ to~$ y^{\pa} + \psi(zb)s_2 $.

In order to determine $ \phihat(z) $ one could also compute~$ s_3 $,
but its image $ \phihatA(z) $ in $ \Adag = R\la x, y, z \ra ^\dagger / ( f, zg-1) $
is already determined by $ \phihatA(z) \phihatA(g) = 1 $ because
$ g $ is in $ R[x,y] $. More precisely, we have
\begin{equation*}
  \phihat(g) = g^\s\bigl(x^{\pa} + \psi(za)s_2, y^{\pa}+\psi(zb)s_2\bigr)
=
  g^{\pa} - F
=
  g^{\pa} (1 - z^{\pa} F)
\end{equation*}
with~$ F $
in~$ \pi R\la x,y,z\ra^\dagger $, so that~$ \phihatA(z) = \phihatA(g)^{-1} = z^{\pa} \bigl(1+\sum_{m=1}^\infty (z^{\pa}F)^m\bigr)$.

\begin{example} \label{raisex1example}
Let us take $ g = f_y $, under
the assumption that $ f_y $ is not identically~0 modulo~$ \pi $.
Then we can take $ a = 0 $ and $ b = 1 $,
so that $ A = R[x,y,z]/(f,zf_{y} - 1) $, and, letting~$ h = f_{y,y} $,
our matrices are
$ \Jac 
=
\begin{pmatrix}
  f_{x}     & f_{y} & 0       \\
  z f_{x,y} & z h   & f_{y} \\
\end{pmatrix}
$,
$ P = \begin{pmatrix}
    0    &  0    \\
    z    &  0    \\
  -z^3 h &  z  \\
\end{pmatrix}
$,
$ \Delta_2 = 0 $
and
$ \Delta_3 = 
\begin{pmatrix}
  1     & 0  \\
 -z^2 h & 1 \\
\end{pmatrix}
$.
We then obtain $ \phihat $ on $ R\la x, y, z \ra^\dagger $ with~$ \phihat(x) = x^{\pa} $,
as in~Remark~\ref{liftcurve}.
The first equation simplifies to
\begin{equation*}
   G_2(S_2) 
 =
   f^\s\(\psi(x), \psi(y)+\psi(z)S_2\)
   - f^{\pa} 
   - \(z f_y -1 \)^{\pa} S_2
\,,
\end{equation*}
which through its unique root $ s_2 $ in~$ \pi R \la x , y, z \ra ^\dagger $
determines $ \phihat(y) = y^{\pa} + z^{\pa} s_2 $ and~$ \phihatA(y) $.
\end{example}

\begin{example} \label{kedlayaexample}
Let us apply Example~\ref{raisex1example} to $ y^2 - Q(x) $,
where $ p \ne 2 $, $ Q(x) $ in $ R[x] $ is
of degree $ 2g+1 $, and its reduction in $ k[x] $ has degree
$ 2g+1 $ and no multiple roots. (In other words, if we take $ R = W(k) $,
the Witt vectors of $ k $, then we are in the
situation studied in~\cite{Ked01}.)
Inverting $ 2 y $, we obtain an open part corresponding to
$ R[x,y,z]/(y^2-Q(x), 2yz-1) $.  
We have 
$ \Jac = \begin{pmatrix}
 -Q'(x) & 2y &  0 \\
  0     & 2z & 2y \\
\end{pmatrix}$,
so we can take
$ P =
\begin{pmatrix}
  0    & 0   \\
  z    & 0   \\
 -2z^3 & z \\
\end{pmatrix}$,
$ \Delta_2 $ the zero matrix, and
$ \Delta_3 = 
\begin{pmatrix}
  1    & 0  \\
 -2z^2 & 1  \\
\end{pmatrix}
$.
Then $ s_2 $ is the unique root in $ \pi R\la x, y, z \ra ^\dagger $
of
\begin{equation*}
G_2(S_2) =  \bigl(y^{\pa} + z^{\pa} S_2 \bigr)^2 - Q^\s(x^{\pa}) - \(y^2 - Q(x)\)^{\pa} - \(2yz-1\)^{\pa} S_2
\,,
\end{equation*}
and $ \phihat(y) = y^{\pa} + z^{\pa} s_2 $.
Then $ \phihatA(y) $ in $ \Adag = R\la x, y, z \ra^\dagger / (f, 2yz-1 ) $
is the unique element congruent to $ y^{\pa} $ modulo $ \pi $
that satisfies~$ \phihatA(y)^2 - Q^\s(x^{\pa}) = 0 $, hence it must coincide
with the explicit formula given in \cite{Ked01} when $ \pa = p $
and~$ R = W(k) $.
\end{example}

\begin{example} \label{hyperellipticp=2}
If~$ k $ has characteristic~2, then,
according to Proposition~7.4.24 and Remark~7.4.25 of \cite{Liu},
a (hyper)elliptic curve of genus~$ g \ge 1 $ with a separable map to~$ \P_k^1 $ can be given by
\begin{equation*}
f =y^2 + Q(x) y + P(x)
\end{equation*}
with~$ Q(x) $ and~$ P(x) $ in~$ k[x] $ satisfying
\begin{equation*}
2 g + 1 \le \max\{ 2 \deg(Q) , \deg(P) \} \le 2 g + 2 
\end{equation*}
and~$ \gcd(Q,  P'^2 + P Q'^2 ) = 1 $.
If~$ U $ and~$ V $ in~$ k[x] $ satisfy~$ Q U + (P'^2 + P Q'^2) V = 1 $,
then~$ P_1 f_x + P_2 f_y = 1 $ in~$ k[x,y] / (f) $
for~$ P_1 = (P' - Q' y) V $
and~$ P_2 = U - Q'^2 y V $.
We can lift~$ P $, $ Q $, $ U $ and~$ V $ to polynomials in~$ R[x] $ of the same degrees,
and using these lifts instead obtain a curve over~$ R $ that
reduces to the one we started with over~$ k $, and have~$ P_1 f_x + P_2 fy \equiv 1 $
modulo~2 in~$ R[x,y]/ (f) $.
\end{example}

\section{Expansions} \label{expansions}

We now return to our curve~$ C/R $ as in Section~\ref{sec:intro},
and describe what happens at each end (the
much simpler case of a full residue disc
is discussed in Remark~\ref{full-disc-remark}).
We use notation as in Notations~\ref{basicnot} and~\ref{global-notation2}.
In particular, we have~$ X = \Spec(A) $ for $ A = R[\x]/(f_2,\dots,f_n) $ as in Assumption~\ref{Aassumption},
with~$ \Ahat = \Rx / (f_2,\dots,f_n) $ and~$ \Adag = \Rxdag / (f_2,\dots,f_n) $.
We assume that $ \Abar \ne 0 $, so that~$ \Spec(\Abar) $
contains the generic point~$ \eta $ of~$ C_k $. This will imply
that the local expansion of an element in~$ A $ has coefficients with
non-negative valuation.

In order to compute the pairing~\eqref{serre-over-K} for 1-forms~$ \eta $
and~$ \o $ of the second kind, defined over~$ K $, we can enlarge~$ K $
to~$ L $ so that all residue discs corresponding to the ends
have an~$ L $-rational point, and apply~\eqref{serre-over-L}. But in order to compute the contribution
of a specific end~$ \ee $, corresponding to a residue disc~$ \D $, it suffices to enlarge~$ K $ to a~$ \tK $
such that~$ \D $ has a~$ \tK $-rational point~$ Q $.
Keeping~$ [\tK : K] $ small will speed up calculations.
However, how the poles of the 1-forms involved lie with respect
to~$ Q $ also matters; see the calculations in the proof of Lemma~\ref{expansion-lemma}
below, as well as Example~\ref{bad-pole-example}.

Having fixed~$ Q $ and~$ \tK $, let~$ \tR $ be its valuation ring, and
view~$ Q $ as a section of~$ C_{\tR} \to \tR $.
Because we can choose the parameter~$ t $ on~$ \D $ defined over~$ \tK $,
we see that~$ \res_{\ee} (\eta \int\o) $ is in~$ \tK $.
Let~$ q $ be the reduction of~$ Q $ in the special fibre of~$ C_{\tR} $,
and~$ G_q $ the subgroup of~$ G = \Gal(\ol{K}/K) $ of elements that fix~$ q $.
Then~$ G_q $ preserves~$ \D $ and~$ \ee $, and although
it may change~$ Q $ and~$ t$,
we have that~$ \res_{\ee} (\eta \int\o) $ is in~$ \tK^{G_q} $ because it is independent of the parameter,
as remarked after Definition~\ref{rigid-res-def}.
The equivariance of~$ \res_\ee $ with respect to the
action of~$ G $ then implies that the contribution to~\eqref{serre-over-L} of the~$ \ee_i $ corresponding
to the~$ G $-orbit of~$ q $ is
equal to the trace from $ \tK^{G_q} $ to~$ K $ of~$ \res_{\ee} \eta \int \omega $.

In Theorem~\ref{globallift0} and Corollary~\ref{phidagA-phihatA}, working over
the original~$ R $, we constructed a~$ \s $-semilinear
endomorphism~$ \phihatA $ of~$ \Ahat $ that lifts the $ \pa $-power endomorphism on~$ \Abar $,
and induces an endomorphism~$ \phidagA $ of~$ \Ahat $.
In order to calculate~$ \res_\ee \phidagA(\eta) \int \omega  $,
after choosing a parameter~$ t $ for~$ \D $,
we could calculate the expansion in~$ t $ of $ \phidagA(\eta) $ as follows.
We compute the $ \phidagA(x_i) $ in~$ \Adag $, which we can plug
into~$ \eta $ in order to find~$ \phidagA(\eta) $. If we compute the expansions in~$ t $ of the~$ x_i $, we
can then substitute those into~$ \phidagA(\eta) $.

However, we use a different approach.
Instead of computing the $ \phidagA(x_i) $ and then their expansions
on $ \ee $, we expand on $ \ee $ the $ x_i $ in the defining equation~\eqref{globalG}
and solve the resulting local equation.  This way we can obtain the expansions of the~$ \phidagA(x_i) $
directly, without the need of substituting expansions into expansions.
Another advantage is that we work with expressions that
contain only the local parameter, not all the variables $ x_i $.
Also, in practice the local expansions can converge on a larger annulus
than one might expect from the behaviour of the global~$ \phidagA(x_i) $
(see Examples~\ref{runningexample3},\ref{runningexample4} and~\ref{weierstrass-example}).

The drawback is that we have to solve the local equation
for each of the ends (or at least for representatives of Galois
orbits, as discussed earlier in this section), so if there are many of those, it may be
better to compute the $ \phidagA(x_i) $ globally first and then substitute
expansions of the $ x_i $.  In order to maintain this flexibility,
in Proposition~\ref{expest} we also discuss how the estimates
on the coefficients in the global~$ \phidagA(x_i) $ translate into
estimates on the coefficients in their local expansions.

For these expansions, we have enlarged~$ K $ to~$ \tK $,
and work on the curve~$ C_{\tR} $ over the valuation ring~$ \tR $
of~$ \tK $. But in order to avoid cumbersome notation, we simply
write~$ K $ for~$ \tK $ and~$ R $ for~$ \tK $, and simplify
the notation accordingly. (Note~$ \s $ in Notation~\ref{basicnot} was
used to write down the system~\ref{globalG} over the
original~$ R $, and is used in the action of~$ \phidagA $, but plays no role in
the expansions here.)

In order to formulate our local estimates, we  shall introduce various rings and other
structures.
For this, we fix a local equation~$ t $ of $ Q $ on $ C $ (as scheme), which we shall
also view as a parameter on $ \D $ and $ \ee $, and use it to
make the restriction map from~$ \Adag $ to rigid analytic
functions $ U_r $, as described in~\eqref{eq:annulus-fun}, with~$ r \uparrow 1 $, explicit on~$ \ee $.

Let $ \OCq $ be the local ring on~$ C $ of the reduction $ q $ of $ Q $.
Note that $ \OCq/(t) \iso R $,
and the completion of $ \OCq $ with respect to $ (t) $ is
isomorphic to~$ R[[t]] $.  We shall refer to the resulting
map $ \OCq \to R[[t]] $,
or any of the analogues described below, as the expansion map
at $ Q $.

In order to describe these, let
\begin{equation*}
   \Rthat
=
 \biggl\{ \sum_{m\in\Z} a_m t^m \textup{ with all } a_m \textup{ in } R \textup{ and } \lim_{m\to-\infty}a_m=0 \biggr\}
\end{equation*} 
be the $ \pi $-adic completion of $ \Rt $.  Then 
any element in $ \Rthat $
that is not in~$ \pi \Rthat $ is in $ \Rthat^* $: we can write it
as $ t^d f - \pi g $ with $ f $ in $ R[[t]]^* $ and $ g $ in $ \Rthat $,
which has inverse $ t^{-d} f^{-1} ( 1 + \sum_{m \ge 1} (\pi t^{-d} f^{-1} g)^m ) $.

\begin{lemma} \label{expansion-lemma}
Let~$ \eta $ be the generic point of~$ C_k $ inside~$ C $,
and~$ \OCgen $ the corresponding local ring.
Then any element of~$ \OCq \setminus \pi \OCq $ is mapped
to a unit of~$ \Rthat $. 
In particular, the expansion map extends to an expansion map~$ \ex : \OCgen \to \Rthat $,
and it is defined on~$ A $.
\end{lemma}

\begin{proof}
For an element~$ a $ of~$ \OCq \setminus \pi \OCq $, let~$ d \ge 0 $
be the order at~$ q $
of its image in the local ring~$ \O_{C_k,q} = \OCq/\pi \OCq $.
Then~$ a = t^d b- \pi c $ for some~$ b $ and~$ c $ in~$ \OCq $
with~$ b(q) \ne 0 $, i.e., $ b $ in~$ \OCq^* $. We have just
seen above that this implies that the image of~$ a $ under the expansion map is in~$ \Rthat^* $.
The local ring~$ \OCgen $ is the valuation ring in the function
field~$ K(C_K) $ of~$ C_K $ for the valuation corresponding to~$ \pi $.
Viewing this function field as the field of fractions of~$ \OCq $,
and using that~$ \OCq $ is a unique factorisation domain, we
see that the expansion map extends to~$ \OCgen $.
Finally, as mentioned at the beginning of this section, our
assumption that~$ \Abar \ne 0 $ implies
that~$ A \subseteq \OCgen $ in~$ K(C_K) $, and we have the
expansion map at~$ Q $ on~$ A $.
\end{proof}

\begin{remark}
If~$ \Abar = 0 $ then, because~$ \OCgen $ is a unique factorisation domain,
for any~$ a $ in~$ A $ there exists~$ m \ge 0$, depending on~$ a $,
such that~$ \pi^m a $ is in~$ \OCgen $, and this element has
an expansion in $ \Rthat $. So in this case, the expansion
map on~$ A $ would take values in~$ \Rthat \otimes_R K $.
\end{remark}

Observe that an element of~$ \OCgen $ that lies in~$ t^{-d} \OCq $ for some
integer~$ d $ has an expansion in~$ \Rt $.
This applies in particular to an element~$ a $ of~$ A $ that
in~$ \D $ has a pole only along $ Q $.
Because in many applications the expansions of the~$ x_j $ in~$ A $ will be 
in~$ \Rt $, we include Proposition~\ref{expest}(1),
which deals with this specific case.

\begin{remark} \label{general-expansion}
For the description of the expansion map at~$ Q $ we did not
use that~$ C_k $ is smooth. Instead, we could have worked on a
regular model~$ C $ over~$ R $ of~$ C_K/K $ with a section~$ Q : R \to C $ (where~$ K $ is now possibly
larger than the original~$ K $).
Indeed, by~\cite[Corollary~9.1.32]{Liu}, the section~$ Q $ hits
exactly one irreducible component of the special fibre~$ C_k $,
this irreducible component has multiplicity~1 in the fibre~$ C_k $
(so is locally at~$ q $ defined by~$ \pi $), and is smooth over~$ k $ at
the intersection point~$ q $.
The same argument as above then gives that the expansion map to~$ \Rthat $
exists on $ \OCgen $ with~$ \eta $ the
generic point of the irreducible component.
The expansion map on~$ A $ takes values in~$ \Rthat $
if $ A \subseteq \OCgen $ inside~$ K(C_K) $, and
in~$ \Rthat \otimes_R K $ otherwise.
(Note that this condition on~$ A $ can be checked only after
extending the original~$ K $ and~$ R $ because the regular model
depends on~$ R $.)
\end{remark}

Because $ \Rthat $ is $ \pi $-adically complete, the expansion map 
$ A \to \Rthat $ at~$ Q $ extends to a map~$ \Ahat \to \Rthat $,
which induces a map
\begin{equation} \label{expansionmap}
 \ex : \Rx \to \Rthat
\,.
\end{equation}
Abusing notation, we shall denote all those expansions maps from $ \Rx $, $ \Rxdag $, $ A $, $ \Ahat $ and~$ \Adag $
to~$ \Rthat $ by~$ \ex $.
In order to describe the image of $ \Rxdag $ under this map,
and to be able to describe our estimates, we
introduce various subsets of $ \Rthat $, as well as a subring~$ \Rtstar $.  We shall show in
Proposition~\ref{expest} that $ \xi $ maps $ \Rxdag $ into $ \Rtstar $
and provide a description for bounds on the coefficients involved.

\begin{notation} \label{Ralphabetanotation}
(1) For any real numbers $\a$ and $\b$ with $\a>0$ we let
\begin{alignat*}{1}
\Rtab {\a} {\b}
& =
\biggl\{\sum_{m\in\Z}{a_m t^m}  \text{ in } \Rthat \text{ with } v(a_m)\ge  -\a^{-1} (m-\b) \biggr\}
,
\\
\Rtstar & = \bigcup_{\a,\b} \Rtab {\a} {\b}
.
\end{alignat*}

(2)
We let~$\Rtab {0} {\beta} = \cap _{\a > 0} \Rtab {\a} {\b} $,
so it is the subset of $\Rtstar $ with elements of the form~$\sum_{m \ge \beta} a_mt^m$.

(3)
For any~$ c \ge 0 $, we put~$V_c = \{ \sum_{m\in\Z}{a_m t^m} \text{ in } \Rthat \text{ with } v(a_m) \ge c \}$.
\end{notation}

\begin{figure}[h]
\begin{center}
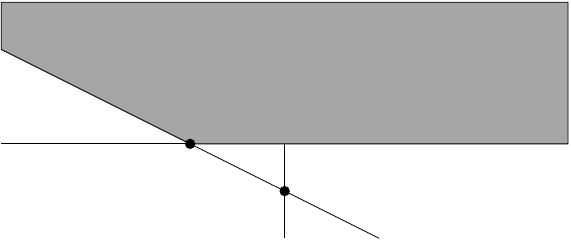
\label{Rabfigure}
\end{center}
\caption{}
\end{figure}

Each element of~$ \Rtab {\a} {\b} $ is a rigid function 
as described in~\eqref{eq:annulus-fun} on (a possibly
narrower) $ \ee $.
One can visualise the conditions on the $ a_m $ by drawing the
region in the plane in which the pairs $ (m,v(a_m)) $ for non-zero $ a_m $
can lie, as in Figure~\ref{Rabfigure}.
This is convenient because the multiplication $ a_m t^m a_n' t^n $ in~$ \Rthat $
corresponds to the addition of vectors~$ (m, v(a_m)) + (n, v(a_n') ) $
in the plane.

The $ \Rtab {\a} {\b} $ are our local analogues of the $ \W{\a,\b} $
of Definition~\ref{Wab-def}, but they use only one
variable, and the convergence condition is for negative powers
of that variable.
They show up quite naturally. For example,
we have seen above that an element of~$ \OCq \setminus \pi \OCq $
can be written as~$ t^d b - \pi c $ with~$ b $ in $ \OCq^* $ and~$ c $ in~$ \OCq $.
Its expansion in~$ \Rthat $ has inverse~$ t^{-d} \xi(b)^{-1} (1 + \sum_{m \ge 1} (\pi t^{-d} \xi(b)^{-1} \xi(c))^m ) $,
which is in~$ \Rtab {d/v(\pi)} {-d} $ because $ \xi(b) $ is in~$ R[[t]]^* $ and~$ \xi(c) $
in~$ R[[t]] $.

\medskip

We record the following properties.

\begin{lemma} \label{rtabprop}
  The subsets above have the following properties.
  \begin{enumerate}
  \item
  $ \Rtab {\a} {\b_1} \subseteq \Rtab {\a} {\b_2} $ for~$ \b_1 \ge \b_2 $, $ \Rtab {\a_1} {\b} \subseteq \Rtab {\a_2} {\b} $
  for~$ \a_1 \le \a_2 $.

  \item The elements in $ \Rtab {\a} {\b} $ converge for $ p^{-\a^{-1}} <
    |t| < 1 $.

  \item \label{rtabprop2} Multiplication in $ \Rthat $ induces
a map
\begin{equation*}
\Rtab {\a_1} {\b_1} \times \Rtab {\a_2} {\b_2}  \to \Rtab  {\max\{\a_1,\a_2\}} {\b_1+\b_2}
.
\end{equation*}

  \item $ \Rtstar $ is a subring of $ \Rthat $, as are the $\Rtab {\a} 0 $.

  \item $ \Rtab {\a} {\b} $ is an ideal of $\Rtab {\a'} 0 $ if $ \b \ge 0 $ and $ \a \le \a' $.
  
  \item $ \Rtab {\a} 0 $ is $ \pi $-adically complete.

  \item The units of $ \Rtab {\a} 0 $ are those $ \sum_m a_m t^m $ with $ a_0 $ in $ R^* $.

  \item \label{rtabprop7} If~$ c > 0 $ then $\Rtab {\a} {\b} \cap V_c \subseteq \Rtab {\ta} {\tb} $ if $\tilde{\a} \ge \max\{\a,\a + c^{-1}(\tilde{\b} - \b)\}$, i.e.,
if~$\tilde{\b} \le \b + (\tilde{\a} - \a)c$ and~$\tilde{\a} \ge \a$.

  \item If $x $ is in $ \Rtab {\a} {\b} \cap V_c$, then $x^n $ is in $ \Rtab {\a} {n\b} \cap V_{nc}$.
  \end{enumerate}
\end{lemma}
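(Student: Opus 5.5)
All nine properties come down to the defining inequality $v(a_m)\ge -\a^{-1}(m-\b)$, i.e.\ $v(a_m)\ge \a^{-1}(\b-m)$. It is convenient to picture this, as in Figure~\ref{Rabfigure}, as the statement that the points $(m,v(a_m))$ lie above a line of slope $-\a^{-1}$ through $(\b,0)$. I would prove the parts in order, each being a short coefficientwise argument.

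For (1), note that $\a^{-1}(\b-m)$ is increasing in $\b$. It is also monotone in $\a$ in the right direction once we remember that $v(a_m)\ge0$ always, since coefficients lie in $R$. Indeed, when $m\ge\b$ the bound is non-positive and hence automatic; when $m<\b$ it decreases as $\a$ grows. For (2), for $|t|=p^{-\rho}$ with $0<\rho<\a^{-1}$, the valuation of the $m$-th term is $v(a_m)+m\rho\ge \a^{-1}\b+m(\rho-\a^{-1})$. This tends to $\infty$ as $m\to-\infty$, while for $m\ge0$ convergence is automatic. For (3), in the product $\sum_{k+l=m}a_kb_l$ each summand has valuation at least $\a_1^{-1}(\b_1-k)+\a_2^{-1}(\b_2-l)$. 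I bound both terms below using $\a=\max\{\a_1,\a_2\}$, applying the argument from (1) separately to each factor via $v\ge\max\{0,\cdot\}$, to get $\a^{-1}(\b_1+\b_2-m)$. Convergence of the infinite sum for fixed $m$ holds because the valuations of the summands tend to infinity, which is where $\lim_{m\to-\infty}a_m=0$ is used. Parts (4) and (5) follow from (1) and (3), using $\Rtab {\a} 0 \cdot \Rtab {\a'} 0 \subseteq \Rtab {\a'} 0$. For (6), the conditions are coefficientwise and closed, and $\Rthat$ is $\pi$-adically complete.

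For (7), one direction is clear: a unit needs $a_0\in R^*$, since reducing modulo the ideal of elements with positive-valuation coefficients, or reading off the constant term modulo $\pi$, sends the ring to $k[t]$-type data. For the converse, write $x=a_0(1-y)$ with $y\in\Rtab {\a} 0$ having $y_0\in\pi R$. The delicate point is that $y$ need not be small: its coefficients for $m>0$ are arbitrary elements of $R$. I would therefore split $y=y_-+y_+$ with $y_-\in\sum_{m\le0}$ and $y_-\in V_c$ for some $c>0$; this holds because for $m\le0$ we have $v\ge\a^{-1}(-m)$ and $v(y_0)>0$. The power-series part $1-y_+$ is a unit of $R[[t]]\subseteq\Rtab {\a} 0$, since its constant term is $1$. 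Writing $1-y=(1-y_+)(1-(1-y_+)^{-1}y_-)$, I then invert the second factor by a geometric series. This converges by (6) because $(1-y_+)^{-1}y_-\in\Rtab {\a} 0\cap V_{c'}$ after checking its valuation is positive; the positive-$m$ part of the product could have valuation $0$, and this is the step I expect to be the main obstacle. If it fails, I would reduce modulo $\pi$ instead: the image in $k[[t]]$ is a power series with nonzero constant term, so it is invertible. Lifting that inverse to $u\in R[[t]]$ gives $xu\in 1+\pi\Rtab {\a} 0$ after using the bounds, and this element is invertible by completeness.

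For (8), take $\tilde\b\le\b+(\tilde\a-\a)c$ and $\tilde\a\ge\a$. The inequality $v\ge \max\{c,\a^{-1}(\b-m)\}$ dominates $\tilde\a^{-1}(\tilde\b-m)$, which I check by a convexity argument. Since the target is linear in $m$, it suffices to compare at the crossover point $m=\b-\a c$ and at the asymptotic slopes: the slope $-\tilde\a^{-1}\ge-\a^{-1}$ handles $m\to-\infty$, and positivity of $c$ together with $v\ge0$ handles large $m$. This is exactly the one-variable analogue of \eqref{alphabetac}, and the same calculation carries over. Finally, (9) follows from (3) with $\a_1=\a_2$ by induction, together with the obvious fact that $V_c V_{c'}\subseteq V_{c+c'}$.
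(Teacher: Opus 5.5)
There is a genuine problem with part (5), and a faulty justification in part (7). The rest of your proposal is fine and just writes out what the paper dismisses as ``easy''. In particular, your kink-and-slopes check for (8) is exactly the paper's picture argument with the vertex at $(\beta-c\alpha,c)$.

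\textbf{Part (5).} It does not follow from (1) and (3). Part (3) only gives $R_{\alpha',0}((t))\cdot R_{\alpha,\beta}((t))\subseteq R_{\max\{\alpha,\alpha'\},\beta}((t))=R_{\alpha',\beta}((t))$, which is strictly larger than $R_{\alpha,\beta}((t))$ when $\alpha<\alpha'$. In fact (5) is false as stated. Take $R=\mathbb{Z}_p$, $\alpha=1$, $\alpha'=2$, $\beta=0$. Then $1\in R_{1,0}((t))$ and $pt^{-2}\in R_{2,0}((t))$, since $v(p)=1\ge 2/2$. But the product $pt^{-2}$ is not in $R_{1,0}((t))$, which would need $v(p)\ge 2$. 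What your argument does prove is a corrected version: for $\beta\ge0$, $R_{\alpha,\beta}((t))$ is an ideal of $R_{\alpha,0}((t))$. More generally, $R_{\alpha,\beta}((t))$ is stable under multiplication by $R_{\alpha',0}((t))$ whenever $\alpha'\le\alpha$. Part (5) is never used later in the paper, so nothing downstream breaks, but you should have checked the direction of the inequality rather than asserting the consequence.

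\textbf{Part (7).} The conclusion is right but the justification is not. The $\pi$-adic completeness in (6) refers to the ideals $\pi^nR_{\alpha,0}((t))$. An element of $R_{\alpha,0}((t))\cap V_c$ need not have powers tending to $0$ in that topology. For example, $z=pt^{-1}$ lies in $R_{1,0}((t))\cap V_1$, yet $z^n=p^nt^{-n}\notin pR_{1,0}((t))$ for every $n$. So ``converges by (6)'' is not a valid step. For the same reason your fallback claim $xu\in 1+\pi R_{\alpha,0}((t))$ fails: take $x=1+pt^{-1}$; for any lift $u\equiv 1$, the coefficient of $t^{-1}$ in $xu-1$ has valuation exactly $1$.

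The correct argument is the geometric-series argument the paper uses to invert elements of $\widehat{R((t))}\setminus\pi\widehat{R((t))}$. Since $z^n\in V_{nc}$, the series $\sum_n z^n$ converges in $\widehat{R((t))}$. Its partial sums lie in the ring $R_{\alpha,0}((t))$, and that set is closed under coefficientwise limits. This closedness, rather than $\pi$-adic completeness, is what your argument for (6) actually establishes. Hence the inverse lies in $R_{\alpha,0}((t))$.

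Finally, the obstacle you anticipated does not occur. Each coefficient of $(1-y_+)^{-1}y_-$ is a convergent sum of products of an element of $R$ with one of valuation at least $c$. So this product lies in $V_c$ in all degrees, positive ones included.
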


\begin{proof}
All are easy to prove. For part~\ref{rtabprop7}, the area corresponding
to $ \Rtab {\a} {\b} \cap V_c $ has a vertex at~$ (\b,0) + c (-\a,1) = (\b - c \a , c) = (\b + (\tilde\a - \a) c,0) + c (-\tilde\a,1) $,
and this area is contained in the one corresponding to~$ \Rtab {\tilde\a} {\tilde\b} $
under the given assumptions on~$ \tilde\a $ and $ \tilde\b $ (see Figure~\ref{Rabcfigure}).
\end{proof}

\begin{figure}[t]
\begin{center}
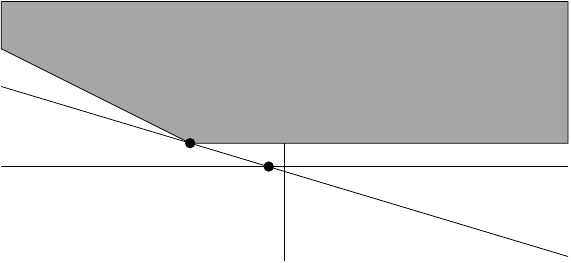
\caption{}
\label{Rabcfigure}
\end{center}
\end{figure}

\begin{remark} \label{expansion-slopes}
(1)
We can now make the statement in Lemma~\ref{expansion-lemma}
more precise for each~$ \ex(x_i) $ (but the same method works
for any element of~$ K(C_K) $ that is generically defined on~$ C_k $).
Suppose that a Zariski open~$ \Spec(B) \subseteq C $ contains~$ \Spec(\O_{C,q}) $,
with~$ B $ a quotient of a polynomial
ring~$ R[y_1, \dots, y_l] $. Then~$ \ex(x_i) $ is in~$ \Rtab {\a} {\b} $ with~$ \a $
and~$ \b $ determined as follows.
In~$ K(C_K) $, write~$ x_i = \frac{b_1} {b_2} $ for some~$ b_1 $ and~$ b_2 \ne 0 $ in~$ B $.
If~$ \ol{b_2} $ is zero in~$ B/(\pi) $, then we may take out
a factor~$ \pi $ (see Algorithm~\ref{KtoRbasis}(2) how to do this algorithmically).
Because~$ C_k \cap \Spec(B)$ is non-empty, the generic point~$ \eta $ of~$ C_k $ is in~$ \Spec(B) $,
so that~$ x_i $ is generically defined on~$ C_k $. Therefore
any such factor~$ \pi $ in~$ b_2 $ must match one in~$ b_1 $,
and iterating this we may assume that the reduction of~$ \ol{b_2} $ in~$ B/(\pi) $
is non-zero.
Then the expansion of~$ b_2 $ is in~$ t^{\b_2} \Rtab {\a} 0 $ with~$\b_2 $
the number of zeroes of~$ b_2$ in this residue disc, i.e., the
degree of the distinguished polynomial of~$ \ex(b_2) $,
and~$ \a = - c^{-1} $
for~$ c $ the smallest of the negative slopes of the Newton polygon
of~$ \ex(b_2) $. (If the lowest non-zero term in~$ \ex(b_2) $ has
degree~$ \b_2 $ then we can take~$ \a = 0 $ as in Notation~\ref{Ralphabetanotation}(2).)
Similarly~$ \ex(b_1) $ is in~$t^{\b_1} R[[t]] $ with~$ \b_1 $ the number of zeroes of~$ b_1 $ in this
residue disc. According to Lemma~\ref{rtabprop}(3) and~(7),
we then have~$ \xi(x_i) = \xi(b_1) \xi(b_2)^{-1} $ in~$ \Rtab {\a} {\b_1-\b_2} $.

Note that a calculation of~$ \ex(b_2) $ with coefficients in~$ R/(\pi^m) $
either determines~$ c $ or gives a negative upper bound for it, hence
a positive upper bound for~$ \a = - c^{-1} $.
If there is a term~$ a_m t^m $ with~$ m < \b_2 $ in this expansion
for which the point~$ (m, v(a_m)) $ is on or below the line segment connecting~$ (\b_2, 0) $
and~$ (0, (m+1) e^{-1} ) $, then we can determine~$ c $. If this
is not the case, we can conclude that~$ c < - (m+1) e^{-1} \b_1 $,
hence~$  \a > e \b_1 (m+1) ^{-1} $.

(2)
If~$ q $ is in~$ \Spec(A) $, then we can take~$ B = A $, $ b_2 = 1 $, and
find (again) that~$ \xi(x_i) $ in~$ R[[t]] $.
It is easy to see that here the expansion of each element of~$ \Ahat $
is in~$ R[[t]] $.
\end{remark}

We now fulfill an earlier promise, and show that~$ \Rxdag $ maps to $ \Rtstar $
under the expansion map~$ \xi $ in~\eqref{expansionmap}.
In particular, each element in $ \Adag $ is mapped to a rigid
function on (a possibly narrower) $ \ee $, which can be described
more precisely using the information from
Remark~\ref{expansion-slopes}.

\begin{prop} \label{expest}
The expansion map $ \ex $ maps $ \Rxdag $ to $ \Rtstar $.
More precisely, 
suppose~$ g(\x) = \sum_D a_D \x^D $
is in~$ \Rxdag $, with $ v(a_D) \ge \a^{-1} \cdot (D - \b) $ 
for some $ \a = (\a_1, \dots, \a_n) $ in $ (\realpos)^n $ and $ \b $ in $ \R^n $.
Denote $ \a' = (\a_1',\dots,\a_n') $  where $\frac{1}{\a_i'} = \ord_t(x_i)$  (if $\ord_t(x_i) = 0$ just take $\a_i'$ negatively large enough).
Then $\ex(g(\x))$ lies in $\Rtab {\cc} {\d} $ with $\cc$ positive, where $\cc$ and $\d$ can be chosen as follows.
\begin{enumerate}
\item If all $\ex(x_i)$ are in $\Rt$, then choose $\cc$ such that $ \frac{1}{\a_i} \ge -\frac{1}{\cc} \ord_t(x_i)$, and  let $\delta = \frac{1}{\cc} \a'^{-1} \cdot \beta$.

\item If each $\ex(x_i)$ is in some $\Rtab {\tilde{\a}} {\tilde{\b_i}} $, then just take $\cc$ such that for each $i$, 
$\cc^{-1} \ge \a_i' ( \tilde{\a} (\a_i'^{-1} - \tilde{\b_i} ) - \a_i^{-1})$, and take $\d $ so that $ - \a^{-1} \cdot \b \ge \cc^{-1} \d$.
\end{enumerate}
\end{prop}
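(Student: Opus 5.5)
The plan is to bound the expansion monomial by monomial and then sum, using the plane picture of Figure~\ref{Rabfigure}: a term $a_m t^m$ corresponds to the point $(m, v(a_m))$, and membership in $\Rtab {\cc} {\d}$ means every such point lies on or above the line $v = -\cc^{-1}(m-\d)$. Write $g(\x) = \sum_D a_D \x^D$. Then $\ex(g) = \sum_D a_D \ex(x_1)^{d_1}\cdots\ex(x_n)^{d_n}$, and this converges $\pi$-adically in $\Rthat$ because $v(a_D)\to\infty$. Since each $\Rtab {\cc} {\d}$ is $\pi$-adically closed (the condition is coefficientwise) and closed under addition, it suffices to find a single pair $(\cc,\d)$ such that every summand $a_D\ex(\x)^D$ lies in $\Rtab {\cc} {\d}$. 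The first assertion, $\ex(\Rxdag)\subseteq\Rtstar$, then follows from either part, because by Notation~\ref{global-notation2} every element of $\Rxdag$ admits some $\a$ and $\b$ of the stated form.

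For part~(1), suppose each $\ex(x_i)$ is in $\Rt$. Then $\ex(x_i) = t^{\ord_t(x_i)}u_i$ with $u_i \in R[[t]]$, so $\ex(\x)^D \in t^{\sum_i d_i\ord_t(x_i)}R[[t]]$. Each summand therefore has all its points at $m \ge \sum_i d_i \ord_t(x_i)$ with valuation at least $v(a_D)\ge \a^{-1}\cdot(D-\b)$. Only the leftmost point matters for the lower-left boundary, so I need
\[
\a^{-1}\cdot(D-\b) \ge -\cc^{-1}\Bigl(\sum_i d_i\ord_t(x_i)-\d\Bigr).
\]
Comparing coefficients of each $d_i$, this follows from $\a_i^{-1}\ge -\cc^{-1}\ord_t(x_i)$ together with the constant-term condition relating $\d$ and $\b$. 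I would use the latter as the definition of $\d$ and reconcile it with the stated form $\cc^{-1}\a'^{-1}\cdot\b$. Points to the right of the leftmost one satisfy the inequality automatically, since the boundary line decreases as $m$ increases.

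For part~(2), Lemma~\ref{rtabprop}(3) gives $\ex(\x)^D \in \Rtab {\tilde\a} {\sum_i d_i\tilde\b_i}$. Multiplying by $a_D$ shifts every point up by $v(a_D)\ge \a^{-1}\cdot D - \a^{-1}\cdot\b$, which amounts to intersecting with $V_c$ for $c = v(a_D)$. I would then apply Lemma~\ref{rtabprop}(7) to trade this vertical shift for a change of slope from $\tilde\a$ to the uniform $\cc$. The condition to check again separates coordinate by coordinate in the $d_i$. The hypothesis on $\cc^{-1}$, which involves $\a_i'$, $\tilde\a$ and $\tilde\b_i$, is exactly what makes the $d_i$-coefficient inequality hold, and the condition $-\a^{-1}\cdot\b\ge\cc^{-1}\d$ absorbs the constant term.

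The main obstacle is the bookkeeping in part~(2). The summand is not a single monomial in $t$, and the region for $\Rtab {\tilde\a} {\cdot}$ has a corner at valuation~$0$ on the right, at $m = \sum_i d_i\tilde\b_i$, while extending infinitely to the left. Lemma~\ref{rtabprop}(7) converts the vertical shift into a slope change uniformly in $D$ only when $v(a_D)$ grows linearly in $D$ compared with the horizontal position of that corner. I would verify this linearity via the quantities $\a_i'$, taking care with the case $\ord_t(x_i)=0$, where one takes $\a_i'$ negatively large so that the constraint becomes vacuous. In the end, the stated choices of $\cc$ and $\d$ should be checked against the single vertex computation from the proof of Lemma~\ref{rtabprop}(7).
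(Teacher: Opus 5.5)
Your part~(1) follows the paper's argument: $a_D\ex(\x)^D$ starts in degree $\a'^{-1}\cdot D$, so you test that one point and compare coefficients of the $d_i$. The reconciliation you postpone cannot be carried out, though. The constant term forces $\cc^{-1}\d\le-\a^{-1}\cdot\b$, so the correct choice is $\d=-\cc\,\a^{-1}\cdot\b$ (or anything smaller). This is what Example~\ref{runningexample3} uses ($\d=2p$), and it is the $\d$-condition printed in~(2). The printed $\d=\cc^{-1}\a'^{-1}\cdot\b$ fails in general. Take $\ex(x)$ with leading term $t^{-2}$, $\a=\b=1$, $g=x$ and $\cc=2$: then $\d=-1$, but because of its $t^{-2}$ term $\ex(g)\notin R_{2,-1}((t))$. (The paper's proof writes $\d=\cc\,\a^{-1}\cdot\b$, which is valid only when $\a^{-1}\cdot\b\le0$.)

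For~(2) your route differs from the paper's. The paper again tests only the point of degree $D\cdot\ord_t(\x)$, which is where the $\a_i'$ come from, even though in~(2) $\ex(\x)^D$ may have infinitely many negative powers of $t$. Your corner argument via Lemma~\ref{rtabprop}(3) and~(7) is the sound one in that situation. Its last step fails, however: you claim the printed hypothesis on $\cc^{-1}$ ``is exactly what makes the $d_i$-coefficient inequality hold''. The summand $a_D\ex(\x)^D$ lies in the region of $R_{\tilde\a,D\cdot\tilde\b}((t))$ translated up by $v(a_D)$. Testing its left edge and its vertex $(D\cdot\tilde\b,v(a_D))$ gives the conditions $\cc\ge\tilde\a$, $\a_i^{-1}+\cc^{-1}\tilde\b_i\ge0$ for all $i$, and $\cc^{-1}\d\le-\a^{-1}\cdot\b$. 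Merely intersecting with $V_{v(a_D)}$, as you propose, gives the more restrictive $\tilde\b_i+(\cc-\tilde\a)\a_i^{-1}\ge0$. No $\a_i'$ occurs anywhere, and the printed condition is a lower bound on $\cc^{-1}$, so it never enforces $\cc\ge\tilde\a$.

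As printed, (2) is in fact false. The expansion $\ex(x)=\sum_{n\ge0}p^nt^{-n-1}$ of $1/(t-p)$ from Example~\ref{bad-pole-example} lies in $R_{1,-1}((t))$, and $g=px$ satisfies the hypothesis with $\a=2$, $\b=-1$. The printed inequality then holds for every sufficiently small $\cc>0$. Yet $\ex(g)$ has a coefficient of valuation $n+1$ at $t^{-n-1}$, so it lies in no $R_{\cc,\d}((t))$ with $\cc<1$.

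So carry out the vertex check explicitly and state~(2) with the conditions it actually produces. These can always be met by taking $\cc$ large. Combined with Remark~\ref{expansion-slopes}, which puts each $\ex(x_i)$ in some $R_{\tilde\a,\tilde\b_i}((t))$, your argument still proves $\ex(\Rxdag)\subseteq\Rtstar$.
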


\begin{proof}
(1)
Note that each $ \ex(x^D) $ is in $ t^{{\a'}^{-1} \cdot D} R[[t]] $.
We have $ v(a_D) \ge \frac{1}{\a} \cdot (D - \b)  \ge - \frac{1}{\cc} ({\a'}^{-1} \cdot D - \d)  $, with $\d = \cc {\a}^{-1} \cdot \b$,
so that $ a_D t^{{\a'}^{-1} \cdot D} $ is in $ \Rtab {\cc} {\d} $.
The same then holds for each $ \ex(a_D x^D ) $, hence for $ \ex(g(x)) $.

(2)
By Lemma~\ref{rtabprop}\eqref{rtabprop2} we see that $\ex(\x^D)$ lies in $\Rtab {\tilde{\a}} {D \cdot \tilde{\b}} $. If we write $\ex(\x^D) = \sum_j b_jt^j$ with $j \ge D \cdot \ord_t(\x)$ and $v(b_j) \ge -\tilde{\a}^{-1}(j - D \cdot \b)$, we see in order that $a_D\ex(\x^D)$ lies in $\Rtab {\cc} {\d} $, it is implied by 
$$ \a^{-1} \cdot (D - \b) - \tilde{\a}^{-1} (D \cdot \ord_t(\x) - D \cdot \tilde{\b}) \ge -\cc^{-1} (D \cdot \ord_t(\x) - \d) .$$ 
Therefore, $\cc$ should be chosen to satisfy $\cc^{-1} \ge \a_i' ( \tilde{\a} (\a_i'^{-1} - \tilde{\b_i} ) - \a_i^{-1})$, 
and $\d$ to satisfy $ - \a^{-1} \cdot \b \ge \cc^{-1} \d$.
\end{proof}

\begin{example} \label{runningexample3}
In Example~\ref{runningexample2}, for the curve defined by $ x^2 - y^2 -1 $
in $ \Zp[x,y] $ with $ p \ne 2 $, we had an element $ \sum_{i,j} a_{i,j} x^i y^j $
of $ \Zp\la x, y \ra ^\dagger $ for which~$ v(a_{i,j}) \ge \frac{i+j}{4p} + \frac12 $,
so that  $\a^{-1} = (\frac{1}{4p}, \frac{1}{4p})$ and $\b = (-p,-p)$.
Both $ x $ and $ y $ have poles of order 1 at each of the two
points at infinity, so that $\a_1' = \a_2' = -1$. If, for each of those ends, we centre~$ t $ at such a point~$ Q $,
then Proposition~\ref{expest}(1) applies, and the largest $ \cc $ we can take
is~$ 4p $, so that $ \ex(\sum_{i,j} a_{i,j} x^i y^j ) $
is in $ \Rtab 4p 2p $ for $ R = \Zp $.
\end{example}

\begin{example} \label{bad-pole-example}
Let us consider the curve defined by $xy - 1$ in $\Zp[x,y]$ with $p \not= 2$.
As we have already shown in Example~\ref{runningexample5},
the final solution $s = \sum_{i,j\ge 0} a_{i,j}x^iy^j$ satisfies $v(a_{i,j}) \ge \frac{i + j - p}{2p - 1}$.
We can take $\a^{-1} = (\frac{1}{2p - 1}, \frac{1}{2p - 1})$ and $\b = (-\frac{p}{2}, -\frac{p}{2})$.
There are two missing points at infinity: $[1:0:0]$ and $[0:1:0]$.
If we choose the local parameter as $t = \frac{1}{x}$, centered at the point $[1:0:0]$,
then $x$ has a pole of order one at this point and $y$ has a zero of order one at this point.
In this case, we can take $\a'^{-1} = (-1, -\e)$ where $\e > 0$ is close to $0$.
Now $x = \frac{1}{t} \in \Rtab {0} {-1} $ and $y = \frac{1}{x} = t \in \Rtab {0} {1} $.
According to Proposition~\ref{expest}(1) we can take $\cc$ as $2p - 1$ and $\d = \frac{p(1 + \e)}{2(2p - 1)}$.

On the other hand, if we centre~$ t $ at another point~$ Q $,
then we get a more complicated expansion.
For example, if we choose another local parameter $t = \frac{1}{x} + p$,
then $x = \frac{1}{t - p} = \sum_{n \ge 1} \frac{p^n}{t^{n+1}} \in \Rtab {1} {-1} $, and
$y = \frac{1}{x} = t - p \in \Rtab {1} {1} $. 
By Proposition~\ref{expest}(2), one can take $\cc^{-1} = 1 + \frac{2p}{\e(2p - 1)}$, which could be arbitrarily large.
\end{example}

\section{The local Frobenius} \label{local-frobenius}

We keep the setup as in Section~\ref{expansions}, and
fix an end~$ \ee $ together with a parameter~$ t $. We recall
that the original~$ K $ may
have been replaced with a finite extension of it.

Because the expansion map~\eqref{expansionmap} is a ring homomorphism, it will transform
the system~\eqref{globalG} with its solution~$ \sol $
into a system~$ H(\S) $ with a solution~$ \soll = \ex(\sol) $.
This determines the expansion
\begin{equation} \label{local-frob-expansion}
\ex(\phidagA(\x)) = \ex(\psi(\x)) + \ex(\psi(P)) \soll 
,
\end{equation}
and 
Corollary~\ref{estimate-cor} together with Proposition~\ref{expest} provide estimates on the coefficients
in~$ \soll $ in terms of the $ \Rtab {\a} {\b} $.
This method is essentially global.

However, unless there are many ends,
it should be much more efficient
if we can compute~$ \soll $ directly from~$ H(\S) $, which is
a local method.
For example, the system~\eqref{globalG} becomes simpler after choosing a local parameter.
That this can be done is the content of Theorem~\ref{locallift0},
and in Proposition~\ref{hensel2} and its corollaries we then discuss estimates on the coefficients in the expansions
obtained this way.
As mentioned at the beginning of Section~\ref{expansions}, the estimates provided by this local method can be quite different from (and are usually better)
than the ones obtained from the global method
(see Examples~\ref{runningexample3}, \ref{runningexample4} and~\ref{weierstrass-example}).

\begin{theorem} \label{locallift0}
Let $ P $ and $ \Delta_2,\dots,\Delta_n $ and
$ G(\S) $ be as in Theorem~\ref{globallift0},
and let~$ \phihatA $ be the resulting $ \s $-linear endomorphism
of $ \Adag $.
Then the expansions of the~$ \phihatA(x_i) $ can be computed directly.
More precisely, if we let~$ H(\S) $ be obtained from~\eqref{globalG}
by applying $ \ex $ to the coefficients, then~$ H(\S) = 0 $ has a unique
solution $ \soll $ in $ (\pi \Rthat)^{n-1} $, which equals $ \xi(\sol) $ with
$ \sol $ as in Theorem~\ref{globallift0},
and~$ \ex(\phidagA(\x)) = \ex(\psi(\x)) + \ex(\psi(P)) \soll $.
\end{theorem}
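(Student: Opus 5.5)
The plan has three parts. First, $\soll=\ex(\sol)$ is a solution. Second, it is the only one in $(\pi\Rthat)^{n-1}$, which I get from Hensel's lemma in $\Rthat$. Third, the formula for the expansions follows from $\ex$ being a ring homomorphism.

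\emph{Existence.} The expansion map $\ex:\Rx\to\Rthat$ of~\eqref{expansionmap} is a continuous ring homomorphism; it is $\pi$-adically continuous because it extends $A\to\Rthat$ by completeness. Let $\sol$ be the solution of $G(\sol)=0$ from Theorem~\ref{globallift0}. Applying $\ex$ entrywise gives $H(\ex(\sol))=\ex(G(\sol))=0$, since $H$ is obtained by applying $\ex$ to the coefficients of $G$, which lie in $R[\x]$. Because $\sol$ has entries in $\pi\Rx$, the vector $\ex(\sol)$ has entries in $\pi\Rthat$.

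\emph{Uniqueness.} The ring $\Rthat$ is $\pi$-adically complete and separated. The system $H$ has coefficients in $\Rthat$, and $H(0)=\ex(G(0))\equiv 0$ modulo $\pi$. For the Jacobian, $\Jac_H(0)=\ex(\Jac_G(0))$, and $\Jac_G(0)\equiv\Id_{n-1}$ modulo $\pi$, so $\Jac_H(0)\equiv\Id_{n-1}$ modulo $\pi\Rthat$. I then run the usual multivariable Hensel argument on two solutions $u,u'$ in $(\pi\Rthat)^{n-1}$. Taylor expansion gives $0=H(u')-H(u)=\Jac_H(u)(u'-u)+Q$, where $Q$ has entries in the ideal generated by products $(u'_i-u_i)(u'_j-u_j)$. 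Since $\Jac_H(u)\equiv\Jac_H(0)\equiv\Id$ modulo $\pi$, it is invertible over $\Rthat$. So if $u'-u\in\pi^m\Rthat^{n-1}$ with $m\ge1$, then $u'-u\in\pi^{2m}\Rthat^{n-1}$, and iterating with separatedness gives $u=u'$. The same Newton iteration from $z=0$ converges $\pi$-adically in $\Rthat$, which gives existence independently, but I do not need that.

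\emph{Expansion formula and main obstacle.} By~\eqref{eq:phiact} and Corollary~\ref{phidagA-phihatA}, $\phidagA(x_i)$ is the image in $\Adag$ of $\psi(x_i)+\sum_j\psi(P)_{i,j}s_j$. The expansion on $\Adag$ is induced from $\Rxdag\subseteq\Rx$ and kills $(f_2,\dots,f_n)$, so applying the homomorphism $\ex$ gives $\ex(\phidagA(\x))=\ex(\psi(\x))+\ex(\psi(P))\soll$. The only delicate point is that $\ex$ on $\Rx$ is well defined and compatible with the map $\Adag\to\Rthat$ used to define expansions. This requires $\ex(x_i)\in\Rthat$, which Lemma~\ref{expansion-lemma} gives via $A\subseteq\OCgen$ and $\Abar\ne0$, together with $\pi$-adic completeness of $\Rthat$ to extend from $R[\x]$ to $\Rx$. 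I expect this compatibility to be the main thing to check carefully; the Hensel uniqueness itself is routine once the reduction $\Jac_H(0)\equiv\Id$ is recorded.
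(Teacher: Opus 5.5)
Your proposal is correct and follows essentially the same route as the paper. Existence comes from $\ex(\sol)$, because $\ex$ is a ring homomorphism. Uniqueness comes from Hensel's lemma in the $\pi$-adically complete ring $\Rthat$, using $H(0)\equiv 0$ and $\Jac_H(0)=\ex(\Jac_G(0))\equiv\Id$ modulo~$\pi$, which you spell out explicitly where the paper only cites it. The formula follows by applying $\ex$ to~\eqref{eq:phiact}. The paper additionally invokes Proposition~\ref{expest} to place $\ex(\sol)$ and the coefficients of $H$ in $\Rtstar$; this is not needed for the statement as formulated, but it feeds the later estimates.
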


\begin{proof}
Recall that
$ \phihatA $ is induced by the $ \s $-linear endomorphism~$ \phihat $
of $ \Rxdag $ mapping $ g(\x) $ to $ g^\s(\psi(\x) + \psi(P) \sol ) $,
with $ \sol $ the unique solution in $(\pi \Rxdag)^{n-1}$ of
$ G(\S) = 0 $. 
So $ \ex(\phidagA(\x)) = \ex(\psi(\x)) + \ex(\psi(P)) \ex(\sol) $.
Because $ \ex(\sol) $ is in $ (\pi \Rtstar )^{n-1} $
by Proposition~\ref{expest}, it suffices to show that  
$ H(\S) = 0 $ has a unique solution $ \soll $ in
$ (\pi \Rthat)^{n-1} $.
By that proposition the coefficients in $ H(\S) $
are in $ \Rtstar $, and $ H(\S) $
has inherited the following properties from $ G(\S) $:
\begin{itemize}
\item
$ H(0) \equiv 0 $ modulo $ \pi \Rtstar $;

\item
$ \Jac_H(0)  = \ex\( \Jac_G(0) \)$,
hence $\Jac_H(0) \equiv\Id_{n-r} $  modulo $ \pi \Rtstar $.
\end{itemize}
By Hensel's lemma for the $ \pi $-adically complete ring
$ \Rthat $, there is a unique $ \soll $ with coefficients in
$ \pi \Rthat $. But $ \xi(\sol) $ provides such a solution by Proposition~\ref{expest},
so they must be equal. The final formula follows by applying
$ \xi $ to~\eqref{eq:phiact}.
\end{proof}

\begin{remark}\label{locnewtonrem}
Applying $ \ex $ to the coefficients of~\eqref{globalG}
maps $ f^{\pa} $ and the $ f_j^{\pa} $ to~0, so that
$ H(\S) = f^\s (\ex(\psi(\x)) + \ex(\psi(P))\S ) $ is determined
by~$ f $ and~$ P $ alone.
\end{remark}

\begin{remark} \label{H-disc-remark}
In Theorem~\ref{locallift0} we consider an end~$ \ee $, i.e.,
the reduction~$ q $ of the section~$ Q $ defined by~$ t=0 $ for the parameter~$ t $ on
the corresponding residue disc
is not in~$ \Spec(\Abar) \subseteq C_k $. The corresponding
statement for~$ q $ in~$ \Spec(\Abar) $ is that~$ H(\S) $ has coefficients
in~$ R[[t]] $ and has a unique solution~$ \soll $ in~$ (\pi R[[t]])^{n-1} $,
which equals~$ \ex(\s) $, so that~$ \ex(\phidagA(\x)) = \ex(\psi(\x)) + \ex(\psi(P)) \soll $
in~$ R[[t]]^n $.
\end{remark}

In order to give estimates on the coefficients involved in the
solution $ \soll $ described in Theorem~\ref{locallift0} by means of Newton iteration, 
we prove the local analogues of Proposition~\ref{itprop} and related results.
Quite analogous to what we did in Section~\ref{global-frobenius},
we want to start the
Newton iteration for solving the system in Theorem~\ref{locallift0}
with~$ z $ having entries 0.

\begin{prop} \label{hensel2}
Let $ H_{r+1}(\S),\dots,H_n(\S) $ be in $ \Rtstar [\S] $
of total maximal degree $ N \ge 1 $, obtained as in \ref{locallift0}.
Suppose there are~$z$ in $ \Rtstar^{n-r}$, $ \mu, \nu, \rho$ in~$\R$ with $ \mu >0 $, and $b, c >0$ such that:
\begin{enumerate}

\item
$z$ has entries in $ \Rtab {\mu} {\nu} \cap V(b)$;

\item
each $H_j(z)$ lies in $\Rtab {\mu} {\rho} \cap V(c)$;

\item
the entries of $\Jac_H(z)$ are in $\Rtab {\mu} {0} $ and $\det(\Jac_H(z)) $ is in $ \Rtab {\mu} {0} ^*$;

\item
$\rho \le \nu$.
\end{enumerate}
Let $\e = -\Jac_H(z)^{-1} H(z)$ and $z'= z + \e$. Then (1) through~(4)
hold for $z'$ instead of~$z$ if we replace $\mu$ with $\mu'$, 
$b$ with $b' = \min\{b,c\}$,
$ c $ with~$ c' = 2c $, 
$ \nu$  with~$\nu' = \nu + (\mu' - \mu)b'$, 
and~$ \rho $ with~$ \rho' = \rho + (\mu' - \mu) \min\{ |I - J| b + |J|c\}$.
Here $\mu'$ can be determined as follows.

Write $H_j(\S) = \sum_{l=0}^N \sum_{|I| = l}H_{j,I}(t)\S^I$ for some $N \ge 2$.
Assume that each~$H_{j,I}(t)$ with~$ l = |I| \ge 2 $ lies in $\Rtab {\mu} {\b_l} $ for some $\b_l$ in $\mathbb R$.
Then
\begin{equation*}
\mu' = \mu +
\max_{2 \le l \le N \atop j=1, \dots, n-r} \maxp_{ |I| = l \atop 0 < K \le I - e_j }
\left\{ -\frac{\b_l + (|I| - |K| - 1)\nu + |K| \rho}{ (|I| - |K| - 1)b + |K|c}\right\}
\,,
\end{equation*}
where $e_j$ means the index whose $j$-th component is~1 and the other components are~0.
Moreover, if $b'= b$ then further Newton iteration does not change $\mu', \nu', \rho'$.
\end{prop}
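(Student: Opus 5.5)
The proof will mirror that of Proposition~\ref{itprop} almost line by line, with $\W{\a,\b}$ replaced by $\Rtab {\mu} {\b}$. The inclusion~\eqref{alphabetac} is replaced by Lemma~\ref{rtabprop}\eqref{rtabprop7}: $\Rtab {\mu} {\b} \cap V_c \subseteq \Rtab {\mu'} {\b + (\mu'-\mu)c}$ for $\mu' \ge \mu$. Products are handled by Lemma~\ref{rtabprop}\eqref{rtabprop2}. The one sign change to keep track of is that here the shift in the second index is $+(\mu'-\mu)c$, whereas globally it was $-\l c$; this is why $\nu'$ and $\rho'$ increase.

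First I control $\e$. By~(3) and Lemma~\ref{rtabprop}(7), $\Jac_H(z)^{-1}$ has entries in $\Rtab {\mu} {0}$ (adjugate divided by a unit). By~(2) and Lemma~\ref{rtabprop}(3), $\e$ has entries in $\Rtab {\mu} {\rho} \cap V(c)$. Next I Taylor-expand $H_j$ and the Jacobian around $z$:
\[
H_j(z') = \sum_{l\ge2}\sum_{|I|=l}\sum_{0\le K\le I,\ |K|\ge2}\tbinom{I}{K} H_{j,I}\, z^{I-K}\e^K,
\]
using that the terms of order $0$ and $1$ in $\e$ cancel by the definition of $\e$. For the Jacobian, $\Jac_H(z')_{i,j} - \Jac_H(z)_{i,j}$ is a sum of terms $I_j\binom{I-e_j}{K}H_{i,I}z^{I-e_j-K}\e^K$ with $0<K\le I-e_j$. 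Each such term lies in
\[
\Rtab {\mu} {\b_l + (|I|-|K|-1)\nu + |K|\rho} \cap V\bigl((|I|-|K|-1)b+|K|c\bigr).
\]
With $\mu'$ as defined, Lemma~\ref{rtabprop}\eqref{rtabprop7} puts this term in $\Rtab {\mu'} {0}$. Indeed, the requirement $0 \le \b_l + \dots + (\mu'-\mu)(\dots)$ is exactly the inequality encoded by the maximum defining $\mu'$. The determinant stays a unit because $\e \in V(c)$, so it is unchanged modulo $\pi$ and its constant term stays in $R^*$. This gives~(3).

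For~(2), each term of $H_j(z')$ has $|K|\ge2$, so it lies in $V(2c)$. Applying Lemma~\ref{rtabprop}\eqref{rtabprop7} with $\mu'$ places it in $\Rtab {\mu'} {\rho'}$ for $\rho'$ as stated. For~(1), $z' = z+\e \in \Rtab {\mu} {\nu} \cap V(b')$, using $\rho\le\nu$ and Lemma~\ref{rtabprop}(1), and this lies in $\Rtab {\mu'} {\nu + (\mu'-\mu)b'}$. For~(4), I will show $\rho' \le \rho + (\mu'-\mu)c$ by the same reindexing trick as in the proof of Proposition~\ref{itprop}: write $K' = K + e_j$ to convert the $\mu'$-inequalities, which range over $0<K\le I-e_j$, into the range $|K'|\ge 2$ needed for $\rho'$. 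Combined with $b'\le c$, this gives $\rho'\le\nu'$.

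The main obstacle is the stability claim and getting the precise form of $\rho'$ right. The stated $\min\{|I-J|b+|J|c\}$ is somewhat ambiguous, and the natural choice is a minimum over the pertinent $I$, $J$ with $|J|\ge2$. For stability when $b'=b$, I substitute $\nu'$ and $\rho'\le\rho+(\mu'-\mu)c$ into the maximum defining the next $\mu''$. Each numerator then increases by at least $(\mu'-\mu)$ times the corresponding denominator, using $b'=b$. So every expression becomes at most $-(\mu'-\mu)+(\mu'-\mu)=0$ relative to $\mu'$, hence $\mu''=\mu'$. Since $c$ doubles while $b'$ stays fixed, the same argument repeats at every subsequent step.
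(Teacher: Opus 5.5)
Your route is the paper's: Taylor-expand $H_j$ and $\Jac_H$ at $z$, bound the terms with Lemma~\ref{rtabprop}(3),(9), pass to $\mu'$ with Lemma~\ref{rtabprop}(8), and reindex $K=K_0+e_j$ as in Proposition~\ref{itprop}. Your handling of $\e$ and of~(3) is fine. The gap is that you carried over the sign of the shift but not the direction of the order. In Proposition~\ref{itprop} a larger $\a^{-1}\cdot\b$ is a weaker condition. Here a larger second index is a \emph{stronger} one ($\Rtab {\mu} {\b_1} \subseteq \Rtab {\mu} {\b_2} $ for $\b_1\ge\b_2$). So every inequality in $\nu,\rho$ must be reversed relative to the global proof, and yours are not. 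In~(1), $\rho\le\nu$ gives $\Rtab {\mu} {\nu} \subseteq \Rtab {\mu} {\rho} $. Hence from $\e\in\Rtab {\mu} {\rho} $ you only get $z'\in\Rtab {\mu} {\rho} $ (the paper's $\Rtab {\mu} {\min\{\nu,\rho\}} $), not $z'\in\Rtab {\mu} {\nu} $. With~(4) as printed this step genuinely fails. Take $z=0$ and $H(S)=S-pt^{-1}$ over $\Z_p$, which has no terms of degree $\ge2$, so $\mu'=\mu$. With $\mu=1$, $\rho=0$, $\nu=5$, $b=c=1$, hypotheses (1)--(4) hold, yet $z'=pt^{-1}\notin\Rtab {1} {5} $. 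The induction only closes if~(4) is read as $\nu\le\rho$, which is the faithful translation of $\a^{-1}\cdot\cc\le\a^{-1}\cdot\b$.

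The same reversal affects~(2), (4) and stability. Applied termwise, Lemma~\ref{rtabprop}(8) only puts $H_{j,I}z^{I-K}\e^K$ in $\Rtab {\mu'} {T} $ with $T=\b_l+(|I|-|K|)\nu+|K|\rho+(\mu'-\mu)\bigl((|I|-|K|)b+|K|c\bigr)$, and $T$ depends on the term. To get a common index you already need the reindexing here. With $K=K_0+e_j$, adding $\rho+(\mu'-\mu)c$ to the inequality defining $\mu'$ gives $T\ge\rho+(\mu'-\mu)c$, hence $H_j(z')\in\Rtab {\mu'} {\rho+(\mu'-\mu)c} \cap V(2c)$. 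This is a \emph{lower} bound, and it is the best these estimates give. If $\mu'-\mu=-(\b_2+\rho)/c>0$ comes from $I=2e_j$, $K_0=e_j$, then the term with $K=I$ has exactly $T=\rho+(\mu'-\mu)c$. So your reading $\rho+2c(\mu'-\mu)$ of the minimum is not reached. Your claimed $\rho'\le\rho+(\mu'-\mu)c$ points the wrong way, and it would not help anyway. Upper bounds on both $\rho'$ and $\nu'=\nu+(\mu'-\mu)b'$ give no comparison between them. The stability step needs each numerator $\b_l+(|I|-|K|-1)\nu'+|K|\rho'$ to \emph{increase} by at least $(\mu'-\mu)\bigl((|I|-|K|-1)b+|K|c\bigr)$, i.e.\ it needs $\rho'\ge\rho+(\mu'-\mu)c$, which is what the paper uses. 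With $\nu\le\rho$ and $\rho'=\rho+(\mu'-\mu)c$ everything closes. Then $\nu'=\nu+(\mu'-\mu)b'\le\rho'$ because $b'\le c$, and the stability computation goes through as you sketched it.
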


\begin{proof}
Expanding $ H_j(z')$ and $\Jac_H(z')$ with $z' = z + \e$ gives
\begin{equation*}
H_j(z') = \sum_{l \ge 2} \sum_{|I| = l} \sum_{0 < J < I, |J| \ge 2} \binom{I}{J} H_{j, I}(t) z^{I-J}\e^J,
\end{equation*}
\begin{equation*}
\Jac_H(z')_{i,j} = \Jac_H(z)_{i,j} + \sum_{l \ge 2} \sum_{|I| = l} H_{r+i,I}(t) i_{r+j} \sum_{0 < K \le I - e_j} \binom{I-e_j}{K} z^{I -e_j-K}\e^K
.
\end{equation*}
By the assumptions, together with parts~(3) and~(9) of Lemma~\ref{rtabprop}, each term $H_{r+i,I}(t) z^{I - e_j -K}\e^K$ lies in
\begin{equation*}
\Rtab {\mu} {\b_l + |I-K-e_j|\nu + |K|\rho} \cap V(|I-K-e_j|b + |K|c)
.
\end{equation*}
If we set 
\begin{equation*}
\mu' = \mu +
\max_{2 \le l \le N \atop j=1, \dots, n-r} \maxp_{ |I| = l \atop 0 < K \le I - e_j }
\left\{ -\frac{\b_l +  (|I| - |K| - 1)\nu + |K| \rho}{ (|I| - |K| - 1)b + |K|c}\right\}
\end{equation*}
then from Lemma~\ref{rtabprop}(8) we see that $\Jac_H(z')$ lies in $\Rtab {\mu'} {0} ^{*}$. Thus (3) is proven.

For (1), we note that we already have $z$ in $\Rtab {\mu} {\nu} \cap V(b)$ and $\e$ in $\Rtab {\mu} {\rho} \cap V(c)$,
so it follows that $z'$ lies in $\Rtab {\mu} {\min \{\nu, \rho\}} \cap V(\min \{b,c\})$.
Because we already defined~$\mu'$ above, by Lemma~\ref{rtabprop}(8)
we know that $z'$ lies in $\Rtab {\mu'} {\nu'} \cap V(b')$ where $b' = \min \{b,c\}$ and $\nu' = \nu + (\mu' - \mu)b$.

For part (2) the proof is similar. In the expansion of $H_j(z')$, each term $z^{I-J}\e^J$ is in 
$\Rtab {\mu} {|I-J|\nu + |J|\rho} \cap V(|I-J|b + |J|c)$. This is already in $V(c')$ since $c' = 2c$ and only 
terms of degree at least~2 show up in the expansion of $H_j(z')$.
Again, by Lemma~\ref{rtabprop}(8), $H_j(z')$ is in $\Rtab {\mu'} {\rho'} $ 
with $\rho' = \rho + (\mu' - \mu)\min\{|I-J|b + |J|c\}$.

Finally notice that if~$b' = b$, then 
$$ \mu'' - \mu' = \maxp \left\{-\frac{\b_l +  (|I| - |K| - 1)\nu' + |K|\rho'}{ (|I| - |K| - 1)b' + |K|c'}\right\} \le 0\,,$$
which is equivalent to saying
$ \min\{\b_l + |I-K-1|\nu' + |K|\rho'\} \ge 0.$
Now combining the definition of $\mu' - \mu$ we see also
\begin{alignat*}{1}
& \phantom{\,\,>\,\,} \b_l +  (|I| - |K| - 1)\nu' + |K|\rho'
\\
& \ge \b_l +  (|I| - |K| - 1)\nu + |K|\rho + (\mu' - \mu)( (|I| - |K| - 1)b' + |K|c) 
\end{alignat*}
is non-negative since $b' = b$. 
This means the Newton iteration will not change the parameters $\mu', \nu', \rho'$ after
two steps.
\end{proof}

\begin{corollary} \label{locesti}
If there exists a $z$ as above, then there is a unique solution $\soll$ of $H(\S) = 0$ with coefficients in $\pi \Rtstar$ obtained by Newton iteration. And as a simple estimate, if we let
\begin{equation*}
M =
\max_{2 \le l \le N \atop j=1, \dots, n-r} \maxp_{ |I| = l \atop 0 < K \le I - e_j }
\{-(\b_l +  (|I| - |K| - 1)\nu + |K|\rho) \}
,
\end{equation*}
then the final solution lies in $\Rtab {\mu + \frac{2M}{c}} {\b} $,
where~$ \b = \min_{2 \le l \le N} \{\b_l\} $.
\end{corollary}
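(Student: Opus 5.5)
The plan mirrors the proof of Corollary~\ref{estimate-cor}, with Proposition~\ref{hensel2} replacing Proposition~\ref{itprop}. For existence and uniqueness, hypotheses~(2) and~(3) say that $H(z)\equiv 0$ modulo $\pi$ and that $\Jac_H(z)$ is invertible over the $\pi$-adically complete ring $\Rtab {\mu} 0$ (Lemma~\ref{rtabprop}(6) and~(7)). The standard Hensel argument then shows that the Newton iterates $z_0=z$, $z_{m+1}=z_m+\e_m$ converge $\pi$-adically to a solution. That solution is unique in $\pi\Rthat$, as in the proof of Theorem~\ref{locallift0}. Membership in $\pi\Rtstar$ will follow once all iterates lie in a single $\Rtab {\mu''} {\b}$: that set is $\pi$-adically closed, because convergence is coefficientwise and the defining conditions are closed.

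For the uniform bound I apply Proposition~\ref{hensel2} twice. After the first step we have $b'=\min\{b,c\}$ and $c'=2c$. Applying the proposition to $z_1$ gives $b''=\min\{b',2c\}=b'$, so by the last assertion of Proposition~\ref{hensel2} the parameters $\mu'',\nu'',\rho''$ are stable under all further iterations. It therefore suffices to bound $\mu''-\mu$. In the formula for $\mu'-\mu$ the numerator is at most $M$ by the definition of $M$. The denominator $(|I|-|K|-1)b+|K|c$ is at least $c$, since $|K|\ge1$ and the other term is non-negative. Hence $\mu'-\mu\le M/c$.

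For the second step I need a bound on the analogous maximum $M'$, formed with $\nu',\rho'$ in place of $\nu,\rho$. Since $\nu'\ge\nu$ and $\rho'\ge\rho$ (they increase by non-negative multiples of $\mu'-\mu$), each expression $-(\b_l+\cdots)$ can only decrease, so $M'\le M$. With denominator at least $c'=2c$ this gives $\mu''-\mu'\le M/(2c)$. Altogether $\mu''\le\mu+3M/(2c)\le\mu+2M/c$, which is what I need, using Lemma~\ref{rtabprop}(1) to enlarge $\mu$.

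For the shift parameter, the plan is to show that every iterate, and hence the limit, lies in $\Rtab {\mu+2M/c} {\b}$ with $\b=\min_l\b_l$. Here I expect the main obstacle: how $\nu$ and $\rho$ compare with the $\b_l$ is not controlled a priori. I would first try to track $\min\{\nu^{(m)},\rho^{(m)}\}$ through the increments $z'=z+\e$, using that $\e$ lies in $\Rtab {\mu} {\rho}$ and that $\nu^{(m)},\rho^{(m)}$ only increase. Since $z$ lies in $\Rtab {\mu} {\nu}\cap V(b)$, Lemma~\ref{rtabprop}(8) allows the shift to be traded against the enlargement of $\mu$, and the goal is to land at shift $\b$. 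If this does not come out cleanly, I will instead check the case $z=0$ used in Algorithm~\ref{mainalgorithm}. There $\rho$ is governed by $H(0)$, whose terms of degree $\ge 2$ control the $\b_l$, and the stated $\b$ comes out directly.
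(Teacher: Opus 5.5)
Your existence and uniqueness argument and your bound on the first parameter are correct, and they coincide with the paper's proof. The paper also uses that $\{\nu\}$ and $\{\rho\}$ are non-decreasing, bounds the first two increments of $\mu$ by $M/c$ and $M/(2c)$, and concludes that the limit is at most $\mu+2M/c$. Your use of the two-step stabilisation in Proposition~\ref{hensel2}, as in Corollary~\ref{estimate-cor}, even gives $\mu+3M/(2c)$. The paper's proof says nothing about the second parameter.

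That second parameter is where your proposal has a genuine gap, and the gap cannot be filled: the shift $\b=\min_l\b_l$ is wrong as stated. Your fallback rests on a confusion. $H(0)$ is the vector of constant coefficients $H_{j,0}$, whereas the $\b_l$ only describe the coefficients $H_{j,I}$ with $|I|\ge2$, so nothing relates the two. More fundamentally, the $\b_l$ cannot bound the shift of $\soll$ from below, since already the first correction $\e_0=-\Jac_H(z)^{-1}H(z)$ is only known to have shift $\rho$.

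Concretely, take Case~1 of the example following Example~\ref{kedlayaexample2} (missing point at infinity). There $H_{2,0}\in t^{-2p'(2g+1)}R[[t]]\cap V_{1/e}$, $H_{2,1}\in R^*$ and $H_{2,2}\in t^{2p'(2g+1)}R[[t]]$. Start from $0$ with $\nu=0$, $\rho=-2p'(2g+1)$, $b=c=1/e$ and $\b_2=2p'(2g+1)$. Then all hypotheses hold for every $\mu>0$, and $M=\max\{0,-(\b_2+\rho)\}=0$. The statement would give $s_2\in\Rtab {\mu} {2p'(2g+1)} $ for all $\mu>0$, i.e.\ $s_2\in\Rtab {0} {2p'(2g+1)} $. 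Then $H_{2,0}=-H_{2,1}s_2-H_{2,2}s_2^2$ would vanish at infinity. But $H_{2,0}$ is the expansion of the polynomial $Q(x)^{p'}-Q^\s(x^{p'})$ in $x$. Such a polynomial can vanish at the pole of $x$ only if it is $0$, and it is not $0$ in general. Indeed, the paper itself concludes $s_2\in\Rtab {0} {-2p'(2g+1)} $ in that example, i.e.\ shift $\rho$.

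Your first idea, tracking $\min\{\nu^{(m)},\rho^{(m)}\}$, does prove the corollary with $\b$ replaced by $\rho=\min\{\nu,\rho\}$. Every iterate is $z+\e_0+\dots+\e_{m-1}$. Here $z\in\Rtab {\mu} {\nu} \subseteq\Rtab {\mu} {\rho} $ by~(4). Also $\e_i=-\Jac_H(z_i)^{-1}H(z_i)\in\Rtab {\mu_i} {\rho_i} $, where $(\mu_i,\rho_i)$ are the parameters produced by Proposition~\ref{hensel2}, with $\rho_i\ge\rho$ and $\mu_i\le\mu+2M/c$. So all iterates, and by closedness $\soll$, lie in $\Rtab {\mu+2M/c} {\rho} $. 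The same then holds with any smaller shift, such as the $\min\{\rho,\b\}$ that the paper's examples effectively use.

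Do not try to push this up to shift $\b$ with Lemma~\ref{rtabprop}(8). That lemma raises the shift only at the cost of a larger first parameter, and the example shows the target is false anyway.
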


\begin{proof}
We can easily apply the Newton iteration to get the unique final solution. Starting from $z_0 = z$, by the formula
\begin{equation*}
\mu' - \mu =
\max_{2 \le l \le N \atop j=1, \dots, n-r} \maxp_{ |I| = l \atop 0 < K \le I - e_j }
\{ -\frac{\b_l +  (|I| - |K| - 1) \nu + |K| \rho}{ (|I| - |K| - 1)b + |K|c}\},
\end{equation*}
we see that the sequences $\{\mu\}, \{\nu\}$ and $\{\rho\}$ are non-decreasing. Then
$$ \mu' - \mu \le \maxp \left\{ -\frac{\b_l +  (|I| - |K| - 1)\nu + |K|\rho}{c} \right\} \le \frac{M}{c}.$$
And in the further step we have $\mu'' - \mu' \le \frac{\b_l +  (|I| - |K| - 1)\nu' + |K|\rho'}{c'}  \le \frac{M}{2c}$ so that the limit of the sequence $\{\mu\}$ is no larger than $\mu + \frac{2M}{c}$.
\end{proof}

\begin{remark}
(1)
Since $\{\nu\}$ and $\{\rho\}$ are non-decreasing sequences, it
is easy to see they are bounded from left. They are also bounded from right by their defining equations: if $\nu$ and $\rho$ are too large, then the nominator $ \b_l + |I-K-1|\nu + |K|\rho$ is positive, so that $\mu' - \mu = 0$. Hence all the indices here form convergent sequences.

(2)
Proposition~\ref{hensel2} states that the parameters that descibes where the solution of $H(\S) = 0$ lies stabilize after 2 steps.
It does not mean that it only takes two steps to finish the Newton iteration.
Indeed, if we want to know the final solution $\sol$ of the equation $H(\S) = 0$ up to precision $N$ (see Definition~\ref{p-precision}),
we need to take at least $\lfloor \log_2 N\rfloor + 1$ steps.
\end{remark}

\begin{example} \label{runningexample4}
Let us return to Example~\ref{runningexample1} and obtain local
estimates.
In Example~\ref{runningexample3}
we derived local estimates from the global one in
Example~\ref{runningexample2}.
Recall that $ p \ne 2 $.
There are two points at infinity, $ [1,1,0] $ and $ [1,-1,0] $. 
With local parameter $ t = 1/x $ we find the expansions $ x(t) = t^{-1} $
and $ y(t) = \pm t^{-1} \sqrt{1- t^2} = \pm t^{-1} (1 - \frac12 t^2 - \frac18 t^4 -\frac{1}{16} t^6 - \cdots  ) $.
Using those in~\eqref{runexeq} we obtain
\begin{equation*}
   H(S)
 =  
   4^{-1} A(t) S^2 + A(t) S  - A(t) + 1
\end{equation*}
with $ A(t) = x(t)^{2p} - y(t)^{2p} = x(t)^{2p} - (x(t)^2-1)^p = p t^{-2p+2} + \cdots $
in $ 1 + p t^{-2p+2} R[[t]] $.

Then we can take $H_0(t)$ in $ \Rtab {2p-2} {0} \cap V(1), H_1(t)$ in $ \Rtab {2p-2} {0} ,$ and $H_2(t)$ in~$\Rtab {2p-2} {0} $. 
Starting with $z = 0$ in $ \Rtab {2p-2} {0} \cap V(1)$ and $H(0) = 1 - A(t)$ in $ \Rtab {2p-2} {0} \cap V(1)$ (so that $b = c = 1, \nu = \rho = 0$), we find:
$\mu' - \mu = 0, b' = \min\{b,c\} = 1, c'= 2, \nu' - \nu = \mu' - \mu = 0, \rho' - \rho = 2(\mu' - \mu) = 0$.
Moreover, since $b' = b$ we see that $\mu'' = \mu'$, i.e., the slope remains
the same, hence the final solution lies in~$\Rtab {2p-2} {0} \cap V(1)$.
\end{example}

We end this section by discussing how to get the desired Frobenius ``up to certain precision".

\begin{definition} \label{p-precision}
We shall say that we know $\a$ in $K$ up to precision $N$ if
we know~$\tilde \a $ in $K$ with $v(\a - \tilde \a) > N$.
We shall say that we know~$f(t)$ in $\Rtstar$ up to precision~$ N $
if we know each coefficient of $f(t)$ up to precision $N$.
\end{definition}

Note that if $\a$ lies in $R$, then to know $\a$ up to precision $N$,
is equivalent to knowing its class in the quotient ring~$ \RN R N = R/(\pi^{N'})$, where $N'$ is the smallest integer such that $N' v(\pi) > N$.

\begin{notation} \label{RtabNdef}
We let $ \RN R N ((t)) $ be the set of formal Laurent series with coefficients in~$ \RN R N $,
and let $\RtabN {\a} {\b} {N} $ denote the image of~$\Rtab {\a} {\b} \subseteq \Rtstar $
in~$ \RN R N ((t)) $.
\end{notation}

Now, to say that we know the Frobenius up to precision $N$ amounts to the same thing 
as knowing the solution $\soll$ of the system $H(\S)$, which has
entries in~$ \Rtstar $,
up to precision $N$, i.e., knowing the classes of the entries of $\soll$ in some $\RtabN {\a} {\b} {N} $.
The sets $\RtabN {\a} {\b} {N} $ behave as the sets~$\Rtab {\a } {\b} $, so we have the following lemma.

\begin{lemma} \label{RtabN}
The sets $\RtabN {\a} {\b} {N} $ have the following properties.
\begin{enumerate}
\item $\RtabN {\a} {\b_1} {N} \subseteq \RtabN {\a} {\b_2} {N} $ for $\b_1 \ge \b_2$, $\RtabN {\a_1} {\b} {N} \subseteq \RtabN {\a_2} {\b} {N} $ for $\a_1 \ge \a_2$.

\item $ \RN R N ((t)) = \cup_{\a, \b} \RtabN {\a} {\b} {N} $.

\item Multiplication in $\RN R N ((t))$ induces a map
\begin{equation*}
\RtabN {\a_1} {\b_1} {N} \times \RtabN {\a_2} {\b_2} {N} \to \RtabN {\max\{\a_1, \a_2\}} { \b_1 + \b_2} {N}
.
\end{equation*}

\item If~$ 0 < c <N $ and~$\tilde{\a} \ge \max\{\a,\a + c^{-1}(\tilde{\b} - \b)\}$ then $\RtabN {\a} {\b} {N} \cap V_c \subseteq \RtabN {\ta} {\tb} {N} $.

\item  $\RtabN {\a} {\b} {N} \cap V_c \subseteq \RtabN {\ta} {\tb} {N} $ if~$ 0 < c <N $ and~$\tilde{\a} \ge \max\{\a,\a + c^{-1}(\tilde{\b} - \b)\}$.

\item  If $x $ is in $ \RtabN {\a} {\b} {N} \cap V_c$ with $nc < N$ then $x^n $ is in $ \RtabN {\a} {n\b} {N} \cap V_{nc}$.
\end{enumerate}
\end{lemma}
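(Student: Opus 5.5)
The plan is to deduce each property of the sets $\RtabN {\a} {\b} {N}$ from the corresponding property of $\Rtab {\a} {\b}$ in Lemma~\ref{rtabprop}, using that reduction $\Rtstar \to \RN R N ((t))$ is a ring homomorphism. The one extra ingredient is a concrete description of the image. I would first prove that $\RtabN {\a} {\b} {N}$ consists exactly of the Laurent series $\sum_m \bar a_m t^m$ over $\RN R N$ such that, for every $m$ with $\bar a_m \neq 0$, the $\pi$-adic valuation of $\bar a_m$ (well defined when it is below $N$) satisfies $v(\bar a_m) \ge -\a^{-1}(m-\b)$.

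To prove this description, I would lift each nonzero $\bar a_m$ to an element of $R$ of the same valuation and lift each zero coefficient to $0$. Since $\bar a_m \neq 0$ forces $v(\bar a_m) < N$, only finitely many negative $m$ can occur, so the lift lies in $\Rtab {\a} {\b}$. The reverse inclusion is immediate. I will use this description silently below.

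With this in hand, the parts go as follows.
\begin{enumerate}
\item Part~(1) follows directly from Lemma~\ref{rtabprop}(1). I note that the statement writes $\a_1 \ge \a_2$ where Lemma~\ref{rtabprop}(1) has $\a_1 \le \a_2$. I would prove the inclusion in the direction that is correct, namely that $\RtabN {\a} {\b} {N}$ grows with $\a$, and flag the reversed inequality as a typo.
\item For part~(2), note that any $\sum \bar a_m t^m$ in $\RN R N ((t))$ has only finitely many negative-degree terms. Each nonzero coefficient has valuation in $[0,N)$. Choosing $\b$ at most the lowest exponent and taking $\a = 0$ (via Notation~\ref{Ralphabetanotation}(2)), or any $\a$, places it in some $\RtabN {\a} {\b} {N}$. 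The reverse inclusion is clear.
\item Part~(3) holds because products of lifts lie in $\Rtab {\max\{\a_1,\a_2\}} {\b_1+\b_2}$ by Lemma~\ref{rtabprop}(3), and reduction is multiplicative.
\end{enumerate}

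The main point needing care is parts~(4)--(6), which involve $V_c$ in the truncated ring. I interpret $V_c$ there as the image of $V_c$, that is, the series whose nonzero coefficients have valuation at least $c$; the hypothesis $c < N$ is what makes this meaningful. Given $x$ in $\RtabN {\a} {\b} {N} \cap V_c$, the lift built above has every nonzero coefficient of valuation at least $\max\{c, -\a^{-1}(m-\b)\}$. So the lift lies in $\Rtab {\a} {\b} \cap V_c$, Lemma~\ref{rtabprop}(8) applies, and reducing again gives parts~(4) and~(5), which are the same statement. For part~(6), $x^n$ is the reduction of $\tilde x^n$, which lies in $\Rtab {\a} {n\b} \cap V_{nc}$ by Lemma~\ref{rtabprop}(9). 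The condition $nc < N$ ensures that $V_{nc}$ is still a nontrivial condition after reduction.
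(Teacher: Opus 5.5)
Your proposal is correct and matches the paper, which gives no separate proof and simply presents this lemma as the analogue of Lemma~\ref{rtabprop} after reduction modulo $\pi^{N'}$. Your coefficientwise lift is exactly what makes the intersections with $V_c$ in parts~(4)--(6) meaningful and lets you invoke Lemma~\ref{rtabprop}(8) and~(9); you are right that part~(1) should read $\alpha_1 \le \alpha_2$, and the one harmless slip is that a nonzero coefficient in $R/(\pi^{N'})$ has valuation below $N' v(\pi)$, which may equal $N$, rather than strictly below $N$.
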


The reader should view the above as an analogue of Lemma~\ref{rtabprop}.
As a consequence, the addition and multiplication are compatible
with working up to precision~$N$.
This means that, in order to find the Frobenius up to precision~$N$, 
we just need to find everything up to that precision. Thus we have the followings.

\begin{proposition} \label{NewtonprecN}
In order to know~$ \ex(\phi(\x)) $ up to precision~$ N $, it
suffices to compute the system~$ H(\S) $ from~\eqref{globalG}
with coefficients in~$ \RN R N ((t)) $, and apply Newton iteration,
starting with~$ z $ having all entries~0.
Moreover, then Proposition~\ref{hensel2} applies with
each~$ \Rtab {\cc} {\d} $ replaced with~$ \RtabN {\cc} {\d} N $.
In particular, starting with~$ z_0 = 0 $, $ \soll $ coincides
with~$ z_n $ up to precision~$ N $ for~$ n > \log_2 (eN) + 1 $,
where~$ e $ is the ramification index for the current~$ K $.
\end{proposition}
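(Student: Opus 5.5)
The proof will rest on the observation that reduction modulo $\pi^{N'}$, where $N'$ is minimal with $N'v(\pi) > N$, is a ring homomorphism $\Rtstar \to \RN R N ((t))$ (indeed $\Rthat \to \RN R N((t))$, since all but finitely many negative-index coefficients vanish mod $\pi^{N'}$). By Remark~\ref{locnewtonrem}, $H(\S) = f^\s(\ex(\psi(\x)) + \ex(\psi(P))\S)$. Its coefficients are polynomial expressions in the $\ex(x_i)$ and the coefficients of $f$ and $P$, so their images in $\RN R N((t))$ can be computed from the $\ex(x_i)$ modulo $\pi^{N'}$. Likewise the final formula $\ex(\phidagA(\x)) = \ex(\psi(\x)) + \ex(\psi(P))\soll$ from Theorem~\ref{locallift0} is compatible with reduction. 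Hence it suffices to know $\soll$ modulo $\pi^{N'}$.

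Next I would compare the Newton iterates for $H$ over $\Rtstar$ with the reduced iterates. Each step $z \mapsto z - \Jac_H(z)^{-1}H(z)$ uses only ring operations and the inversion of $\det\Jac_H(z)$. This determinant is a unit in $\Rtab {\mu} 0$ by Proposition~\ref{hensel2}(3), so its reduction is a unit and the inverse commutes with reduction. Therefore the reduced iteration, started at $z_0=0$, produces exactly the reductions of the true iterates $z_n$. Applying Lemma~\ref{RtabN} in place of Lemma~\ref{rtabprop} in the proof of Proposition~\ref{hensel2} gives the version with $\RtabN {\cc} {\d} N$. The only caveat is that the $V_c$-statements (parts (4)--(6) of Lemma~\ref{RtabN}) need $c < N$; once $c \ge N$ the relevant terms are zero modulo $\pi^{N'}$, so the bounds hold trivially.

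For the number of steps, start with $z_0=0$. Then $H(0) \equiv 0$ modulo $\pi$, so $H(z_0) \in V_c$ with $c = v(\pi) = 1/e$. By Proposition~\ref{hensel2} we have $H(z_n) \in V_{2^n/e}$. By Remark~\ref{finiteprecisionremark} (local version: $\Jac_H$ is invertible with integral inverse), $\soll - z_n$ lies in the same $V_{2^n/e}$. We need $2^n/e > N$, i.e.\ $n > \log_2(eN)$. The stated bound $n > \log_2(eN)+1$ is therefore sufficient, with one extra step of slack to absorb the strict-inequality/rounding in passing from $N$ to $N'$.

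The main obstacle is justifying that computing $H$ itself only modulo $\pi^{N'}$ does not spoil the iteration, since errors in the coefficients might propagate. I would handle this as in the last paragraph of Remark~\ref{finiteprecisionremark}. Everything is a ring computation in $\RN R N((t))$, and the reduced Newton map is the reduction of the exact one. The reduced iterates are therefore exact images, not approximations, so no propagation occurs.
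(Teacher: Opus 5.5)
Your proposal is correct and follows the same route as the paper. The paper's proof simply cites the local expansion formula~\eqref{local-frob-expansion}, Theorem~\ref{locallift0}, Proposition~\ref{hensel2} and Lemma~\ref{RtabN}, together with the observation that $c>0$ forces $c\ge e^{-1}$, and you make explicit that reduction modulo $\pi^{N'}$ is a ring homomorphism commuting with each Newton step. Your count in fact gives the slightly sharper $n>\log_2(eN)$, which implies the stated bound, so no extra step of slack needs to be justified.
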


\begin{proof}
This follows from~\eqref{local-frob-expansion}, Proposition~\ref{locallift0}, 
Proposition~\ref{hensel2} and Lemma~\ref{NewtonprecN}, taking
into account that~$ c > 0 $ in the proposition implies~$ c \ge e^{-1} $
in the current~$ K $.
\end{proof}

We state the following lemma for the sake of completeness.

\begin{lemma} \label{lemmaprecision}
Let $\a$ be in $\pi^AR$ and $\b$ in $\pi^B R$, for~$A$ and $B$ in~$ \Z $. Then in order to know~$\a\b$ up to precision $N$,
it suffices to know~$\a$ up to precision~$N_1$ and $\b$ up to precision $N_2$, for~$N_1$ and $N_2$ with~$N_1 + B \ge N, N_2 + A \ge N$ and $N_1 + N_2 \ge N$.
\end{lemma}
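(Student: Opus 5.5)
The plan is to write the product as a sum of three error terms. Choose approximations $\tilde\a$ and $\tilde\b$ in $K$ with $v(\a-\tilde\a) > N_1$ and $v(\b-\tilde\b) > N_2$. Set $\e_1 = \tilde\a - \a$ and $\e_2 = \tilde\b - \b$, so that $\tilde\a\tilde\b = (\a+\e_1)(\b+\e_2)$. Expanding gives
\begin{equation*}
\tilde\a\tilde\b - \a\b = \e_1 \b + \a \e_2 + \e_1\e_2
\,.
\end{equation*}
By the ultrametric inequality it then suffices to show that each of these three terms has valuation strictly greater than~$ N $. We then take $\tilde\a\tilde\b$ as our approximation of $\a\b$, in the sense of Definition~\ref{p-precision}.

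Each term is bounded using only the valuation hypotheses, with $v$ normalised by $v(p)=1$ as in Notation~\ref{basicnot}.
\begin{itemize}
\item Since $\b \in \pi^B R$, we have $v(\b) \ge B\,v(\pi)$, and so $v(\e_1\b) > N_1 + B\,v(\pi)$.
\item Symmetrically, $v(\a\e_2) > N_2 + A\,v(\pi)$.
\item Finally, $v(\e_1\e_2) > N_1 + N_2$.
\end{itemize}
The three stated conditions $N_1 + B \ge N$, $N_2 + A \ge N$ and $N_1 + N_2 \ge N$ then give the three strict inequalities $>N$ at once.

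The only real point of care is the unit bookkeeping. The lemma writes $N_1 + B$, where $B$ is an exponent of $\pi$ but $N_1$ is a valuation normalised by $v(p)=1$. I would read $A$ and $B$ as the lower bounds $v(\a)\ge A$ and $v(\b)\ge B$, which is the intended meaning when $e=1$. Alternatively, one can keep the exponents of $\pi$ throughout and check the inequalities in the same normalisation as $N$. With either reading the argument above goes through unchanged.

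It is also worth recording why $A$ or $B$ may be negative, as in the later applications with denominators. The strictness of the inequalities is then still inherited from the strict inequalities in the definition of precision. No other step presents an obstacle; the lemma is a direct consequence of the non-archimedean triangle inequality.
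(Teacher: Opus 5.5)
Your proof is correct and is essentially the paper's argument: expand the product of the two approximations and bound each of the three error terms by one of the three hypotheses, using the ultrametric inequality. You write the error as $\e_1\b + \a\e_2 + \e_1\e_2$, in terms of the true values, so the hypotheses $\a \in \pi^A R$ and $\b \in \pi^B R$ apply directly; this is slightly cleaner than the paper's $\delta_1\tilde\b + \tilde\a\delta_2 + \delta_1\delta_2$. Your reading of $A$ and $B$ as valuation bounds (i.e.\ $B\,v(\pi)$ in place of $B$) matches how the lemma is applied later, with $N_1' = N + h e^{-1}$.
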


\begin{proof}
To know $\a$ up to precision $N_1$ means we have~$\tilde\a$ in $K$ and~$\delta_1$ in $R$
with~$\a = \tilde\a + \delta_1$ and $v(\delta_1) > N_1$, similarly for $\b = \tilde\b + \delta_2$.
Then $\a\b = (\tilde\a + \delta_1)(\tilde\b + \delta_2)$ is known up to precision $N$ if $v(\delta_1) + B > N$,
$v(\delta_2) + A > N$ and $v(\delta_1) + v(\delta_2) > N$.
\end{proof}

\begin{remark} \label{full-disc-remark}
(1)
In the current and previous sections we worked
at one of the ends~$ \ee $, but one can of course also compute
the local expansion of the Frobenius on a residue disc~$ \D $ contained
in~$ \Spec(A) $. Enlarging~$ K $ if necessary, we may assume
there is a point~$ Q : R \to C $ with~$ Q $ in~$ \D $,
corresponding to a parameter~$ t $ on~$ \D $. This gives
an expansion map~$ \ex = \ex_Q : \Adag \to R[[t]] $.
The system~$ H(\S) $ be obtained from~\eqref{globalG}
by applying $ \ex $ to the coefficients now has coefficients
in~$ R[[t]] $, the unique~$ \soll $ in~$ (\pi R[[t]])^{n-1} $ with $ H(\S) = 0 $ equals $ \xi(\sol) $ with
$ \sol $ as in Theorem~\ref{globallift0},
and~$ \ex(\phidagA(\x)) = \ex(\psi(\x)) + \ex(\psi(P)) \soll $
has entries in~$ R[[t]] $.
If we want to compute such expansions up to degree~$ L $
(see Definition~\ref{t-precision})
then it suffices to start with computing~$ \ex(\x) $ up to~$ t^L $.
The corresponding statements hold if we replace~$ R $ with
some~$ \RN R N $. We observe that in both cases there is no need
to precompute how many Newton iterations or multiplications are
to be used.

(2)
For future applications, we also consider~$ \ex(f^{-1}) = \ex(f)^{-1} $
for~$ f $ in~$ A $. If~$ f $ has no zero in~$ \D $ then~$ \ex(f) $
is in~$ R[[t]]^* $, and the situation is clear. 
If~$ f $ has precisely~$ d \ge 1 $
zeroes (counting multiplicities), then~$ \ex(f) $ is
in~$ R[[t]] \cap \Rtab {\a} d $ for some~$ \a \le v(\pi)/d $,
and~$ \ex(f^{-1}) $ is in~$ t^{-d} \Rtab {\a} 0 = \Rtab {\a} {-d} $.
More explicitly, we can write~$ \ex(f) = t^d u - g $ with~$ u $
in~$ R[[t]]^* $, and~$ g $ in~$ R[t] $ of degree less than~$ d $
as well as in~$ \Rtab {\a} d $.
Then
\begin{equation} \label{exfinv}
\ex(f^{-1}) = t^{-d} u^{-1} (1 - g t^{-d} u^{-1})^{-1} =
t^{-d} u^{-1}  \Bigl( 1 + \sum_{m = 1}^\infty g^m t^{-md} u^{-m} \Bigr)
,
\end{equation}
where~$ g t^{-d} u^{-1} $ is in~$ \Rtab {\a} 0 $.

Now suppose we want to compute~\eqref{exfinv} with coefficients
in~$ \RN R N $, and up to a certain degree, assuming that~$ u $ is known up to degree~$ L $ in~$ \RN R N [[t]] $.
Because~$ g $ is of degree less than~$ d $, we have~$ g t^{-d} u^{-1} $
in~$ V_{\a^{-1}} $, so that in~\eqref{exfinv}
we only need to sum up to~$ m = \lfloor \a N \rfloor $.
Then the result in~$ \RN R N [[t]] $ is known up to degree~$ L - (\lfloor \a N \rfloor +1) d $.

There is an alternative estimate, which can work well for~$ d > 1 $.
Using that~$ g $ is in~$ V_c $ for some~$ c > 0 $,
we only need to sum in~\eqref{exfinv} up to~$ m = \lfloor N c^{-1} \rfloor $,
and we know the result in~$ \RN R N [[t]] $ up to degree~$ L - (\lfloor N c^{-1} \rfloor + 1 ) d $.
\end{remark}

\section{Examples of the local Frobenius} \label{local-examples}

In this section we revisit some of the examples in Section~\ref{global-examples}.
In particular, we investigate the case of an hyperelliptic curve as
in Example~\ref{kedlayaexample}, leaving out either the point
at infinity, or all the Weierstrass points.
We work out those cases mostly as an illustration of the differences
between leaving out as few points as possible, or opting for
localizing but imposing $ \phi(x) = x^{\pa} $.

The reader should bear in mind that for those curves,
using the closed formula as in \cite{Ked01}
for $ \phi $ with $ \phi(x) = x^{\pa} $, one can certainly get more precise information
about the expansions $ \ex(\phi(y)) $ than by our general methods.
Also, due to the low degree in $ y $ of the defining equation,
the problem of computing $ \phi(y) $ or its expansion
for this $ \phi(x) $
is of a rather different nature than in the case of a more general
curve.

\begin{example} \label{kedlayaexample2}
Let the notation and assumptions be as in Example~\ref{kedlayaexample}.
Let $ A = R[x,y]/(f) $,
so that we leave out only the point at infinity.
Since $ \ol Q $ has no multiple roots, there exist polynomials $ \ol{a}(x) $ of
degree at most $ 2g $ and $ \ol{b}(x) $ of degree at most $ 2g-1 $
in $ k[x] $ such that $ -\ol{a}(x) \ol{Q'}(x) + \ol b(x) \ol Q(x) = 1  $ in $ k[x] $.
Then
\begin{equation*}
	\ol a~\ol{f_x} + 2^{-1}y\ol b~\ol{f_y} = 1 + \ol b~\ol f
\end{equation*}
in $ k[x,y] $.
We lift $ \ol{a} $ and $ \ol{b} $ to $ a $ and $ b $ of degree
at most $ 2g $ and $ 2g-1 $ in $ R[x] $, so that
$ a $ and $ 2^{-1} y b $ have a pole at infinity of order at most $ 4g $ and $ 6g-1 $ respectively.

Using those, the $ H(S) $ defined in Theorem~\ref{locallift0} becomes
\begin{alignat*}{1}
	H(S) &= \bigl(y(t)^{\pa} + \tfrac{1}{2} y(t)^{\pa} b^\s (x(t)^{\pa} ) S \bigr)^2 -
			Q^{\s}\bigl(x(t)^{\pa} + a^\s(x(t)^{\pa})S\bigr)\\
         &= \sum_{l=0}^{2g+1} H_l S^l\,.
\end{alignat*}
Choosing a parameter $ t $ centred at the missing point,
the expansions $ x(t) $ and $ y(t) $ are in $ t^{-2}R[[t]] $ and 
$ t^{-2g-1}R[[t]] $ respectively. 
So $ H_0 $ is in $ \pi t^{-2\pa(2g+1)}R[[t]] $,
$ H_1 $ is in
$ R^* + \pi t^{-8\pa g}R[[t]] $, and 
$ H_l $ is in 
$ t^{-2\pa(l(2g-1) + 2g+1)}R[[t]] = t^{-2\pa((l-1)(2g-1)+4g)}R[[t]] $
for $ l = 2,\cdots,2g+1 $.

From the above we can assume:
\begin{itemize}
 \item $H_0(t) $ is in~$ \Rtab {8p'ge} {2p'(2g-1)} \cap V_{1/e}$;

 \item $H_1(t) $ is in~$ \Rtab {8p'ge} {0} ^{*}$;

 \item $H_l(t) $ is in~$ \Rtab {8p'ge} {-2p'((2g-1)(l-1) + 4g)} $ for $2 \le l \le 2g + 1$.
\end{itemize}
We can start with $z = 0$ in~$ \Rtab {8p'g} {0} \cap V_{1/e}$ and $H(0)~$ in $\Rtab {8p'g} {2p'(2g-1)} \cap V_{1/e}$,
so that $b = c = \frac{1}{e}$,  $ \nu = 0$, and~$ \omega = 2p'(2g-1)$.
Then
\begin{alignat*}{1}
\mu' - \mu & = \maxp_{2\le l \le 2g+1, 0<k \le l-1} \{ -\frac{-2p'((2g-1)(l-1) + 4g) + 2kp'(2g-1)}{(l-1)\frac{1}{e}}\} = 8p'ge
,
\\
b' & = b = \frac{1}{e}, c' = 2c; \nu' - \nu = (\mu' - \mu)b' = 8p'g
,
\\
\omega' - \omega & = \min_{2 \le l \le 2g+1} \{(l-1) \frac{1}{e}\} (\mu' - \mu) = 8p'g
,
\end{alignat*}
hence the final solution lies in~$\Rtab {16p'ge} {2p'(2g+3)} \cap V_{1/e}$.
\end{example}

\begin{example}
Let us now invert $ 2y $ as in Example~\ref{kedlayaexample},
so that we work
with the open affine corresponding to $ R[x,y,z]/(y^2 - Q(x), 2yz - 1) $
and the missing points are the $ 2g+2 $ Weierstrass points.
The vector $ H(\S) $ defined in Theorem~\ref{locallift0} has entries
\begin{alignat*}{1}
	H_2(\S) & = \bigl(y(t)^{\pa} + z(t)^{\pa} S_2\bigr)^2 - Q^\s(x(t)^{\pa}) 
				= H_{2,0} + H_{2,1} S_2 + H_{2,2} S_2^2 \\
\intertext{and}
	H_3(\S) & = 2 \bigl(y(t)^{\pa} + z(t)^{\pa} S_2\bigr) \bigl( z(t)^{\pa}-
				2z(t)^{3\pa}S_2+z(t)^{\pa} S_3 \bigr)-1
\,.
\end{alignat*}

As in Example~\ref{raisex1example}, 
the first condition involves only $ S_2 $,
and by Lemma~\ref{hensel2}, $ H_2(S_2) = 0 $ has a unique solution $ \ts_2 $
in $ \pi\Rtstar $. To give estimates on $ \ts_2 $, we need to study three
distinct cases.

Case 1: the missing point is the point at infinity. Then the expansions 
$ \ex(x) $, $ \ex(y) $ and $ \ex(z) $ are in $ t^{-2}R[[t]] $,
$ t^{-2g-1}R[[t]] $, and~$ t^{2g+1}R[[t]] $, respectively.

In this case, we have assumptions \footnote{We recall that the notation~$ \Rtab {0} {\b} $ was defined in Notation~\ref{Ralphabetanotation}.}:

\begin{itemize}
 \item $H_{2,0}(t) $ is in~$ \Rtab {0} {-2p'(2g+1)} \cap V_{1/e}$;

 \item $H_{2,1}(t) $ is in~$ R^{*}$;

 \item $H_{2,2}(t) $ is in~$ \Rtab {0} {2p'(2g+1)} $.
\end{itemize}

Starting with $z = 0$ we may set $\mu = \nu = 0$ and $\omega = -2p'(2g+1)$, then $M = 0$ so that the final solution $s_2$ is in~$ \Rtab {0} {-2p'(2g+1)} $. 

Case 2: the missing point is a Weierstrass point $ (a,0) $.
Here we can choose $ y $ to be the local parameter~$ t $, and the expansions~$ \ex(x) $
and~$ \ex(z) $ are in~$ R[[t]] $ and~$ t^{-1}R[[t]] $, respectively.

In this case, the assumptions are:
\begin{itemize}
 \item $H_{2,0}(t) $ is in~$ \Rtab {0} {0} \cap V_{1/e}$;

 \item $H_{2,1}(t) $ is in~$ R^{*}$;

 \item $H_{2,2}(t) $ is in~$ \Rtab {0} {-2p'} $.
\end{itemize}

Again $\mu = \nu = \omega = 0$ and $M = 2p'$, so that the final solution lies in $\Rtab {2p'/e} {-2p'} $. Note that in this case the new estimate is the same as the old one.
\end{example}

\begin{example} \label{weierstrass-example}
In~$ \Z_3[x,y] $, let~$ f = y^2 - h(x)$ for $ h(x) = x^3 - x $.
As in Example~\ref{kedlayaexample}, $ A = \Z_3[x,y,z] / (f, yz - 1) $ describes
an affine part of an elliptic curve over~$ \Z_3 $, and we can
find~$ \phidagA $ with~$ \phidagA (x) = x^3 $ and~$ \phidagA(y) = y^3 \sqrt{1+a} $
for~$ a = \frac{h(x^3) - h(x)^3}{h(x)^3} = 3 x^2 (1-2x^2+x^4)^{-1} $.
Then on the residue disc containing~$ (0,0) $,
with parameter~$ t = y $, we have that~$ a(t) $ is in~$ 3 t^4 \Z_3[[t]] $,
and
\begin{equation*}
\phidagA(y)(t) = t^3 + \frac32 t^7 + \frac{39}8 t^{11} + \frac{315}{16} t^{15} + \cdots \text{ in } \Z_3[[t]],
.
\end{equation*}
\end{example}

\begin{remark}
We discuss smooth planar curves in more detail in Example~\ref{gen-planar-ex}.
\end{remark}
\section{Computing cup products up to a given precision} \label{cup-product-estimates}

In this section we explain how to use the methods that were discussed in
Sections~\ref{expansions} and~\ref{local-frobenius} to get the cup products required
in Method~\ref{cupmethod}, up to a given precision.
More explicitly, fix an end $\ee$ and a local parameter $t$ for the corresponding residue disc~$\D$.
Then the expansion at~$ \ee $ of~$f$ in~$ \Adag $ is of the form $\sum c_nt^n$ in~$\Rtstar $,
and that of~$\o$ in~$ \Omega^1(\Adag/R) $
of the form $\sum c_n't^n \dd t$ in $\Rtstar \, \dd t$.

In Definition~\ref{p-precision} we defined the precision for the coefficients $c_n$,
but we still need the following definition since we are dealing with infinite series in~$t$
and~$ t^{-1} $.

\begin{definition} \label{t-precision}
We shall say that we know~$\sum c_nt^n$ in~$ \Rtstar $ in degrees~$L_0$ to~$L_1$
(up to precision~$ N $)
if we know the coefficients $c_n$ for $L_0 \le n \le L_1$ (up
to precision~$ N $).
\end{definition}

Thus the task can be stated more precisely.
Given~2-forms $\o_1, \dots, \o_n$ that form a basis of~$ \hdr^1(C/R) $,
we need to know the matrix $M_1$ with entries $\sum_\ee \res_{\ee} (\o_j \int \o_i) $ up to precision $N_1$,
and the matrix $M_2$ with entries $\sum_\ee \res_{\ee} ( \phihatA(\o_j) \int \o_i ) $ up to precision~$N_2$.
The required precisions $N_1$ and $N_2$ will be computed in Section~\ref{algorithm-estimates}.

We assume the~1-forms are given as $\o = \sum_{i=1}^n f_i(\x) \, \dd x_i$,
and the first thing is to expand all the forms in terms of~$t$.

\begin{proposition} \label{expdiff}
The expansion map~$\ex: \Rxdag \to \Rtstar $ in~\eqref{expansionmap} induces an expansion map of forms
$\ex: \oplus_i \Rxdag \dd x_i \to \Rtstar \,\dd t$ by
sending $\sum_i f_i(\x) \, \dd x_i$ to~$\sum_i \ex(f_i) \, \dd \ex(x_i)$,
which descends to an expansion map on~$ \Omega_A^\dagger$.
Moreover, if $\ex(x_i)$ is in~$\Rtab {\a} {\b} $,
then $\ex (\dd x_i)$ is in $\Rtab {\a} {\b-1} \, \dd t$.
\end{proposition}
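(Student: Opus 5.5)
The plan is to define the map on the free module first, then check that it kills the relations, and finally read off the degree estimate from termwise differentiation.

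First, I define $\dd : \Rtstar \to \Rtstar\,\dd t$ by termwise differentiation, $\sum a_m t^m \mapsto \sum m a_m t^{m-1}\,\dd t$. This is well defined on $\Rtstar$. Indeed, if $\sum a_m t^m$ is in $\Rtab {\a} {\b}$, then $v(m a_m) \ge v(a_m) \ge -\a^{-1}(m-\b) = -\a^{-1}((m-1)-(\b-1))$. So the derivative lies in $\Rtab {\a} {\b-1}\,\dd t$. Applied to $\ex(x_i)$, this is exactly the last claim of the proposition.

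The map $\dd$ is $R$-linear and satisfies the Leibniz rule, since it does so on Laurent polynomials. It extends by continuity, because multiplication and differentiation are continuous for the $\pi$-adic topology on each $\Rtab {\a} 0$, which is $\pi$-adically complete by Lemma~\ref{rtabprop}. The assignment $\sum_i f_i\,\dd x_i \mapsto \sum_i \ex(f_i)\,\dd\ex(x_i)$ is then a well-defined $\Rxdag$-linear map on the free module $\oplus_i \Rxdag\,\dd x_i$, via the ring homomorphism $\ex$. It lands in $\Rtstar\,\dd t$ because $\Rtstar$ is a ring (Lemma~\ref{rtabprop}(4)) and Proposition~\ref{expest} puts each $\ex(f_i)$ in $\Rtstar$.

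To descend to $\Omega_A^\dagger = \bigl(\oplus_i \Adag\,\dd x_i\bigr)/\bigl(f_j\,\dd x_i,\ \dd f_j\bigr)$, two things must be checked. The first is that $f_j\,\dd x_i$ maps to zero, which is immediate since $\ex(f_j)=0$. The second, and the key step, is the chain rule: for every $g$ in $\Rxdag$,
\begin{equation*}
\dd\bigl(\ex(g)\bigr) = \sum_i \ex\!\left(\frac{\partial g}{\partial x_i}\right)\dd\ex(x_i).
\end{equation*}
Applying this to $g=f_j$ then gives $0 = \dd\ex(f_j) = \ex(\dd f_j)$.

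For polynomials $g$, the chain rule follows from the Leibniz rule by induction on monomials. For general $g=\sum_D a_D\x^D$ in $\Rxdag$, I would approximate $g$ by its truncations $g_{\le d}$. This is the main obstacle: one must show that $\ex(g_{\le d})\to\ex(g)$ and $\ex(\partial_i g_{\le d})\to\ex(\partial_i g)$ in a topology for which $\dd$ and multiplication by the fixed series $\dd\ex(x_i)$ are continuous.

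I would handle this using the estimates behind Proposition~\ref{expest}. All the tails $\ex(a_D\x^D)$ lie in one fixed $\Rtab {\cc} {\d}$. Moreover, the tails of $g$ and of $\partial_i g$ tend to zero coefficientwise: $v(a_D)\ge \a^{-1}\cdot(D-\b)\to\infty$, and the factor $D_i$ coming from differentiation only increases the valuation. So convergence holds coefficient by coefficient in $t$, uniformly within $\Rtab {\cc} {\d}$. On such a set, coefficientwise limits commute with multiplication by a fixed element of $\Rtstar$ and with $\dd$, since each output coefficient depends, up to any given precision, on only finitely many input coefficients because of the valuation bounds. Passing to the limit then gives the chain rule, and hence the descent.
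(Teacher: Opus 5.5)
Your proof is correct. The estimate $v(m a_m)\ge v(a_m)\ge -\alpha^{-1}\bigl((m-1)-(\beta-1)\bigr)$ is exactly the paper's argument; the paper takes well-definedness and the descent to $\Omega_A^\dagger$ for granted, so your chain-rule verification supplements it rather than following a different route.

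That supplement can be lightened. Since the $f_j$ lie in $R[\x]$, the chain rule for polynomials (i.e.\ the Leibniz rule on $\Rthat$) together with $\Rxdag$-linearity already kills the relations $h\,\dd f_j$. If you also want $\ex\circ\dd=\dd\circ\ex$ on all of $\Rxdag$, it follows at once from the $\pi$-adic continuity of $\ex\colon\Rx\to\Rthat$, of $\dd$, and of multiplication on $\Rthat$, so the uniform $R_{\gamma,\delta}((t))$ bookkeeping is unnecessary.
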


\begin{proof}
Suppose $\ex(x_i) = \sum a_nt^n$, then $\dd \ex(x_i) = \sum na_nt^{n-1} \, \dd t$,
with the coefficient~$ n a_n $ of~$ t^{n-1}$ satisfying~$v(na_n) \ge v(a_n) \ge -\frac{1}{\a} ((n-1) - (\b -1))$.
\end{proof}

Consider two expanded forms~$ \o = \sump {m} {} a_{m-1} t^{m-1} \, \dd t $
and~$ \eta = \sump {m} {} b_{m-1} t^{m-1} \, \dd t $ with trivial residue,
where the prime indicates we leave out the term with $ m = 0 $.
We want to compute
\begin{equation} \label{cupreslocal}
  \res_\ee \Bigl( \eta \int \o \Bigr) = \sump {m} {} \frac{a_{m-1} b_{-m-1}}{m}
\end{equation}
as in~\eqref{serre-rigid-formula}, but 
up to a given precision~$ N \ge 0 $.
For this, we estimate how many terms
we need in the expansions of $ \eta $ and~$ \o $, and up to which
precision.

\begin{lemma} \label{estimates-lemma}
For~$ \eta $ and $ \o $ as above, suppose that~$ \o $ is in~$\Rtab {\a_1} {\b_1} \, \dd t $
and~$ \eta $ is in~$ \Rtab {\a_2} {\b_2} \, \dd t $.
Then
\begin{equation*}
 \res_\ee \Bigl( \eta \int \o \Bigr)
 =
 \sump {m=L_0} {L_1} \frac{\tilde a_{m-1} \tilde b_{-m-1}}{m}
\end{equation*}
up to precision~$ N \ge 0 $, 
where each $ \tilde a_i $ equals $ a_i $, and
each $ \tilde b_i $ equals $ b_i $, up to precision~$ N + \e $,
for $ L_0 $, $ L_1 $ and $ \e $ that are determined as follows.
\begin{itemize}
\item
Find an integer $L_0 \le 0 $  such that his
$-\a_1^{-1} (m - 1 - \b_1) > N + \log_p|m| $ for all $m < L_0$,
and an integer $L_1 \ge 0 $ such that
$-\a_2^{-1} (-m - 1 - \b_2) > N + \log_p|m| $ for all $m > L_1$.

\item
If $ L_0 < 0 $ and
\begin{equation*}
\qquad\quad
\maxp\{- \a_1^{-1} (m-1-\b_1)\} + \maxp\{- \a_2^{-1} (-m-1-\b_2)\}
 > N + v(m)
\end{equation*}
for $ m = L_0 $, replace $ L_0 $ with $ L_0 + 1 $. Repeat this
until the test fails.

\item
Treat $ L_1 $ similarly, but in this case replace $ L_1 $ with $ L_1 - 1 $.

\item
Take $ \e = \maxp\{v(L_0),\dots,v(-1), v(1), \dots, v(L_1)\} $.
\end{itemize}
\end{lemma}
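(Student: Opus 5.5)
We start from the exact formula~\eqref{cupreslocal}: integrating $\o$ termwise gives $\int\o = \sum' a_{m-1} t^m/m$, and the residue of $\eta\int\o$ is the coefficient of $t^{-1}$, i.e.\ $\sum'_m a_{m-1}b_{-m-1}/m$. This series converges because $\o$ and $\eta$ lie in $\Rtstar\,\dd t$. We then bound the valuation of each summand using the hypotheses $\o\in\Rtab{\a_1}{\b_1}\dd t$ and $\eta\in\Rtab{\a_2}{\b_2}\dd t$. These give $v(a_{m-1})\ge -\a_1^{-1}(m-1-\b_1)$ and $v(b_{-m-1})\ge-\a_2^{-1}(-m-1-\b_2)$. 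Since all coefficients lie in $R$, we also have $v(a_{m-1}),v(b_{-m-1})\ge0$, so each of these bounds may be replaced by its $\maxp$. Consequently
\begin{equation*}
v\Bigl(\frac{a_{m-1}b_{-m-1}}{m}\Bigr)\ge \maxp\{-\a_1^{-1}(m-1-\b_1)\}+\maxp\{-\a_2^{-1}(-m-1-\b_2)\}-v(m).
\end{equation*}

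The first step is the truncation. For $m<L_0$ we use only the bound on $a_{m-1}$ together with $v(b)\ge0$, and we need $v(a_{m-1})-v(m)>N$. Since $v(m)\le\log_p|m|$, the defining condition on $L_0$ in the first bullet guarantees this. Such an $L_0$ exists because the left side grows linearly in $|m|$ while $\log_p|m|$ grows logarithmically. The case $m>L_1$ is symmetric, using the bound on $b_{-m-1}$. The second and third bullets only shrink the range, and each index they discard satisfies the displayed combined bound exceeding $N$. Hence every discarded term has valuation $>N$, and the discarded tail, being a convergent sum of such terms, has valuation $>N$ by the ultrametric inequality. The endpoints $m=0$ are excluded throughout by the prime.

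The second step handles the approximation of the coefficients. Write $\tilde a_i=a_i+\d_i$ and $\tilde b_i=b_i+\d_i'$ with $v(\d_i),v(\d_i')>N+\e$. Then
\begin{equation*}
\tilde a\tilde b-ab=\d b+a\d'+\d\d',
\end{equation*}
and each term has valuation $>N+\e$ because $a,b\in R$ (the last term trivially). Dividing by $m$ costs $v(m)\le\e$ for $L_0\le m\le L_1$, $m\ne0$, by the choice of $\e$ in the fourth bullet. So each error term has valuation $>N$, and the finite sum of errors does too. This is essentially Lemma~\ref{lemmaprecision} with $A=B=0$.

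The main point needing care is the truncation step. We have to check that replacing the individual bounds by their $\maxp$ is legitimate, which it is since the coefficients lie in $R$. We must also verify that the iterative shrinking in the second and third bullets only removes terms for which the combined estimate has already been established, and not some term lying in the interior of the range. Combining the two steps then gives the claimed equality up to precision $N$.
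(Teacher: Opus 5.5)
Your proposal is correct and follows the paper's proof essentially step for step. You truncate using the first bullet together with $v(b_{-m-1})\ge 0$ and $v(m)\le\log_p|m|$, discard further boundary terms via the combined $\maxp$ bound, and control the coefficient-approximation error at precision $N+\e$ before dividing by $m$ with $v(m)\le\e$. Your three-term expansion $\d b + a\d' + \d\d'$ and your ultrametric remark on the infinite discarded tail only make explicit what the paper does with a two-term identity; that identity instead uses $\tilde b\in R$.
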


\begin{proof}
In computing $ \res_\ee \bigl( \eta \int \o \bigr) = \sump {m} {} \frac{a_{m-1} b_{-m-1}}{m} $
up to precision $ N $ we can ignore terms with $ v(a_{m-1}) + v( b_{-m-1}) > N + v (m) $.  
Since all~$ b_i $ are in~$ R $, for $ m < L_0 $ one has 
$ v(a_{m-1}) + v(b_{-m-1}) \ge -\a_1^{-1} (m - 1 - \b_1) > N + \log_p|m| \ge N + v(m)$.
For~$ m > L_1 $ one similarly has
$ v(a_{m-1}) + v(b_{-m-1}) > N + v(m) $.

Thus, $ \res_\ee \bigl( \eta \int \o \bigr) $ is equal to $ \sump {m=L_0} {L_1} \frac{a_{m-1} b_{-m-1}}{m} $
up to precision~$ N $. We can then still discard the term for
any remaining~$ m $ if the bounds on the coefficients of an element of
$ \Rtab {\a_1} {\b_1} $ and on those of an element of
$ \Rtab {\a_2} {\b_2} $ gives us that $ v(a_{m-1}) + v(b_{-m-1}) > N + v(m) $.
This is done (if possible) in the second step
for $ m = L_0, L_0 + 1, \dots $, and adjusts~$ L_0 $ accordingly. The third step does this for
$ m = L_1, L_1 - 1, \dots $, and adjusts~$ L_1 $.

For~$ a $ and $ b $ in~$ R $, and
$ \tilde a $ and $ \tilde b $ in $ K $ with
$ v(\tilde a - a) > N + \e $ and $ v(\tilde b - b) > N + \e $
for~$ N + \e \ge 0 $, we
have $ \tilde a $ and $ \tilde b $
in $ R $. Then~$\tilde a \tilde b - a b = (\tilde a -  a) \tilde b - a (\tilde b - b) $
implies that~$ v(\tilde a \tilde b - a b ) > N + \e$.
Therefore~$ \res_\ee \bigl( \eta \int \o \bigr) $
up to precision $ N $ also equals~$ \sump {m=L_0} {L_1} \frac{\tilde a_{m-1} \tilde b_{-m-1}}{m} $
by our choice of~$ \e $.
\end{proof}

Then we see from Lemma~\ref{estimates-lemma} that, 
given $\o_i = \sump {m} { } a_{i, m-1} t^{m-1} \, \dd t $ lying in the set $\Rtab {\a_i} {\b_i} \, \dd t $,
in order to compute $\res_{\ee} ( \o_j \int \o_i) $ up to precision $N_1$, 
we need to know the class of $\o_i$ in the quotient $\RtabN {\a_i} {\b_i} {N_1 + \e} \, \dd t$
in degrees~$L_{i,0}$ to $L_{i,1}$.
However, to compute $\res_{\ee} (\phi(\o_j) \int \o_i) $,
we need to know a priori the set $\Rtab {\cc} {\d} \, \dd t $ in
which every~$\phi(\o_i)$ lies.
In order to have such an estimate,
at first we estimate the subsets $\Rtab {\a} {\b_j} $ where the homogeneous terms of $H(\S)$ lie.
Then we use Proposition~\ref{hensel2} to obtain an~$\Rtab {\mu} {\nu} $
in which~$\sol$ lies.
Now from~$\ex(\phi(\x)) = \ex(\psi(\x)) + \ex(\psi(P))\soll$
as in~\eqref{local-frob-expansion},
together with Lemma~\ref{rtabprop} and Proposition~\ref{expdiff},
we can find an~$\Rtab {\cc} {\d} \, \dd t$ in which every~$\phi(\o_i)$ lies.

Because we know that~$\phi(\o_j) $ is in $ \Rtab {\cc_j} {\d_j} \, \dd t$, 
in order to find $\res_{\ee} (\phi(\o_j)\int \o_i) $ up to precision $N_2$,
we need to find the class of $\phi(\o_j)$ in $\RtabN {\cc_j} {\d_j} {N_2 + \e} \, \dd t $, 
in degrees~$L_{j,0}$ to $L_{j,1}$.
By lemma~\ref{NewtonprecN}, to find $\phi(\o)$ up to precision $N_2 + \e$, 
we need to find everything, in particular, to run the Newton iteration steps, up to the same precision.

We still have the problem how many terms in $t$ of $\sol$ are needed.
Let us reduce it to the following proposition.

\begin{proposition} \label{Prodprec}
Suppose $A_i(t) = \sum_m a_{i,m}t^m$ is in $ \Rtab {\a_i} {\b_i} $ for $1 \le i \le l$.
Then in order to know the $k$-th coefficient of $ \prod_{i=1}^l A_i(t) $ up to precision $N$,
it suffices to know the coefficients of each~$ A_i(t) $ 
up to degree $L_i$, up to precision~$ N $,
where $L_i = k + (\max_{j \not= i} \a_j) N - (\sum_{j \not= i} \b_j) $.
\end{proposition}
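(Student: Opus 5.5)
The plan is to expand the $k$-th coefficient of the product directly and show that every term involving a coefficient $a_{i,m}$ with $m > L_i$ has valuation above $N$, so it can be dropped modulo the required precision. Write the $k$-th coefficient as
\begin{equation*}
c_k = \sum_{m_1 + \dots + m_l = k} a_{1,m_1} \cdots a_{l,m_l}
,
\end{equation*}
which converges $\pi$-adically in $R$ because each $A_i$ lies in $\Rtab {\a_i} {\b_i} \subseteq \Rthat$. Every coefficient lies in $R$, so every factor has non-negative valuation. Fix $i$ and a term with $m_i > L_i$. Then $\sum_{j \ne i} m_j = k - m_i < k - L_i$.

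For the key estimate, use the defining bound $v(a_{j,m_j}) \ge -\a_j^{-1}(m_j - \b_j)$ of Notation~\ref{Ralphabetanotation}. Together with $v(a_{j,m_j}) \ge 0$, this gives $v(a_{j,m_j}) \ge \a^{-1}\max\{0, \b_j - m_j\}$, where $\a = \max_{j\ne i}\a_j$. This step uses $\a_j \le \a$: replacing $\a_j$ by the larger $\a$ only weakens a positive lower bound, which is the inclusion of Lemma~\ref{rtabprop}(1). Summing over $j \ne i$ gives
\begin{equation*}
\sum_{j\ne i} v(a_{j,m_j}) \ge \a^{-1}\Bigl(\sum_{j\ne i}\b_j - \sum_{j \ne i} m_j\Bigr) > \a^{-1}\Bigl(\sum_{j\ne i}\b_j - k + L_i\Bigr) = N .
\end{equation*}
The last equality is exactly the choice $L_i = k + \a N - \sum_{j\ne i}\b_j$. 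Since $v(a_{i,m_i}) \ge 0$, the whole term has valuation $> N$ and can be ignored. The case $\a = 0$, where all $A_j$ with $j \ne i$ are bounded below in degree, needs a separate remark: there the inequality $m_j \ge \b_j$ forces $\sum_{j\ne i} m_j \ge \sum_{j\ne i}\b_j$, so $m_i \le k - \sum_{j \ne i} \b_j = L_i$ and no such terms occur.

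Hence $c_k$ agrees up to precision $N$ with the same sum restricted to $m_i \le L_i$ for all $i$. This restricted sum is still infinite in the negative direction, but it converges, and by the same estimate only finitely many of its terms matter modulo precision $N$.

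The remaining step is to replace each $a_{i,m}$ by an approximation $\tilde a_{i,m}$ known up to precision $N$. I would argue as in the last paragraph of the proof of Lemma~\ref{estimates-lemma}: since all $a_{i,m}$ lie in $R$, so do the $\tilde a_{i,m}$. The telescoping identity $\prod \tilde a - \prod a = \sum_i (\tilde a_i - a_i)\prod_{j<i} a_j \prod_{j>i}\tilde a_j$ then shows that each product changes by valuation $> N$, and therefore so does the convergent sum. I expect the main obstacle to be the bookkeeping in the key estimate. One must use the crude common slope $\max_{j\ne i}\a_j$ together with non-negativity of valuations, and not attempt a sharper bound. One must also notice that the constraint $m_i > L_i$ controls only the sum $\sum_{j\ne i} m_j$, not the individual indices. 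The $\max\{0,\cdot\}$ trick handles this, since the bound is linear in the sum whenever all the $\b_j - m_j$ are nonnegative, and otherwise only gets better.
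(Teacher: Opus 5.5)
Your proposal is correct and is essentially the paper's argument. The paper combines the factors $j\ne i$ into one series $B(t)$ lying in $R_{\alpha',\beta'}((t))$, with $\alpha'=\max_{j\ne i}\alpha_j$ and $\beta'=\sum_{j\ne i}\beta_j$, via Lemma~\ref{rtabprop}(3); it then discards the terms $a_m b_{k-m}$ with $m>L_i$ exactly as you do, and your estimate with $\max\{0,\cdot\}$ is that lemma's proof written out inline. One small correction: the restricted sum is actually finite, because $m_j\le L_j$ for all $j$ together with $\sum_j m_j=k$ forces $m_i\ge k-\sum_{j\ne i}L_j$, so your final approximation step needs no convergence discussion.
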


\begin{proof}
It suffices to prove this for~$ i = 1 $. Let~$ A(t) = A_1(t) = \sum_m a_m t^m $
and let~$ B(t) = A_2(t) \dots A_l(t) = \sum_{n} b_n t^n $, so
that~$ B(t) $ is in~$ \Rtab {\a'} {\b'} $ with~$ \a' = \max\{\a_2,\dots,\a_l\} $
and~$ \b' = \b_2 + \dots + \b_l $ by Lemma~\ref{rtabprop}.
The coefficient of~$ t^k $ in~$ A(t) B(t) $ is~$ \sum_{m} a_{m} b_{k-m}$, where
$ v(a_{m} b_{k-m}) \ge v(b_{k-m}) > N $ if~$ - \a'^{-1} (k-m - \b') > N $, i.e., if~$ k + \a' N - \b' < m $.
So we only need to know $ a_m $ for~$ m \le k + \a' N - \b' $.
Because all coefficients involved are in~$ R $, we only need to know the
classes of those~$ a_k $ in $ \RN R N $.
\end{proof}

\begin{remark}
Given $ \o $ and~$ \eta $ in Proposition~\ref{estimates-lemma},
in order to compute~$ \res_\ee \bigl( \o \int \eta \bigl) $ up
to a specified precision, at each end we have to compute the expansion
of~$\phi(\eta)$ up to a certain precision~$ N $, and up to a given
degree.

If~$ \eta = \sum_i f_i(\x) \, \dd x_i $, then
using Propositions~\ref{expdiff} and~\ref{Prodprec}, we see that
we have to compute each~$ \ex(\phihatA(x_i)) $ up to precision~$ N $,
and up to some degree~$ L_i $.
Applying Proposition~\ref{Prodprec} again to~\eqref{local-frob-expansion},
we see that we have to find~$ \soll $ up to precision~$N$ and up to 
some degree~$ L $.

By Proposition~\ref{NewtonprecN} we can  carry out the Newton iteration
with coefficients in~$ \RN R N $, and $ \soll $ is~$ z_k $ up
to precision~$ N $ if~$ k > \log_2(eN)  + 1 $.
Thus it remains to determine up to which degree in~$ t $ the coefficients in the system $H(\S)$
needs to be known in order to get~$ z_k $ up to degree~$ L $.

From the data of the system $H(\S)$ for~$ z = 0 $,
we can determine the parameters~$\mu_i, \nu_i, \o_i$ for which $z_i$ is in~$ \Rtab {\mu_i} {\nu_i} $,
and~$H(z_i)$ is in~$ \Rtab {\mu_i} {\o_i} $.
From~$z_{i+1} = z_i - \Jac_H^{-1}(z_i) H(z_i)$ we can determine how many terms are needed for $H(z_{k-1})$
by using Proposition~\ref{Prodprec}.
Then we can decide inductively how many terms are needed for~$H(\S)$.
\end{remark}

\section{Bounding denominators in the residues at the ends.} \label{eisp}

In this section we bound from below the valuations of the elements
in~$ M_2 $ in Algorithm~\ref{mainalgorithm}, when the basis of~$ \hdr^1(C/R) $
is obtained from Proposition~\ref{compdr} for~$ D $ with~$ |D| \cap \Spec(A) = \emptyset $.
We do this by proving that the valuation of the contribution
of each end to the pairing in~\eqref{serre-over-L} is bounded below by~$ - \lceil \log_p(e/p) \rceil $.

If~$ e < p $ then~$ \hcr^1(C_k/W(k)) \otimes_{W(k)} R = \hdr^1(C/R) $
in~\eqref{hcr-hrig-hdr} by \cite[Theorem~2.8.1]{Ber-Ogu83}. It
follows that this last group is mapped to itself under~$ \myphicr {\pa} \otimes \s $ under that
condition, but our results below show this also holds if~$ e = p $.
Moreover, even if~$ e < p $ then it is not obvious that the local contributions
in~\ref{serre-over-L}, if we use a basis as above, have non-negative valuations.
For a further discussion we refer to Remark~\ref{h-remark}.

\begin{definition}
We say that~$ \o = \sum_n a_n t^n \dd t $ in~$ \Rtstar \dd t $
has integrable polar part if~$ a_{-1} = 0 $ and~$ (n+1)^{-1} a_n $ is in~$ R $
for all~$ n < -1 $.
\end{definition}

Note that~$ \o $ has integrable polar part precisely when it
is of the form~$ \dd h + \eta $ with~$ h $ in~$ \Rtstar $ and~$ \eta $ in~$ R[[t]] \dd t $.
This also shows that the notion is independent of the choice of the
parameter~$ t $, as another choice is of form~$ t' = t u $ for~$ u $ in~$ R[[t]] $.

\begin{lemma} \label{polar-lemma}
(1)
If~$ \o $ and~$ \eta $ have integrable polar parts,
then~$ \res_\ee \Bigl( \eta \int \o \Bigr) $ is in~$ R $.

(2)
If in a Zariski neighbourhood~$ U $ in~$ C $ of~$ q $ in the closed
fibre, $ \o = \dd f + \eta $, with~$ \eta $ in~$ H^0(U, \Omega_{U/R}^1 ) $ and~$ f $
a rational function that is generically defined on~$ C_k $, then~$ \ex(\o) $
has locally integrable polar part. Here~$ \ex $ is the expansion
map as in~\ref{expansionmap}.
\end{lemma}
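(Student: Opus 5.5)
For part~(1) I will use the remark right after the definition: a form with integrable polar part can be written as~$ \dd h + \eta' $ with~$ h $ in~$ \Rtstar $ having only terms of negative degree and~$ \eta' $ in~$ R[[t]] \, \dd t $. Concretely, for~$ \o = \sum_n a_n t^n \dd t $ put~$ h_\o = \sum_{n<-1} (n+1)^{-1} a_n t^{n+1} $, which has coefficients in~$ R $ by assumption, and~$ \o_0 = \sum_{n \ge 0} a_n t^n \dd t $. Do the same for~$ \eta $, giving~$ h_\eta $ and~$ \eta_0 $. Then~$ \int \o = h_\o + F_\o $ with~$ F_\o = \sum_{n\ge0} (n+1)^{-1} a_n t^{n+1} $, up to an irrelevant constant (a constant contributes~$ c\,\res_\ee \eta = 0 $). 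I then expand~$ \res_\ee (\eta \int \o) $ into four pieces. First,~$ \res(\eta_0 F_\o) = 0 $ because everything lies in~$ R[[t]] $. Next,~$ \res(\eta_0 h_\o) $ and~$ \res(\dd h_\eta \cdot h_\o) $ are sums of products of coefficients in~$ R $, so they lie in~$ R $. (The second one is~$ \res(h_\o \, \dd h_\eta) $, and only finitely many terms occur for elements of~$ \Rtstar $, or the sum converges in~$ R $.) The remaining term is~$ \res(\dd h_\eta \cdot F_\o) $, which has denominators from~$ F_\o $. Using~$ \res(\dd(h_\eta F_\o)) = 0 $, I rewrite it as~$ -\res(h_\eta \, \dd F_\o) = -\res(h_\eta \o_0) $, and this is in~$ R $. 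Since~$ R $ is closed, all the convergent sums stay in~$ R $.

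For part~(2) I apply~$ \ex $ to~$ \o = \dd f + \eta $. Since~$ \eta $ is regular on~$ U \ni q $, Remark~\ref{expansion-slopes}(2) (applied with~$ B = \O(U) $) gives~$ \ex(\eta) \in R[[t]] \, \dd t $. Since~$ f $ is generically defined on~$ C_k $, Lemma~\ref{expansion-lemma} gives~$ \ex(f) \in \Rthat $, and hence~$ \ex(\dd f) = \dd \ex(f) $ by Proposition~\ref{expdiff}. To match the remark characterising integrable polar parts I also need~$ \ex(f) \in \Rtstar $. I get this by writing~$ f = b_1/b_2 $ with~$ b_i $ regular near~$ q $ and~$ \ol{b_2} \ne 0 $, as in Remark~\ref{expansion-slopes}(1), which places~$ \ex(f) $ in some~$ \Rtab {\a} {\b} $.

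With this in hand,~$ \ex(\o) = \dd \ex(f) + \ex(\eta) $ has the required shape. Directly, the coefficient of~$ t^{n} $ in~$ \dd \ex(f) $ is~$ (n+1) c_{n+1} $ with~$ c_{n+1} \in R $, so dividing by~$ n+1 $ stays in~$ R $. The residue also vanishes, since an exact form contributes no~$ t^{-1} $ term and~$ \ex(\eta) $ has no negative powers. Adding~$ \ex(\eta) $ only changes nonnegative-degree terms, so the polar part is unaffected.

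The main obstacle I expect is bookkeeping rather than substance: checking that the infinite sums in part~(1) converge in~$ \Rtstar $ and that the integration-by-parts identity~$ \res(\dd(\cdot)) = 0 $ is legitimate there. The product~$ h_\eta F_\o $ involves a series with positive powers from~$ F_\o $, whose coefficients may be non-integral and grow in the negative direction. I would handle this on a thin annulus where both series converge (Lemma~\ref{rtabprop}(2)), using that~$ F_\o $ converges on the open unit disc, and compare coefficients there.
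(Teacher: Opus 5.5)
Your proposal is correct and follows essentially the paper's argument. In part~(2) you use exactly the paper's observation that the polar part of $\ex(\o)$ is that of $\dd \ex(f)$ with $\ex(f)$ in $\Rtstar$, and in part~(1) your four pieces reduce to the two families of terms in \eqref{cupreslocal}: those with $m<0$ (your $\res(\eta_0 h_\o)$) and those with $m>0$ (your $\res(\dd h_\eta \cdot F_\o)$). The paper simply reads off that each term $a_{m-1}b_{-m-1}/m$ lies in $R$, because $m^{-1}a_{m-1}\in R$ for $m<0$ and $-m^{-1}b_{-m-1}\in R$ for $m>0$; your integration by parts only moves the $1/m$ onto the coefficient of $\eta$, so it and the convergence bookkeeping for $h_\eta F_\o$ are unnecessary.
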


\begin{proof}
(1)
Writing~$ \o = \sump {m} {} a_{m-1} t^{m-1} \dd t $ and~$ \eta = \sump {m} {} b_{m-1} t^{m-1} \dd t $
with all~$ a_{m-1} $ and~$ b_{m-1} $ in~$ R $, in the formula~$ \sump {m} {} \frac{a_{m-1} b_{-m-1}}{m} $
for the residue given (cf.~\eqref{cupreslocal}) every term is in~$ R $
because~$ \frac1m a_{m-1} $ is in~$ R $
for~$ m < 0 $ and~$ - \frac1m b_{-m-1} $ is in~$ R $
for~$ m > 0 $.

(2)
The polar part of~$ \ex(\o) $ is that of~$ \dd \ex(f) $, and~$ \ex(f) $ is in~$ \Rtstar $.
\end{proof}

\begin{proposition} \label{ep-prop}
Let~$ h = \max\{0, h' \} $,
where~$ h' = e j - p^j $ for~$ j = \lceil \log_p(e/(p-1)) \rceil $.
Define~$ \phi : \Rtstar \, \dd t \to \Rtstar \, \dd t  $ by replacing~$ t $
with~$ f + \pi g $ for some~$ f $ in~$ t R[[t]] $ and~$ g $ in~$ \Rtstar $.
If~$ \o $ in~$ \Rtstar \dd t $ has integrable polar part,
then~$ \pi^h \phi(\o) $ has integrable polar part.
\end{proposition}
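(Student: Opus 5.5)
The plan is to use the characterisation noted just before Lemma~\ref{polar-lemma}: a form has integrable polar part if and only if it can be written as $\dd H + \eta$ with $H$ in $\Rtstar$ and $\eta$ in $R[[t]]\,\dd t$. I will decompose $\o$ in this way, apply $\phi$ to each piece, and rewrite each image again as an exact form with integral coefficients plus a regular form. Write $u = f + \pi g$. Throughout I will assume, as is implicit in the statement, that $u$ is a unit of $\Rthat$. This holds because the reduction of $f$ is nonzero, so by the discussion before Lemma~\ref{expansion-lemma} we have $u^{-1}$ in $\Rthat$. Arguing as in the remarks after Notation~\ref{Ralphabetanotation}, we in fact get $u^{-1}$ in $\Rtstar$, so substitution of $u$ preserves $\Rtstar$.

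\emph{The exact part.} Write $\o = \dd H + \eta$ with $H$ in $\Rtstar$ and $\eta = a(t)\,\dd t$, $a$ in $R[[t]]$. Then $\phi(\dd H) = \dd\bigl(H(u)\bigr)$. Since $H(u)$ is in $\Rtstar$, with coefficients in $R$, this piece already has integrable polar part, and so does $\pi^h\phi(\dd H)$.

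\emph{The regular part; this is the heart of the argument.} Let $A(t) = \sum_{n\ge0} a_n t^{n+1}/(n+1)$ be the formal antiderivative, which has denominators. Then $\phi(\eta) = a(u)\,\dd u = \dd\bigl(A(u)\bigr)$, and I expand
\begin{equation*}
\frac{u^{n+1}}{n+1} = \frac{f^{n+1}}{n+1} + \sum_{k=1}^{n+1} \binom{n}{k-1}\frac{\pi^k}{k} f^{n+1-k} g^k ,
\end{equation*}
using $\binom{n+1}{k}/(n+1) = \binom{n}{k-1}/k$. The terms $\dd\bigl(f^{n+1}/(n+1)\bigr) = f^n\,\dd f$ lie in $R[[t]]\,\dd t$. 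Every other term is $\dd$ of an element of $\Rtstar$ times a scalar $\pi^k/k$. The valuation of $\pi^h\pi^k/k$ is $(h+k)/e - v(k)$. It is therefore enough to check that $e\,v(k) - k \le h$ for all $k \ge 1$. Since $v(k)$ can only be large when $k$ is a power of $p$, it suffices to check $e j - p^j \le h$ for all $j \ge 0$. The function $j \mapsto e j - p^j$ increases as long as $e \ge p^{j+1}-p^j = p^j(p-1)$, and it is maximised at $j = \lceil \log_p(e/(p-1)) \rceil$ (or the neighbouring value, which I will check carefully). This is exactly $h'$, and $h = \max\{0,h'\}$ also covers the case $k=1$.

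\emph{Assembling the pieces.} The remaining step is to justify that all these sums converge in $\Rtstar$: the sums over $n$ and $k$ of the integral terms $\pi^{h+k}k^{-1}\binom{n}{k-1} f^{n+1-k}g^k$, and their differentials, which are termwise $\pi$-adically small. For this I will use Lemma~\ref{rtabprop}, with $g$ in some $\Rtab{\a}{\b}$ and the extra factor $\pi^k$ providing $V_{k/e}$, to control the growth in $k$ uniformly. I then conclude that $\pi^h\phi(\o) = \dd(\text{element of } \Rtstar) + (\text{element of } R[[t]]\,\dd t)$. I expect the main obstacle to be this convergence bookkeeping, together with pinning down that the maximum of $e j - p^j$ is attained at the stated $j$; the algebraic identity itself is straightforward.
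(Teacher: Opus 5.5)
Your proposal is correct and is essentially the paper's argument. The paper also splits off the terms with $m<0$ (your $\dd H$), which remain integrable because $h \ge 0$, and handles the $m>0$ terms via the bound $v_\pi\bigl((x+\pi y)^m - x^m\bigr) \ge v_\pi(m) - h'$ of Lemma~\ref{betterval}, maximising $el - p^l$ over $l \ge 0$ exactly as you do. Two small differences: your identity $\binom{n+1}{k}/(n+1) = \binom{n}{k-1}/k$ gives the binomial estimate of Lemma~\ref{binomlem} more directly than the paper's Legendre-formula proof, and the paper takes for granted the convergence and the invertibility of $f+\pi g$ that you flag.
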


\begin{proof}
Write~$ \eta = \sump m {} a_{m-1} t^{m-1} \dd t $, so
\begin{equation*}
\phi(\eta) = \sump m {} a_{m-1} \phi(t)^{m-1} \dd \phi(t) = \sump m {} m^{-1} a_{m-1} \dd \phi(t)^m 
.
\end{equation*}
For~$ m < 0 $, each~$ m^{-1} a_{m-1} $ is in~$ R $ by assumption,
so~$ a_m \phi(t)^m \dd \phi(t)$ has integrable polar part, and
the same holds if we multiply it by~$ \pi^h $ because~$ h \ge 0 $.
Now it suffices to show that the polar part of~$ m^{-1} \pi^{h'} \phi(t)^m $ for~$ m > 0 $
has coefficients in~$ R $. That follows from the next lemma.
\end{proof}

In the next result we let~$ v_\pi = e v_p $, so~$ v_\pi(\pi) = 1 $.

\begin{lemma}\label{betterval}
Let $v_{\pi } $ of a polynomial be the minimal valuation of its coefficients.
Then,
for all~$ m > 0 $, we have~$  v_{\pi } \left( (x+\pi y)^m - x^m \right) \ge v_{\pi }(m) - h' $.
\end{lemma}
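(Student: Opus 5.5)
The plan is to expand the binomial and bound the valuation of each coefficient separately. We have
\begin{equation*}
(x+\pi y)^m - x^m = \sum_{k=1}^m \binom mk \pi^k x^{m-k} y^k ,
\end{equation*}
so it suffices to show that $ v_\pi\bigl(\binom mk\bigr) + k \ge v_\pi(m) - h' $ for every $ 1 \le k \le m $. For the binomial coefficient I will use the identity $ \binom mk = \frac mk \binom{m-1}{k-1} $. Since $ \binom{m-1}{k-1} $ is an integer, this gives $ v_p\bigl(\binom mk\bigr) \ge v_p(m) - v_p(k) $. Hence
\begin{equation*}
v_\pi\Bigl(\binom mk \pi^k\Bigr) \ge v_\pi(m) - \bigl(e\, v_p(k) - k\bigr) ,
\end{equation*}
and the lemma reduces to the purely numerical claim that $ e\,v_p(k) - k \le h' $ for all integers $ k \ge 1 $.

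For the numerical claim, write $ s = v_p(k) \ge 0 $. Then $ k \ge p^s $, so $ e\,v_p(k) - k \le F(s) $ with $ F(s) = e s - p^s $. It then remains to maximise $ F $ over the integers $ s \ge 0 $. The successive differences are $ F(s+1) - F(s) = e - p^s(p-1) $. This is positive exactly when $ p^s < e/(p-1) $, and non-positive afterwards. So $ F $ increases up to the smallest integer $ s $ with $ p^s \ge e/(p-1) $, and is non-increasing from there on. That smallest $ s $ is precisely $ j = \lceil \log_p(e/(p-1)) \rceil $, provided $ j \ge 0 $. In that case $ \max_{s\ge0} F(s) = F(j) = h' $, which is what we need.

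The only point needing care is to check that $ j \ge 0 $, so that the maximiser $ j $ lies in the allowed range $ s \ge 0 $. I expect this to be the main (albeit small) obstacle, since a naive reading of the ceiling could suggest a negative $ j $. If $ j $ were negative, say $ j \le -1 $, then $ e/(p-1) \le p^{-1} $, i.e.\ $ e \le (p-1)/p < 1 $. This is impossible because $ e \ge 1 $, so $ j \ge 0 $ always holds. (When $ e \le p-1 $ we get $ j = 0 $ and $ h' = -1 $, consistent with the term $ k = 1 $ contributing $ v_\pi(m) + 1 $.) Putting the coefficient bound and the maximisation of $ F $ together gives $ v_\pi\bigl((x+\pi y)^m - x^m\bigr) \ge v_\pi(m) - h' $.
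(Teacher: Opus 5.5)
Your proof is correct and follows the paper's argument. You expand the binomial, bound $v_p\binom mk \ge v_p(m)-v_p(k)$, and reduce to maximising $es-p^s$ over integers $s\ge 0$ via the successive differences $e-p^s(p-1)$; the maximiser satisfies $j\ge 0$ because $\log_p(e/(p-1))>-1$. The only deviation is that you derive the binomial bound from $\binom mk=\frac mk\binom{m-1}{k-1}$, which is a shorter route than the Legendre-formula argument the paper uses for Lemma~\ref{binomlem}.
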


To prove this we first prove what is probably a well-known formula about the valuations of binomial coefficients.
The situation is much better than our bound gives because it is often negative, and all binomial coefficients are integers.

\begin{lemma}\label{binomlem}
  We have $v_p(\binom{m}{k}) \ge v_p(m)-v_p(k) $ for~$k  = 1, \dots, m $.
\end{lemma}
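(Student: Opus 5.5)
The plan is to use the elementary identity $k\binom{m}{k} = m\binom{m-1}{k-1}$, valid for $1 \le k \le m$. Both sides are integers, and $\binom{m-1}{k-1}$ is an integer, so $v_p\bigl(\binom{m-1}{k-1}\bigr) \ge 0$. Taking $p$-adic valuations gives
\begin{equation*}
v_p(k) + v_p\Bigl(\binom{m}{k}\Bigr) = v_p(m) + v_p\Bigl(\binom{m-1}{k-1}\Bigr) \ge v_p(m),
\end{equation*}
and rearranging yields $v_p\bigl(\binom{m}{k}\bigr) \ge v_p(m) - v_p(k)$, as claimed.

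The key steps, in order, are as follows. First, verify the identity by writing both sides as $m!/\bigl((k-1)!\,(m-k)!\bigr)$; this is where $k \ge 1$ is needed. Second, apply additivity of $v_p$ on nonzero integers. Third, discard the non-negative term $v_p\bigl(\binom{m-1}{k-1}\bigr)$, using $0 \le k-1 \le m-1$.

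There is no real obstacle. The only point needing care is the range of $k$: the case $k = m$ gives $\binom{m-1}{m-1} = 1$ and the bound $0 \ge 0$. In the subsequent use for Lemma~\ref{betterval}, this bound will be combined with $v_\pi(\pi^k) = k$ and the estimate $k - e\,v_p(k) \ge -h'$, optimised over $k$.
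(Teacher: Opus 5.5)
Your proof is correct, and it takes a different route from the paper's. The paper applies Legendre's formula $v_p(n!)=\sum_{i\ge1}\lfloor n/p^i\rfloor$ to write
\begin{equation*}
v_p\Bigl(\binom{m}{k}\Bigr)=\sum_{i\ge1}\bigl(\lfloor x_i+y_i\rfloor-\lfloor x_i\rfloor-\lfloor y_i\rfloor\bigr),\qquad x_i=k/p^i,\ y_i=(m-k)/p^i .
\end{equation*}
Each summand is $0$ or $1$, and it equals $1$ whenever $p^i\mid m$ but $p^i\nmid k$. There are $\max\{0,v_p(m)-v_p(k)\}$ such $i$, which gives the bound.

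Your identity $k\binom{m}{k}=m\binom{m-1}{k-1}$ avoids Legendre's formula and carry-counting entirely. It also gives an exact formula,
\begin{equation*}
v_p\Bigl(\binom{m}{k}\Bigr)=v_p(m)-v_p(k)+v_p\Bigl(\binom{m-1}{k-1}\Bigr),
\end{equation*}
so the slack in the inequality is a single binomial coefficient. The carry-counting viewpoint is what the paper uses, via Kummer's theorem, in Remark~\ref{hincreasing}(1) to show the bound is sharp for $k=p^l$ and $m=p^n$. In your formulation, that sharpness reduces to $p\nmid\binom{p^n-1}{p^l-1}$. This follows from Lucas's theorem, since every base-$p$ digit of $p^n-1$ equals $p-1$.

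So your argument is shorter for the inequality itself and still gives the sharpness statement with one extra standard fact.
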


\begin{proof}
Recall Legendre's formula $v_p(n!) = \sum_{i=1}^{\infty} \lfloor\frac{n}{p^i}\rfloor $. Now that~$ \lfloor x+y\rfloor- \lfloor x\rfloor-\lfloor y\rfloor $
for real numbers~$ x $ and~$ y $ is either $0 $ or $1 $. In particular, it is~1 if $x+y $ is an integer but $x $ is not. Taking $x_i = k / p^i $ and $y_i = (m-k) / p^i $ we see that the expression
\begin{equation}
  v_p\left( \binom{m}{k} \right) =  \sum_{i=1}^{\infty} \bigl( \lfloor x_i+y_i\rfloor- \lfloor x_i\rfloor-\lfloor y_i\rfloor \bigr)
\end{equation}
is a sum of $0 $s and $1 $s and we get a $1 $ for every $i $ such that $p^i \mid m $ and~$p^i \nmid k $.
\end{proof}

\begin{proof}[Proof of Lemma~\ref{betterval}]
By Lemma~\ref{binomlem} we have
\begin{align*}
  v_{\pi } \left( (x+\pi y)^m - x^m \right) &= v_{\pi } \left( \sum_{i=1}^{m} \binom{m}{i} (\pi y)^i x^{m-i} \right)\\  &\ge \min_{i=1}^m \left\{ i+ e (v_p(m) -v_p(i)) \right\}\\ &= v_{\pi }(m) +  \min_{i=1}^m \left\{ i- e v_p(i) \right\}\\ &\ge v_{\pi }(m) +  \min_{i\ge1} \left\{ i- e v_p(i) \right\}
.
\end{align*}
 This minimum values of~$ i- e v_p(i) $ for~$ i \ge 1 $
is attained for some~$i= p^l $, so it suffices to minimise~$ f(l) = p^l - el $
for~$ l \ge 0 $. We have $f(l+1)-f(l)= p^l(p-1)-e $, an increasing
function on~$ \R $ that is zero for~$ l = \log_p(e/(p-1)) > -1 $.
\end{proof}

\begin{remark}\label{hincreasing}
(1)
The bound of Lemma~\ref{betterval} is best possible. In order
to see this, note that the minimal value in its proof was obtained at $i=p^l $.
Using Kummer's theorem one sees that the inequality in Lemma~\ref{binomlem} is an equality when~$k=p^l $ and~$m=p^n $ with $n\ge l $

(2)
If, in the proof above, we write $f_e(l) $ to stress the dependence on $e $, then clearly $f_e(l) $ is non-increasing in $e $. Therefore, $-h'= - h_e' $,
which is just the minimal value of $f_e(l) $ over all $l\ge 0 $, is also non-increasing in $e $, so $h_e' $ itself is non-decreasing in $e $.
In fact, it is increasing once $j\ge 1 $, i.e., starting at $e=p-1 $.
So, as an easy estimate we have $h'=h > 0 $ for~$e \ge p+1 $.
\end{remark}

We want to apply Proposition~\ref{ep-prop} to~$ \ex( \phihatA(\o)) $ for~$ \o $
in~$ H^0(C, \I(D)) $ as in Proposition~\ref{compdr}, 
under the assumption that~$ |D| \cap \Spec(A) = \emptyset $.
But, as mentioned at the beginning of Section~\ref{expansions}, the
local parameter~$ t $ is perhaps only defined after
replacing~$ K $ with a finite extension~$ L $ (which we suppressed
from the notation above), and so~$ \phihatA(t) $ is not defined.
We therefore enlarge~$ K $ to a suitable~$ L $,
with valuation ring~$ R_L $, in such a way that~$ \phihatA $
extends to~$ A \otimes_R R_L $, and is defined on a suitable~$ t $.

\begin{lemma}
Let~$ L/K $ be finite and unramified with residue field extension~$ l/k $. Then there is a unique~$ \t $
in~$ \Aut(L/\Qp) $ such that~$ \t_{|K} = \s $ and~$ \ol{\t} $ in~$ \Gal(l/\F_p) $
is given by raising to the power~$ \pa $.
\end{lemma}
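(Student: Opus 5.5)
The plan is to reduce the statement to the standard Galois theory of unramified extensions of local fields. Let $K_0$ denote the maximal unramified subextension of $K/\Qp$ and $L_0$ that of $L/\Qp$. Since $L/K$ is unramified, $L = L_0 K$ (the compositum inside $L$) and $L_0 \cap K = K_0$. Moreover, $L_0$ is the unramified extension of $\Qp$ with residue field $l$, so $\Gal(L_0/\Qp) \to \Gal(l/\F_p)$ is an isomorphism. The structure of $L$ is then $L \simeq L_0 \otimes_{K_0} K$. This holds because $L_0/K_0$ is Galois (unramified, hence cyclic) and linearly disjoint from $K$ over $K_0$: indeed $[L:K] = [l:k] = [L_0:K_0]$, the degrees are all controlled by residue fields, and $e(L/\Qp)=e(K/\Qp)$.

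\emph{Uniqueness.} Any $\t$ with the stated properties is determined on $K$ by $\t_{|K}=\s$. On $L_0$ it is determined by $\ol{\t}$, via the isomorphism $\Gal(L_0/\Qp)\simeq\Gal(l/\F_p)$; here we use that $\t$ preserves $L_0$, being the unique maximal unramified subfield. Since $L=L_0K$, the automorphism $\t$ is determined on all of $L$.

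\emph{Existence.} Let $\t_0 \in \Gal(L_0/\Qp)$ be the element reducing to $x\mapsto x^{\pa}$ on $l$. We then need compatibility on $K_0 = L_0\cap K$. The restriction $\s_{|K_0}$ reduces to $\ol{\s}$ on $k$, which is the $\pa$-th power map, and $\t_{0|K_0}$ reduces to the same map on $k$. Since $\Gal(K_0/\Qp)\simeq\Gal(k/\F_p)$, we get $\s_{|K_0}=\t_{0|K_0}$. Now define $\t$ on $L_0\otimes_{K_0}K$ by $a\otimes b\mapsto \t_0(a)\otimes\s(b)$. This is well defined because the two maps agree on $K_0$; it is a ring automorphism, it fixes $\Qp$, and it has the required restrictions.

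The main point to check carefully is the isomorphism $L_0\otimes_{K_0}K\simeq L$. I would argue it as follows. The natural map is a surjection onto the compositum $L_0K$. The source is a field because $L_0=K_0(\zeta)$ for a root of unity $\zeta$ of order prime to $p$, and the minimal polynomial of $\zeta$ over $K_0$ stays irreducible over $K$, since its reduction is the minimal polynomial over $k$ of a generator of $l$. Comparing degrees with $[L:K]=[l:k]$ then gives equality $L_0K=L$. Finally, that $\ol{\t}$ is as required follows because the residue field of $L$ is that of $L_0$.
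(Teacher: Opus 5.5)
Your proof is correct, but it takes a different route from the paper.

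The paper argues purely with Galois groups. For uniqueness, $\tau_1\tau_2^{-1}$ lies in $\mathrm{Gal}(L/K)$, which maps isomorphically onto $\mathrm{Gal}(l/k)$ because $L/K$ is unramified, and $\tau_1\tau_2^{-1}$ reduces to the identity there. For existence, it lets $E\subseteq K$ be the fixed field of $\langle\sigma\rangle$ and asserts that $L/E$ is Galois. It then extends $\sigma$ to some $\rho\in\mathrm{Gal}(L/E)$, and multiplies $\rho$ by the element of $\mathrm{Gal}(L/K)\simeq\mathrm{Gal}(l/k)$ that corrects $\overline{\rho}$ to the $p'$-th power map.

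You instead split $L\simeq L_0\otimes_{K_0}K$ along the maximal unramified subextensions and glue $\tau=\tau_0\otimes\sigma$. Compatibility on $K_0$ is forced by $\mathrm{Gal}(K_0/\mathbb{Q}_p)\simeq\mathrm{Gal}(k/\mathbb{F}_p)$, and uniqueness follows from $L=L_0K$ together with the rigidity of $\mathrm{Gal}(L_0/\mathbb{Q}_p)\simeq\mathrm{Gal}(l/\mathbb{F}_p)$.

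The paper's argument is shorter, but it never justifies that $L/E$ is Galois. This is true: any $E$-embedding of $L$ preserves $K$, hence sends $L$ to the unique unramified extension of $K$ of degree $[L:K]$. Your route sidesteps this point entirely.

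Your route also gives an explicit description of $\tau$: $\sigma$ on $K$, and $\zeta\mapsto\zeta^{p'}$ on a root of unity $\zeta$ generating $L_0$. This makes transparent the compatibility of these extensions under enlarging $L$, which the paper notes right after the lemma.

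The price is the extra bookkeeping you supplied. You needed linear disjointness of $L_0$ and $K$ over $K_0$, via the degree count or the irreducibility of the reduced minimal polynomial. You also needed that every automorphism over $\mathbb{Q}_p$ preserves the maximal unramified subfield.
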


\begin{proof}
If~$ \t_1 $ and~$ \t_2 $ both satisfy the conditions, then in~$ \Aut(K/\Qp) $ both restrict to~$ \s $
on~$ K $, and~$ \t_1 \t_2^{-1} $ is in~$ \Gal(L/K) $. This group
is isomorphic to~$ \Gal(l/k) $ because~$ L/K $ is unramified, so~$ \ol{\t_1} \, \ol{\t_2}^{-1} = \id_l $,
and~$ \t_1 = \t_2 $.
In order to show a suitable~$ \t $ exists, let~$ E \subseteq K $
be the fixed field of~$ \la \s \ra $. Then~$ L/E $ is finite
Galois, so there is a~$ \rho $ in~$ \Gal(L/E) $ such that~$ \rho_{|K} = \s $.
Let~$ F_{\pa} $ in~$ \Gal(l/\F_p) $ denote the map given by raising
to the power~$ \pa $. Then~$ \ol{\rho}_{|k} = \ol{\s} = F_{\pa|k} $,
hence~$ \ol{\rho}^{-1} F_{\pa} $ is in~$ \Gal(l/k) $.
We can then multiply~$ \rho $ by an element in~$ \Gal(L/K) \simeq \Gal(l/k) $
in order to obtain a~$ \t $ of~$ \Gal(L/E) $ satisfying both conditions.
\end{proof}

For~$ L/K $ finite and unramified, and~$ \t $ as in the lemma,
we can extend~$ \phihat $ from an~$ \s $-semilinear endomorphism of~$ \Rx $
to a~$ \t $~semilinear endomorphism~$ \phihatt $ of~$ \Rtildex $
by using~\eqref{eq:phiact} but with~$ \s $ replaced with~$ \t $
in the definition of~$ \psi $.
It induces a map~$ \phi_{\t,\hat A_{\tR}} $ on~$ \Adag_{\tR} = \Adag \otimes_R \tR $
that restricts to~$ \phihatA $ on~$ \Adag $, and reduces to raising
to the power~$ \pa $ on~$ \Adag_{\tR} / (\pi) = \Abar \otimes_k l $.
Moreover, because of the uniqueness of~$ \t $ for a given~$ L $,
these extensions are compatible if we replace~$ L $ with a larger
unramified finite extension of~$ K $.

By choosing a suitable~$ L/K $ that is finite and unramified,
we can make sure one (or all) of the residue discs corresponding
to~$ \ee $ contain and~$ L $-rational point. It gives rise to
a section~$ \tR \to C_{\tR} $ that enables us to choose a parameter~$ t $
for this disc, as element in~$ \O_{C_{\tR}, q} $ where~$ q $
in~$ C_l $ is the closed point in the image of~$ Q $.

In order to avoid cumbersome notation, below we again write~$ R $, $ K $,
$ \s $, and~$ C $ for~$ \tR $, $ L $, $ \t $ and~$ C_{\tR} $.

Now that we have~$ t $ in~$ \O_{\eta,C} = A_{(\pi)} $, where~$ \eta $
is the generic point of the closed fibre~$ C_k $, we have to
give meaning to~$ \phihatA(t) $ and its expansion in~$ \Rtstar $.
For this, consider the commutative diagram
\begin{equation*}
\xymatrix{
A \ar[r] \ar[d] & \Adag \ar[d]
\\
A_{(\pi)} \ar[r] & \Adag_{(\pi)}
.}
\end{equation*}
Around~\eqref{expansionmap} we have defined the expansion
map~$ \ex $ from~$ A $, $ \Adag $ and~$ A_{(\pi)} = \O_{\eta,C} $
to~$ \Rtstar $, where we used Proposition~\ref{expest} to replace~$ \Rthat $
with~$ \Rtstar $ as the target.
We can get all of those by combining the above diagram with an expansion map~$ \ex : \Adag_{(\pi)} \to \Rtstar $,
which we obtain from the one on~$ \Adag $, as follows. We have~$ \Adag = A + \pi \Adag $,
so if~$ a $ in~$ \Adag $ is not in~$ \pi \Adag $, then~$ a = c + \pi b $
with~$ b $ in~$ \Adag $ and~$ c $ in~$ A \setminus \pi A $.
We saw during the construction of the expansion map on~$ A_{(\pi)} $
that then~$ \ex(c) $ is in~$ \Rtstar^* $. The same then holds
for~$ \ex(a) $, so that~$ \ex : \Adag \to \Rtstar $ extends uniquely
to an expansion map~$ \ex : \Adag_{(\pi)} \to \Rtstar $.

Similarly, we saw in Corollary~\ref{phidagA-phihatA} that~$ \phidagA : A \to \Adag $
induces a map~$ \Adag \to \Adag $ that induces the~$ \pa $-th
power map on~$ \Abar = A/(\pi) = \Adag/ (\pi) $. So if~$ a $
is in~$ \Adag \setminus \pi \Adag $ then~$ \phidagA $ is in the
same set, hence~$ \phidagA $ induces a map~$ \phidagA : \Adag_{(\pi)} \to \Adag_{(\pi)} $
that reduces to the~$ \pa $-th power map on~$ \Frac(\Abar) = k(C_k) $.

We now have the following. Note that we can view~$ t $ here as in~$ A_{(\pi)} $,
or in~$ \Adag_{(\pi)} $.

\begin{proposition} \label{phidagAt-prop}
Let the notation be as above. Then we have~$ \phidagA(t) = t^{\pa} + \pi a $ for
some~$ a $ in~$ \Adag_{(\pi)} $.
\end{proposition}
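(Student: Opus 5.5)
We want $\phidagA(t) - t^{\pa}$ to lie in $\pi \Adag_{(\pi)}$. The plan is to first write $t$ as a fraction of elements of $A$ with non-zero reduction, then reduce the claim to a congruence modulo $\pi$, which follows because $\phidagA$ lifts the $\pa$-th power map.

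First we represent $t$ by elements of $A$. Since $t$ lies in $A_{(\pi)} = \O_{\eta,C}$, we write $t = b_1/b_2$ with $b_1, b_2$ in $A$ and $b_2$ not in $\pi A$. As in Remark~\ref{expansion-slopes}, any common factor of $\pi$ can be cancelled, so we may assume this. Then $b_2$ is a unit in $A_{(\pi)}$, hence also in $\Adag_{(\pi)}$, and $\phidagA(b_2)$ is a unit in $\Adag_{(\pi)}$ because $\phidagA$ maps $\Adag \setminus \pi\Adag$ into itself, as observed just before the statement.

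Next we compute $\phidagA(t)$ directly:
\begin{equation*}
\phidagA(t) - t^{\pa} = \frac{\phidagA(b_1)\, b_2^{\pa} - b_1^{\pa}\, \phidagA(b_2)}{\phidagA(b_2)\, b_2^{\pa}} .
\end{equation*}
The denominator is a unit in $\Adag_{(\pi)}$. It therefore suffices to show that the numerator lies in $\pi \Adag$.

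The key input is that $\phidagA$ reduces to the $\pa$-th power map on $\Abar = \Adag/(\pi)$ (Corollary~\ref{phidagA-phihatA}). This gives $\phidagA(b_i) \equiv b_i^{\pa}$ modulo $\pi \Adag$ for $i = 1, 2$. Substituting, the numerator is congruent to $b_1^{\pa} b_2^{\pa} - b_1^{\pa} b_2^{\pa} = 0$ modulo $\pi\Adag$. We can then set
\begin{equation*}
a = \pi^{-1}\bigl(\phidagA(b_1)\, b_2^{\pa} - b_1^{\pa}\, \phidagA(b_2)\bigr) \cdot \bigl(\phidagA(b_2)\, b_2^{\pa}\bigr)^{-1},
\end{equation*}
which lies in $\Adag_{(\pi)}$ and satisfies $\phidagA(t) = t^{\pa} + \pi a$.

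The only step that needs care is the compatibility of the ring maps. We must check that $\phidagA$ on $\Adag_{(\pi)}$, extended by inverting elements outside $\pi\Adag$, agrees with the map defined before the statement. We must also check that the base change to the unramified extension, with $\t$ replacing $\s$, preserves the reduction property. Both follow from the remarks preceding the proposition, since the extended $\phi_{\t}$ still reduces to the $\pa$-th power map on $\Abar \otimes_k l$. So I do not expect a genuine obstacle here.
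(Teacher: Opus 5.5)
Your proof is correct and is essentially the argument the paper intends. The paper gives no separate proof: it relies on the preceding observation that $\phidagA$ extends to $\Adag_{(\pi)}$ and reduces to the $\pa$-th power map on $\Adag_{(\pi)}/\pi\Adag_{(\pi)} = \Frac(\Abar)$, and your computation with $t = b_1/b_2$ simply writes that reduction out explicitly. The step where you cancel common factors of $\pi$ is unnecessary, since $b_2 \notin \pi A$ already holds by definition of localising at the prime $\pi A$.
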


\begin{lemma} \label{phit-lemma}
Let~$ \phit : \Rtstar \to \Rtstar $ be the homomorphism obtained
by letting~$ \s $ act on the coefficients and replacing~$ t $ with~$ \tilde t = \ex(\phidagA(t)) = t^{\pa} + \pi \ex(a) $,
with~$ a $ as in the previous proposition. Then~$ \ex \circ \phidagA $
and~$ \phit \circ \ex $ induce the same homomorphism from~$ A_{(\pi)} $ to~$ \Rtstar $.
\end{lemma}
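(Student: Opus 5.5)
The strategy is to reduce the comparison on $A_{(\pi)}$ to a comparison on the generators of the algebra $A$, then extend by a continuity/uniqueness argument. Both $\ex \circ \phidagA$ and $\phit \circ \ex$ are $\s$-semilinear ring homomorphisms $A_{(\pi)} \to \Rtstar$. First I would check that $\phit$ is well defined. Write $\tilde t = t^{\pa} + \pi\ex(a)$. Substituting $\tilde t$ for $t$ in a series $\sum_m c_m t^m$ in $\Rtstar$ needs $\tilde t^{-1}$, and I would obtain it as
\begin{equation*}
\tilde t^{-1} = t^{-\pa}\bigl(1 + \pi t^{-\pa}\ex(a)\bigr)^{-1} = t^{-\pa}\sum_{k \ge 0}\bigl(-\pi t^{-\pa}\ex(a)\bigr)^k .
\end{equation*}
This converges by Lemma~\ref{rtabprop}(3),(6),(8), since $\ex(a)$ lies in some $\Rtab{\a}{\b}$. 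The remaining convergence of the substituted series follows from the same $\Rtab{\a}{\b}$ estimates: the $\pi$-adic terms from the negative powers decay, and for positive powers $\tilde t^m$ is controlled.

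Next comes the key observation, which is a uniqueness principle in the style of Theorem~\ref{locallift0}. Both maps agree on $t$ by construction, since $\phit(\ex(t)) = \phit(t) = \tilde t = \ex(\phidagA(t))$. Both maps reduce modulo $\pi$ to $x \mapsto x^{\pa}$ composed with the expansion into $k((t))$. For $\phit \circ \ex$ this holds because $\tilde t \equiv t^{\pa}$ and $\s$ reduces to $\ol{\s}$. For $\ex\circ\phidagA$ it holds because $\phidagA$ reduces to the $\pa$-power map on $\Frac(\Abar)$.

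Now take any $x$ in $A_{(\pi)}$, for instance one of the coordinates $x_i$. It is algebraic over $K(t)$. More precisely, I would choose a polynomial $F(T,X)$ with coefficients in $R$ such that $F(t,x)=0$ and such that $\partial F/\partial X$ at $(t,x)$ is nonzero modulo $\pi$, i.e.\ $x$ is étale over $R[t]$ at the generic point of $C_k$. Such an $F$ exists because $t$ is a local parameter at $q$, so $t$ reduces to a separating transcendental of $k(C_k)$ (its differential is nonzero at $q$). Applying both homomorphisms to $F(t,x)=0$ shows that $y_1 = \ex(\phidagA(x))$ and $y_2 = \phit(\ex(x))$ are both roots of $F^\s(\tilde t, Y)$ in $\Rthat$. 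They are congruent modulo $\pi$, and $\partial_Y F^\s(\tilde t, y_j)$ is a unit in $\Rthat$ by the reduction argument together with the fact that nonzero elements mod $\pi$ are units. Hensel uniqueness in the $\pi$-adically complete ring $\Rthat$ then gives $y_1 = y_2$. Since $A_{(\pi)}$ is generated over $R[t]_{(\pi)}$ by finitely many such $x$ up to localisation, and both maps are homomorphisms that send elements outside $\pi A_{(\pi)}$ to units, the two maps agree on all of $A_{(\pi)}$.

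The main obstacle is the separability step: producing $F$ with $\partial_X F$ a unit generically on $C_k$. When $\pa$ divides the relevant degrees, $x$ may be inseparable over $k(t)$. I would handle this by replacing the primitive-element approach with the étaleness of $R[t]_{(\pi)} \to A_{(\pi)}$. Étaleness holds because $\dd t$ generates $\Omega^1$ generically on the special fibre, and it gives the needed Hensel-type uniqueness of lifts of homomorphisms via formal étaleness. Concretely, two $\s$-semilinear lifts $A_{(\pi)} \to \Rthat$ that agree on $R[t]_{(\pi)}$ and agree modulo $\pi$ must coincide.
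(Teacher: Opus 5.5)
Your final argument, the formal-unramifiedness version in your last paragraph, is correct, but it takes a different route from the paper.

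\textbf{The paper's route.} It argues by Taylor expansion in $t$. For $b\in\O_{C,q}$ it writes $b=r_0+r_1t+\dots+r_Nt^N+t^{N+1}c$ with $r_i\in R$ and $c\in\O_{C,q}$. Applying $\ex\circ\phidagA$ gives $\s(r_0)+\dots+\s(r_N)\tilde t^N+\tilde t^{N+1}\ex(\phidagA(c))$. Letting $N\to\infty$, this converges coefficientwise to $\phit(\ex(b))$. The equality then extends to $A_{(\pi)}=(\O_{C,q})_{(\pi)}$ because elements outside $\pi\O_{C,q}$ are sent to units.

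\textbf{Your route.} You prove a rigidity statement instead:
\begin{itemize}
\item Both maps are $\s$-semilinear and send $t\mapsto\tilde t$, so they agree on $R[t]$, hence on $R[t]_{(\pi)}\subseteq A_{(\pi)}$.
\item Both reduce to $b\mapsto\ex(\ol b)^{\pa}$ in $\Rthat/\pi\Rthat=k((t))$.
\item Since $\dd t$ generates $\Omega^1_{C/R}$ at $q$, you have $\Omega^1_{A_{(\pi)}/R[t]_{(\pi)}}=0$. So if the maps agree modulo $\pi^n$, their difference modulo $\pi^{n+1}$ is an $R[t]_{(\pi)}$-derivation into the square-zero ideal $\pi^n\Rthat/\pi^{n+1}\Rthat$, hence zero.
\item $\pi$-adic separatedness of $\Rthat$ finishes the argument.
\end{itemize}

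\textbf{What each buys.} Your route needs no convergence estimates. The paper's limit tacitly needs the remainder $\tilde t^{N+1}\ex(\phidagA(c))$, with $c$ depending on $N$, to tend to $0$ coefficientwise. That requires control of the polar part of $\ex(\phidagA(c))$ modulo each $\pi^n$, uniformly in $N$. Your route also gives the equality already in $\Rthat$, and it explains the lemma as uniqueness of lifts along the étale map $R[t]_{(\pi)}\to A_{(\pi)}$. The paper's route is more hands-on: it uses only $t$-adic approximation of elements of $\O_{C,q}$ by polynomials in $t$, with no Kähler differentials. Your verification that $\phit$ is well defined is unnecessary, since the statement takes it as given.

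\textbf{Drop the first Hensel version.} You should drop the first, element-by-element Hensel version and correct its diagnosis. Once $k(C_k)/k(t)$ is separable, every $\ol x$ is separable over $k(t)$, so inseparability is never the issue. The real problem is that for an $x$ not generating the residue extension there may be no $F$ with $\partial_X F(t,x)$ a unit. For instance, suppose $y\in A_{(\pi)}$ with $y\notin K(t)$ and $y^2=h\in R[t]_{(\pi)}$. Then $x=t+\pi y$ has minimal polynomial $(X-t)^2-\pi^2h$ over $K(t)$. Dividing by this monic polynomial shows every $F\in R[t]_{(\pi)}[X]$ vanishing at $x$ is a multiple of it in $R[t]_{(\pi)}[X]$. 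Hence $\partial_XF(t,x)\in 2\pi y\,A_{(\pi)}\subseteq\pi A_{(\pi)}$.

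Salvaging the Hensel approach would require choosing $x$ so that $A_{(\pi)}$ is a localisation of $R[t]_{(\pi)}[X]/(F)$ with $F'(x)$ a unit, i.e.\ the local structure theorem for étale maps. Your derivation argument avoids that, so it is the version to write up.
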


\begin{proof}
Let~$ b $ be in~$ \O_{q,C} $ and, for any~$ N \ge 0 $ write~$ b = r_0 + r_1 t + \dots + b_N t^N + t^{N+1} c $
with the~$ r_i $ in~$ R $ and~$ c $ in~$ \O_{q,C} $. Then
\begin{equation*}
\ex \circ \phidagA(b) =  \s(r_0) + \s(r_1) \tilde t + \dots + \s(b_N) \tilde t^N + \tilde t^{N+1} \ex \circ \phidagA(c) 
.
\end{equation*}
Letting~$ N $ grow we see that the right-hand side
converges coefficientwise to~$ \phit \circ \ex (b) $.
Combining Lemma~\ref{expansion-lemma} with Proposition~\ref{expest},
we find that if~$ b $ is not in~$ \pi \O_{q,C} $
then~$ \ex(b) $ is in~$ \Rtstar $, so the same holds for~$ \phit \ex (b) = \ex \circ \phidagA (b) $.
There the two ring homomorphisms extend to the same one from~$ \left( \O_{q,C} \right)_{(\pi)} = A_{(\pi)} $
to~$ \Rtstar $.
\end{proof}

Because applying~$ \s $ to the coefficients in~$ \Rtstar \dd t $ has
no influence on the notion of having integrable polar part, and
Lemma~\ref{phit-lemma} extends to~$ \Omega_{A_{(\pi)}/R}^1 $,
using Lemma~\ref{polar-lemma} as well as Propositions~\ref{ep-prop} and~\ref{phidagAt-prop},
we obtain the following. Note that here~$ R $ is the original~$ R $,
not the valuation ring in the extension(s)~$ L $ of~$ K $ used above
in which the mentioned contributions of the various ends lie.

\begin{corollary} \label{h-cor}
Let~$ \o $ and~$ \eta $ be in~$ H^0(C, \I(D)) $ as in~Proposition~\ref{compdr},
and assume that~$ |D| \cap \Spec(A) = \emptyset $. Let~$ h $
be as in Proposition~\ref{ep-prop}.
Then the contribution from each end~$ \ee $ to~$ \pi^h \pair{ \o, \phidagA(\eta) } $
in~\eqref{serre-over-L} has non-negative
valuation. In particular, if we use such forms to obtain a basis
of~$ \hdr^1(C/R) $, then the entries of~$ \pi^h M_2 $ in Algorithm~\ref{mainalgorithm}
are in~$ R $.
\end{corollary}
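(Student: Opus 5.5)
The plan is to reduce the global statement to the local results just established, working end by end. Fix an end $\ee$. After an unramified extension as above we have a parameter $t$ for the corresponding residue disc and the expansion map $\ex$. The local contribution is $\res_\ee\bigl(\phidagA(\eta)\int\o\bigr)$, with both forms expanded via $\ex$. By Lemma~\ref{polar-lemma}(1), it suffices to show two things: that $\ex(\o)$ has integrable polar part, and that $\pi^h\ex(\phidagA(\eta))$ has integrable polar part. The residue is bilinear, so the factor $\pi^h$ can be moved onto the Frobenius side.

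For $\ex(\o)$ (and likewise $\ex(\eta)$), I would use the description of $H^0(C,\I(D))$ from Proposition~\ref{compdr}. Near the reduction $q$ of the centre of the disc, such a form is $\dd f+\eta'$ with $\eta'$ regular on a Zariski neighbourhood of $q$ and $f$ generically defined on $C_k$. Since $|D|\cap\Spec(A)=\emptyset$, the only polar behaviour sits at the ends, so Lemma~\ref{polar-lemma}(2) applies and gives an integrable polar part. I expect this to be essentially the defining property of the forms produced by Proposition~\ref{compdr}, so it should be a matter of quoting it.

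For the Frobenius side, Lemma~\ref{phit-lemma} (extended to $1$-forms over $A_{(\pi)}$) gives $\ex(\phidagA(\eta))=\phit(\ex(\eta))$. Here $\phit$ applies $\s$ to the coefficients and substitutes $\tilde t=t^{\pa}+\pi\ex(a)$, using Proposition~\ref{phidagAt-prop}. Applying $\s$ to coefficients preserves integrable polar parts. The substitution has the shape required by Proposition~\ref{ep-prop}, with $f=t^{\pa}\in tR[[t]]$ and $g=\ex(a)\in\Rtstar$, so $\pi^h\phit(\ex(\eta))$ has integrable polar part. Combining this with the previous paragraph, Lemma~\ref{polar-lemma}(1) gives that each local contribution to $\pi^h\pair{\o,\phidagA(\eta)}$ lies in $R_L$.

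The main obstacle is bookkeeping rather than a single hard step. First, I have to check that $\ex(a)$ really lies in $\Rtstar$ and not merely in $\Rthat$; this follows from the extension of $\ex$ to $\Adag_{(\pi)}$ constructed above. Second, I have to keep track of the base changes. Each local contribution lies in $R_L$, and the total pairing is Galois invariant, since it is a sum of traces over the orbits of ends, as discussed in Section~\ref{expansions}. The total therefore lies in $R_L\cap K=R$. Hence the entries of $\pi^h M_2$ lie in $R$ when the basis $\o_1,\dots,\o_{2g}$ consists of such forms.
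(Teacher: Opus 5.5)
Your proposal is correct and is essentially the paper's own argument. You obtain integrable polar part for $\ex(\o)$ from Lemma~\ref{polar-lemma}(2), and for $\pi^h \ex(\phidagA(\eta)) = \pi^h \phit(\ex(\eta))$ from Lemma~\ref{phit-lemma} (extended to $1$-forms), Proposition~\ref{phidagAt-prop} and Proposition~\ref{ep-prop}, then conclude end by end with Lemma~\ref{polar-lemma}(1); your final descent step (local contributions in the valuation ring of the unramified extension, entries of $M_2$ in $K$, hence in $R$) matches the paper's remark that the contributions live in the extension while $R$ is the original ring.
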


Combining this corollary with Remark~\ref{hincreasing} we obtain
the following.

\begin{corollary} \label{stable-cor}
If~$  e \le p $ then~$ \hdr^1(C/R) $ is stable under the action of~$ \myphicr {\pa} \otimes \s $
in~\eqref{robswish1}, under the identifications in~\eqref{hcr-hrig-hdr}.
\end{corollary}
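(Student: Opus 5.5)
The plan is to show that $h=0$ whenever $e\le p$, and then to deduce stability from Corollary~\ref{h-cor} together with Poincar\'e duality.

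\emph{Step 1: $h=0$ for $e\le p$.} By Proposition~\ref{ep-prop}, $h=\max\{0,h'\}$ with $h'=ej-p^j$ and $j=\lceil\log_p(e/(p-1))\rceil$. From the proof of Lemma~\ref{betterval}, $-h'=\min_{l\ge0}(p^l-el)$. Remark~\ref{hincreasing}(2) says $h'$ is non-decreasing in $e$, so it suffices to take $e=p$. Then $p^l-pl\ge 0$ for all $l\ge0$: the values are $1$ at $l=0$, $0$ at $l=1$, and $p^l\ge pl$ for $l\ge1$. Hence $h'\le0$ and $h=0$. For $e\le p-1$ we get $j\le 0$, and the minimum is attained at $l=0$, again giving $h=0$.

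\emph{Step 2: integrality of $M_2$.} Choose the basis $\o_1,\dots,\o_{2g}$ of $\hdr^1(C/R)$ as in Proposition~\ref{compdr}, with $|D|\cap\Spec(A)=\emptyset$. This is possible after choosing $A$ suitably, since stability does not depend on the choice of $A$ or of the lift, because the crystalline Frobenius is intrinsic. Corollary~\ref{h-cor} with $h=0$ then gives $M_2$ with entries in $R$. By Remark~\ref{detrem}, $M_1$ has entries in $R$ and $\det M_1\in R^*$, so $M=M_1^{-1}M_2$ has entries in $R$. By \eqref{robswish1} and Lemma~\ref{compatible-pairings}, $M$ is the matrix of $\myphicr{\pa}\otimes\s$ on $\hdr^1(C_K/K)$ with respect to an $R$-basis of $\hdr^1(C/R)$, under the identifications \eqref{hcr-hrig-hdr} and the map $\mynewmap$ of Remark~\ref{robswish2}. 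Since the operator is $\s$-semilinear and $\s(R)=R$, an $R$-combination of the basis maps into $\hdr^1(C/R)$, which proves stability.

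\emph{Main obstacle.} The delicate point is justifying the choice of basis in Step 2. The $\o_i$ must satisfy the hypotheses of Corollary~\ref{h-cor}, and the contributions of the ends, which a priori lie in extensions $L$, must combine into an element of $R$. For $L/K$ unramified this follows from the Galois trace argument at the start of Section~\ref{expansions}, because traces of integral elements are integral. One also has to check that an admissible $D$ avoiding $\Spec(A)$ exists. If necessary, I would shrink $A$ by localising, which Remark~\ref{smoothlcirem} and Section~\ref{loc-plane-curves} allow, so that the forms from Proposition~\ref{compdr} have their poles only at the ends.
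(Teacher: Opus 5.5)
Your proposal is correct and is essentially the paper's own argument: the corollary is obtained by combining Corollary~\ref{h-cor} with Remark~\ref{hincreasing}, which gives $h=0$ for $e\le p$, so $M_2$, and hence $M=M_1^{-1}M_2$ by Remark~\ref{detrem}, has entries in $R$. The existence of a suitable $D$ is left implicit in the paper, and you do not need to shrink $A$: the closure in $C$ of any point of $C_K\setminus X_K$ is automatically disjoint from $X$, and multiples of such closures have special-fibre degree at least $2g-1$.
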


\begin{remark}
If~$ e > p $ then~$ p $ is small in practice, and the additional
precision in Algorithm~\ref{algorithm} (see Section~\ref{algorithm-estimates})
for such~$ p $ is not an obstable.
\end{remark}

\begin{remark} \label{h-remark}
The isomorphisms in~\eqref{hcr-hrig-hdr}
gives rise to two potentially different~$ R $-structures on $ \hdr^1(C_K/K) $:
the image~$ L_\dr $ of~$ \hdr^1(C/R) $, and the
image~$ L_\cry $ of~$ \hcr^1(C_k/W(k)) \otimes_{W(k)} R $,
with the last one stable under the action of~$ \myphicr {\pa} \otimes \s $.
By~\cite[Theorem~2.8.1]{Ber-Ogu83}
we have $ p^j L_\dr \subseteq L_\cry $
and~$ p^j L_\cry \subseteq L_\dr $ for~$ j =  \lceil \log_p(e/(p-1)) \rceil $
as in Proposition~\ref{ep-prop}.
In particular, for~$ e = 1, \dots, p-1 $, we have~$ L_\cry = L_\dr $,
(as already proved in~\cite[Theorem V.2.3.3]{Ber71}),
which implies that~$ L_\dr $ is stable under~$ \phidagA $.

Now let~$ \o $ and~$ \eta $ be as in Corollary~\ref{h-cor}. Then~$ p^j \eta $,
hence also~$ \phidagA (p^j \eta) $, is in~$ L_\cry $,
and~$  p^j \phidagA (p^j \eta) $ is in~$ L_\dr $ again.
Therefore~$ p^{2 j} \pair{ \o, \phidagA(\eta) } $ is in~$ R $
by Remark~\ref{detrem}.
However, this does not say anything about the contributions to
the pairing at each end.
Also, because~$ v_p(p^{2j}) = 2 j > j - e^{-1} p^j = v_p(\pi^{h'}) $
for all~$ j \ge 0 $, the result in Corollary~\ref{h-cor} is always
stronger if~$ h' \ge 0 $, i.e., for~$ e \ge p $ (see Remark~\ref{hincreasing}(2)).
For~$ e = 1, \dots, p-1 $ we have~$ h' = -1 $ but~$ h = j = 0 $,
and the two estimates coincide.
\end{remark}

In \cite[Remark~2.10]{Ber-Ogu83}
examples are given with~$  e > p $ where~$ L_\cry $ and~$ L_\dr $
as in Remark~\ref{h-remark} do not coincide.
Based on Corollary~\ref{stable-cor}, we pose the following.

\begin{question}
Is~$ \hdr^1(C_k/R) = \hcr^1(C_k/W(k)) \otimes_{W(k)} R $ in~\eqref{hcr-hrig-hdr} if~$ e = p $?
\end{question}

\section{Estimates for the algorithm} \label{algorithm-estimates}

In this section we provide estimates for performing Algorithm~\ref{mainalgorithm}.
As indicated there, we assume given a suitable~$ R $-basis of~$ \hdr^1(C/R) $,
but in Remark~\ref{ComputewithKbasis} we shall discuss how the estimates have to be modified
if we use a~$ K $-basis of~$ \hdr^1(C_K/K) $.

As mentioned just before Method~\ref{zeta-method}, the zeta function of $ C_k $
is obtained as~$ Z(T) = \frac{P_1(T)}{(1-T)(1-qT)} $, where~$ P_1(T) = a_0+\dots+a_{2g}T^{2g} $ in~$ \Z[T] $
equals~$ \det(1-T M') $, and~$ q $ is the cardinality of~$ k $.%
\footnote{The reader beware that in this section and Section~\ref{complexity} we have~$ q = |k| $, but in other sections it denotes a closed point of~$ C_k $.}%
Then, according to~\cite[p.507]{Weil49} or \cite[Theorem~12.6]{Mil80}, we have~$ a_0=1 $ and $ a_{2g-i}=q^{g-i}a_i $ for $i=0,\dots,g$,
so that we only need to know $ a_1, \dots, a_g $.
In fact, if
\begin{equation*}
P_1(T) = a_0+\dots+a_{2g}T^{2g} = \det(1 - T M' ) = \prod_{i = 1}^{2g} (1 -\a_i T)
\end{equation*}
in~$ \mathbb{C}[T]$
then~$\Tr(M'^j) = \sum_{i=1}^{2g} \a_i^j$,
and~$ j a_{j} = - \sum_{i = 1}^j a_{j - i} \Tr(M'^i)$ for~$ j \ge 1 $
because of the Newton identities. With~$ a_0 = 1 $, and all~$ |\a_j| = q^{\frac 12} $,
we find that~$ \Tr(M'^j) $ for~$ j \ge 0 $ is an integer of absolute
value at most~$ 2 g q^{\frac j2} $.
Therefore we can obtain~$ P_1(T) $ if we compute the integers~$\Tr(M'^j)$ for~$ j =  1, \dots, g $
up to precision~$ \tilde N $
where $\tilde N$ is the largest integer less than or equal to $\log_p 2\lfloor2gq^{\frac{g}{2}}\rfloor$.

By Remark~\ref{detrem} and Corollary~\ref{h-cor}, the entries of~$ M_1 $
and~$ M_1^{-1} $ are in~$ R $, and those of~$ M_2 $ are in~$ \pi^{-h} R $
with~$ h $ as in Proposition~\ref{ep-prop}, so that~$ M = M_1^{-1} M_2 $
has entries in $ \pi^{-h}R $.
Since~$ M' = \sigma^{l-1}(M) \times \dots \times \sigma(M) \times M $,
it suffices to know the entries of $M$ up to precision~$ N = \tilde N + g(l-1)he^{-1}$.
Using again that~$M = M_1^{-1}M_2$, we see from Lemma~\ref{lemmaprecision}
that for this it suffices to determine~$M_1$ up to precision $N_1' = N + h e^{-1}$ and $M_2$ up to precision $N_2' = N$.
Finally, using Lemma~\ref{estimates-lemma}, we know that in order to know the entries of $M_1$
up to precision $N_1'$, we need to know the forms $\o_i$ up to precision $N_1 = N_1' + \e_1$
where $\e_1$ is obtained from the same lemma.
Similarly, we need to know the forms $\o_j$ and $\phi(\o_j)$ up to precision $N_2 = N_2' +\e_2$.

\begin{remark}
The matrix~$ M $ is also important for Coleman integration
on~$ C_K $. We note that the above estimates still hold if we start
with a (potentially higher) desired precision~$ \tilde N $ for the entries of~$ M' $.
\end{remark}

The following algorithm shows how to get the zeta function by computing the matrices $M_1$ and $M_2$ up to high enough precision.
In it, we abbreviate~$ \phihatA $ to~$ \phi $.

The algorithm has many steps, but almost all of them are involved
estimates on the required precision and number of terms in the
expansions, before we actually compute the expansions and plug
them into the actual computation.

\begin{assumption}
We assume that the form~$ \o_1, \dots, \o_{2g} $ in the input
of the algorithm have integrable polar part, as discussed in
Section~\ref{de-rham}, i.e., we assume they lie in a suitable~$ H^0(\I(D)) $.
\end{assumption}

As mentioned in Lemma~\ref{polar-lemma},
this implies that all residues~$ \res_\ee(\o_j (\int \o_i) ) $, as given in~\eqref{cupreslocal},
have non-negative valuation.

\begin{algorithm} \label{est-algorithm}
Computation of the matrices $M_1$ and~$ M_2 $.

\medskip

\noindent
{\bf Input}
\begin{itemize}
\item The required precision~$ N = \lfloor \log_p 2\lfloor2gq^{\frac{g}{2}}\rfloor \rfloor  + g(l-1)h e^{-1} $ for the entries of~$ M $,
and the required precision $N_1'  = N + h e^{-1} $ for those
in~$M_1$, and $N_2' = N $ for those in~$M_2$.

\item The input of Algorithm~\ref{mainalgorithm}.

\item The matrix~$ P $ in~\eqref{congeq2} and the resulting system $G(\S)$ in~\ref{globalG}.
\end{itemize}

\medskip
\noindent
Then for each end $\ee$ with local parameter $t = t_\ee$ do the following.

\medskip

\noindent
{\bf Step 1: computing the local contribution to the entries of $M_1$}

\begin{enumerate}
\item
Compute the parameters $\lambda_i, \theta_i$ such that the expansion $\ex(x_i)$ lies in $\Rtab {\lambda_i} {\theta_i} $
(see Remark~\ref{expansion-slopes}).

\item
Use Lemma~\ref{rtabprop} and the~$ \lambda_i $ as well as the~$ \theta_i $
to find $ \a_j $ in $ \realpos $ and $ \b_j $ in $ \R $
such that $ \o_j $ has its expansion in $ \Rtab {\a_j} {\b_j} \, \dd t $.

\item Use Lemma~\ref{estimates-lemma} to determine the precision $N_1 = N_1' + \e_1$ of the~$\ex(\o_i)$
needed to compute the~$ \res_{\ee} (\o_j \int \o_i) $ up to precision~$ N_1' $,
as well so the degrees $L_i$ for each~$\o_i$ in $t$ required for this.

\item Use Lemma~\ref{Prodprec} to determine the required number of terms~$ K_j $ in the expansion of each~$ x_j $
up to precision~$ N_1 $ in order to compute the~$ \ex(\o_i) $
in the previous step.

\item
Use Newton iteration to compute the~$ \ex(x_j) $ up to the required
precision and terms, using Remark~\ref{expansion-slopes}.

\item Compute the~$\o_i = \sum_m \ol{a}_{i,m}t^m \, \dd t $ with coefficients in~$\RN R {N_1} $, up to degree $L_i$.

\item Compute the value of each of $\res_{\ee} (\o_j \int \o_i) $
as $\sump m {} \frac{\ol{a}_{i, m-1}\ol{a}_{j, -m-1}}{m}$.
\end{enumerate}

\medskip

\noindent
{\bf Step 2: computing the local contribution to the entries of $M_2$}

\medskip

\noindent{\it A: preliminary estimates}
\begin{enumerate}
\item 
Based on Step~1(1),
use Proposition~\ref{expest} to estimate the parameters $\tilde\a$ and~$\tilde \b_j$ such that the homogeneous terms of degree $j$ of $H(\S)$ are in $\Rtab {\tilde \a} {\tilde \b_j} $.

\item Use Proposition~\ref{hensel2} to determine the parameters $\tilde \cc$ in $ \realpos $ and $\tilde \d$ in $\R$ such that the components of $\soll$,
and of all the Newton iterates starting from~$ z_0 = 0 $, lie in $\Rtab {\tilde \cc} {\tilde \d} $.

\item Use the equality 
$ \ex(\phi(x_i)) = x_i(t)^p + \sum_{j=r+1}^n \ex(\psi(P_{i,j})) \tilde s_i $ to determine 
$ \tilde\lambda_i $ in $ \realpos $ and $ \tilde \theta_i $ in $ \R $ such that each
$ \phi(x_i) $ has its expansion in $ \Rtab {\tilde\lambda_i} {\tilde\theta_i} $.

\item Use Lemma~\ref{rtabprop} and~$ \tilde \lambda_i$ as well as $\tilde \theta_i$
to find $\cc_i$ in $\realpos$ and $\d$ in $\R$ such that each $\phi(\o_i)$ has its expansion in 
$\Rtab {\cc_i} {\d_i} \, \dd t$.

\item Use this, and the result in Step~1(3) in Lemma~\ref{estimates-lemma} to
determine the precision $N_2 = N_2' + \e_2$ of the~$\ex(\o_i)$
and~$\ex(\phi(\o_i))$
required for the calculation of the~$\res_\ee (\phi(\o_j) \int \o_i) $
up to precision~$ N_2' $, 
as well as the degrees $\tL_i$ for each~$\o_i$ in $t$,
and~$L_i'$ for each~$\phi(\o_i)$, required for this.
Later steps use coefficients in~$ \RN R N_2 $.
\end{enumerate}

\noindent{\it B: estimates for the Newton iteration}
\begin{enumerate}
\item
Compute the number of steps $k = [\log_2 N_2] + 1$ of Newton iteration needed for computing~$ \soll $
up to precision~$ N_2 $.

\item
Use the output of A(2) and Proposition~\ref{Prodprec} to determine the
number~$ \bL $ of terms in~$ t $ that
are needed in~$ \soll $ in order to compute the~$ \ex(\phi(x_j)) $
from~\eqref{eq:phiact}.
Here, and in later steps, increase each~$ K_j $ if necessary.

\item Using that~$z_{i+1} = z_i - \Jac_H^{-1}(z_i) H(z_i)$ and Proposition~\ref{Prodprec},
starting with~$z_k = \soll $, determine the degree $\bL_i$ in $t$ needed for computing $z_i$
for~$ i= k, \dots, 1 $.

\item Using that $z_1 = -\Jac_H^{-1}(0) H(0)$, Proposition~\ref{Prodprec}, and~$ \bL_1 $, determine the degree~$\bK$ 
in~$ t $ for the coefficients in~$ H(\S)$ that is needed to compute~$\soll$
up to degree~$ \bL $.
\end{enumerate}

\medskip

\noindent
{\it C: actual calculations}
\begin{enumerate}
\item Determine the~$ \ex(x_j) $ up to degree~$ K_j $.
(This may require recalculating with higher precision and/or
more terms than in Step~1(5) by more Newton iterations.)

\item Compute each $\o_i = \sum \ol{a}_{i,m}t^m \, \dd t$ up to degree $\tL_i$.

\item Apply the expansion map $\ex$ to the system $G(\S)$ as in~\ref{globalG} to get the system $H(\S)$, where each term of $H(\S)$ is computed up to degree $\bK$ as a series in~$t$.

\item Use Newton iteration to compute the solution $\soll$ of $H(\S) = 0$ in~$ k $ steps, starting from~$ z = 0 $.

\item Find each~$\phi(\o_i) = \sum  \ol{c}_{j,m} t^m \, \dd t $ up to degree $L_i'$.

\item
Compute $ \res_\ee (\phi(\o_j) \int \o_i) $ up to precision $N_2'$
as $ \sump m {} \frac{\ol{a}_{i,m-1} \ol{c}_{j,-m-1}}{m} $. 
(It may have negative valuation.)
\end{enumerate}

\medskip

\noindent
{\bf Output:}
the entry at position~$(i,j)$ of $M_1$ as~$ \sum_\ee \res_\ee (\o_j \int \o_i) $,
and that of~$ M_2$ as~$ \sum_\ee \res_\ee (\phi(\o_j) \int \o_i) $.
\end{algorithm}

Finally, we are able to compute the zeta function.

\begin{algorithm} \label{computezeta}

\noindent
{\bf Input:} the output of the above two algorithms.

\begin{enumerate}
\item
Compute the product $ M_1^{-1}M_2 $ corresponding to the matrix $ M $ of the
action of the $\s$-linear Frobenius, up to precision $ N $.

\item 
With $ q = p^l $, compute 
$ M' = \s^{l-1}(M)\times \s^{l-2}(M)\times \cdots \times \s(M)\times M $, 
the matrix of the action of the linear Frobenius, up to precision $ N' $.

\item
Lift the coefficients $ \ol{a}_1,\dots, \ol{a}_g $ of the characteristic
polynomial 
$ \sum_{i=0}^{2g} \ol{a}_i T^i $ of $ M' $ to the unique integers~$ a_i $
satisfying
$ |a_i| \le {2g \choose i} q^{\frac i 2} $.

\item
Set $ a_0 = 1$ and compute $ a_{2g-i} $ as $ q^{g-i} a_i $ for $ i=0,\dots,g $.
\end{enumerate}

\medskip

\noindent
{\bf Output:}
The numerator $ P_1(T) $ of the zeta function of $ C_k $.

\end{algorithm}

As mentioned at the beginning of this section, we now discuss
how to modify the above if we use a~$ K $-basis of~$ \hdr^1(C_K/K) $.

\begin{remark} \label{ComputewithKbasis}
If we start with $\tom_1, \dots, \tom_{2g}$ that give a~$ K $-basis of $\hdr^1(C_K/K)$ instead of an~$R$-basis of $\hdr^1(C/R)$,
coming from~$ \Omega_{A_K/K}^1 $,
then we can still carry out the above algorithms by computing matrices~$\tM_1$,
with as entries the~$ [\tom_i] \cup [\tom_j] = \langle \tom_i , \tom_j \rangle_U $ ,
and $\tM_2$, with as entries the~$ \langle \tom_i , \phi(\tom_j) \rangle_U$. But
we may have to increase the precisions compared to using~$ M_1 $
and~$ M_2 $, for which we give estimates below.

After multiplying with suitable powers of $\pi$,
we may assume that~$\tom_1, \dots, \tom_{2g}$ come from~$ \Omega_{A/R}^1 $.
Then taking suitable~$ R $-linear combinations, we may assume
that they satisfy the condition in~\eqref{CompID} to be in~$ H^0(C, \I(D)) $
for a suitable~$ D $, and result in~$ 2g $ $ R $-linearly independent
elements of~$\hdr^1(C/R)$ by using Proposition~\ref{compdr}.
They do not necessarily form an $R$-basis,
so Remark~\ref{detrem} is not applicable here.
However, we still compute the entries of~$\tM_1$ and~$\tM_2$ as Lemma~\ref{estimates-lemma} still applies.

Suppose that $\tM_1$ has Smith normal form $ P \text{diag}(\pi^{m_1}, \dots, \pi^{m_{2g}}) Q$ 
for some~$P$ and~$ Q $ in~$GL_{2g}(R)$, and $0 \le m_1 \le \dots \le m_{2g}$. 
Then $\tM_1^{-1}$ has entries in~$\pi^{-m_{2g}} R$.
Because~$m_{2g}$ is the smallest integer~$m\ge 0$ such that the
image of multiplication by~$ \tM_1 $ on~$ ( R/\pi^{m+1}) ^{2g} $
has~$ p $-torsion of rank~$ 2g $, it can be found by 
computing~$\tM_1$ up to a high enough precision.
(Note that~$ \tM_1 $ is easy to compute.)

By Corollary~\ref{h-cor}, the entries of~$\tM_2$ are in~$\pi^{-h}R$. 
Therefore, $M = \tM_1^{-1} \tM_2$ has entries in $\pi^{-m_{2g}-h}R$.
Since $M' = \sigma^{l-1}(M) \times \dots \times \sigma(M)$, 
we see that the entries of $M'^j$ lie in $\pi^{-jm_{2g}-jh} R$.
Thus in order to determine the coefficients of the zeta function, one should know the entries of $M$ up to precision 
$N = \tilde N + g (l-1) (m_{2g} + h) e^{-1} $.
Then by Lemma~\ref{lemmaprecision} we need to know the entries of $\tM_1$ up to precision $N_1' = N + h e^{-1}$
and the entries of $\tM_2$ up to precision $N_2' = N + m_{2g} e^{-1}$.
\end{remark}

\section{Computing the first de Rham cohomology group} \label{de-rham}

In this section we explain how to get a basis of $\hdr^1(C/R)$
for~$ C $ and~$ R $ as in Section~\ref{sec:intro}. We consider
a proper, smooth curve~$f: C \to \Spec(R)$ over a discrete valuation ring $R$.
We let~$ K $ be the field of fractions~$  \Frac(R) $ of~$ R $,
and~$k$ its residue field.
We let~$ C_k $ and~$C_K$ be the geometrically irreducible special and generic fibres.
We do not need~$ R $ to be complete here, but we do assume its
characteristic is zero.

We shall use the following strategy for computing de Rham
cohomology.
Recall that it is defined as the hypercohomology of the complex $\O_C \xrightarrow{\dd} \Omega_{C/R}^1$. In order to compute $\hdr^1(C/R)$, we
use an exact sequence of complexes
{\smaller
\begin{equation*}
0 \to [\O_C \to \Omega_{C/R}^1] \to [\O_C(D) \to \Omega_{C/R}^1(D')] \to [\O_C(D)/\O_C \to \Omega_{C/R}^1(D')/\Omega_{C/R}^1] \to 0
\end{equation*}%
}%
for some suitably chosen divisors $D$ and~$D'$. We shall show below
that the induced map~$\dd : \O_C(D)/\O_C \to \Omega_{C/R}^1(D')/\Omega_{C/R}^1$ is injective. 
We then replace~$\Omega_{C/R}^1(D')$ with
a subsheaf $\I(D)$, containing~$ \Omega_{C/R}^1 $, for which~$\dd: \O_C(D)/\O_C \to \I(D)/\Omega_{C/R}^1$ is an isomorphism.
The hypercohomology of the complex $\O_C(D)/\O_C \to \I(D)/\Omega_{C/R}^1$ vanishes,
and, as a result, the hypercohomology of the complex $\O_C(D) \to \I(D)$ computes the de Rham cohomology of $C/R$
(see Proposition~\ref{compdr} for details.)
(The analogue on the generic fibre of~$ \I(D) $ consists of 1-forms
of the second kind; see Remark~\ref{secondkindrem}(2), Theorem~\ref{secondkindthm},
and Remark~\ref{refinedremark}.)

\medskip
We now make explicit the assumptions on the divisors $D$ and $D'$.

\begin{assumption} \label{D-D'-assumption}
$D$ and~$ D' $ are effective, finite over $R$, with~$\deg(D_k) \ge 2g-1$,
and~$ D' $ is such that~$ \dd $ maps $ \O_C(D)$ to $ \Omega_{C/R}^1(D')$.
\end{assumption}

\begin{prop} \label{H1Dtrivial}
Under the above assumptions, we have $H^1(C,\O_C(D))  = 0$. 
\end{prop}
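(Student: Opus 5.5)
The plan is to reduce to the fibres and use cohomology and base change for the proper flat morphism $f: C \to \Spec(R)$. Since $D$ is effective, finite over $R$ and (as a relative effective Cartier divisor on the smooth curve $C$) flat over $R$, the sheaf $\O_C(D)$ is an invertible sheaf on $C$, hence flat over $R$. Its restriction to each fibre is $\O_{C_k}(D_k)$ and $\O_{C_K}(D_K)$ respectively, and $\deg(D_K) = \deg(D_k)$ because $D$ is finite and flat over $R$.

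First I would treat the closed fibre. Since $\deg(D_k) \ge 2g-1 > 2g-2 = \deg(K_{C_k})$, Serre duality on the smooth, proper, geometrically irreducible curve $C_k$ gives
\begin{equation*}
H^1(C_k, \O_{C_k}(D_k)) \cong H^0(C_k, \Omega^1_{C_k/k}(-D_k))^\vee = 0 ,
\end{equation*}
because a line bundle of negative degree on an integral proper curve has no nonzero global sections. The same argument gives the vanishing on the generic fibre, though we do not actually need it.

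Next I would pass from the closed fibre to $C$. Relative dimension is $1$, so $H^2$ vanishes on all fibres, and the cohomology and base change theorem (e.g.\ \cite[III.12.11]{Hartshorne}, or directly the semicontinuity/Grauert argument over the local ring $R$) applied with $i=1$ shows that $H^1(C,\O_C(D)) \otimes_R k \to H^1(C_k,\O_{C_k}(D_k))$ is surjective, hence an isomorphism, and this group vanishes. Since $R$ is Noetherian and $f$ is proper, $H^1(C,\O_C(D))$ is a finitely generated $R$-module, so Nakayama's lemma over the local ring $R$ gives $H^1(C,\O_C(D)) = 0$. Alternatively, and more elementarily, one can use the short exact sequence $0 \to \O_C(D) \xrightarrow{\pi} \O_C(D) \to \O_{C_k}(D_k) \to 0$. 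Its long exact sequence shows that multiplication by $\pi$ is surjective on $H^1(C,\O_C(D))$, the $H^2$ term vanishing since $C$ can be covered by two affines, or by the dimension of the fibres. Nakayama then again gives the vanishing.

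I expect the main obstacle to be only bookkeeping: one has to justify that $\O_C(D)$ is invertible and flat with fibres $\O_{C_k}(D_k)$, which uses that $D$ is effective and finite over $R$ on a smooth relative curve, so locally it is cut out by a nonzerodivisor that remains a nonzerodivisor on the special fibre. One also needs that $H^1$ is finitely generated, which holds because $f$ is proper and $R$ is Noetherian. Completeness of $R$ is not needed, since $R$ is local Noetherian.
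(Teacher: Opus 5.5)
Your proposal is correct and is essentially the paper's proof: you get $H^1(C_k,\O_{C_k}(D_k))=0$ by Serre duality, since $\deg(D_k)\ge 2g-1$, and then pass to $C$ by cohomology and base change (the paper cites parts~(a) and~(b) of \cite[Theorem~5.3.20]{Liu}). Your alternative via $0 \to \O_C(D) \xrightarrow{\pi} \O_C(D) \to \O_{C_k}(D_k) \to 0$ plus Nakayama is also valid and slightly more elementary, since it only needs $H^1(C,\O_C(D))$ to be finitely generated over $R$; the vanishing of the $H^2$ term that you invoke is not needed, because surjectivity of $\pi$ on $H^1$ only uses exactness at the middle $H^1$.
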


\begin{proof}
The assumptions imply~$H^1(C_k, \O_{C_k}(D_k)) \cong H^0(C_k, \Omega_{C_k/k}^1(-D_k))^\vee = 0$.
By parts~(a) and~(b) of \cite[Theorem~5.3.20]{Liu} we then have $H^1(C, \O_C(D)) = 0$.
\end{proof}

\begin{prop} \label{dinject}
Under the assumptions as in Assumption~\ref{D-D'-assumption},
the induced map~$\dd: \O_C(D)/\O_C \to \Omega_{C/R}^1(D')/\Omega_{C/R}^1$ is injective.
\end{prop}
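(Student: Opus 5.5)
The claim is local on $C$, so I will prove it on stalks. Outside $|D|$ both quotient sheaves vanish. Fix a point $x$ of $C$ in $|D|$. It is a closed point of $C_K$ or a point of $C_k$, and since $D$ is finite over $R$ such points exist. Suppose a local section $f$ of $\O_C(D)$ near $x$ satisfies $\dd f \in \Omega^1_{C/R}$. I must show $f \in \O_{C,x}$. Since $\O_{C,x}$ is a normal (regular) domain, it is the intersection of the local rings at its height-one primes. So it suffices to show $f$ is regular at every height-one prime $\mathfrak p$ of $\O_{C,x}$ lying in the support of $D$. Such primes are horizontal: $D$ is finite over $R$, so its components are not the special fibre. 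Hence each such $\mathfrak p$ is a point of the generic fibre $C_K$, and $\O_{C,\mathfrak p}$ is a discrete valuation ring of residue characteristic zero. (At the generic point of $C_k$, $D$ imposes no poles, so $f$ is already regular there.)

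This reduces the claim to characteristic zero on the curve $C_K$. Take a uniformiser $u$ at $\mathfrak p$ in $\O_{C_K,\mathfrak p}$, so that $\Omega^1$ there is free on $\dd u$, using smoothness of $C_K/K$. Write $f = a u^{-m} + (\text{higher order})$ with $m \ge 1$ and $a$ a unit, assuming $f$ has a pole. Then $\dd f = -m a u^{-m-1}\,\dd u + \cdots$. Because $m \ne 0$ in the residue field, which has characteristic zero, this has a pole of order exactly $m+1 \ge 2$. That contradicts $\dd f \in \Omega^1$. So $f$ is regular at $\mathfrak p$. One technicality is that the residue field of $\mathfrak p$ might not be separable over $K$ in a way that affects the leading-term computation. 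I would instead pass to the completion $\kappa(\mathfrak p)[[u]]$ and use that $\Omega^1_{C_K/K}$ is generated by $\dd u$ locally: derivation of constants from $\kappa(\mathfrak p)$ contributes only terms whose order is at least that of $f$, so the leading term of order $-m-1$ survives.

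Finally, combine these via Hartogs-type normality. The section $f$ lies in $\O_{C,\mathfrak q}$ for all height-one $\mathfrak q$ in $\operatorname{Spec}\O_{C,x}$: for those outside $|D|$ this holds trivially, and for those in it this is the previous step. Hence $f \in \O_{C,x}$. Thus the kernel of $\O_C(D)/\O_C \to \Omega^1_{C/R}(D')/\Omega^1_{C/R}$ is zero.

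The main obstacle I expect is the derivation-of-constants point in the leading-term argument. If it causes trouble, I would first make the finite étale base change to a field over which the relevant point of $D_K$ is rational, using that injectivity can be checked after faithfully flat extension. The leading-term argument then becomes immediate.
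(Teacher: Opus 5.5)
Your argument is correct and is essentially the paper's. Both reduce to the characteristic-zero leading-term computation at a point of $D_K$, where $m\neq 0$ forces $\dd f$ to have a pole of order exactly $m+1$, and then return to $C$ using that no component of $D$ is vertical: the paper phrases this return step as $\pi$-torsion-freeness of $\O_{C,q}/(\pi_1\cdots\pi_l)$ via unique factorisation, while you phrase it via $\O_{C,x}=\bigcap_{\textup{ht}\,\mathfrak q=1}\O_{C,\mathfrak q}$, and you avoid the paper's base change to make the point rational (your worry there is harmless, since $\dd a$ is regular for a unit $a$, so $u^{-m}\dd a$ has order at least $-m$ and cannot cancel $-m a u^{-m-1}\dd u$).
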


\begin{proof}
There is a commutative diagram
\begin{equation*}
\xymatrix{
  \O_C(D)/\O_C \ar[d] \ar[r]^\dd & \Omega_{C/R}^1(D')/\Omega_{C/R}^1 \ar[d] \\
  \O_{C_K}(D_K)/\O_{C_K} \ar[r]^\dd & \Omega_{C_K/K}^1(D_K')/\Omega_{C_K/K}^1  }
\end{equation*}
We first prove that the bottom horizontal map is injective. In
order to see this, note that~$\O_{C_K}(D_K)/\O_{C_K}$ and $\Omega_{C/K}^1(D_K')/\Omega_{C_K}^1$ are skyscaper sheaves,
so it suffices to prove the injectivity when~$D_K$ is a multiple of a point~$ P $.
We may extend~$ K $ and assume~$ P $ is~$ K $-rational.
With~$t_P$ a local parameter at $P$, we then have~$\dd: \O_{C_K}(nP)/\O_{C_K} \cong \oplus_{i =1}^n K t_P^i \to \Omega_{C_K}^1((n+1)P)/\Omega_{C_K}^1 \cong \oplus_{i = 1}^{n} K t_P^{i+1} \dd t$ is injective.

Next we shall prove that the left arrow of the above diagram is injective,
which can be checked at the stalks.
Because~$K = R[\frac{1}{\pi}]$, this amounts to showing that
multiplication by~$ \pi $ is injective at each stalk of~$ \O_C(D)/\O_C $.
This is clear at the stalks at points not in~$ C_k $
as~$ \pi $ is a unit there. And if~$ q $ is in~$ C_k $ then in
the local ring~$ \O_{C,q} $ we have that~$ D $ is defined by~$ \pi_1 \dots \pi_l $
for some prime elements in~$ \O_{C,q} $, a unique factorisation domain because~$ C $
is regular at~$ q $. By our assumptions on~$ D $, none of the~$ \pi_i $ are
associate to~$ \pi $. It follows that multiplication by~$ \pi $
on~$ \pi_1^{-1} \dots \pi_l^{-1} \O_{C,q} / \O_{C,q} $ is injective,
as required.
\end{proof}

\begin{definition} \label{ID}
Let~$\pr: \Omega_{C/R}^1(D') \to \Omega_{C/R}^1(D')/\Omega_{C/R}^1$
be the natural projection,
and let~$ \dd : \O_C(D)/\O_C \to \Omega_{C/R}^1(D')/\Omega_{C/R}^1$
be as in Proposition~\ref{dinject}. We then set~$\I(D) = \pr^{-1}(\im(\dd))$.
\end{definition}

Note that sections of~$ \I(D) $ are sections of~$ \Omega_{C/R}^1(D') $
such that their `polar parts' in~$ \Omega_{C/R}^1(D')/\Omega_{C/R}^1 $ can
be integrated to elements of~$ \O_C(D) / \O_C $. This depends
on~$ D $ but not on~$ D' $, justifying the notation.

We then have a short exact sequence of complexes,
$$
0 \to [\O_C \to \Omega_{C/R}^1] \to [\O_C(D) \to \I(D)] \to [\O_C(D)/\O_C \to \I(D)/\Omega_{C/R}^1] \to 0
\,,
$$
where the hypercohomology of the third complex vanishes because
the map in it is an isomorphism by Proposition~\ref{dinject}.
We then obtain the following.

\begin{prop} \label{compdr}
The natural map~$  \hdr^*(C/R) \to \H^{*}(\O_C(D) \to \I(D)) $ is an isomorpishm. Moreover, we
have an exact sequence
$$ 0 \to \hdr^0(C/R) \to H^0(C, \O_C(D)) \to H^0(C,\I(D)) \to \hdr^1(C/R) \to 0.$$
\end{prop}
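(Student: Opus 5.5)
The plan is to take hypercohomology of the short exact sequence of complexes displayed just before the statement,
\begin{equation*}
0 \to [\O_C \to \Omega_{C/R}^1] \to [\O_C(D) \to \I(D)] \to [\O_C(D)/\O_C \to \I(D)/\Omega_{C/R}^1] \to 0
\,,
\end{equation*}
and use the resulting long exact sequence. By Proposition~\ref{dinject} and Definition~\ref{ID}, the differential in the third complex is an isomorphism of sheaves. So that complex is acyclic, and all of its hypercohomology groups vanish. The long exact sequence then shows at once that $\hdr^*(C/R) \to \H^*(\O_C(D) \to \I(D))$ is an isomorphism in every degree, which is the first claim.

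For the exact sequence, compute $\H^*(\O_C(D) \to \I(D))$ with the first hypercohomology spectral sequence (equivalently, the stupid filtration). Its $E_1$ page has $E_1^{0,q} = H^q(C,\O_C(D))$ and $E_1^{1,q} = H^q(C,\I(D))$, and the $d_1$ is induced by $\dd$. Since $C$ is a curve over $\Spec(R)$, the map $C \to \Spec(R)$ is proper of relative dimension one. One could use that coherent cohomology vanishes above degree $1$, but that is not even needed. What matters is that Proposition~\ref{H1Dtrivial} gives $H^1(C,\O_C(D)) = 0$. The stupid filtration then yields the long exact sequence
\begin{equation*}
0 \to \H^0 \to H^0(C,\O_C(D)) \to H^0(C,\I(D)) \to \H^1 \to H^1(C,\O_C(D)) = 0
.
\end{equation*}
Here the map $H^0(C,\O_C(D)) \to H^0(C,\I(D))$ is $\dd$, and $\H^0 \to H^0(C,\O_C(D))$ is the natural inclusion. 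Replacing $\H^0$ and $\H^1$ by $\hdr^0(C/R)$ and $\hdr^1(C/R)$ via the isomorphism from the first step gives the stated exact sequence. One should check that the composite maps agree with the natural ones, but this is formal from functoriality of the filtration with respect to the inclusion of complexes.

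The only step needing care is the acyclicity of the quotient complex. This requires $\dd : \O_C(D)/\O_C \to \I(D)/\Omega^1_{C/R}$ to be bijective, not merely injective. Surjectivity holds by construction, since $\I(D)/\Omega_{C/R}^1 = \im(\dd)$ inside $\Omega_{C/R}^1(D')/\Omega_{C/R}^1$. One also needs that $\I(D)$ is a sheaf containing $\Omega^1_{C/R}$, which holds as it is a preimage under $\pr$. With Proposition~\ref{dinject} for injectivity, everything else is standard homological algebra, so I expect no real obstacle beyond verifying this compatibility.
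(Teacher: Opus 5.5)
Your proposal is correct and follows essentially the same route as the paper. The isomorphism comes from the short exact sequence of complexes, whose quotient complex is acyclic by Proposition~\ref{dinject}. The exact sequence comes from the hypercohomology long exact sequence of the two-term complex $[\O_C(D) \to \I(D)]$ together with $H^1(C,\O_C(D))=0$ from Proposition~\ref{H1Dtrivial}; the paper merely packages this step as a commutative ladder against the corresponding sequence for $[\O_C \to \Omega^1_{C/R}]$.
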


\begin{proof}
Writing~$ H_{D}^*(C) $ for~$ \H^*(\O_C(D) \to \I(D)) $,
we have a commutive diagram
\begin{equation*}
\xymatrix@C=.8pc{
 0  \ar[r] & \hdr^0(C/R) \ar[r] \ar[d]^-{\cong} & H^0(C, \O_C) \ar[r] \ar[d] & H^0(C,\Omega_{C/R}^1) \ar[r] \ar[d] &  \hdr^1(C/R) \ar[r] \ar[d]^-{\cong} & H^1(C, \O_C) \ar[d]
\\
 0  \ar[r] & H_D^0(C) \ar[r] & H^0(C, \O_C(D)) \ar[r]^-{\dd} & H^0(C,\I(D)) \ar[r] &  H_D^1(C) \ar[r] & 0
\,,
}
\end{equation*}
from which the result is immediate.
\end{proof}

\begin{remark} \label{secondkindrem}
(1)
The kernel and cokernel of~$ \dd $ in the bottom row of the above
diagram are free~$ R $-modules because each~$ \hdr^*(C/R) $ is
free, but for the cokernel this is not obvious from the explicit
description that we shall obtain for~$ H^0(C, \I(D)) $.

(2)
The statements and diagram in Proposition~\ref{compdr} are compatible with tensoring
over~$ R $ with~$ K $. Then~$ \I(D) \otimes_R K $ consists of the
elements in~$ \Omega_{C_K/K}^1 $ that are of the second kind and have poles of order at most one higher than those of the
functions in~$ \O_D(D) $ (cf.~Theorem~\ref{secondkindthm}
and Remark~\ref{refinedremark}).
\end{remark}

\begin{remark}
We have a commutative diagram
\begin{equation*}
\xymatrix{
0 \ar[r] & \Omega_{C/R}^1 \ar[r] \ar@{=}[d] &  \I(D) \ar[r] \ar[d] & \im(\dd)  \ar[r] \ar[d] &  0
\\
0 \ar[r] & \Omega_{C/R}^1 \ar[r] &  \Omega_{C/R}(D') \ar[r]^-{\pr} & \Omega_{C/R}^1(D')/\Omega_{C/R}^1 \ar[r] & 0
}
\end{equation*}
of sheaves on~$ C $ with exact rows. It then follows from the
map between the long exact sequences of cohomology associated to the rows
that the commutative diagram
\begin{equation} \label{CompID}
\begin{split}
\xymatrix{
  H^0(C, \I(D)) \ar[d] \ar[r] & H^0(C, \Omega_{C/R}^1(D')) \ar[d] \\
  H^0(C, \im(\dd)) \ar[r] & H^0(C, \Omega_{C/R}^1(D')/\Omega_{C/R}^1)
}
\end{split}
\end{equation}
is a Cartesian square of $R$-modules.
Here the horizontal maps are injections, so we may view~$ H^0(C, \I(D)) $ as consisting of
the elements of~$ H^0(C, \Omega_{C/R}^1(D')) $ that satisfy a condition on their polar parts.
We obtain the same condition if we replace~$ H^0(C, \im(\dd)) $
with the isomorphic~$ H^0(C, \O_C(D)/\O_C) $, and the bottom
map with the one induced by~$ \dd $.
\end{remark}

\begin{remark}
Some special cases of the above results can be found in~\cite[\myS3]{bogaart}, where~$ D $
is a multiple of a section. In our results, the components of~$ D $
can be singular, and need not correspond to~$ K $-rational points on the generic fibre.
\end{remark}

\begin{example}
In~$ \Z_2[x,y] $, let~$h = x^2 - 4 x + 1$, and set~$f = y^2 + h y + x^5 $. Then~$ f $
defines a hyperelliptic curve of genus~2 over~$ \Z_2 $, with
a second affine patch defined by~$ \tf = \ty^2 + \th \ty + \tx $
for~$ \tx = 1/x $, $ \ty = y / x^3 $, and~$ \th = \tx^3 h(1/\tx) = \tx^3 - 4 \tx^2 + \tx $.
Let~$ \infty $ be the point of~$ C $ given by~$ \tx = \ty = 0 $
in the second affine patch. With~$ D = 3 (\infty) $ and~$ D' = 4 (\infty) $,
Assumption~\ref{D-D'-assumption} is satisified, and
\begin{alignat*}{1}
H^0(C, \O_C(D)) & = \langle 1, x \rangle_{\Z_2},
\\
H^0(C, \Omega_{C/R}^1(D')) & = \langle \o_1, \o_2, \o_3 , \o_4, \o_5 \rangle_{\Z_2},
\end{alignat*}
for~$ \o_1 = \frac1{2y+h} \dd x = - \frac1{5 x^4 + h'} \dd y$, $ \o_2 = x \o_1 $, $ \o_3 = x^2 \o_1 $,
$ \o_4 = y \o_1 $, and~$ \o_5 = x^3 \o_1 $, as one sees by computing on the closed and generic fibres, and applying
parts~(a) and~(b) of~\cite[Theorem~5.3.20]{Liu}.
Note that~$ \dd x = \o_1 - 4 \o_2 + \o_3 + 2 \o_4 $.

With~$ t = \ty $ a local parameter at~$ \infty $, the forms with poles
at~$ \infty $ have expansions
\begin{alignat*}{1}
\o_3(t) & = (- t^{-2} + 4 t - 8 t^2 + \dots) \dd t
\\
\o_4(t) & = (t^{-3} + t^{-2} - t^2 + \dots) \dd t
\\
\o_5(t) & = (t^{-4} + t^{-3} - t + \dots) \dd t
,
\end{alignat*}
so that~$ H^0(C, \I(D') ) = \langle \o_1, \o_2, \o_3, 2 \o_4, \o_4 + \o_5 \rangle_{\Z_2} $,
and~$ [\o_1] $, $ [\o_2] $, $ [\o_3] $ and~$ [\o_4 + \o_5] $ form a~$ \Z_2 $-basis of~$ \hdr^1(C/\Z_2) $.
The matrix $M_1$ with cup products for these classes is
\begin{equation*}
\renewcommand*{\arraystretch}{1.2}
\begin{pmatrix*}[r]
      0 &        0 &        0 & -\frac13 \\
      0 &        0 &       -1 &  \frac13 \\
      0 &        1 &        0 &  \frac43 \\
\frac13 & -\frac13 & -\frac43 &        0 
\end{pmatrix*}
\end{equation*}
Using only~$ x^i \o_1 $ for~$ i=0, \dots,3 $ and ignoring~$ \o_4 $
results in a lattice of index~2 with basis~$ [\o_1] $, $ [\o_2] $, $ [\o_3] $ and~$ [2 \o_5] $
(cf.~\cite[\myS6]{bogaart}).
\end{example}

In order to use Proposition~\ref{compdr} to compute the de Rham
cohomology of~$ C/R $ effectively, we need to describe~$H^0(C,\O_C(D))$ and $H^0(C, \I(D))$.
We do this for~$ C/R $ as in Notation~\ref{basicnot} and
Notation~\ref{globalnotation}, still assuming Assumption~\ref{D-D'-assumption}
holds.

In Algorithms~\ref{KtoRbasis} and~\ref{KtoRbasisOmega},
we describe how to obtain~$ R $-bases of~$ H^0(C, \O_C(D)) $
and~$ H^0(C, \Omega_{C/R}(D')) $ from~$ K $-bases of the corresponding~$ K $-vector
spaces on~$ C_K $.
(If~$ C $ is planar, then there are more direct
approaches; see Proposition~\ref{planarH0OCD} for~$ H^0(C, \O_C(D))$
and Proposition~\ref{P2C-prop} for~$ H^0(C, \Omega_{C/R}(D')) $.
Moreover, we can also use Proposition~\ref{R-basis-image-d} instead
of Proposition~\ref{newOCDprop}.)

\begin{algorithm} \label{KtoRbasis}
Suppose that~$ |D| \cap \Spec(A) = \emptyset $, and that~$a_1, ..., a_m$ in~$ K[\x] $ induce a~$ K $-basis of $H^0(C_K,\O_{C_K}(D_K)) \subseteq A_K = K[\x] / (f_2, \dots, f_n) $.
We can then obtain an $R$-basis of $H^0(C,\O_C(D))$ as follows.

(1)
Multiplying with powers of $\pi$ we can assume that~$a_1, ..., a_m$ are in $R[\x]$.
By our assumptions on~$ D $, and \cite[Theorem~5.3.20]{Liu}, it suffices to obtain such~$ a_i $
with~$ k $-linearly independent images~$ \ol{a_1}, \dots, \ol{a_m} $ in~$ \Abar = k[\x] / (\ol{f_2}, \dots, \ol{f_n}) $.

(2)
Suppose that~$ \ol{a_1}, \dots, \ol{a_{j-1}} $ are~$ k $-linearly independent
for some~$ j = 1, \dots, m $. If~$ \ol{a_j} $ is~$ k $-linearly
dependent of those, there exist~$ r_1, \dots, r_{j-1} $ in~$ R $ as well as~$ b_j $
and~$ c_2, \dots, c_n $ in~$ R[\x] $ such that~$ a_j - r_1 a_1 - \dots - r_{j-1} a_{j-1} = \pi b_j + \sum_{i=2}^n c_i f_i $
in~$ R[\x] $.
Here the~$ c_i $ can be found by computing that~$ \ol{a_j - r_1 a_1 - \dots - r_{j-1} a_{j-1}} $
is in~$ (\ol{f_2}, \dots, \ol{f_n}) $ in ~$ k[\x] $ using a Gr\"obner basis,
and lifting to~$ R[\x] $, which determines~$ b_j $.
Then~$ a_1,\dots,a_{j-1}, a_j $ and~$ a_1, \dots, a_{j-1}, b_j $
span the same~$ K $-subspace of~$ A $, so we may replace~$ a_j $
with~$ b_j $. Because~$  R a_1 + \dots + R a_j \subsetneq R a_1 + \dots + R a_{j-1} + R b_j $ in
the finitely generated free~$ R $-module~$ H^0(C,\O_C(D)) $,
this must stop after finitely many iterations, at which stage~$ \ol{a_1}, \dots, \ol{a_j} $
are~$ k $-linearly independent.

(3) Using~(2) for~$ j = 1, \dots, m-1 $ gives~$ k $-linearly independent~$ \ol{a_1}, \dots, \ol{a_m} $
in~$ \Abar $.
\end{algorithm}

We now consider how to get an $R$-basis of $H^0(C, \I(D))$ by
combining the Cartesian square in~\eqref{CompID} with the
isomorphism~$ H^0(C, \O_C(D)/\O_C) \to H^0(C, \im(\dd))$ induced by~$\dd $.
We start with describing~$ H^0(C, \O_C(D) / \O_C ) $, in such
a way that the map~$ \dd $ to~$ H^0(C, \Omega_{C/R}^1(D')/\Omega_{C/R}^1) $ is clear.
If~$ D = \sum_Q m_Q Q $ with the~$ Q $ disjoint $ R $-rational sections, and~$ t_Q $
is a local parameter on~$ C $ at~$ Q $, then~$ H^0(C, \O_C(D) / \O_C ) $ is just~$ \sum_Q \sum_{i=-m_Q}^{-1} R t_Q^i $.
In our rather more general situation, the irreducible components of~$ D $
may not be disjoint, and their generic points not~$ K $-rational.

Note that each irreducible component of~$ D $ is $ \Spec(R') $
for a subring~$ R' $ of the valuation ring~$ R_L $ in a finite
extension~$ L $ of~$ K $. Because `lying over' holds for the
integral extension~$ R_L / R' $, and~$ L $ is also complete, also~$ \Spec(R') $ has only two points.
Grouping the irreducible components of~$ D $ according to their
intersection with the special fibre, it is clear it suffices
to describe~$ H^0(C, \O_C(D)/ \O_C) $ under the assumption that
each irreducible component of~$ D $ meets~$ C_k $ in the same
point~$ q $, so that they are all contained in~$ \Spec(\O_{C,q} ) \subseteq C $.

The following proposition always applies because~$ C $
is regular, hence~$ \O_{C,q} $ is a unique factorisation domain.

\begin{proposition} \label{newOCDprop}
Suppose that~$ D $ in~$ \Spec(\O_{C,q}) $ is defined by~$ \pi_1^{m_1} \dots \pi_l^{m_l} $
where the~$ \pi_i $ are distinct prime elements of~$ \O_{C,q} $,
and all~$ m_i \ge 1 $.
If~$ \Sigma_i' \subseteq \O_{C,q} $ induces an~$ R $-basis
of~$ \O_{C,q} / (\pi_i) $, then~$ \Sigma_i = \Sigma_i' \cup \pi_i \Sigma_i' \cup \dots \cup \pi_i^{m_i-1} \Sigma_i' $
induces an~$ R $-basis of~$ \O_{C,q} / (\pi_i^{m_i}) $,
\begin{equation*}
\Sigma = \Sigma_1 \cup \pi_1^{m_1} \Sigma_2 \cup \pi_1^{m_1} \pi_2^{m_2} \Sigma_3 \cup \dots \cup \pi_1^{m_1} \pi_2^{m_2} \dots \pi_{l-1}^{m_{l-1}}\Sigma_l
\end{equation*}
induces an~$ R $-basis of~$ \O_{C,q}/ (\pi_1^{m_1} \dots \pi_l^{m_l}) $,
and~$ \pi_1^{-m_1} \cdots \pi_l^{-m_l} \Sigma $ induces an~$ R $-basis
of~$ H^0(C, \O_C(D)/ \O_C) $. 
The corresponding statements hold if we replace~$ \O_{C,q} $ by a ring~$ B $ with~$ \Spec(B) \subseteq C $
containing~$ q $ if the natural maps~$ B/(\pi_i) \to \O_{C,q}/ (\pi_i) $
are isomorphisms.
\end{proposition}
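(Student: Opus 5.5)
The proof has three layers: a filtration argument for a single prime power, a d\'evissage for the product, and identification of $H^0(C,\O_C(D)/\O_C)$ as a local quotient. First I need that each $\O_{C,q}/(\pi_i)$ is a free $R$-module, so that an $R$-basis $\Sigma_i'$ makes sense. Since $\pi_i$ is a prime element not associate to $\pi$ (the components of $D$ are finite over $R$ and do not contain the closed fibre), $\O_{C,q}/(\pi_i)$ is a domain that is finite over $R$ (because $D$ is finite over $R$). It is also $\pi$-torsion free, hence free of finite rank over the DVR $R$.

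For $\Sigma_i$, use the filtration $\O_{C,q}\supseteq(\pi_i)\supseteq\dots\supseteq(\pi_i^{m_i})$. Because $\O_{C,q}$ is a domain, multiplication by $\pi_i^j$ induces an isomorphism $\O_{C,q}/(\pi_i)\to(\pi_i^j)/(\pi_i^{j+1})$. Each graded piece is therefore free with basis $\pi_i^j\Sigma_i'$. Since extensions of free modules split, a union of lifts of bases of the graded pieces is a basis of the whole; this is an easy induction on $m_i$. The same argument handles $\Sigma$, using the filtration
\begin{equation*}
\O_{C,q}\supseteq(\pi_1^{m_1})\supseteq(\pi_1^{m_1}\pi_2^{m_2})\supseteq\dots\supseteq(\pi_1^{m_1}\cdots\pi_l^{m_l}).
\end{equation*}
Here multiplication by $\pi_1^{m_1}\cdots\pi_{j-1}^{m_{j-1}}$ gives an isomorphism $\O_{C,q}/(\pi_j^{m_j})\to(\pi_1^{m_1}\cdots\pi_{j-1}^{m_{j-1}})/(\pi_1^{m_1}\cdots\pi_j^{m_j})$, again because $\O_{C,q}$ is a domain. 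So the $j$-th graded piece has basis $\pi_1^{m_1}\cdots\pi_{j-1}^{m_{j-1}}\Sigma_j$. Unique factorisation is not even needed here, only that $\O_{C,q}$ is a domain and that the quotients are free.

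For the final claim, note that $\O_C(D)/\O_C$ is supported on $|D|$. Under the standing reduction that all components of $D$ pass through $q$, $|D|\subseteq\Spec(\O_{C,q})$, and every point of $|D|$ specialises to $q$. Hence $H^0(C,\O_C(D)/\O_C)$ equals the stalk-level module $f^{-1}\O_{C,q}/\O_{C,q}$ with $f=\pi_1^{m_1}\cdots\pi_l^{m_l}$. This is the step I expect to need the most care. I will argue that a sheaf supported on $|D|$ is the pushforward of its restriction to the closed subscheme $\Spec(\O_{C,q}/(f))$, which lies in the local scheme $\Spec(\O_{C,q})$, so its global sections equal its stalk at $q$. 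Multiplication by $f^{-1}$ then gives $\O_{C,q}/(f)\cong f^{-1}\O_{C,q}/\O_{C,q}$, which transports the basis $\Sigma$ to $f^{-1}\Sigma$.

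For the version with $B$: the same filtration arguments apply verbatim in $B$ once each $B/(\pi_i)\to\O_{C,q}/(\pi_i)$ is an isomorphism. By the graded pieces, this forces $B/(f)\to\O_{C,q}/(f)$ to be an isomorphism via the five lemma, and so the elements of $\Sigma$ may be taken in $B$.
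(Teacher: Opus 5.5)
Your proof is correct and is essentially the paper's argument: it uses the same filtrations, with the multiplication-by-$\pi_i$ and multiplication-by-$\pi_1^{m_1}\cdots\pi_{j-1}^{m_{j-1}}$ isomorphisms on the graded pieces (your single length-$l$ filtration is the unrolled form of the paper's induction on $l$), the identification of $H^0(C,\O_C(D)/\O_C)$ with $\O_{C,q}/(\pi_1^{m_1}\cdots\pi_l^{m_l})$ via multiplication by $\pi_1^{-m_1}\cdots\pi_l^{-m_l}$, and a five-lemma comparison for $B$. Beyond the paper, you also justify why $\O_{C,q}/(\pi_i)$ is free over $R$ and why the global sections equal the stalk at $q$, both of which the paper leaves implicit.
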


\begin{proof}
That~$ \Sigma_i $ induces an~$ R $-basis of~$ \O_{C,q} / (\pi_i^{m_i}) $
follows easily by induction on~$ m_i $ from the short exact sequence
$ 0 \to (\pi_i) / (\pi_i^{m_i}) \to \O_{C,q} / (\pi_i^{m_i}) \to \O_{C,q} / (\pi_i) \to 0 $
and the isomorphism~$ \O_{C,q} / (\pi_i^{m_i-1}) \to (\pi_i) / (\pi_i^{m_i}) $
given by multiplying by~$ \pi_i $.
That~$ \Sigma $ induces an~$ R $-basis of~$ \O_{C,q} / (\pi_1^{m_1} \dots \pi_l^{m_l}) $
then follows similarly based on induction on~$ l $, the short
exact sequences
\begin{equation*}
 0 \to (\pi_1^{m_1}) / (\pi_1^{m_1} \dots \pi_l^{m_l}) \to \O_{C,q} / (\pi_1^{m_1} \dots \pi_l^{m_l}) \to \O_{C,q} / (\pi_1^{m_1}) \to 0 
,
\end{equation*}
and the isomorphism~$ \O_{C,q} / (\pi_1^{m_1} \dots \pi_l^{m_l}) \to (\pi_1^{m_1}) / (\pi_1^{m_1} \dots \pi_l^{m_l})  $ given by multiplication by~$ \pi_1^{m_1} $.

We have an isomorphism~$ \O_{C,q} / (\pi_1^{m_1} \dots \pi_l^{m_l}) \to H^0(C, \O_C(D) / \O_C) $,
given by multiplication by~$ \pi_1^{-m_1} \dots \pi_l^{-m_l} $,
so we obtain an~$ R $-basis for the latter from~$ \Sigma $.
Finally, using the natural map~$ B \to \O_{C,q} $ we see that the short
exact sequences for~$ B $ corresponding to those above for~$ \O_{C,q} $ induce
isomorphisms~$ B / (\pi_i^{m_i}) \to \O_{C,q} / (\pi_i^{m_i}) $ and~$ B / (\pi_1^{m_1} \dots \pi_l^{m_l}) \to \O_{C,q} / (\pi_1^{m_1} \dots \pi_l^{m_l}) $.
\end{proof}

\begin{algorithm} \label{KtoRbasisOmega}
If~$ |D'| \cap \Spec(A) = \emptyset $, so that~$ H^0(C, \Omega_{C/R}^1(D')) \subseteq \Omega_{A/R}^1 $,
then one can get an~$R$-basis of $H^0(C, \Omega_{C/R}^1(D'))$
from a~$ K $-basis of~$H^0(C, \Omega_{C_K/K}^1(D_K'))$,
using the following variation of Algorithm~\ref{KtoRbasis}.
We may identify~$ \Omega_{R[\x]/R}^1 $ with~$ R[\x]^n $ by
letting~$ \sum_{i=1}^n g_i \dd x_i $ correspond to~$ (g_1,\dots, g_n) $.
Using that~$ A= R[\x] / (f_2, \dots, f_n) $ as in  Assumption~\ref{Aassumption}, 
this way~$ \Omega_{A/R}^1 $ identifies with the quotient~$ M $ of~$ A^n $
by its~$ A $-submodule generated by~$ (f_{i,x_1}, \dots, f_{i, x_n})  $
for~$ i = 2, \dots, n $. We view~$ M $ as the quotient of~$ R[\x]^n $
by the~$ R[\x] $-submodule generated by some explicit~$ F_1, \dots, F_{ n^2-1} $.
We have compatible identifications
of~$ \Omega_{A_K/K}^1 $ with~$ M_K = M \otimes_R K $
and of~$ \Omega_{\Abar/k}^1 $ with~$ \ol{M} = M \otimes_R k $.
We view~$ M_K $ as the quotient of~$ K[\x]^n $ by the~$ K[\x] $-submodule
generated by~$ F_1, \dots, F_{n^2-1} $, and~$ \ol{M} $ as the
quotient of~$ k[\x]^n $ by the~$ k[\x] $-submodule generated by
the images of~$ F_1, \dots, F_l $.
Viewing a~$ K $-basis of~$H^0(C, \Omega_{C_K/K}^1(D_K'))$ as
being in~$ M $, and lifting it to~$ v_1, \dots, v_l $ in~$  K[\x]^n $,
multiplying by powers of~$ \pi $ we can assume they are in~$ R[\x]^n $,
and want their images~$ \ol{v_1}, \dots, \ol{v_l} $ in~$ \ol{M} $
to be~$ k $-linearly independent. In the equivalent of Algorithm~\ref{KtoRbasis}(2),
we lift a~$ k $-linear dependency of~$ \ol{v_j} $ on~$ \ol{v_1}, \dots, \ol{v_{j-1}} $
to~$ v_j - r_1 v_1 - \dots - r_{j-1} v_{j-1} = \pi w_j + \sum_{i=1}^{n^2-1} c_i F_i $
in~$ R[\x]^n $, with the~$ r_i $ in~$ R $, $ w_j $ in~$ R[\x]^n $, and the~$ c_i $
in~$ R[\x] $. This can be done by expressing~$ \ol{v_j - r_1 v_1 - \dots - r_{j-1} v_{j-1}} $
in~$ k[\x]^n $ as a member of the~$ k[\x] $-submodule generated
by the images of~$ F_1, \dots, F_{n^2-1} $,
which can be done using Gr\"obner basis.
Replacing~$ v_j $ with~$ w_j $, we can iterate until~$ \ol{v_j} $
is~$ k $-linearly independent of~$ \ol{v_1}, \dots, \ol{v_{j-1}} $.
Doing this for~$ j = 1, \dots, l $ results in~$ v_1, \dots, v_l $
in~$ R[\x]^n $ such that their images
in~$ M $ correspond to an~$ R $-basis of~$ H^0(C, \Omega_{C/R}^1(D')) \subseteq \Omega_{A/R}^1 $.
\end{algorithm}

Algorithms~\ref{KtoRbasis} and~\ref{KtoRbasisOmega} allow us
to write~$ H^0(C, \O_C(D) / \O_C) \rightiso  H^0(C, \im(\dd)) $
and~$ H^0(C, \Omega_{C/R}^1(D')) $ as finitely generated free~$ R $-modules.
In order to make the maps to~$ H^0(C, \Omega_{C/R}^1(D')/\Omega_{C/R}^1) $
in~\eqref{CompID} explicit,
we can use the injection of this last group into~$ H^0(C_K, \Omega_{C_K/K}^1(D_K') / \Omega_{C_K/K}^1) $,
followed by another injection obtained by
extending the coefficients to a finite extension~$ L $ of~$K $ such that~$ |D_L| $ consists of~$ L $-rational points.
The resulting~$ L $-vector space~$ H^0(C_L, \Omega_{C_L/L}^1(D_L') / \Omega_{C_L/L}^1) $ is
easily described using local parameters, as are the resulting composed 
maps to it from~$ H^0(C, \O_C(D) / \O_C) $ and~$ H^0(C, \Omega_{C/R}^1(D')/\Omega_{C/R}^1) $.
This way, using~\eqref{CompID}, we can compute~$ H^0(C, \I(D)) $ explicitly as an~$ R $-submodule
of~$ H^0(C, \Omega_{C/R}^1(D') ) $.

\section{The case of smooth plane curves} \label{smooth-planar}

Let~$ R $, $ \pi $ and~$ k $ be as in Notation~\ref{basicnot}.
Any curve in~$ \P_k^2 $ that is smooth over~$ k $ is obtained
as the special fibre $ C_k $ of a curve~$ C $ in $ \P_R^2 $,
smooth over~$ R $, by simply lifting the defining homogeneous
equation over $ k $ to some homogeneous~$ F[X,Y,Z] $ in~$ R[X,Y,Z] $ of the same degree.
In this section we discuss how to make~\eqref{congeq2} explicit,
i.e., if we let~$ f(x,y) = F(x,y,1) $, then we describe how to calculate $ P_1 $, $ P_2 $ in~$ R[x,y] $ such that ~$ \ol{P_1} \, \ol{f_x} + \ol{P_2} \, \ol{f_y} = 1 + \ol{f} \, \ol{\Delta} $
in~$ k[x,y] $ for some~$ \Delta $ in $ R[x,y] $.
Clearly, we only need~$ \ol{P_1} $ and~$ \ol{P_2} $, so can work
directly over~$ k $, and simplify notation accordingly. Also, because~$ \Delta $ plays no role once
we go from~\eqref{globalG} to the system~$ H(\S) $ in Section~\ref{local-frobenius}
(see Remark~\ref{locnewtonrem}), we work directly on~$ C_k $.

We also compute the~$ R $-modules required in Proposition~\ref{compdr}
and~\eqref{CompID} directly, i.e., without using Algorithms~\ref{KtoRbasis}
and~\ref{KtoRbasisOmega}, and provide an alternative to Proposition~\ref{newOCDprop}.

In Section~\ref{complexity}, we shall discuss the complexity of the algorithm in this case.

\begin{lemma} \label{Olemma}
Let~$ k $ be a field, and suppose~$ F(X,Y,Z) $
is homogeneous of degree~$ d \ge 1 $ in~$ k[X,Y,Z] $, not divisible
by~$ Z $, 
and defines a smooth projective curve~$ C $ in~$ \P_k^2 $. 
If we write~$ x = X/Z $ and~$ y = Y/Z $, and~$ C_\infty = C \cdot \{ Z = 0 \} $, then for~$ m \ge d - 2 $
the image of~$ k[x,y]_{\le m} $ in~$ k(C) $ is equal to~$ L(m C_\infty) $.
\end{lemma}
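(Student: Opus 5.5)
We show both inclusions and conclude by a dimension count. For the inclusion of the image of $k[x,y]_{\le m}$ in $L(mC_\infty)$: a polynomial $g(x,y)$ of total degree at most~$m$ equals $G(X,Y,Z)/Z^m$ with $G = Z^m g(X/Z,Y/Z)$ homogeneous of degree~$m$. Hence $\operatorname{div}(g) = (G\cdot C) - m (Z\cdot C) = (G\cdot C) - m C_\infty$, and since $G\cdot C$ is effective (note $G$ is not identically zero on~$C$ unless $g$ vanishes in $k(C)$), $g$ lies in $L(mC_\infty)$. This part is routine.

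For equality we compare dimensions. The divisor $C_\infty$ has degree~$d$, and $\deg(mC_\infty) = md$. The canonical divisor of a smooth plane curve of degree~$d$ is $(d-3)C_\infty$, so $2g-2 = d(d-3)$. For $m \ge d-2$ we have $md > d(d-3) = 2g-2$, so Riemann--Roch gives $\dim L(mC_\infty) = md - g + 1 = md - (d-1)(d-2)/2 + 1$. On the other side, the kernel of $k[x,y]_{\le m} \to k(C)$ consists of the $g$ with $G$ divisible by~$F$ in $k[X,Y,Z]$ (because $F$ is irreducible, $C$ being smooth and hence geometrically irreducible, and $g$ vanishes on~$C$ iff $G$ vanishes on~$C$). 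Since $F$ is not divisible by~$Z$, dehomogenising identifies this kernel with $f\cdot k[x,y]_{\le m-d}$, of dimension $\binom{m-d+2}{2}$ (zero if $m<d$). So the image has dimension $\binom{m+2}{2}-\binom{m-d+2}{2}$, which for $m \ge d$ equals $md - (d-1)(d-2)/2 + 1$ by direct expansion. This matches Riemann--Roch, and we are done.

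The main obstacle is the small cases $m = d-2$ and $m = d-1$, where $\binom{m-d+2}{2}$ must be read as $0$ (for $m=d-2$ it is $\binom{0}{2}=0$, and for $m=d-1$ it is $\binom{1}{2}=0$). For these two values we check the formula $\binom{m+2}{2} = md - (d-1)(d-2)/2 + 1$ directly. Alternatively, one can invoke the standard fact that plane curves of degree~$d$ impose independent conditions, i.e.\ $H^1(\P^2,\O(m-d)) = 0$, which gives surjectivity of $H^0(\P^2,\O(m)) \to H^0(C,\O_C(m))$ with $\O_C(m) = \O_C(mC_\infty)$. Either route gives equality for all $m \ge d-2$.
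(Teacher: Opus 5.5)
Your argument is correct and is essentially the paper's: the evident inclusion, then Riemann--Roch (valid since $\deg(mC_\infty)=md>d(d-3)=2g-2$), compared with the dimension of the image of $k[x,y]_{\le m}$. The only difference is bookkeeping: you get that dimension in one step from the kernel $f\cdot k[x,y]_{\le m-d}$, whereas the paper checks $m=d-2$ (where $k[x,y]_{\le d-2}$ injects) and then notes that both dimensions grow by $d$ each time $m$ increases by one.
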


\begin{proof}
Clearly the image is contained in~$ L(m C_\infty) $.
Let~$ g = \binom{d-1}2 $ be the genus of~$ C $.
If~$ m = d-2 $ then~$ m C_\infty $ has degree~$ d (d-2) > 2 g - 2 = d (d-3) $,
so that~$ L(m C_\infty) $ has dimension~$ 1 - g + d (d-2) = \frac12 ( d^2 -  d) $,
whereas $ k[x,y]_{\le d-2} $ injects into $ k(C) $ and has dimension~$ \binom d 2 $.
Its image therefore coincides with ~$ L((d-2) C_\infty) $.
For~$ m \ge d-2 $, if we increase~$ m $ by~1 then the dimension
of~$ L(m C_\infty) $ increases by~$ \deg(C_\infty) = d $, whereas
the dimension of the image of~$ k[x,y]_{\le m} $ in~$ k(C) $
increases by~$ d $ as well.
\end{proof}

\begin{remark} \label{coordringbasis}
In order to see the increase in dimension of the image of~$ k[x,y]_{\le m} $,
let~$ a = 0, \dots, d $ the smallest
values such that the monomial~$ x^a y^{d-a} $ occurs in~$ f $.
Then~$ k[x,y] / (f) $ has as a~$ k $-basis the classes
of~$ x^i y^j $ with~$ i+j < d $ and those~$ x^i y^j $ with~$ i + j \ge d $
such that~$ i < a $ or~$ j < d - a $, and the image
of~$ k[x,y]_{\le m} $ in~$ k[x,y] / (f) $ has as~$ k $-basis
those classes for which~$ i + j \le m $.
In order to see this, note that for any~$ h $ in~$ k[x,y] $ of
degree~$ e \ge d $, we can use this monomial in~$ f $ to successively eliminate~$ x^a y^{e-a}, x^{a+1} y^{e-a-1}, \dots, x^{e-d+a} y^{d-a} $,
while introducing only terms of lower degree or terms of degree~$ e $
with higher degree in~$ x $.
\end{remark}

For~$ F(X,Y,Z) $ as in Lemma~\ref{Olemma}, it can happen that
one of~$ F_X $, $ F_Y $ and~$ F_Z $ is identically~0 (e.g.,
if $ F(X,Y,Z) = X^2 + Y Z $ and~$ k $ has characteristic~$ 2 $).
We can avoid such special cases in the next lemma, as follows.
Because~$ C $ is smooth, we cannot have~$ F_X $,  $ F_Y $ and~$ F_Z $
identically~0. Also, if~$ d \ge 2 $ then we cannot have two of
those identically~0 because~$ d F =  X F_X + Y F_Y + Z F_Z $
and our curve must be irreducible. So for~$ d \ge 2 $, by
permuting the coordinates if necessary, we may assume
that $ F_X \not\equiv 0 $ and~$ F_Y \not\equiv 0 $,
and for~$ d = 1 $ this can be achieved by a coordinate change.
If~$ f(x,y) = F(x,y,1) $, then~$ f_x(x,y) = F_X(x,y,1) $ and~$ f_y(x,y) = F_Y(x,y,1) $
are not identically~0.

\begin{lemma} \label{Omegalemma}
Let $ F $ and $ C $ be as in Lemma~\ref{Olemma}, and set~$ f(x,y) = F(x,y,1) $.
Assume that~$ f_x $ and~$ f_y $ are not identically~0.
Then~$ \omega = f_y^{-1} \dd x = - f_x^{-1} \dd y $
defines an element of~$ \G(C, \Omega_{C/k}^1(-K)) $, with divisor~$ K= (d-3) C_\infty $.
Moreover, if~$ m \ge 0 $ then
\begin{equation*}
\G(C, \Omega_{C/k}^1(m C_\infty))
=
\{h \o \text{ with } h \text{ in the image of } k[x,y]_{\le d+m-3} \}
\,.
\end{equation*}
\end{lemma}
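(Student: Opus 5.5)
The plan is to first check that $\omega$ is a nowhere-vanishing regular form on the affine part $C \cap \{Z \ne 0\}$, and then compute its order at the points at infinity. The divisor of $\omega$ is then $(d-3)C_\infty$. Everything else follows by combining this with Lemma~\ref{Olemma}.

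\emph{Affine part.} On $f=0$ we have $f_x\,\dd x + f_y\,\dd y = 0$, which gives the equality $f_y^{-1}\dd x = -f_x^{-1}\dd y$ as rational forms; neither side is zero because $f_x$ and $f_y$ are not identically $0$ on $C$. Since $C$ is smooth, at each affine point $f_x$ or $f_y$ is nonzero. Where $f_y \ne 0$, $x$ is a local parameter and $f_y^{-1}\dd x$ is regular and nonvanishing. Where $f_x\ne0$, the same holds for $-f_x^{-1}\dd y$ with parameter $y$. So $\omega$ has no zeros or poles on the affine part.

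\emph{Points at infinity.} I would prove $\ord_P(\omega) = (d-3)\ord_P(Z)$, where $\ord_P(Z)$ denotes the multiplicity of $P$ in $C_\infty$, by a coordinate change. At a point $P$ with $Z=0$, assume $X\neq0$ (otherwise $Y\neq0$ and the argument is symmetric). Use the affine coordinates $u=Z/X$ and $w=Y/X$, so that $x=1/u$ and $y=w/u$, and put $g(u,w)=F(1,w,u)$. Homogeneity gives $f(x,y)=u^{-d}g(u,w)$ and $f_y(x,y)=F_Y(1/u,w/u,1)=u^{1-d}g_w(u,w)$. Also $\dd x = -u^{-2}\dd u$. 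Hence
\[
\omega = f_y^{-1}\dd x = -u^{d-3}\, g_w^{-1}\,\dd u .
\]
By the affine computation applied to $g$, the form $g_w^{-1}\dd u$ is regular and nonvanishing near $P$. Here one must make sure $g_w\not\equiv0$; it is $F_Y$ dehomogenised, so this follows from the assumption on $f_y$. Therefore $\ord_P(\omega)=(d-3)\ord_P(u)=(d-3)\ord_P(C_\infty)$, since $u$ is a local equation of the line $Z=0$ at $P$. This shows $\mathrm{div}(\omega)=(d-3)C_\infty=K$, so $\omega\in\G(C,\Omega^1_{C/k}(-K))$. In fact $\omega$ generates $\Omega^1_{C/k}(-K)\cong\O_C$.

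\emph{Second statement.} Multiplication by $\omega$ gives an isomorphism $L((d-3+m)C_\infty)\to \G(C,\Omega^1_{C/k}(mC_\infty))$: indeed $h\omega$ lies in the target exactly when $\mathrm{div}(h)+(d-3)C_\infty+mC_\infty\ge0$. Since $m\ge0$ we have $d+m-3\ge d-3$, but Lemma~\ref{Olemma} needs the bound $d-2$. For $m\ge1$ that lemma applies directly and identifies $L((d+m-3)C_\infty)$ with the image of $k[x,y]_{\le d+m-3}$. The case $m=0$, i.e.\ degree $d-3$, is the main obstacle, because it falls just outside Lemma~\ref{Olemma}.

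For $m=0$ I would argue by dimensions. The space $\G(C,\Omega^1)$ has dimension $g=\binom{d-1}{2}$. The space $k[x,y]_{\le d-3}$ also has dimension $\binom{d-1}{2}$, and it injects into $k(C)$ because $f$ has degree $d$. Its image lies in $L((d-3)C_\infty)$, which has dimension $g$, so the image equals $L((d-3)C_\infty)$. For $d\le2$ both sides are zero, with the convention that $k[x,y]_{\le\text{negative}}=0$.
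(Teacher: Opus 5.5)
Your proposal is correct and follows essentially the same route as the paper. The paper glues $\omega$ with $z_1^{d-3}\omega_1$ and $-z_2^{d-3}\omega_2$ from the charts at infinity, which is the same computation as your $\omega = -u^{d-3}g_w^{-1}\,\dd u$, and it settles $m=0$ by the same dimension count $\binom{d-1}{2}=g$. The only difference is that for $m\ge 1$ you cite Lemma~\ref{Olemma} directly, whereas the paper repeats the equivalent dimension count.
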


\begin{proof}
On the part where~$ f(x,y) = F(x,y,1) $,
locally either~$ f_x \ne 0 $ and~$ \dd y $ generates the 1-forms,
or~$ f_y \ne 0 $ and~$ \dd x $ generates the 1-forms. Then~$ \o $
gives a section of~$ \Omega_{C/k}^1 $ on this affine part because~$ 0 = \dd f = f_x \dd x + f_y \dd y $.
Similarly, on the part where~$ Y \ne 0 $ we let~$ x_1 = X/Y= x/y $
and~$ z_1 = Z/Y = 1/y $, so the equation
becomes~$ f_1(x_1, z_1) = F(x_1, 1, z_1) = z_1^d f(x_1/z_1,1/z_1) $.
Here~$ f_{1, x_1}^{-1} \dd z_1 $ and~$ - f_{1 , z_1}^{-1} \dd x_1 $
define a section~$ \o_1 $ of~$ \Omega_{C/k}^1 $.
Finally, where~$ X \ne 0 $ we let~$ y_2 = Y/X= y/x $
and~$ z_2 = Z/X = 1/x $, so the equation
becomes~$ f_2(y_2, z_2) = F(1, y_2, z_2) = z_2^d f(1/z_2,y_2/z_2) $.
Here~$ f_{2, y_2}^{-1} \dd z_2 $ and~$ - f_{2 , z_2}^{-1} \dd y_2 $
define a section~$ \o_2 $ of~$ \Omega_{C/k}^1 $.

In order to see that~$ \o $, $ z_1^{d-3} \o_1 $ and~$ - z_2^{d-3} \o_2 $
glue to an element of~$ \G(C, \Omega_{C/k}^1) $,
we note that~$ \Omega_{C/k}^1 $ has no torsion,
so that it suffices to check they agree at the generic point.
For this, using~$ z_1^{-d} f_1(x_1,z_1) = f(x, y) $
with~$ x = x_1 z_1^{-1} $ and~$ y = z_1^{-1} $
gives us~$ z_1^{-d} f_{1, x_1}(x_1, z_1) = f_x(x,y) z_1^{-1} $,
so that~$ - f_x(x, y)^{-1} \dd y = z_1^{d-3} f_{1, x_1}(x_1,z_1)^{-1} \dd z_1 $
because~$ \dd y = - z_1^{-2} \dd z_1 $.
Similarly, using~$  z_2^{-d} f_2(y_2, z_2) = f(x,y) $
with~$ y = y_2 z_2^{-1} $ and~$ x = z_2^{-1} $ gives
us~$ z_2^{-d} f_{2, y_2}(y_2, z_2) = f_y(x,y) z_2^{-1} $,
which, with~$ \dd x = - z_2^{-2} \dd z_2 $,
implies that~$ f_y(x,y)^{-1} \dd x = - z_2^{d-3} f_{2 , y_2}(y_2, z_2)^{-1} \dd z_2 $.
Therefore they glue, and we see that~$ \o $ when viewed as this
global section has divisor~$ (d-3) C_\infty $.

It now follows that~$ h \o $ with~$ h $ in~$ k[x,y]_{\le d-3} $ is in~$ \G(C, \Omega_{C/k}^1 ) $.
Because~$ k[x,y]_{\le d-3} $ has dimension~$ \binom{d-1}2 = g $
and injects into~$ k(C) $, we get all of~$ \G(C, \Omega_{C/k}^1) $ this way.
If~$ m \ge 1 $ then~$ \Omega_{C/k}^1(m C_\infty) \simeq \O_C( (d + m - 3) C_\infty) $,
and so~$ \G(C, \Omega_{C/k}^1 (m C_\infty)) $ has dimension
$ 1 - g + (d + m - 3) d = \frac d2 (d + 2m - 3 ) $.
On the other hand, the image of~$ k[x,y]_{\le d+m-3} $ in~$ k(C) $
for~$ m \ge 1 $ has dimension
$ \binom {d+1}d2 + d (m-2) = \frac d2 (d + 2 m - 3 ) $
as well.
Because~$ h \o $ is in~$ \G(C, \Omega_{C/k}^1(m C_\infty) ) $
if~$ h $ is in the image of~$ k[x,y]_{\le d + m -3} $, this implies
the last statement of the lemma.
\end{proof}

For the map~$ \dd : \G(C, \O_C( m C_\infty ) ) \to \G (C, \Omega_{C/k}^1( (m+1) C_\infty) ) $
with~$ m \ge d-2 $,
using the description of Lemma~\ref{Olemma} for the domain leads to
a description of the image of~$ \dd $ that is not directly compatible with the one used in Lemma~\ref{Omegalemma}.
But for~$ h $ in $ k[x,y]_{\le m} $, one can simply rewrite~$ \dd h = h_x \dd x + h_y \dd y $ as $ (h_x f_y - h_y f_x) \o $,
with~$ h_x f_y - h_y f_x $ in~$ k[x,y]_{\le d+m-2} $.

In the other direction, it would be sufficient to write~$ \o = a \dd x + b \dd y $ for~$ a $
and~$ b $ in $ k[x,y] / (f) $, which can always be done on this
affine part of the curve.
This is equivalent to~$ a f_y \dd x + b f_y \dd y = \dd x $,
hence to~$ (a f_y - b f_x) \dd x = \dd x $, and therefore to~$ - b f_x  + a f_y = 1 $
in~$ k[x,y] / (f) $.
This identity is also what is needed in~\eqref{congeq2} (see
Section~\ref{planecurvesection}).
The next proposition and corollary through Lemma~\ref{Olemma}
bound the degrees of elements of~$ k[x,y] $ that one has to consider
to find such~$ a $ and~$ b $. Of course, if~$ d = 1 $ we could
use elements of~$ k $. By assuming~$ d \ge 2 $, we also ensure
that~$ f_x $ and~$ f_y $ are non-zero.

\begin{proposition} \label{P1P2bound}
Let~$ C $ and~$ f $ be as in Lemma~\ref{Omegalemma}, and assume
that~$ d \ge 2 $.
Write $ (f_x)_0 = (f_x)_0^a + (f_x)_0^\infty $
with~$ (f_x)_0^a $ supported in the affine part of the curve,
and~$ (f_x)_0^\infty $ supported in~$ |C_\infty| $, and
write~$ (f_y)_0 = (f_y)_0^a + (f_y)_0^\infty $ similarly.
Let~$ D' = (f_x)_0^a + (f_y)_0^a $,
and fix~$ D \ge 0 $, supported in~$ |C_\infty| $ and with~$ \deg(D) - \deg(D') > d (d-3) $.
Then there exist~$ P_1 $ in~$ L(D+(f_x)_0^\infty - (f_x)_\infty) $
and~$ P_2 $ in~$ L(D+(f_y)_0^\infty - (f_y)_\infty) $
such that~$ P_1 f_x + P_2 f_y = 1 $ in~$ k[x,y] / (f) $.
\end{proposition}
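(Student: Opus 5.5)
The plan is a Riemann--Roch / sheaf-surjectivity argument on $C$. Since $C$ is smooth and $f_x,f_y$ are not identically zero on $C$, their affine zero divisors $(f_x)_0^a$ and $(f_y)_0^a$ have disjoint supports: a common affine zero would be a singular point of $C$. Put $E_1 = D+(f_x)_0^\infty - (f_x)_\infty$ and $E_2 = D+(f_y)_0^\infty-(f_y)_\infty$. For $P_i\in L(E_i)$ we have $\operatorname{div}(P_1 f_x) \ge -D + (f_x)_0^a$, and similarly for $P_2f_y$. Multiplication by $f_x$ and $f_y$ therefore gives a map
\begin{equation*}
\Phi\colon L(E_1)\oplus L(E_2) \to L(D), \qquad (P_1,P_2)\mapsto P_1f_x+P_2f_y,
\end{equation*}
identifying $L(E_1)\cong H^0(C,\O_C(D-(f_x)_0^a))$ and $L(E_2)\cong H^0(C,\O_C(D-(f_y)_0^a))$. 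The constant $1$ lies in $L(D)$ because $D\ge0$. It therefore suffices to show that $\Phi$ is surjective. Since all three spaces consist of functions regular on the affine part, the resulting identity $P_1f_x+P_2f_y=1$ in $k(C)$ holds in $k[x,y]/(f)$, which is the coordinate ring of the affine part.

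For surjectivity, write $A_1=(f_x)_0^a$ and $A_2=(f_y)_0^a$. Because these effective divisors have disjoint supports, there is an exact sequence of sheaves (a Koszul-type sequence)
\begin{equation*}
0\to \O_C(D-A_1-A_2)\to \O_C(D-A_1)\oplus\O_C(D-A_2)\to \O_C(D)\to 0.
\end{equation*}
The right map is the sum of the inclusions. Locally, near a point of $A_1$ the ideal of $A_2$ is the whole stalk, and vice versa, so the map is surjective; its kernel is $\O_C(D-A_1)\cap\O_C(D-A_2)=\O_C(D-A_1-A_2)$. Under the identifications above (multiplying the first summand by $f_x$ and the second by $f_y$, up to sign conventions), the map on global sections is $\Phi$. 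The long exact cohomology sequence then shows that $\Phi$ is surjective as soon as $H^1(C,\O_C(D-D'))=0$, where $D'=A_1+A_2$.

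This vanishing is where the degree hypothesis enters. By Serre duality, $H^1(C,\O_C(D-D'))\cong H^0(C,\Omega^1_{C/k}(D'-D))^\vee$. By Lemma~\ref{Omegalemma}, $\Omega^1_{C/k}\cong\O_C((d-3)C_\infty)$ has degree $d(d-3)=2g-2$. The line bundle $\Omega^1_{C/k}(D'-D)$ thus has degree $d(d-3)+\deg D'-\deg D<0$ by assumption, so it has no nonzero global sections and the $H^1$ vanishes.

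The main obstacle is keeping the bookkeeping straight. The isomorphism $L(E_1)\to H^0(\O_C(D-A_1))$, $P\mapsto Pf_x$, relies on the identity $\operatorname{div}(f_x)=(f_x)_0^a+(f_x)_0^\infty-(f_x)_\infty$, so one must check the signs of the infinite parts carefully. One must also confirm the disjointness claim, which uses $f=0$ together with the smoothness of $C$, so that the Koszul sequence is exact.
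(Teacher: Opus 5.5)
Your proof is correct. It takes a somewhat different route from the paper's, although the essential input is the same: the vanishing $H^1(C,\O_C(D-D'))=0$, which follows from $\deg(D-D')>d(d-3)=2g-2$.

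The paper uses smoothness globally on the affine part. It starts from some $\tilde a,\tilde b$ in $k[x,y]/(f)$ with $\tilde a f_x+\tilde b f_y=1$, with no control at infinity. It then enlarges to $D+E$ so that $\tilde a f_x$ and $-\tilde b f_y$ lie in $L(D+E)$. The vanishing above gives surjectivity of $L(D+E-D')\to\Gamma(C,\O_C(D+E)/\O_C(D))$, so there is an $h$ with the same polar part at $|C_\infty|$ as $\tilde a f_x$. It then corrects to $P_1=\tilde a-h/f_x$ and $P_2=\tilde b+h/f_y$.

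You use smoothness only pointwise, as the disjointness of the affine zero loci of $f_x$ and $f_y$. You encode this in the exact sequence $0\to\O_C(D-A_1-A_2)\to\O_C(D-A_1)\oplus\O_C(D-A_2)\to\O_C(D)\to0$ and get surjectivity of $\Phi$ in one step.

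What your version buys: it is shorter and needs no initial B\'ezout identity. It also exhibits the solution set $(P_1,P_2)$ as a torsor under $L(D-D')$, via $h\mapsto(h/f_x,-h/f_y)$.

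What the paper's version buys: it is phrased as a correction procedure applied to an arbitrary given identity, for instance one found with a Gr\"obner basis. This is closer to how the result is used algorithmically.

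The two arguments are closely related. The paper's diagram chase splits your connecting-map argument into an affine step, where surjectivity is automatic, and a correction of polar parts at infinity.
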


\begin{proof}
Because~$ C $ is smooth, there exist~$ \tilde a$ and~$ \tilde b $ in~$ k[x,y] / (f) $
with~$ \tilde a f_x + \tilde b f_y = 1 $.
We shall rewrite this into~$ P_1 f_x + P_2 f_y = 1 $
with~$ P_1 $ and~$ P_2 $ as in the proposition.

Because~$ d (d-3) = 2 g - 2 $, for any divisor~$ E \ge 0 $ supported at infinity we have a commutative diagram
{\smaller
\begin{equation*}
\xymatrix{
0 \ar[r] & L( D - D' ) \ar[r]\ar[d] & L( D + E - D' ) \ar[r]\ar[d] & \G(C, \O_C(D+E)/\O_C(D)) \ar@{=}[d] \ar[r] & 0
\\
0 \ar[r] & L( D ) \ar[r] & L( D + E ) \ar[r] & \G(C, \O_C(D+E)/\O_C(D)) \ar[r] & 0
\,.
}
\end{equation*}
}
For the map on the right, note that the support of~$ D' $ is
disjoint from that of~$ D $ and~$ D+E $, so that~$  \O_C(D+E-D')/\O_C(D-D')$ and $ \O_C(D+E)/\O_C(D) $
identify naturally.
We note that for a fixed~$ h \ne 0 $ in $ L(D + E - D' ) $ we have
\begin{equation*}
(h/f_x)
=
(h) - (f_x)_0^a - (f_x)_0^\infty  + (f_x)_\infty
\ge
- D - E - (f_x)_0^\infty  + (f_x)_\infty
\end{equation*}
because~$ (h) \ge - D - E + D' $ and~$ - (f_x)_0^a \ge - D' $.
Therefore~$ h = a f_x $ with~$ a $ in~$ L(D + E + (f_x)_0^\infty  - (f_x)_\infty) $.
This also holds if~$ h = 0 $, and we can similarly write~$ h = - b f_y $ with~$ b $
in~$ L(D + E + (f_y)_0^\infty  - (f_y)_\infty) $.

We take such an~$ E $ for which~$ \tilde a f_x $
and~$ - \tilde b f_y $ are in~$ L(D+E) $.
They have the same image in~$ \G(C, \O_C(D+E)/\O_C(D)) $ because~$ D \ge 0 $,
so that~$ 1 $ is in~$ L(D) $.
We now fix~$ h $ in $ L( D+E - D') $ with that
image in the diagram.
Writing~$ h = a f_x = - b f_y $ as before, we
have~$ (\tilde a - a) f_x + (\tilde b - b) f_y = 1 $, where~$ ( \tilde a - a ) f_x $
and~$ ( \tilde b - b ) f_y $ are in~$ L(D) $ by the commutativity
of the diagram.
We take~$ P_1 = \tilde a - a $, which is in~$ L(D + (f_x)) $.
But~$ \tilde a $ and~$ a $ have no poles in the affine part
of the curve, so~$ P_1 $ is in~$ L(D+(f_x)_0^\infty - (f_x)_\infty) $.
Similarly~$ P_2 = \tilde b - b $ is in~$ L(D+(f_y)_0^\infty - (f_y)_\infty) $.
\end{proof}

\begin{corollary} \label{smooth-planar-cor}
In the situation of Proposition~\ref{P1P2bound}, the following
hold.

(1)
We have~$ P_1 f_x + P_2 f_y = 1 $ with~$ P_1 $ and~$ P_2 $ obtained
from~$ k[x,y]_{\le d^2 + d - 3} $.

(2)
If the curves defined by $ f_x $ and~$ f_y $ do not meet~$ C $
at infinity then we can obtain such~$ P_1 $ and~$ P_2 $
from~$ k[x,y]_{\le 2d-3} $.
\end{corollary}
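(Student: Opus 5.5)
The plan is to apply Proposition~\ref{P1P2bound} with an explicit choice of~$ D $, and then use Lemma~\ref{Olemma} to turn membership in the spaces~$ L(\cdot) $ into a degree bound for polynomials. The natural choice is~$ D = m C_\infty $ for a suitable integer~$ m $. Note that~$ f_x $ and~$ f_y $ have degree at most~$ d-1 $, so~$ (f_x)_\infty \le (d-1) C_\infty $ and similarly for~$ f_y $. By B\'ezout, the affine zero divisor~$ (f_x)_0^a $ has degree at most~$ d(d-1) $, and the same holds for~$ (f_y)_0^a $. Hence~$ \deg(D') \le 2d(d-1) $.

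For~(1), we need~$ \deg(D) = md > d(d-3) + \deg(D') $. It suffices that~$ md > d(d-3) + 2d(d-1) = d(3d-5) $, so we take~$ m = 3d-4 $. Then~$ P_1 $ lies in~$ L(D + (f_x)_0^\infty - (f_x)_\infty) $, and this is contained in~$ L(D + (f_x)_0^\infty) $ because~$ (f_x)_\infty \ge 0 $.

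The main difficulty is that~$ (f_x)_0^\infty $ need not be a multiple of~$ C_\infty $. We bound it instead. Since~$ \deg (f_x)_0 = \deg (f_x)_\infty $, and~$ (f_x)_\infty $ is supported on~$ |C_\infty| $ with multiplicities at most~$ (d-1) $ times those of~$ C_\infty $, we get a coarse bound~$ (f_x)_0^\infty \le (d-1) d\, C_\infty $. Indeed, each point of~$ C_\infty $ has multiplicity at most~$ d $ in~$ C_\infty $, and~$ \deg (f_x)_0^\infty \le d(d-1) $.

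This gives~$ P_1 \in L\bigl((3d-4 + d^2 - d) C_\infty\bigr) = L\bigl((d^2 + 2d - 4) C_\infty\bigr) $, which is weaker than the claimed~$ d^2 + d - 3 $. So the plan has to sharpen the estimate. Write~$ (f_x)_0 - (f_x)_\infty = (f_x) $. Then~$ D + (f_x)_0^\infty - (f_x)_\infty \le D + (f_x)_0^\infty $, and~$ \deg\bigl((f_x)_0^\infty - (f_x)_\infty\bigr) = -\deg (f_x)_0^a \le 0 $. The aim is to choose~$ D $ and the comparison more carefully, using that~$ \deg D' $ and~$ \deg (f_x)_0^a $ trade off against each other. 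Concretely, I would bound the multiplicity of each point~$ Q $ of~$ |C_\infty| $ in~$ (f_x)_0^\infty - (f_x)_\infty $ by~$ \deg (f_x)_0^\infty \le d(d-1) - \deg(f_x)_0^a $, and take~$ m $ of the form~$ d - 3 + \lceil \deg(D')/d \rceil + 1 $.

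Summing the two contributions should then land exactly at degree~$ d^2 + d - 3 $. This is the bookkeeping step I expect to be the obstacle: one must balance~$ \deg D' $ against the excess zeros at infinity. Once~$ P_i $ lies in~$ L(M C_\infty) $ with~$ M \ge d-2 $, Lemma~\ref{Olemma} represents it by an element of~$ k[x,y]_{\le M} $.

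For~(2), the hypothesis that the curves defined by~$ f_x $ and~$ f_y $ do not meet~$ C $ at infinity gives~$ (f_x)_0^\infty = 0 = (f_y)_0^\infty $. Then~$ (f_x)_\infty = (d-1) C_\infty $ exactly, and by B\'ezout~$ \deg D' = 2d(d-1) $. Taking~$ D = (3d-4) C_\infty $ as before yields
\begin{equation*}
P_1 \in L\bigl((3d-4-(d-1)) C_\infty\bigr) = L\bigl((2d-3) C_\infty\bigr),
\end{equation*}
and likewise for~$ P_2 $. Since~$ 2d-3 \ge d-2 $, Lemma~\ref{Olemma} gives~$ P_1 $ and~$ P_2 $ in the image of~$ k[x,y]_{\le 2d-3} $.
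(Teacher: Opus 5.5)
\textbf{Part (2).} This is correct and is exactly the paper's argument.

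\textbf{Part (1).} This is not proved. Your explicit estimate only gives $P_1 \in L\bigl((d^2+2d-4)C_\infty\bigr)$. The ``sharpening'' is left as bookkeeping you hope will land at $d^2+d-3$, with no verification.

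The lost $d-1$ is exactly the term $-(f_x)_\infty$ that you discarded. The observation you need is that, as a function on $C$, $f_x = F_X(X,Y,Z)/Z^{d-1}$, so $(f_x) = C\cdot\{F_X=0\} - (d-1)C_\infty$. You cannot simply subtract the pole bound $(d-1)C_\infty$ from the bound $d(d-1)C_\infty$ on the zeros at infinity, because $(f_x)_0^\infty$ and $(f_x)_\infty$ have disjoint supports. The estimate has to be made pointwise. For $Q$ in $|C_\infty|$, let $\mu_Q \ge 1$ be its multiplicity in $C_\infty$ and $i_Q \le d(d-1)$ the local intersection number of $C$ with $F_X=0$. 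Then
\[
\ord_Q(f_x) = i_Q - (d-1)\mu_Q \le d(d-1) - (d-1)\mu_Q \le (d-1)^2\mu_Q ,
\]
that is, $(f_x)_0^\infty - (f_x)_\infty \le (d-1)^2 C_\infty$. With your choice $D=(3d-4)C_\infty$ this gives $P_1$ in $L\bigl((3d-4+(d-1)^2)C_\infty\bigr) = L\bigl((d^2+d-3)C_\infty\bigr)$, and the same argument applies to $P_2$. Lemma~\ref{Olemma} then finishes. This is the paper's proof.

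Your own trade-off idea can also be completed, but the computation has to be done. Put $a=\deg (f_x)_0^a$ and $b=\deg (f_y)_0^a \le d(d-1)$, and take $m=d-2+\lceil (a+b)/d\rceil$, so that $md > d(d-3)+\deg D'$. Your bound $(f_x)_0^\infty \le (d(d-1)-a)C_\infty$ then gives the exponent
\[
m + d(d-1) - a \le 2d-3+d(d-1) + \lceil a/d\rceil - a \le d^2+d-3 ,
\]
using $\lceil a/d\rceil \le a$. The bound for $P_2$ follows symmetrically. Since your proposal contains neither this computation nor the pointwise estimate above, part~(1) remains unestablished.
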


\begin{proof}
(1)
We have~$ d(d-1) C_\infty > (f_x)_0^\infty $
because~$ (f_x) $ is the difference of the intersection divisor
of~$ C $ with a curve of degree~$ d-1 $, and~$ (d-1) C_\infty $.
So if we take~$ D = (3d-4) C_\infty $, then~$ \deg(D) - \deg(D') >  d (3d-4) - 2 d (d-1) = d (d-2) \ge 0 $.
We also have~$ L(D+(f_x)_0^\infty - (f_x)_\infty) \subseteq L(D+ d(d-1) C_\infty - (d-1) C_\infty) $.
The latter is~$ L( (d^2+d-3) C_\infty ) $, which is the 
image of~$ k[x,y]_{\le d^2 + d - 3} $ by Lemma~\ref{Olemma}.
A similar argument works for~$ P_2 $.

(2)
Here~$ (f_x)_0^\infty = (f_y)_0^\infty  = 0 $
and~$ D' $ has degree~$ 2 d (d-1) $.
We let~$ D = (3d-4) C_\infty $, which has degree~$ d (3d-4) $.
Because~$ (f_x)_\infty = (f_y)_\infty = (d-1) C_\infty $,
the proposition gives that~$ P_1 $ and~$ P_2 $ can be
taken to be in~$ L( (2d-3) C_\infty) $, 
which by Lemma~\ref{Olemma} is the image of~$ k[x,y]_{\le 2d-3} $.
\end{proof}

\begin{remark} \label{rare-rem}
Note that $ (f_x)_0^\infty \ne 0 $ and~$ (f_y)_\infty^0 \ne 0 $
in Proposition~\ref{P1P2bound}
can occur. For example if~$ F(X,Y,Z) = XY-Z^2 $, where both have
degree~1 (and in Corollary~\ref{smooth-planar-cor} the curves defined by~$ f_x $ and~$ f_y $ do meet~$ C $
at infinity).

However, if the field~$ k $ is large enough compared to~$ d $,
then we can often change coordinates in such a way that
Corollary~\ref{smooth-planar-cor}(2) applies.
In order to see, this, first
suppose that at every point~$ Q $ in~$ |C_\infty| $ we have~$ F_X(Q) \ne 0 $
or~$ F_Y(Q) \ne 0 $. Let~$ G(X,Y,Z) = F(a_1 X + a_2 Y, b_1 X + b_2 Y, Z) $
for~$ a_1 $, $ a_2 $, $ b_1 $ and~$ b_2 $ in~$ k $ with~$ a_1 b_2 - a_2 b_1 \ne 0 $.
If~$ Q' $ on~$ C_G $ corresponds to~$ Q $ on~$ C_F $, then~$ G_X(Q') = a_1 F_X(P) + a_2 F_Y(P) $
and~$ G_Y(Q') = b_1 F_X(P) + b_2 F_Y(P) $. If~$ k $ is large enough,
then we can choose~$ a_1 $, $ a_2 $, $ b_1 $ and~$ b_2 $ such that~$ G_X(Q') \ne 0 $
and~$ G_Y(Q') \ne 0 $ at all~$ Q' $, and Corollary~\ref{smooth-planar-cor}(2)
applies.
Next, suppose that~$ F_X(Q) = F_Y(Q) = 0 $ for some~$ Q = [\a, \b, 0] $ in~$ |C_\infty| $,
where~$ \a $ and~$ \b $ may be in a finite extension of~$ k $.
Write
\begin{equation*}
F(X,Y,Z) = F_d(X,Y) + F_{d-1}(X,Y) Z + \dots + F_0(X,Y) Z^d
,
\end{equation*}
with~$ F_i(X,Y) $ homogeneous of degree~$ i $, and~$ F_d(X,Y) \ne 0 $.
Because~$ Q $ is on the curve we have~$ F_d(X,Y) = (\b X  - \a Y) B(X,Y) $
for some~$ B(X,Y) $. But~$ F_X(Q) = F_Y(Q) = 0 $
is equivalent to~$ \b B(\a, \b) = \a B(\a, \b) = 0 $, so 
that~$ B(\a, \b) = 0 $. Therefore~$ F_d(X,Y) $ contains
the square of~$ \b X - \a Y $, and for most~$ F(X,Y) $ one has~$ F_d(X,Y) $
not divisible by a square.
\end{remark}

Using Corollary~\ref{smooth-planar-cor} allows us to write down the system~$ G(\S) $
in~\eqref{globalG} with some control on the degrees of the polynomials~$ P_1 $ and~$ P_2 $.
Using local expansions, gives us
the system~$ H(\S) $ in Theorem~\ref{locallift0}.

\begin{example} \label{gen-planar-ex}
As before, let~$ f $ of degree~$ d \ge 2 $ in~$ R[x,y] $ define the
affine part of a smooth curve in~$ \P_R^2 $, and put~$ A = R[x,y] / (f) $.
Assume that we have~$ P_1 $ and~$ P_2 $ in~$ R[x,y] $ of degree at most~$ D $, and such that~$ P_1 f_x + P_2 f_y \equiv 1 $
modulo~$  \pi A $.
Let us assume that we are in the generic situation, where~$ C_\infty = C \cdot \{Z = 0 \} $
consists of~$ d $ points (after extending~$ R $ if necessary),
and that they all reduce to different point on~$ k $.
We then have~$ d $ ends, and at the corresponding residue disc
we take a parameter that is centred at the unique point~$ Q $ of~$ C_\infty $
in that disc. Then~$ x $ and~$ y $ will have poles at each~$ Q $
of order at most (and in general equal to)~$ -1 $.
Then~\eqref{globalG}, as made explicit at the beginning
of Section~\ref{planecurvesection}, results in
\begin{equation} \label{H-planar} 
H(S) = \sum_{i=0}^d H_i S^i
\end{equation}
in~$ R[[t]] (S) $, as in Theorem~\ref{locallift0}.
With notation as in Notation~\ref{Ralphabetanotation}, we have~$ H_i $ in~$ t^{- \pa ( i (D-1) + d )} R[[t]] $,
$ H_0 $ in~$ t^{-\pa d} \cap V_{1/e} $,
and~$ H_1 $ is in~$ \Rtab {\nu} 0 $ for~$ \mu = e \pa (d+D-1) $.
Then~$ H_i $ for~$ i = 2, \dots, d $, is in~$ \Rtab {\mu} {\b_i} $
with~$ \b_i = - \pa ( i (D-1) + d) $.
We apply Proposition~\ref{hensel2} with~$ z = 0 $, as Newton
iteration determines the solution~$ \ts $ in~$ \pi \Rtstar $.
We can take $ b = c = 1/e $ and~$ \rho = \nu = \pa (D-1) $ in
conditions~(1) through~(4) in the proposition.
Then
\begin{alignat*}{1}
\mu' & = \mu + \maxp_{2 \le i \le d, 0 < k \le i - 1} \left\{ -\frac{\b_i + (i-k-1) \nu + k \rho}{(i-k-1)b + k c}\right\}
\\
& =
\mu + \maxp_{2 \le i \le d} \left\{ -\frac{- \pa ( i (D-1) + d)  + \pa (i-1) (D-1) }{(i-1)/e }\right\}
\\
& =
\mu + e \pa \maxp_{2 \le i \le d} \left\{ \frac{  i (D-1) + d  - (i-1) (D-1) }{(i-1) }\right\}
\\
& =
\mu + e \pa (d+ D-1)
\\
& =
2 \mu 
,
\end{alignat*}
$ b' = b = 1/e $, and~$ \nu' = \nu + \pa (d + D -1 )= \pa (d + 2 D - 2 ) $,
and by the proposition all Newton iterates, and~$ \ts $ itself, are in~$ \Rtab {\mu'} {\nu'} $.
It follows from~\eqref{local-frob-expansion} that~$ \ex(\phidagA(x)) = \ex(x)^{\pa} + \ex( \psi(P_1)) \ts $
and~$ \ex(\phidagA(y)) = \ex(y)^{\pa} + \ex( \psi(P_2)) \ts $
are in~$ \Rtab {2\mu} { \nu ''} $, with~$ \nu'' = \nu' +  \pa (D-1) = \pa (d + 3 D - 3 ) $.
Finally, we note that Remark~\ref{rare-rem} implies that in the
general situation we are in the situation of Corollary~\ref{smooth-planar-cor}(2), 
and can assume~$ D \le 2 d - 3 $.
(By contrast, in Corollary~\ref{locesti} we have
\begin{equation*}
M
= \maxp_{2 \le i \le d  }\{- (\b_i + (i-1)\nu) \}
= \pa (d + D - 1)
= e^{-1} \mu
,
\end{equation*}
and its estimate gives that~$ \ts $ is in $\Rtab {3 \mu} {\b} $, where~$ \b = \min_{2 \le i \le d} \{\b_i\} = - \pa d D $.)
\end{example}

\medskip

In the remainder of the section we explain how to obtain an $R$-basis of $\hdr^1(C/R)$ by
means of Proposition~\ref{compdr} and~\eqref{CompID}, by writing down more directly~$ R $-bases
of~$H^0(C, \O_C(D))$, $ H^0(C, \O_C(D)/\O_C)$ and~$H^0(C, \Omega_{C/R}^1(D'))$
than is done by the more general Algorithms~\ref{KtoRbasis} and~\ref{KtoRbasisOmega}
and Proposition~\ref{newOCDprop}.

\begin{notation} \label{notation-smooth-planar}
Let $H$ be the hyperplane in $\P_R^2$ defined by $Z = 0$,
$D = n C_\infty$ for~$ C_\infty = C \cdot \{Z = 0\}$,
and $D' = (n+1)C_\infty$, where $n \ge 1$.
Let $Y = \P_R^2 \setminus H$ be the affine plane in $\P_R^2$ with coordinates $x, y$,
and $X = C \cap Y$ the affine part of $C$ with coordinate ring $A = R[x, y]/(f)$.
\end{notation}

\begin{proposition} \label{R-basis-image-d}
Let $U'$ be any affine chart of $\P_R^2$ with coordinates $u, v$ such that $U'$ contains the support of the divisor $D$,
and let $U = C \cap U'$ be the new affine part of $C$.
Then $H^0(C, \O_C(D)/\O_C)$ can be identified with the cokernel of the map
$H^0(U, \O_C) \to H^0(U, \O_C(D))$.
Moreover, if we assume that $g(u, v)$ is the defining equation of $U$ of degree $d$,
$h(u, v)$ is the defining equation of~$ \{ Z = 0 \} $,
and that $v^d$ has non-zero coefficient in $g(u, v)$,
then $H^0(C, \O_C(D)/\O_C)$ can be identified with the cokernel of multiplication by $h^n$ from
$\oplus_{0 \le j \le d-1} R[u]v^j$ to $\oplus_{0 \le j \le d-1} R[u]v^j$.
\end{proposition}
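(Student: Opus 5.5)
For the first statement I will use that $\O_C(D)/\O_C$ is supported on $|D|$, a closed subset of $C$ contained in the affine open $U$. Then $H^0(C,\O_C(D)/\O_C) = H^0(U,\O_C(D)/\O_C)$. Because $U$ is affine, $H^1(U,\O_C)=0$, so the long exact cohomology sequence on $U$ of
\[
0\to\O_C\to\O_C(D)\to\O_C(D)/\O_C\to 0
\]
gives the identification of $H^0(U,\O_C(D)/\O_C)$ with the cokernel of $H^0(U,\O_C)\to H^0(U,\O_C(D))$. This part is formal.

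For the explicit description, $H^0(U,\O_C)=B:=R[u,v]/(g)$. Since $v^d$ has non-zero coefficient in $g$ and $g$ has degree $d$, I expect to reduce to the case where this coefficient is a unit. I will try to justify this using smoothness over $R$ and the fact that the chart contains all of $C_\infty$, possibly after scaling, because otherwise the reduction mod $\pi$ would drop degree in $v$. With that coefficient a unit, $g$ is monic in $v$ up to a unit, so $B$ is a free $R[u]$-module with basis $1,v,\dots,v^{d-1}$, i.e.\ $B\cong\oplus_{0\le j\le d-1}R[u]v^j$.

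Next, on $U$ the divisor $C_\infty$ is cut out by the single equation $h$. I need it to be Cartier and principal on $U$, and to equal $(h)$ exactly, i.e.\ the hyperplane section $C\cdot\{Z=0\}$ restricted to $U$. Hence $D=nC_\infty$ is given by $h^n$, and $H^0(U,\O_C(D))=h^{-n}B$ inside the function field; here $h$ is a non-zero-divisor in the domain $B$. Multiplication by $h^n$ therefore gives an isomorphism $h^{-n}B\to B$ taking the inclusion $B\hookrightarrow h^{-n}B$ to multiplication by $h^n$ on $B$. The cokernel is then $B/h^nB$, which, transported through the $R[u]$-basis, is the cokernel of the $R[u]$-linear map $h^n\cdot$ on $\oplus_j R[u]v^j$. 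I would also remark that the matrix of this map is obtained by reducing $h^n v^j$ modulo $g$ as a polynomial in $v$, which is how one computes it.

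The main obstacle is the freeness of $B$ over $R[u]$, which needs the leading coefficient of $v^d$ to be a unit in $R$ and not merely non-zero. I would first try to argue that the chart and the smoothness assumptions force this. If that fails, I would interpret the hypothesis as requiring a unit coefficient (the intended reading, since otherwise the statement is false over $R$).
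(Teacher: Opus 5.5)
Your argument is the paper's proof with the details filled in: restrict to $U$ because $\O_C(D)/\O_C$ is supported on $|D|\subseteq U$, identify the cokernel using $H^1(U,\O_C)=0$, and then use $H^0(U,\O_C)=\oplus_{0\le j\le d-1}R[u]v^j$ and $H^0(U,\O_C(D))=h^{-n}H^0(U,\O_C)$, so the cokernel is given by multiplication by $h^n$.

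You are right that the freeness needs the coefficient of $v^d$ to be a unit in $R$, equivalently non-zero modulo $\pi$. This cannot be derived from smoothness or from $|D|\subseteq U'$: a non-unit coefficient only means that the point at infinity of the chart in the $v$-direction lies on $C_k$. So your fallback, reading the hypothesis as requiring a unit coefficient, is the one to adopt.
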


\begin{proof}
Since the support of $\O_C(D)/ \O_C$ is the same as the support of $D$,
we have
\begin{equation*}
H^0(C, \O_C(D)/\O_C) = H^0(U, \O_C(D)/\O_C) = \text{Coker}(H^0(U, \O_C) \to H^0(U, \O_C(D))).
\end{equation*}
If $v^d$ has non-zero coefficient in $g(u, v)$, then we have
$H^0(U, \O_C) = \oplus_{0\le j \le d - 1} R[u]v^j$
and $H^0(U, \O_C(D)) = \oplus_{0 \le j \le d-1} h^{-n} R[u]v^j$,
thus proving the statement.
\end{proof}

The following algorithm explains how to explicitly compute an $R$-basis of the cokernel
in Proposition~\ref{R-basis-image-d}.

\begin{algorithm} \label{Rx-algo}
Let $ R $ be a discrete valuation ring, $ K $ its
field of fractions, and~$ x $ a variable.
Suppose that~$ N $ is an~$ R[x] $-submodule of~$ R[x]^n $, given
by a finite set of generators, and let~$ M = R[x]^n/N $.
If $ M $ as~$ R $-module is finitely generated and torsion free,
then one can compute an $ R $-basis of it as follows.

(1)
First we compute the structure of $M \otimes_R K$ as a finitely generated $K[x]$-module. 
This amounts to finding an automorphism of $K[x]^n$ as $K[x]$-module that induces an isomorphism 
 $ \phi : M \otimes_R K \rightiso K[x]/(d_1(x)) \times \dots \times K[x]/(d_n(x)) $ 
for (unique) monic~$ d_i(x) $ in $ K[x] $ with $ d_1(x) | d_2(x) | \dots | d_n(x) $.

(2)
Then each $d_i(x)$ is in~$R[x]$.  In order to see this, let $M' = \phi(M)$
be the image of~$ M $, which is an $R[x]$-submodule as well as a finitely generated $R$-module.
If~$ \a $ in a finite extension of~$ K $ is a root of~$ d_i(x) $,
then~$ M' $ cannot map to~0 under
the homomorphism obtained by projection to the $ i $th position
followed by the surjection~$ K[x]/(d_i(x)) \to K(\a) $ that sends~$ x $
to~$ \a $, because $ M' $ generates the target of~$ \phi $ as $ K $-vector space.
Then the non-zero image of~$ M' $ in~$ K(\a) $ is closed under
multiplication by~$ \a $, so it is a faithful~$ R[\a] $-module
that is finitely generated as~$ R $-module. Thus~$ \a $ is integral
over~$ R $ by \cite[p.334]{lang93}, hence~$ d_i(x) $ is in~$ R[x] $.

(3)
In particular, $ M \subseteq M \otimes_R K $ is annihilated
by~$ d_n(x) $.
Let~$ m = \deg(d_n(x)) $, and
let~$ v_1,\dots, v_n $ be the~$ n $ elements~$ (0,\dots, 0, 1, 0, \dots, 0) $ in $ R[x]^n $.
Then the classes in~$ M = R[x]^n/N $ of the~$ x^i v_j $ for $ i=0,\dots, m-1 $ and~$ j=1,\dots,n $
generate~$ M $ as~$ R $-module, and their images under~$ \phi $ generate~$ M' $.
Identifying each~$ K[x]/(d_i(x)) $ with~$ K^{\deg(d_i(x))} $,
it is easy to compute an~$ R $-basis of $ M' $, consisting of~$ R $-linear expressions in these images,
because~$ R $ is a discrete valuation ring.
The corresponding~$ R $-linear expressions
of the classes of the~$ x^i v_j $ then form an~$ R $-basis of~$ M $.
\end{algorithm}

Next, an $R$-basis of $H^0(C, \O_C(D))$ can be obtained as follows.

\begin{proposition} \label{planarH0OCD}
Let the notation be as in Notation~\ref{notation-smooth-planar},
with~$ n \ge d-2 $. Then $H^0(C, \O_C(D)) =  \oplus_{i+j \le n} R x^iy^j /\oplus_{i+j \le n - d} R fx^iy^j $.
\end{proposition}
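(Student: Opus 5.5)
The plan is to identify $H^0(C, \O_C(D))$ with the image of $R[x,y]_{\le n}$ in $A = R[x,y]/(f) \subseteq K(C_K)$. First I show this image equals the displayed quotient. Then I prove it is all of $H^0(C,\O_C(D))$ by comparing with the two fibres via Lemma~\ref{Olemma} and the base change results of \cite[Theorem~5.3.20]{Liu}, exactly as in Proposition~\ref{H1Dtrivial}.

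First, the kernel of $R[x,y]_{\le n} \to A$. An element $h$ of degree at most $n$ that vanishes in $A$ is $h = f q$ with $q \in R[x,y]$, since $f$ generates the ideal. Because the degree-$d$ part of $f$ is nonzero modulo $\pi$ (the reduction $\ol F$ still has degree $d$ and is not divisible by $Z$), the top homogeneous component of $f$ has a unit coefficient. Hence $\deg(fq) = d + \deg(q)$, so $\deg q \le n-d$, and $q$ has coefficients in $R$. Multiplication by $f$ is injective, so the kernel is $\oplus_{i+j\le n-d} R f x^i y^j$. The quotient $Q$ is therefore a free $R$-module of rank $\binom{n+2}{2} - \binom{n-d+2}{2}$. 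Moreover, the same argument over $k$ shows that $Q \otimes_R k$ injects into $k[x,y]/(\ol f)$, with image the image of $k[x,y]_{\le n}$. Similarly $Q \otimes_R K$ is the image of $K[x,y]_{\le n}$.

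Second, comparison with $H^0(C,\O_C(D))$. Every element of $Q$ is regular on the affine part $X$ and has pole order at most $n$ along $C_\infty$, measured on both fibres. Since $C$ is normal, $Q \subseteq H^0(C,\O_C(D))$. Because $n \ge d-2$ gives $\deg D_k = nd \ge d(d-2) > 2g-2$, we have $H^1(C_k, \O_{C_k}(D_k)) = 0$. Then \cite[Theorem~5.3.20]{Liu} gives that $H^0(C,\O_C(D))$ is free and that $H^0(C,\O_C(D)) \otimes_R k \cong H^0(C_k, \O_{C_k}(D_k)) = L(nC_{\infty,k})$. By Lemma~\ref{Olemma} over $k$, the composite $Q\otimes k \to H^0(C,\O_C(D))\otimes k \cong L(nC_{\infty,k})$ is surjective.

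Third, conclusion. The map $Q \otimes k \to H^0(C,\O_C(D)) \otimes k$ is then an isomorphism, since it is onto and injective by the first step. Both modules are finitely generated free over the local ring $R$, so Nakayama's lemma gives $Q = H^0(C,\O_C(D))$.

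The main obstacle is the compatibility in the second step. One must check that the reduction of an element of $Q$ corresponds, under the base change isomorphism, to the obvious function on $C_k$, i.e.\ that restriction to the special fibre commutes with the inclusion into $\Abar$. This holds because $X_k = \Spec(\Abar)$ is dense in $C_k$ and $D$ avoids $X$.
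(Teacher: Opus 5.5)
Your proof is correct, but it follows a different route from the paper's.

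The paper works once and for all on $\P_R^2$. It takes global sections of $0 \to \O_{\P_R^2}(-C+nH) \to \O_{\P_R^2}(nH) \to \O_C(D) \to 0$ and uses $H^1(\P_R^2, \O_{\P_R^2}(n-d)) = 0$ to get a short exact sequence. It then identifies the two outer terms with $\oplus_{i+j \le n-d} R f x^i y^j$ and $\oplus_{i+j \le n} R x^i y^j$.

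You instead first identify the image $Q$ of $R[x,y]_{\le n}$ in $A$ with the displayed quotient, using a degree argument. You then compare fibrewise, using cohomology and base change \cite[Theorem~5.3.20]{Liu}, Lemma~\ref{Olemma} on $C_k$, and Nakayama.

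What each approach buys:
\begin{itemize}
\item The paper's argument is shorter. It needs no compatibility check between restriction to $C_k$ and reduction to $\ol{A}$. It also never uses $n \ge d-2$, since the vanishing of $H^1$ on $\P_R^2$ holds for every twist, so it proves the equality without that hypothesis.
\item Your argument genuinely needs $n \ge d-2$, both for Lemma~\ref{Olemma} and for $H^1(C_k, \O_{C_k}(D_k)) = 0$. In exchange, it uses only tools the paper already employs (as in Proposition~\ref{H1Dtrivial} and Algorithm~\ref{KtoRbasis}) and avoids cohomology of projective space over $R$. It also shows directly that an $R$-basis of $Q$ reduces to a $k$-basis of $L(nC_{\infty,k})$, which is exactly the criterion behind Algorithm~\ref{KtoRbasis}.
\end{itemize}

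A small simplification: since $Q \subseteq H^0(C,\O_C(D))$ from the start, surjectivity of $Q \otimes_R k \to H^0(C,\O_C(D)) \otimes_R k$ already suffices for the Nakayama step. The injectivity you prove there is not needed.
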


\begin{proof}
The short exact sequence
$$ 0 \to \O_{\P_R^2} (-C + nH) \to \O_{\P_R^2}(nH) \to \O_C(D) \to 0$$
of sheaves results in a short exact sequence of the corresponding
global sections as~$H^1(\P_R^2, \O_{\P_R^2} (-C + nH))$ is trivial.
The result now follows from
$H^0(\P_R^2, \O_{\P_R^2} (-C + nH)) \cong \oplus_{i+j \le n - d} R fx^iy^j$
and
$H^0(\P_R^2, \O_C(nH)) \cong \oplus_{i+j \le n} R x^iy^j$.
\end{proof}

The next result describes how to get an $R$-basis of $H^0(C, \Omega_{C/R}^1(D'))$.

\begin{proposition} \label{P2C-prop}
Let~$ A = R[x,y]/(f) $ with~$ f $ as before.
For~$n \ge \max\{2d - 3, d\}$ consider the commutative diagram
\begin{equation*}
\xymatrix{
0 \ar[r] & M_1 \ar[r]^-{\cc} \ar[d] & M_2  \ar[r] \ar[d] & M_3 \ar[r] \ar[d] & 0
\\
0 \ar[r] & R[x,y] \ar[r]^-{\cdot \dd f } & 
A \dd x \oplus A \dd y \ar[r]^-{\d} &
\Omega^1_{A/R} \ar[r] & 0
}
\end{equation*}
of~$ R $-modules,  with exact rows, 
where all vertical maps are inclusions,
$ M_1 $ is generated by all $x^iy^j$ with $i + j \le n + 1 - d$,
$ M_2 $ by all~$x^iy^j \dd x$ and $ $  $x^iy^j \dd y$ for $i + j \le n-1$,
together with all~$x^iy^j (y\dd x - x\dd y)$ for $i + j = n -1$,
$ \cc $ is multiplication by~$ \dd f = f_x \dd x + f_y \dd y $,
and~$ \d $ is the quotient map to~$ \Omega_{A/R}^1 = (A\dd x \oplus A \dd y )/A \dd f $.

Then~$M_3 = \d(M_2) $ is the image of $H^0(C, \Omega_{C/R}^1(D'))$ in~$ \Omega_{A/R}^1 $
under localisation for~$ D' = (n+1) C_\infty $, where~$ C_\infty = C \cdot \{ Z = 0 \} $.
Moreover in~$ A \dd x + A \dd y $ one can effectively compute
an~$ R $-basis of~$ M_2 / \cc(M_1)) $ by Remark~\ref{coordringbasis}.
\end{proposition}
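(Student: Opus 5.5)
The plan is to prove the two claims separately: first that $M_3=\d(M_2)$ is exactly the image of $H^0(C,\Omega^1_{C/R}(D'))$, and then the effectiveness statement. For the first claim I would reduce to the two fibres. Both $M_3$ and $H^0(C,\Omega^1_{C/R}(D'))$ sit inside $\Omega^1_{A/R}$, which is torsion free since $C$ is smooth over $R$.

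First I check that $M_3\subseteq H^0(C,\Omega^1_{C/R}(D'))$. Since $\Omega^1_{C/R}\simeq\O_C((d-3)C_\infty)$ via $\o=f_y^{-1}\dd x$, it is enough to bound pole orders at infinity. By Lemma~\ref{Omegalemma}, $\dd x=f_y\o$ and $\dd y=-f_x\o$, so $x^iy^j\,\dd x$ with $i+j\le n-1$ equals $h\o$ with $\deg h\le n-1+d-1=n+d-2$. For $D'=(n+1)C_\infty$ we may allow $h$ of degree up to $d+n+1-3=n+d-2$, so this term lies in the right space. The same computation applies to $\dd y$.

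The extra forms need more care. The form $x^iy^j(y\dd x-x\dd y)$ with $i+j=n-1$ equals $x^iy^j(yf_y+xf_x)\o$. By Euler's identity, $xf_x+yf_y=df-F_Z(x,y,1)$, and $F_Z(x,y,1)$ has degree at most $d-1$. Modulo $f$ the coefficient therefore has degree at most $n-1+d-1=n+d-2$, so it is also fine. I would do this check over $R$ using the charts at infinity, as in the proof of Lemma~\ref{Omegalemma}: $\o$ extends over the whole of $C$ with divisor $(d-3)C_\infty$ on both fibres.

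Next I show the reverse inclusion. By \cite[Theorem~5.3.20]{Liu} (parts~(a) and~(b)), $H^0(C,\Omega^1_{C/R}(D'))$ is a free $R$-module whose formation commutes with base change to $k$ and to $K$, because $\deg D'_k>0$ kills $H^1$. Since $M_3\subseteq H^0$, it then suffices to show that the reduction $M_3\otimes k\to H^0(C_k,\Omega^1(D'_k))$ is surjective and that the ranks agree. Equivalently, $H^0/M_3$ is finitely generated and $\pi$-divisible, hence zero by Nakayama. Surjectivity on the special fibre follows from Lemma~\ref{Omegalemma}: every element there has the form $h\o$ with $\deg h\le n+d-2$. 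I would then show that every such $h\o$ is a $k$-combination of the images of the generators.

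This generation step is the main obstacle. One needs to write $h=af_y-bf_x$ modulo $f$ with $\deg a,\deg b\le n-1$, using the extra Euler terms to handle the top degree. I would attack it with Corollary~\ref{smooth-planar-cor}. Since $n\ge 2d-3$, monomials of degree at most $n+d-2-(2d-3)=n-d+1$ can be multiplied by $P_1,P_2$ of degree at most $2d-3$. In the generic case this exhibits $h\o$ directly: for $\deg h\le n-d+1$ we get $h=hP_2f_y-(-hP_1)f_x$. The remaining top-degree pieces would be handled by the forms $x^iy^j(y\dd x-x\dd y)$, whose leading parts $x^iy^j(xF_X+yF_Y)_{\text{top}}$ equal $d\,x^iy^jF_d$ modulo lower terms. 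Combined with the reduction of Remark~\ref{coordringbasis}, this should span the missing graded pieces.

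As an alternative, I would compare dimensions instead. Compute $\dim_k M_2\otimes k-\dim_k M_1\otimes k$, using injectivity of $\cdot\dd f$ and Remark~\ref{coordringbasis}, and check that it equals $\frac d2(d+2(n+1)-3)$. The hypothesis $n\ge d$ ensures that $n+1-d\ge 1$ and that the relevant degrees are in the stable range of Lemma~\ref{Olemma}. For this to work I must show that $\cc$ is injective on the reduction and that $M_1=\cc^{-1}(M_2)$, i.e.\ exactness of the top row over $k$.

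For the effectiveness claim, $M_1$ and $M_2$ have explicit monomial $R$-bases, and $\cc$ is given by an explicit matrix. Reducing $M_2$ modulo $\cc(M_1)$ by eliminating leading monomials of $f\cdot x^iy^j$, as in Remark~\ref{coordringbasis}, uses a leading coefficient that is a unit in $R$, because $F_d$ has a unit coefficient in the lift. This produces a free complement and hence an $R$-basis of $M_2/\cc(M_1)$.
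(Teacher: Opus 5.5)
Your inclusion $M_3\subseteq H^0(C,\Omega^1_{C/R}(D'))$ (via $\omega=f_y^{-1}\dd x$ and Euler's identity) is fine, as is the Nakayama reduction to surjectivity on $C_k$; this would be a genuinely different route from the paper's. But surjectivity on $C_k$, which is the whole content of the proposition, is not established, and the argument you sketch for it fails on degrees. The generators of $M_2$ only allow $h\omega=a\,\dd x-b\,\dd y$ with $\deg a,\deg b\le n-1$, plus the $n$ Euler forms. The identity $h=hP_2f_y+hP_1f_x$ gives coefficients of degree up to $\deg h+2d-3$, so it only handles $\deg h\le n-2d+2$, not $\deg h\le n-d+1$. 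You used the bound $n+d-2$ for the coefficient of $\omega$ where the bound $n-1$ for the coefficients of $\dd x,\dd y$ is what matters. For $n=2d-3$ this handles nothing at all. Moreover, Corollary~\ref{smooth-planar-cor}(2) covers only the generic case. Your sentence about the Euler forms filling the missing graded pieces is not an argument: they contribute only $k[X,Y]_{n-1}F_Z$.

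After homogenising, what you actually need is $S_{n+d-2}=(F,F_X,F_Y,F_Z)_{n+d-2}$ in $S=k[X,Y,Z]$. This Macaulay-type statement is exactly where $n\ge 2d-3$ must enter, and your proposal never uses that hypothesis in a correct way. In characteristic $0$ the partials form a regular sequence whose quotient has socle degree $3d-6$, so the bound is sharp. On $C_k$ with $p\mid d$ the partials satisfy $XF_X+YF_Y+ZF_Z=0$ and are not a regular sequence, so even that route is unavailable there.

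You also never prove the middle exactness $A\,\dd f\cap M_2=\gamma(M_1)$. It is part of the statement, and it is what makes $M_2/\gamma(M_1)\cong M_3$. Your dimension-count alternative presupposes it. It also presupposes the ranks of $M_1$ and $M_2$, which are not the numbers of listed generators since $f=0$ in $A$.

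The paper gets all of this at once, directly over $R$. It identifies the top row with $H^0$ of the twisted conormal sequence on $\P^2_R$, so middle exactness is just left exactness of $H^0$. The vanishing $H^1(\I/\I^2(n+1))=0$ follows from $H^2(\P^2_R,\O(n+1-2d))=0$, and this is where $n\ge 2d-3$ is used. The vanishing $H^1(\Omega^1_{\P^2_R}(n+1-d))=0$ follows from the Euler sequence for $n\ge d$. Together they show that the explicit basis of $H^0(\Omega^1_{\P^2_R}(n+1))$ surjects onto $H^0(C,\Omega^1_{C/R}(D'))$, which also makes your reduction to $k$ unnecessary.
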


\begin{proof}
The short exact sequence of sheaves
\begin{equation} \label{sos}
0 \to \I/\I^2\otimes \O_{\P_R^2}(n+1)
\xrightarrow{\dd \otimes \text{id}}
\Omega^1_{\P_R^2/R} \otimes \O_C \otimes \O_{\P_R^2}(n+1)
\to
\Omega^1_{C/R}(D')
\to 0
,
\end{equation}
where~$\I = \O_{\P_R^2}(-C)$ is the ideal sheaf of~$ C $,
is the second exact sequence of differentials, twisted by~$  \O_{\P_R^2}( (n+1) H) $
for~$ H $ the line at infinity, so~$ D' = (n+1) C_\infty $.
Then~$ H^1( \P_R^2 , \I/\I^2\otimes \O_{\P_R^2}(n+1) ) $ is trivial.
This can be seen by forming the long exact sequence of cohomology
groups for the short exact sequence 
\begin{equation*}
0 \to \O_{\P_R^2}(-2C)(n+1) \to \O_{\P_R^2}(-C)(n+1) \to \I/\I^2\otimes \O_{\P_R^2}(n+1) \to 0
\end{equation*}
of sheaves, and using that $H^1(\P_R^2, \O(k)) = 0$ for all $k$,
and $H^2(\P_R^2, \O_{\P_R^2}(-2C + (n+1)H)) = H^2(\P_R^2, \O(-2d + n + 1)) = 0$ for $n + 1 - 2d > -3$
(cf.~\cite[p. 225]{hart77}).
Moreover, the triviality of~$ H^1( \P_R^2 , \O_{\P_R^2}(-2C)(n+1) )  $
implies that~$ H^0( \P_R^2 , \I/\I^2\otimes \O_{\P_R^2}(n+1) ) $ is generated
by the image of~$ H^0( \P_R^2 , \O_{\P_R^2}(-C)(n+1) ) $.
As an~$ R $-module, this is generated by all $fx^iy^j$ for $i + j \le n + 1 - d$
under the natural projection map (cf.~Lemma~\ref{Olemma}).
This can be easily checked by dimension counting
as $\dim H^0( \P_R^2 , \O_{\P_R^2}(-C)(n+1) ) = \binom{n+3 - d}{2}$
coincides with the number of elements $\{f x^iy^j | i + j \le n + 1 - d\}$.
Therefore the image of~$ H^0( \P_R^2 , \I/\I^2\otimes \O_{\P_R^2}(n+1) ) $ under
localisation to~$ \A_R^2 $ gives~$ M_1 $ as in
the proposition.

Similarly, we have a short exact sequence of sheaves
\begin{equation*}
0 \to \Omega^1_{\P_R^2/R}(-C)(n+1) \to
 \Omega^1_{\P_R^2/R}(n+1) \to \Omega^1_{\P_R^2/R} \otimes \O_{\P_R^2}(n+1) \otimes \O_C \to
0
.
\end{equation*}
We want to see that~$ H^1 ( \P_R^2 , \Omega^1_{\P_R^2/R}(-C)(n+1) ) $ is trivial.
For this, consider the twisted Euler sequence 
\begin{equation*}
0 \to \Omega_{\P_R^2}^1(n + 1 - d) \to \O_{\P_R^2}(n - d)^{\oplus 3} \to \O_{\P_R^2}(n + 1 - d) \to 0
.
\end{equation*}
Then~$H^1(\P_R^2, \P(k))$ is zero for all $k$.
Also, the corresponding sequence on global sections is short
exact for $n - d \ge 0$, as one sees by explicit calculation (see below).
Therefore~$ H^1 ( \P_R^2 , \Omega^1_{\P_R^2/R}(-C)(n+1) ) $ is trivial.

On the other hand, in order to get generators of $ H^0 ( \P_R^2 ,  \Omega^1_{\P_R^2/R}(n+1) ) $,
consider the Euler sequence
\begin{equation*}
0 \to \Omega_{\P_R^2/R}^1(n + 1) \to \O_{\P_R^2}(n)^{\oplus 3} \to \O_{\P_R^2}(n+1) \to 0,
\end{equation*}
where the first map sends $a \dd x + b \dd y$ to $(a, b, -ax - by)$, and the second sends $(a, b, c)$ to $ax + by +c$
for $a, b, c$ in $\O_{\P_R^2}(n)$.
This implies that a $1$-form $a\dd x + b\dd y$ is in $H^0(\P_R^2, \Omega_{\P_R^2}^1(n+1))$ 
if and only if~$a$ and $b$ are in $H^0(\P_R^2, \O_{\P_R^2}(n))$ and there exists some $c$ in $ H^0(\P_R^2, \O_{\P_R^2}(n))$
such that $c = -ax - by$.

For $a, b$ in $H^0(\P_R^2, \O_{\P_R^2}(n-1))$ such a $c$ always exists,
i.e., all $x^iy^j\dd x$ and $x^iy^j\dd y$ with $i + j \le n - 1$ are in $H^0(\P_R^2, \Omega_{\P_R^2}^1(n+1))$.
If $a = x^iy^{n - i}$ has a pole of order exactly~$n$ at infinity, then $b$ can be uniquely determined by setting $ax + by = 0$
to make sure that $c$ does not have a pole of order $n + 1$ at infinity.
Thus all $x^iy^{n - 1 - i}(y \dd x - x\dd y)$ for~$ i = 0, \dots, n-1 $
are also in $H^0(\P_R^2, \Omega_{\P_R^2}^1(n+1))$.
These give a basis of $H^0(\P_R^2, \Omega_{\P_R^2/R}^1(n+1))$ as 
the number of these elements coincides with  the rank of
$H^0(\P_R^2, \Omega_{\P_R^2/R}^1(n+1))$, which is
\begin{equation*}
3\rk H^0(\P_R^2, \O_{\P_R^2}(n)) - \rk H^0(\P_R^2, \O_{\P_R^2}(n+ 1))
= 3\binom{n+2}{2} - \binom{n+3}{2} = n(n+2)
.
\end{equation*}
Therefore~$ M_2 $ is generated by the elements given in the proposition,
under the localisation from~$ \P_R^2 $ to~$ \A_R^2 $ followed
by the pullback to~$ C \setminus C_\infty $.

Finally, $ M_3 $ is the image of~$ H^0( C , \Omega^1_{C/R}(D') ) $
under the localisation to~$ C \setminus C_\infty = \Spec(A) $.
Therefore the bottom row in the diagram in the proposition corresponds
to the global sections of the sheaves in~\eqref{sos}
under the localisation to~$ \A_R^2 $, $ \Spec(A) $ and~$ \Spec(A) $,
respectively, and the upper row identifies with the short exact
sequence of the global sections of the sheaves in~\eqref{sos}.
\end{proof}

Therefore~\eqref{CompID} for~$ D = n C_\infty $ and~$ D' = (n+1) C_\infty $
is
\begin{equation} \label{planarCompID}
\begin{split}
\xymatrix{
  H^0(C, \I(D)) \ar[r] \ar[d] & H^0(C, \Omega_{C/R}^1((n+1)C_\infty)) \ar[d] \\
  H^0(C, \im(\dd)) \ar[r] & H^0(C, \Omega_{C/R}^1((n+1)C_\infty) / \Omega_{C/R}^1)
  .
  }
\end{split}  
\end{equation}
Algorithm~\ref{Rx-algo} enables us to compute~$ H^0(C, \im(\dd)) $,
and~$ H^0(C, \Omega_{C/R}^1((n+1)C_\infty)) $ can now be calculated
using Proposition~\ref{P2C-prop}.
We can now compute~$ H^0(C, \I(D)) $ inside~$ H^0(C, \Omega_{C/R}^1((n+1)C_\infty)) $,
treating~$ H^0(C, \Omega_{C/R}^1((n+1)C_\infty) / \Omega_{C/R}^1) $
just as we did at the very end of Section~\ref{de-rham}.

For~$ n \ge \max\{2d-3,d\} $ all hypotheses are satisfied, as
is Assumption~\ref{D-D'-assumption}, that is, $n \ge d - 2$.
Then~$ \hdr^i(C/R) $ for~$ i = 0 $ and~1 coincide with the kernel and cokernel of~$ H^0(C, \O_C(D)) \xrightarrow{\dd} H^0(C, \I(D)) $.

\section{The complexity of the algorithm for smooth planar curves} \label{complexity}

In this section we give a rough estimate for the complexity of
Algorithm~\ref{est-algorithm} for smooth planar curves,
based on our earlier discussion in Example~\ref{gen-planar-ex}.

We have made various simplifying assumptions. We
shall be using the soft $O$ notation $\ot$, meaning that logarithmic factors
are neglected compared with polynomial ones, so that for example
$O(l\log^9(l)) = \ot(l)$.

To simplify matters, we assume~$K$ is unramified over $\Qp$, that $p' = p $
and that~$C\setminus  \Spec(A)$
consists of a finite number of disjoint $R$-sections. Not assuming
this probably does not change the complexity much because one is
typically working over a larger extension, but at the same time the
results of the computation, being Galois conjugates of one another,
can be computed once for a bunch of points.

We let $q=p^l$ be the size of the residue field $ k $. We
assume that the plane model is given by a polynomial~$ f $ of degree $d $,
so $d= O(\sqrt{g}) $ and the number of annuli we need to compute
residues at is also $O(d) $ as they correspond to the points
at infinity of~$ C = C_f $.
\begin{prop}\label{complprop}
The asymptotic complexity of this algorithm is $\ot(p g^5 l^3)$.
\end{prop}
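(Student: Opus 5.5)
We follow the steps of Algorithm~\ref{est-algorithm} (and then Algorithm~\ref{computezeta}) and bound the cost of each in terms of three parameters: the $p$-adic precision, the $t$-adic length of the Laurent series, and the size of the linear algebra. Under the standing assumptions ($e=1$, $\pa=p$, $h=0$ by Corollary~\ref{stable-cor}), the working precision is $N=O(\tilde N)=O(gl)$, with $\tilde N\approx\log_p(2g q^{g/2})$. Ring operations in $\RN R N$ therefore cost $\ot(Nl)=\ot(gl^2)$ bit operations. We have $d=O(\sqrt g)$, and by Corollary~\ref{smooth-planar-cor}(2) and Remark~\ref{rare-rem} we may take $\deg P_i\le D\le 2d-3=O(\sqrt g)$.

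The first step is to bound the number $L$ of $t$-adic terms needed at each end. Example~\ref{gen-planar-ex} shows that $\ts$ and the $\ex(\phidagA(x)),\ex(\phidagA(y))$ lie in $\Rtab{2\mu}{\nu''}$ with $\mu=O(pd)$ and $\nu''=O(pd)$. The basis forms $\o_i$ from Section~\ref{smooth-planar} have poles of order $O(n)=O(d)$ at infinity. Lemma~\ref{estimates-lemma} and Proposition~\ref{Prodprec} then say that we need $t$-exponents in a window of length roughly $\mu N + p\cdot O(d^2)$. Since $\mu N=O(pd\cdot gl)$, this gives $L=O(p\sqrt g\, gl + pg)=O(pg^{3/2}l)$. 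Multiplying two series of length $L$ over $\RN R N$ by fast arithmetic costs $\ot(L\cdot gl^2)=\ot(pg^{5/2}l^3)$.

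Next we count the series multiplications. At each of the $O(d)=O(\sqrt g)$ ends, Newton iteration on the single equation~\eqref{H-planar} of degree $d$ takes $O(\log N)$ steps by Proposition~\ref{NewtonprecN}. Each step evaluates $H$ and $H'$ by Horner's scheme in $O(d)$ multiplications, so the end costs $\ot(d\cdot pg^{5/2}l^3)$. Forming $H$ needs $\ex(x)^p,\ex(y)^p$ and $\psi(P_i)$, which is $O(\log p+D^2)$ multiplications. Forming $\phidagA(\o_j)$ for the $2g$ forms $\o_j=h_j\o$ with $\deg h_j=O(d)$ is the real bottleneck. It needs about $O(d^2)$ monomials $\phidagA(x)^a\phidagA(y)^b$, computed once per end, and then $2g$ linear combinations. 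Each combination is a sum of $O(g)$ scalar-times-series terms costing $\ot(g\cdot L\cdot gl^2)$, so all $2g$ of them cost $\ot(g^2\cdot pg^{3/2}l\cdot gl^2)=\ot(pg^{9/2}l^3)$ per end. Over $O(\sqrt g)$ ends this totals $\ot(pg^5l^3)$. The $O(g^2)$ residues of Step~1(7) and Step~2C(6) are inner products of length $L$ and are dominated by this term. The global steps cost $\ot(g^3 \cdot l\cdot gl^2)$, including $M=M_1^{-1}M_2$ and $M'$ via $O(l)$ Frobenius twists (or $\ot(\log l)$ with a doubling trick), plus the characteristic polynomial. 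These are also dominated.

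The main obstacle is the accounting for $\phidagA(\o_j)$: showing it costs $\ot(pg^{9/2}l^3)$ per end rather than more. A naive approach, multiplying series for every monomial of every $h_j$, would give an extra factor of $g$. I would avoid this by precomputing the $O(d^2)=O(g)$ monomials $\phidagA(x)^a\phidagA(y)^b$ once per end, using $O(g)$ multiplications that each cost $\ot(pg^{5/2}l^3)$. After that, only scalar linear combinations remain, together with one multiplication by $\phidagA(\o)$ per form. That gives the stated bound. The remaining care is in checking that the truncation bounds of Lemma~\ref{estimates-lemma} really give $L=O(pg^{3/2}l)$ and not a larger power of $d$.
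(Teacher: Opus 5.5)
Your argument is correct at the level of rigour of the paper's own sketch. It arrives at the same intermediate total $\ot(g d^3 N^2 \alpha l)$ with $\alpha=O(pd)$ from Example~\ref{gen-planar-ex}, but you distribute the factors differently.

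The paper uses a $t$-window of length $O(N\alpha d)$. It charges a loss of $\alpha N$ positive powers of $t$ for each of the $\sim d$ multiplications in evaluating $H$, and then counts only $d$ operations per form and residue disc, i.e.\ $O(gd^2)$ series operations in all. You use the window $O(\alpha N+pd^2)$ that Lemma~\ref{estimates-lemma} requires for the final $\phidagA(\omega_j)$. Proposition~\ref{Prodprec}, applied to whole products, shows that the loss $\alpha N$ need not accumulate, because $R_{\alpha,\beta_1}((t))\cdot R_{\alpha,\beta_2}((t))\subseteq R_{\alpha,\beta_1+\beta_2}((t))$. In exchange you count $O(d^2)$ scalar--series operations per form and end, reflecting the $O(d^2)$ monomials of each $h_j$. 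So a factor $d$ moves from the window length to the operation count.

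The paper's conservative window spares it any analysis of how truncation errors propagate through Horner evaluation and the Newton steps. Yours is sharper, and it makes explicit that the bottleneck is the linear algebra of the $2g$ linear combinations. The $O(g^2)$ residue inner products are of the same order as these, not smaller.

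The point you flag as needing care is harmless. The linear combinations act degreewise, so only they and the subsequent multiplication by $\phidagA(\omega)$ need the short window. Even if the Newton iteration and the $O(d^2)$ monomials are computed on the paper's longer window of length $O(dL)$, this costs only $\ot(pg^4l^3)$ per end, below your dominant $\ot(pg^{9/2}l^3)$.

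There are a few small slips, none of which affects the bound. Expanding $H(S)=f^\sigma\bigl(\xi(x)^p+\xi(\psi(P_1))S,\ \xi(y)^p+\xi(\psi(P_2))S\bigr)$ in powers of $S$ takes more than the $O(\log p+D^2)$ multiplications you list. You can instead evaluate $f^\sigma$ and its $S$-derivative at each Newton iterate with $O(d^2)$ operations, which stays within budget. The naive treatment of the forms is worse by at most a factor $O(d)=O(\sqrt g)$, not $g$. It is worse by nothing if each monomial is obtained from a previous one by a single product, since in the soft-$O$ sense a series--series product costs the same $\ot(LNl)$ as a scalar--series product. Finally, $h=0$ follows from the definition of $h$ in Proposition~\ref{ep-prop} with $e=1$, not from Corollary~\ref{stable-cor}.
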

\begin{proof}
By the discussion at the beginning of Section~\ref{algorithm-estimates}
and the estimates in Algorithm~\ref{est-algorithm}, we need to compute with $p $-adic precision  $N=\ot(g l)$.
(Note that we are assuming that~$ e = 1 $, so~$ h = 0 $ in Proposition~\ref{ep-prop}.)

The computation of the matrices $M_1$ and $M_2$ involve computation
of cup products, which in turn decomposes into certain residue
computations as described in Section~\ref{cup-product-estimates}. We consider the
computations of $M_2$, as these are clearly more time consuming.
Since we assumed $K $ was unramified, the computation is done
integrally, i.e., without denominators, by Corollary~\ref{h-cor}.

By Section~\ref{cup-product-estimates} the complexity of the residue
computation is controlled by the
parameter $\a$ of overconvergence. Indeed, the images under~$ \phidagA $
or our 1-forms are in some~$ \Rtab {\a} {0} \, \dd t $,
as defined in Notation~\ref{RtabNdef} (it is clear that from the point of view of the asymptotic
complexity the parameter $\b$ may be neglected). Since we are
considering the coefficients up to precision $N $, we need to take
negative powers of $t $ in  $  \Rtab {\a} {0} $, up to $-N \a $, as after that the coefficients become divisible
by $p^N $. The number of positive powers of $t $ is roughly $N \a $ times the number
of multiplications of elements of $ \Rtab {\a}
{0} $ we will do, as each multiplication reduces the highest power of
$t $ (see Proposition~\ref{Prodprec}).

As the convergence of the solution is with
respect to the $ p $-adic topology, the number of iterations is
proportional to the log of the $p$-adic precision, which is~$N$,
hence ultimately to $\log(l)$. This may be swallowed in the soft $O $
notation. Each iteration will involve the evaluation of the polynomial~$ H(S) $
in~\eqref{H-planar} and its derivative, both of degree $\sim d $.
  Thus, the number of powers of $t $ we need to consider
is $O(N \a d) $ and the complexity of working with this ring is $\ot(
N \a d) $, which is  $\ot(
N^2 \a d) $ operations in the residue field $k$. As
this has size $p^l$, operations take $\ot(\log(p^l))= \ot(l)$.

As for the actual work, the dominant term would be the substitution of
the local Frobenius into our basis of forms.
This has to be done for $2 g $
forms and $d $ residue discs and involves $d $ operations in  $ \Rtab {\a} {0} $. The overall complexity is thus
\begin{equation}
  \ot \left( g d^3 N^2 \a l \right) = \ot \left( (gdl)^3 \a \right) =
  \ot \left( g^{9 /2} l^3 \a \right)
.
\end{equation}
From Example~\ref{gen-planar-ex} we get $\a = O(p d) $, proving the result.
\end{proof}

\appendix

\section{Cup products in algebraic de Rham cohomology} \label{algebraic-cup-products}

In this appendix we review the description of $\hdr^1(C_K/K)$ in terms of
differential forms of the second kind, as well as a formula, due to Serre,
describing the cup product pairing~\eqref{eq:cppair}  in terms of residues and integrals.
This material is well-known, but we could not find a suitable reference.
The pairing is compatible with field extensions,
and, for convenience, we set up the discussion for a smooth complete curve $X$
over an algebraically closed field $K$ of characteristic~0.

Recall that a meromorphic form on $X$ is said to be of the second kind if
all of its residues are~0. For a function~$f$ in~$ K(X)$, the form~$\dd f$ is of the second kind.

\begin{theorem} \label{secondkindthm}
The space 
\begin{equation*}
  \frac{\{ \textup{forms $\omega$ of the second kind on $X$} \}}{\{ \textup{forms $\dd f$ for $f  $ in~$K(X)$} \}}
\end{equation*}
is isomorphic to $\hdr^1(X / K)$.
\end{theorem}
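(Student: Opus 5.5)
We identify $\hdr^1(X/K)$ with the hypercohomology of $\O_X \xrightarrow{\dd} \Omega_{X/K}^1$. As in Section~\ref{de-rham}, we twist by an effective divisor $D$ of large degree. The plan has three steps.

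\emph{Step 1: a map from forms of the second kind to cohomology.} Fix an effective divisor $D$ with $\deg D \ge 2g-1$ and consider the subcomplex inclusion
\[
[\O_X \to \Omega_{X/K}^1] \hookrightarrow [\O_X(D) \to \I(D)].
\]
Over a field, $\I(D)$ consists of the forms in $\Omega^1_{X/K}(D+D_{\mathrm{red}})$ whose polar part at each point is the polar part of $\dd g$ for some local section $g$ of $\O_X(D)$. Write the polar part of $\o$ at $P$ as $\sum_{i\ge 2} c_i t^{-i}\dd t$ with no $t^{-1}\dd t$ term; then it is exactly such a $\dd$ of a polar part, because in characteristic~$0$ we can integrate termwise via $t^{-i}\dd t = \dd\bigl(t^{1-i}/(1-i)\bigr)$. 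So forms of the second kind with poles bounded by $D$ are precisely the sections of $\I(D)$. On the quotient complex $[\O_X(D)/\O_X \to \I(D)/\Omega^1]$ the map $\dd$ is an isomorphism of skyscraper sheaves; injectivity is Proposition~\ref{dinject} over a field. Hence the inclusion is a quasi-isomorphism. Since $H^1(X,\O_X(D))=0$, the argument of Proposition~\ref{compdr} then gives
\[
\hdr^1(X/K) \cong H^0(X,\I(D))/\dd H^0(X,\O_X(D)).
\]

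\emph{Step 2: pass to the limit over $D$.} Take the direct limit as $D$ grows. The limit of $H^0(X,\I(D))$ is the space of all forms of the second kind, since any such form has poles bounded by some $D$. The limit of $H^0(X,\O_X(D))$ is $K(X)$. The transition maps are compatible with the isomorphisms of Step~1, as these come from the inclusion of the fixed complex $[\O_X\to\Omega^1]$. So the limit of the constant system $\hdr^1(X/K)$ equals the quotient in the statement.

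\emph{Main obstacle.} The delicate point is the bookkeeping in the limit. We must check that a form of the second kind with poles in $D$ that is exact in $K(X)$, say $\dd f$, is already exact within $H^0(X,\O_X(D))$. We argue via orders of poles: if $\dd f$ has a pole of order at most $m+1$ at $P$, then in characteristic~$0$ the function $f$ has a pole of order at most $m$ there. Hence $f$ lies in $H^0(X,\O_X(D))$ up to a constant, so the maps in the direct system are injective and the limit is indeed the stated quotient.
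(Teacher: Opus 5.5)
Your proof is correct, but it takes a different route from the paper.

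\textbf{The paper's route.} The paper resolves $\O_X$ flasquely by $K(X)\to\bigoplus_x i_{x,*}K(X)/\O_{X,x}$, and $\Omega^1_{X/K}$ by the analogous complex for 1-forms. Then $\hdr^1(X/K)$ is computed directly by the global sections of the total complex $T^\bullet$. A degree-1 cocycle is a pair $((f_x)_x,\omega)$ with $\omega\equiv\dd f_x$ modulo regular forms at each $x$. In characteristic~0 this says precisely that $\omega$ is of the second kind, and the $f_x$ are then determined by $\omega$. The coboundaries are the pairs $((f)_x,\dd f)$. That argument is one step and needs no Riemann--Roch or Serre duality input. It also produces the complex $T^\bullet$, which the appendix reuses for the trace map and the cup product formula.

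\textbf{Your route.} You transfer the machinery of Section~\ref{de-rham} to the field $K$:
\begin{itemize}
\item the quasi-isomorphism $[\O_X\to\Omega^1_{X/K}]\hookrightarrow[\O_X(D)\to\I(D)]$, from Proposition~\ref{dinject} together with the definition of $\I(D)$;
\item the vanishing of $H^1(X,\O_X(D))$ by Serre duality;
\item a filtered colimit over $D$.
\end{itemize}
This uses more input, but it yields a sharper finite-level statement: $\hdr^1(X/K)\cong H^0(X,\I(D))/\dd H^0(X,\O_X(D))$ for every single effective $D$ of degree at least $2g-1$. That gives explicit control of the poles, which is the refinement mentioned in Remark~\ref{refinedremark}.

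\textbf{Small corrections.}
\begin{itemize}
\item The sections of $\I(D)$ are the forms of the second kind with poles bounded by $D+D_{\mathrm{red}}$, not by $D$.
\item Your ``main obstacle'' is not actually one. Compatibility with the isomorphisms from $\hdr^1(X/K)$ already forces all transition maps to be bijective. Exactness of filtered colimits then identifies the limit with the stated quotient.
\item Your pole-order argument is correct but redundant. Also, $f$ lies in $H^0(X,\O_X(D))$ exactly, not just up to a constant.
\end{itemize}
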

\begin{proof}
We can write a flasque resolution of~$\O_X$
by quasi-coherent $ \O_X $-modules as
\begin{equation*}
  K(X) \to \bigoplus_{x\in X} i_{x,\ast} K(X)/ \O_{X,x}
,
\end{equation*} 
where the sum is over all closed points of~$X$, and similarly for 1-forms.
The differentials are compatible, so we can form the double complex
\begin{equation} \label{doublecomplex}
\begin{split}
    \xymatrix{
	K(X) \ar[r] \ar[d]^{\dd}  & \bigoplus_{x\in X} \frac{ i_{x,\ast} K(X)}{ \O_{X,x} }
	\ar[d]^{\dd}\\
   \Omega_{K(X)}^1 \ar[r]   & \bigoplus_{x\in X} \frac{ i_{x,\ast} \Omega_{K(X)}^1  }{ \Omega_{X,x}^1 }
}
\end{split} 
\end{equation}
whose total complex we denote by $T^{\bullet}$. Recall that for a cohomological double complex with $d_1: A^{pq}\to A^{p+1,q} $ and $d_2: A^{pq} \to A^{p,q+1} $ then in the total complex terms the differential is $d_1+(-1)^{p} d_2 $. Thus,
explicitly,
\begin{alignat*}{1}
T^0&=K(X),
\\
T^1&=
\left(\bigoplus_{x\in X} i_{x,\ast} K(X)/\O_{X,x}\right)
\oplus\Omega^1_{K(X)}
\\
\intertext{and}
T^2&=
\bigoplus_{x\in X}  i_{x,\ast} 
\Omega^1_{K(X)}/\Omega^1_{X,x}
,
\end{alignat*}
with differentials
\begin{alignat}{1}
\nonumber
f&\longmapsto ((f_x)_x,\dd f)
\\
\intertext{and}
\label{deg2}
((g_x)_x,\omega) &\longmapsto
(\dd g_x-\omega_x)_x
.
\end{alignat}
There is a natural map~$ \Omega_X^\bullet \to T^\bullet $ of
complexes, and
the complex $\Gamma(X,T^{\bullet} )$ of global sections of~$T^{\bullet}$
computes the algebraic de Rham
cohomology of $X$. The closed terms in degree~1 of this complex consist of pairs
$((f_x)_{x \in X},\omega)$, where~$\omega$ is a meromorphic form on $X$,
while each~$f_x$ is a germ of a meromorphic function at~$x$, modulo regular
functions, such that $\omega \equiv \dd f_x $ modulo a regular differential at~$x$.
In characteristic~0 this is equivalent to~$\omega$ being
a form of the second kind on~$X$, and
the~$f_x$ are uniquely determined by~$\omega$. The exact
terms in degree~1 are pairs~$( (f|_x)_{x \in X},\dd f)$ with~$f$ in~$K(X)$.
So mapping~$ ((f_x)_{x \in X},\omega)$ to~$ \omega $
gives the result.
\end{proof}

\begin{remark} \label{refinedremark}
Proposition~\ref{compdr} is a more refined version over our~$ R $, but by localising it
to~$ K $ we obtained a version over an arbitrary field of characteristic~0,
with more control on the order of the poles involved.
\end{remark}

We now interpret the cup product pairing~\eqref{eq:cppair} using
forms of the second kind.

\begin{theorem}\label{seckindcup}
    Let $\omega$, $\eta$ be forms of the second kind on $X$, and let
    $[\omega]$, $[\eta]$ be their respective cohomology classes in
    $\hdr^1(X /K)$. Then 
    \begin{equation*}
    \tr ([\omega]\cup [\eta]) =  \sum_x \res_x \left(( \tint \omega)\cdot  \eta \right)
    .
  \end{equation*}
Here $\int \omega  $ is the Laurent series obtained by termwise
integration of a local expansion of $\omega  $ at~$ x $. This determines
$\int \omega  $ up to a constant, but this constant does not change the
expression because~$\res_x (\eta)  = 0 $.
\end{theorem}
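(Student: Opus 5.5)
We work with the Čech-type model of Theorem~\ref{secondkindthm}. The class $[\omega]$ is represented in $\Gamma(X,T^1)$ by the pair $((f_x)_x,\omega)$, where $f_x$ is the polar part of $\int\omega$ at $x$ (well defined modulo $\O_{X,x}$ because all residues of $\omega$ vanish). Similarly $[\eta]$ is represented by $((g_x)_x,\eta)$. The plan is to realise the cup product $\hdr^1\times\hdr^1\to\hdr^2$ as a product on the level of the total complexes, $T^\bullet\otimes T^\bullet\to T^\bullet$. This product should be induced by the wedge product of forms together with the multiplicative structure of the adelic-type resolution. It is compatible with the natural map $\Omega_X^\bullet\to T^\bullet$, so it computes the usual cup product on hypercohomology.

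\textbf{Step 1: the product formula on $T^\bullet$.} The complex $T^\bullet$ is a sub/quotient of a Čech–adelic complex. Following the usual Alexander–Whitney-type formula for a two-term resolution, the product of two degree-1 elements $((f_x),\omega)$ and $((g_x),\eta)$ lands in
\begin{equation*}
T^2=\bigoplus_x \Omega^1_{K(X)}/\Omega^1_{X,x}.
\end{equation*}
It is given (up to a global sign, to be fixed by the convention $d_1+(-1)^p d_2$) by the local classes $(f_x\,\eta)_x$ modulo regular forms. This is the $\check{H}^1(\O)\otimes H^0(\Omega^1)$-type component, and the other components vanish for degree reasons. We will check that this formula is well defined on cohomology. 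Changing $\eta$ by $\dd h$ changes $(f_x\eta)_x$ by $(f_x\,\dd h)_x$, and this must be shown to be a coboundary modulo regular forms; it equals the image of $(f_x h)_x$ under \eqref{deg2} up to $\omega h$ and regular terms.

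\textbf{Step 2: the trace map.} The trace $\tr:\hdr^2(X/K)\to K$ corresponds on $\Gamma(X,T^2)$ to the map $(\nu_x)_x\mapsto\sum_x\res_x\nu_x$. This sum kills the image of \eqref{deg2} by the residue theorem applied to the global form $\omega$ and because exact germs $\dd g_x$ have zero residue. We identify it with Serre duality's trace via $H^1(X,\Omega^1)\cong \hdr^2$. Combining with Step 1 gives $\tr([\omega]\cup[\eta])=\sum_x\res_x(f_x\eta)$. Replacing $f_x$ by the full local primitive $\int\omega$ changes it by a regular function $r$, and $\res_x(r\eta)$ is not obviously zero. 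However, the difference summed over all $x$ equals $\sum_x\res_x(F\eta)$ for a suitable global correction. More simply, we use that the pair $((f_x),\omega)$ also has the global term $\omega$. The full product includes a term $\omega\wedge(\cdot)$ that pairs with $g_x$, and the symmetric combination reassembles into $\res_x((\int\omega)\eta)$. This uses the identity $\res_x(\int\omega\cdot\eta)=-\res_x(\omega\int\eta)$ from integration by parts.

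\textbf{Main obstacle.} The delicate point is Step 1: setting up a genuinely multiplicative model for $T^\bullet$, since quotients like $K(X)/\O_{X,x}$ are not rings, and fixing signs so that the result matches $\tr\circ\cup$ exactly rather than up to sign. We would first try to replace $T^\bullet$ by the honest Čech complex for the cover $\{X\setminus S,\ \text{small neighbourhoods of } S\}$, where $S$ contains the poles of $\omega$ and $\eta$. The standard Čech cup product is explicit there, and the degree-2 cocycle becomes $f_x\eta$ on the punctured neighbourhoods. The remaining verification is then routine.
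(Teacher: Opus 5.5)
Your proposal has a genuine gap in Step~1, and the fallback at the end inherits it.

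\textbf{Step~1 is not well defined.} The formula $(f_x\eta)_x$ does not make sense on the given data. The germ $f_x$ is only a class in $K(X)/\O_{X,x}$. At a pole $x$ of $\eta$, changing $f_x$ by $r\in\O_{X,x}$ changes $f_x\eta$ by $r\eta$, which is in general not in $\Omega^1_{X,x}$. Your well-definedness check has the same defect: $(f_xh)_x$ is not defined at poles of $h$, and $h(\dd f_x-\omega)$ need not be regular there.

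\textbf{Step~1 is also wrong, even after fixing principal parts.} Suppose $\omega$ is holomorphic. Then all $f_x=0$, and your formula gives $\tr([\omega]\cup[\eta])=0$ for every $\eta$. For $g\ge1$ this contradicts Poincar\'e duality, since a nonzero holomorphic class pairs nontrivially with some class of the second kind. Locally, write $\omega=\sum a_{m-1}t^{m-1}\dd t$ and $\eta=\sum b_{m-1}t^{m-1}\dd t$. Then $\res_x((\tint\omega)\eta)=\sum_{m\ne0}a_{m-1}b_{-m-1}/m$, but the principal part of $\tint\omega$ only produces the terms with $m<0$. The terms with $m>0$ (the Taylor part of $\omega$ against the polar part of $\eta$) must come from a second component of the product, one pairing $\omega$ with local primitives of $\eta$.

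Your Step~2 gestures at such a term, but this contradicts Step~1's claim that the other components vanish, and ``the symmetric combination reassembles'' is not an argument. The fallback repeats the error: the degree-2 cocycle is not $f_x\eta$. Finally, the sign is left open. On $\Gamma(X,T^2)$ the trace is $-\sum_x\res_x$, with the sign coming from the shift $\Omega^1_X[1]\to\Omega^\bullet_X$, not $+\sum_x\res_x$.

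\textbf{How to repair the fallback.} It works if you keep both terms of the explicit \v{C}ech cup product, whose $C^1(\Omega^1)$-component is $-\omega_i(g_j-g_i)+(f_j-f_i)\eta_j$, and compute the trace with Proposition~\ref{seckindincech}. This is essentially the paper's route.

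Take a Zariski affine $U_0\supseteq S$ (there are no ``small neighbourhoods'' algebraically) and $U_1=X\setminus S$. Choose rational $f_0,g_0$ such that $\omega_0=\omega+\dd f_0$ and $\eta_0=\eta+\dd g_0$ are regular on $U_0$, and $f_0,g_0$ are regular on $U_0\setminus S$. Then $[\omega]$ and $[\eta]$ are represented by $(\omega_0,\omega;-f_0)$ and $(\eta_0,\eta;-g_0)$, and the relevant component is $g_0\omega_0-f_0\eta$. Taking $\theta_1=0$ and $\theta_0=-(g_0\omega_0-f_0\eta)$, the trace is $\sum_{x\in S}\res_x(g_0\omega_0-f_0\eta)$.

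Near $x\in S$ write $f_0=-\tint\omega+a$ and $g_0=-\tint\eta+b$ with $a,b$ regular, so that $\omega_0=\dd a$. Then
\begin{equation*}
\res_x(g_0\omega_0-f_0\eta)=\res_x\bigl((b-\tint\eta)\,\dd a\bigr)+\res_x\bigl((\tint\omega)\eta\bigr)-\res_x(a\eta)=\res_x\bigl((\tint\omega)\eta\bigr),
\end{equation*}
because $\res_x(b\,\dd a)=0$ and $-\res_x((\tint\eta)\,\dd a)=\res_x(a\eta)$. So the second term exactly cancels the non-canonical dependence on $a$ coming from the first.

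The paper does the same with an arbitrary affine cover. There it first uses Riemann--Roch to choose the $g_i$ so that the $\eta_i$ vanish to high order. It then uses Remark~\ref{modrem} to reduce the cocycle to $-\omega(g_j-g_i)$, i.e.\ $\theta_i=-\omega g_i$, and finishes by integration by parts.
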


We need some preparation before we can prove the theorem. We first give an explicit description of the trace map in algebraic de Rham cohomology,
\begin{equation*}
  \tr: \hdr^2(X /K)\to K\;.
\end{equation*} 
We have an isomorphism
\begin{equation*}
  \hdr^2(X /K)\cong H^2(\Gamma(X, T^\bullet))\;.
\end{equation*}
Elements of the right hand side are represented by  collections $(\omega_x)_{x\in X}$, where $\omega_x \in \Omega^1_{K(X)}/\Omega^1_{X,x} $. An element of $\Omega^1_{K(X)}/\Omega^1_{X,x} $ has a well-defined
residue at $x$. This residue vanishes on the restriction of~\eqref{deg2} to the first component, as residues of exact differentials are $0 $, and the sum of all the residues vanishes also on the restriction to the second component by  the residue theorem. It therefore defines a map,
\begin{equation}\label{eq:trdef}
  \tr:  H^2(\Gamma(X, T^\bullet))\to K, \quad \tr((\omega_x)_x) = -\sum_x \res_x (\omega_x)\;.
\end{equation} 
\begin{proposition}
  The composed map $ \hdr^2(X /K)\cong H^2(\Gamma(X, T^\bullet)) \xrightarrow{\tr} K $  is the trace map in algebraic de Rham cohomology.
\end{proposition}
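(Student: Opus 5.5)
The plan is to reduce the statement to the classical description of the Serre-duality trace on $H^1(X,\Omega^1_X)$ by residues. I would then fix the sign by a normalisation check: the trace of the first Chern class of a point must be~$1$.

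First I would identify $\hdr^2(X/K)$ with $H^1(X,\Omega^1_X)$. Because $X$ is a proper curve, the Hodge-to-de Rham spectral sequence degenerates. Concretely, the map of complexes $\Omega^1_X[-1] \to \Omega_X^\bullet$ induces an isomorphism $H^1(X,\Omega^1_X) \cong \hdr^2(X/K)$. By definition, the algebraic de Rham trace is the composition of the inverse of this isomorphism with the Serre-duality trace $H^1(X,\Omega^1_X) \to K$. I would realise this inclusion inside the double complex~\eqref{doublecomplex}: the bottom row is a flasque resolution of $\Omega^1_X$, and it sits in $T^\bullet$ as the row $q=1$. This gives an explicit isomorphism
\begin{equation*}
 \coker\Bigl(\Omega^1_{K(X)} \to \textstyle\bigoplus_x \Omega^1_{K(X)}/\Omega^1_{X,x}\Bigr) = H^1(X,\Omega^1_X) \to H^2(\Gamma(X,T^\bullet)).
\end{equation*}
I must track the sign this introduces. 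The total differential is $d_1 + (-1)^p d_2$, and the shift $[-1]$ carries its own sign convention. I would check directly on the degree~$2$ term that the map sends $(\omega_x)_x$ to $\pm(\omega_x)_x$, with a sign I will determine.

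Second, I would invoke the classical fact (Serre, \emph{Groupes alg\'ebriques et corps de classes}, Ch.~II; Hartshorne III.7 and Tate's residue theory) that under the adelic/Cousin description of $H^1(X,\Omega^1_X)$ above, the Serre-duality trace is $(\omega_x)_x \mapsto \sum_x \res_x(\omega_x)$. Combining the two steps shows that $\tr$ in~\eqref{eq:trdef} agrees with the de Rham trace up to a universal sign.

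Third, I would fix that sign by a normalisation, and this is the main obstacle: the answer depends entirely on convention. The intrinsic normalisation is $\tr(c_1(\O_X(x_0)))=1$ for a closed point $x_0$. I would compute the de Rham Chern class of $\O(x_0)$ through $\dd\log$ of transition functions and transport it into $T^\bullet$. With a local parameter $t$ at $x_0$ and the trivialisations $1$ away from $x_0$ and $t^{-1}$ near $x_0$, the \v{C}ech cocycle is $\dd\log t$. In the Cousin resolution this becomes the element of $\Omega^1_{K(X)}/\Omega^1_{X,x_0}$ given by the polar part $\pm\dd t/t$ at $x_0$, and $0$ elsewhere. I would apply the total-complex sign $(-1)^p$ and check that~\eqref{eq:trdef} gives $+1$, which accounts for the minus sign in~\eqref{eq:trdef}.

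If the literature's convention for $c_1$ differs, I would instead normalise by requiring compatibility with the pairing. Specifically, I would check one example (say $\omega=\dd t/t^2$ and $\eta=\dd t$ near a point) against Theorem~\ref{seckindcup}. That theorem is the reason for fixing the convention in the first place. I expect the bookkeeping of these signs, rather than any conceptual step, to be where errors creep in.
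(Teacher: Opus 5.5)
Your first two steps are the paper's proof. The de Rham trace is Hartshorne's Serre-duality trace composed with the inverse of $H^1(X,\Omega^1_X)\cong\hdr^2(X/K)$. That isomorphism is realised by including the second row of \eqref{doublecomplex} into $T^\bullet$, and on that row the Serre-duality trace is $(\omega_x)_x\mapsto\sum_x\res_x(\omega_x)$. The paper fixes the sign inside your first step. It observes that the shift, i.e.\ this inclusion of the second row into the total complex, introduces a sign. So the de Rham trace of $(\omega_x)_x$ is $-\sum_x\res_x(\omega_x)$, which is $\tr$ in \eqref{eq:trdef}. No normalisation is used.

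The gap is that you throw this away, concluding only agreement ``up to a universal sign'', and let your third step decide the sign. It cannot do that, and the sign is the entire content of the statement beyond the classical residue description. You \emph{defined} the trace in your first step as a specific composite, so its sign is not a convention. The normalisation $\tr(c_1(\O_X(x_0)))=1$ is a different characterisation. You would have to prove it for that composite, and its outcome depends on further conventions: the direction of transition functions, and possible signs in $\dd\log\colon\O_X^*[-1]\to\Omega^\bullet_X$. Take the cocycle you propose, $\omega_{12}=\dd t/t$ with $U_1=X\setminus\{x_0\}$ and $U_2\ni x_0$. Proposition~\ref{seckindincech} with $\theta_1=0$ and $\theta_2=\dd t/t$ gives $\tr=-1$. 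So with these conventions your check would return $-1$ and lead you to the opposite sign. Your fallback of calibrating against Theorem~\ref{seckindcup} is circular, because that theorem is proved using the present proposition. Drop the third step and carry out the sign bookkeeping you announced in the first step.
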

\begin{proof}
The trace map is obtained by composing the inverse of the isomorphism
\begin{equation*}
  H^1(X,\Omega_X^1) \xrightarrow{\sim} \hdr^2(X/K)
\end{equation*}
with the trace map for Serre duality, defined, for example,
in~\cite[p.248]{hart77}. The isomorphism above, coming from the
hypercohomology spectral sequence, is induced from the map
\begin{equation*}
  \Omega_X^1[1] \to \Omega_X^\bullet
,
\end{equation*}
and therefore also by the  inclusion
of the second line of~\eqref{doublecomplex}
into the total complex~$ T^\bullet $, but the shift introduces a sign.
The trace map of~\cite{hart77} is defined on this second line by~$-\tr$, whence the
result.
\end{proof}

We now describe de Rham cohomology in terms of \v Cech cocycles.  In
particular, we shall show how to associate such a cocycle to a form of the second
kind, and also show how to compute the trace map in term of \v Cech cocycles.

We fix a finite affine covering $U_1,\dots,U_n$ of $X$. In terms of this covering, de
Rham cohomology is the cohomology of the total complex associated to the
double complex $C^{\bullet}(\Omega_X^{\bullet})$ of \v Cech cochains in the
sheaves of differential forms. This follows since  for quasi-coherent~$ \O_X $-modules  \v Cech cohomology
coindices with cohomology using injective resolutions~\cite[Theorem~III.4.5]{hart77}. In all double complexes involving \v Cech cocycles we will let the \v Cech differential be the first differential and the differential induced from the complex of sheaves be the second differential. Concretely, an element of $\hdr^1(X /K)$ is
given by the data $(\omega_i, f_{ij})$, where~$\omega_i$ is in~$ \Omega_X^1(U_i)$,
$f_{ij} $ is in~$ \O_X(U_{ij})$, one has~$ \omega_j-\omega_i = \dd f_{ij} \text{ on } U_{ij} $,
and the $f_{ij}$ satisfy the usual cocycle condition.
These are taken modulo those of the form $(\dd f_i,f_j-f_i)$ with~$f_i$
in~$\O_X(U_i)$. Similarly, elements of $\hdr^2(X /K)$ are given by data
$(\omega_{ij},f_{ijk})$ where the $f_{ijk}$ satisfy the cocycle condition
and ~$ \omega_{jk}-\omega_{ik}+\omega_{ij}= -\dd f_{ijk} $.
These are taken modulo the obvious relations.

To pass from a representation of de Rham cohomology in terms of the
complex~$\Gamma(X,T^{\bullet})$
and a representation in terms of \v Cech cocycles, we
consider the \v Cech complex~$ C^\bullet(T^\bullet) $ corresponding to the complex $T^{\bullet}$. This gives a double
complex whose total complex computes de Rham cohomology as well, and~$ C^\bullet(\Omega_X^\bullet) $
as well as the complex~$ \Gamma(X,T^{\bullet})$ map to it.
The result in low degrees is the following diagram, where
the top row corresponds to the total complex associated to~$ C^\bullet(\Omega_X^\bullet) $,
the middle row to that of~$ C^\bullet(T^\bullet) $,
and the bottom row to~$ \Gamma(X, T^\bullet) $.
\begin{equation} \label{difhtwo}
\begin{split}
\resizebox{300pt}{!}{%
\xymatrix{
C^0(\O_X) \ar[r] \ar[d] & C^0(\Omega_X^1)\oplus C^1(\O_X) \ar[r] \ar[d] & C^1(\Omega_X^1)\oplus C^2(\O_X) \ar[r] \ar[d] & \dots
\\
C^0(T^0) \ar[r] & C^0(T^1)\oplus C^1(T^0) \ar[r] & C^0(T^2)\oplus C^1(T^1) \oplus C^2(T^0) \ar[r] & \dots
\\
\Gamma(X, T^0) \ar[r] \ar[u] & \Gamma(X, T^1) \ar[r] \ar[u] & \Gamma(X, T^2) \ar[r] \ar[u] & 0
}%
}
\end{split}
\end{equation}
Here the maps from the first to the second row are obtained from the
map~$ \Omega_X^\bullet \to T^\bullet $ of complexes, and the
maps from the last to the second row from the inclusions
of~$ \Gamma(X, T^i) $ into~$ C^0(T^i) $.
In the second row, we more explicitly have
\begin{alignat*}{1}
C^0(T^0) & =
\oplus_i K(X)
\\
C^0(T^1) \oplus C^1(T^0) & =
\oplus_{i; x\in U_i} \frac{ i_{x,\ast} K(X) }{ \O_{X,x}} \bigoplus \oplus_i \Omega^1(K(X)) \bigoplus \oplus_{i,j} K(X) 
\end{alignat*}
and~$  C^0(T^2)\oplus C^1(T^1) \oplus C^2(T^0) $ equal to
\begin{equation*}
\oplus_{i; x\in U_i} \frac{ i_{x,\ast} \Omega^1(K(X)) }{ \Omega_{X,x}^1 }
\bigoplus 
\oplus_{i,j; x\in U_{ij}} \frac{ i_{x,\ast} K(X) }{ \O_{X,x} }
\bigoplus
\oplus_{i,j} \Omega^1(K(X))
\bigoplus
\oplus_{i,j,k}K(X)
,
\end{equation*}
with the map in degree~0 mapping~$ (f_i) $ to
\begin{equation} \label{second-row-diff-0}
 (f_{i,x} ,\dd f_i, f_j-f_i) 
\end{equation}
and the one in degree~1 mapping~$   ( g_{i,x} , \omega_i,f_{ij}) $ to
\begin{equation} \label{second-row-diff-1}
( \dd g_{i,x} - (\omega_i)_x, g_{j,x}-g_{i,x}- f_{ij,x} , \omega_j-\omega_i - \dd f_{ij}, f_{jk}-f_{ik}+f_{ij})
.
\end{equation}

Diagram~\eqref{difhtwo} allows us to pass from cohomology classes in the
top row, represented by \v Cech cocycles, to cohomology classes on the bottom
row, represented by forms of the second kind for $H^1$, or by elements in $
H^2(\Gamma(X,T^{\bullet}))$
from which we can compute the trace map.

\begin{proposition}\label{prop:ctof}
The \v Cech cocycle $(\omega_i,f_{ij})$ and the meromorphic form $\omega$ of the second kind represent the same class in $\hdr^1(X /K)$ if and only if there are 
meromorphic functions $f_i$ with~$f_{ij}=f_j-f_i$,
and $\omega|_{U_i}=\omega_i- \dd f_i$.
\end{proposition}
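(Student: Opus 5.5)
The plan is to use diagram~\eqref{difhtwo} directly. The \v Cech cocycle $(\omega_i,f_{ij})$ and the form $\omega$ of the second kind represent the same class in $\hdr^1(X/K)$ exactly when their images in the total complex of $C^\bullet(T^\bullet)$, the middle row, differ by a coboundary. The images of the top and bottom rows are both quasi-isomorphic to the middle one, so it is enough to compare the images there.

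The first step is to write down both images. The cocycle $(\omega_i,f_{ij})$ maps to $(0,\omega_i,f_{ij})$ in $C^0(T^1)\oplus C^1(T^0)$, because a regular germ at $x\in U_i$ is zero modulo $\O_{X,x}$. By Theorem~\ref{secondkindthm}, $\omega$ corresponds to $((g_x)_x,\omega)$ in $\Gamma(X,T^1)$, where $g_x$ is the polar part at $x$ of a local primitive of $\omega$. Its image is $(g_x|_{x\in U_i},\omega|_{U_i},0)$.

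The second step is to characterise when the difference of these two images is a coboundary. The sufficiency direction is immediate from \eqref{second-row-diff-0}. If meromorphic $f_i$ exist with $f_{ij}=f_j-f_i$ and $\omega|_{U_i}=\omega_i-\dd f_i$, then the difference is
\[
(g_x,\;\omega_i-\dd f_i-\omega_i,\;-f_{ij})=(g_x,-\dd f_i,f_i-f_j).
\]
The first coordinate $g_x$ is the germ of $-f_i$ modulo $\O_{X,x}$. To see this, note that $\omega_i$ is regular on $U_i$, so $\dd(-f_i)$ and $\omega$ have the same polar part at each $x\in U_i$. In characteristic~$0$, the class of a germ modulo regular germs is determined by the polar part of its differential. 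So the difference equals the image of $(-f_i)$ under \eqref{second-row-diff-0}.

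The third step is necessity. Suppose the difference equals the image of some $(h_i)$ in $C^0(T^0)=\oplus_i K(X)$. Reading off the last two coordinates gives $\omega|_{U_i}-\omega_i=\dd h_i$ and $-f_{ij}=h_j-h_i$. Setting $f_i=-h_i$ then yields the two required identities.

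The main obstacle is that the comparison map only guarantees equality in cohomology, not equality of representatives. What we actually get is that the difference is a coboundary in the total complex of the middle row. This is exactly the condition above, since a $1$-coboundary there is by definition the image of some $(h_i)$. I expect the only real care needed is with the identification of germs modulo $\O_{X,x}$ through their differentials in characteristic~$0$, and with signs.
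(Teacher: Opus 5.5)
Your proposal is correct and takes essentially the same route as the paper. Both arguments compare $(0,\omega_i,f_{ij})$ with $(g_x,\omega|_{U_i},0)$ in the middle row of \eqref{difhtwo} and read off necessity from the coboundary equation. For sufficiency both rest on the characteristic-$0$ fact that a germ with regular differential is itself regular: the paper gets it from the closedness of the leftover term $(h_{i,x},0,0)$, whereas you check $g_x\equiv -f_{i,x}$ directly from polar parts.
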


\begin{proof}
Recall that the form of the second kind $\omega$ corresponds to an element 
$(g_x,\omega)$ in the degree $1$ term of the bottom row of~\eqref{difhtwo}
such that $\omega \equiv \dd g_x $ modulo a regular differential at~$x$.
Suppose this has the same image in the cohomology of the middle row as the
cocycle
$(\omega_i,f_{ij})$. This means that they are the same up to a coboundary.
In other words,  we have~$f_i$ in $ \oplus_i K(X)$ such that
\begin{equation*}
  (0,\omega_i,f_{ij}) = (g_x, \omega|_{U_i},0)+
  ( (f_i)_x, \dd f_i, f_j-f_i)\;.
\end{equation*} 
In particular, $f_{ij}= f_j-f_i$ and $\omega|_{U_i}= \omega_i-\dd f_i$.
  Conversely, suppose the $f_i$ satisfy the above condition. Then,
using~\eqref{second-row-diff-0} we have
\begin{equation*}
  (0,\omega_i,f_{ij}) - (g_x, \omega|_{U_i},0)-
  ((f_i)_x,\dd f_i, f_j-f_i) = (h_{i,x},0,0) 
\end{equation*}
for some germs of rational functions $h_{i,x} = - g_x - f_{i,x}$ at $x$, taken module germs
  of holomorphic functions. This last element is closed, implying in
  particular that $\dd h_{i,x}$ is holomorphic, hence $h_{i,x}$ itself is
  holomorphic, hence $0$; so we indeed get the same cohomology class. 
\end{proof}
\begin{remark}\label{ratflsq}
   The existence of rational functions $f_i$ such that $f_{ij}= f_j-f_i$ followed by cohomological arguments in the above proof,  but it is immediate for any rational $f_{ij} $ as flasqueness
  implies the vanishing of the higher \v Cech cohomology groups
with coefficients in~$ K(X) $~\cite[Proposition~III.4.3]{hart77}.)
\end{remark}

Proposition~\ref{prop:ctof} does not tell us how to pass from one representation to
the other. It is straightforward though to make this more concrete,
yielding the following.

\begin{proposition}\label{prop:ctof1}
  To get from a cocycle $(\omega_i,f_{ij})$ to a form~$ \omega $ of the second kind
  that represents the same cohomology class,
pick, using Remark~\ref{ratflsq}, any
  rational functions $f_i$ such that $f_{ij}= f_j-f_i$.
On~$ U_{ij} $ we then have
\begin{equation*}
(\omega_j - \dd f_j) - (\omega_i- \dd f_i) = (\omega_j-\omega_i)-\dd f_{ij}=0
\,,
\end{equation*} 
   so that the~$\omega_i - \dd f_i $  glue to the required meromorphic form $\omega$, which is clearly 
  of the second kind.
  In the reverse
  direction, starting with~$ \omega $, a form of the second kind, we may pick
  any rational $f_i$ such that $\omega_i=\omega+\dd f_i$ is
  holomorphic on $U_i$ (their existence is again implied by
  Proposition~\ref{prop:ctof}). Then letting $f_{ij}=f_j - f_i$ we see that
  $\dd f_{ij}=\omega_i-\omega_i$ so $f_{ij}$ is holomorphic giving the
  required cocycle $(\omega_i,f_{ij})$.
\end{proposition}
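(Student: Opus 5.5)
The plan is to check the two recipes directly against the criterion of Proposition~\ref{prop:ctof}. That proposition says a cocycle $(\omega_i,f_{ij})$ and a form $\omega$ of the second kind represent the same class exactly when there are rational $f_i$ with $f_{ij}=f_j-f_i$ and $\omega|_{U_i}=\omega_i-\dd f_i$. So in each direction it suffices to show that the objects produced by the recipe exist and have the properties claimed. Matching the cohomology classes is then immediate from Proposition~\ref{prop:ctof}.

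\emph{From a cocycle to a form.} Given $(\omega_i,f_{ij})$, view the $f_{ij}$ as a \v Cech $1$-cocycle with values in the constant sheaf $K(X)$. This sheaf is flasque, so by Remark~\ref{ratflsq} its higher \v Cech cohomology vanishes, and we obtain $f_i$ in $K(X)$ with $f_{ij}=f_j-f_i$.
\begin{itemize}
\item On $U_{ij}$ the identity $\omega_j-\omega_i=\dd f_{ij}$ gives $(\omega_j-\dd f_j)-(\omega_i-\dd f_i)=0$. The meromorphic forms $\omega_i-\dd f_i$ therefore agree as elements of $\Omega^1_{K(X)}$ and glue to a single meromorphic form $\omega$.
\item At a point $x$ in $U_i$, $\omega$ differs from the regular form $\omega_i$ by the exact form $\dd f_i$. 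Residues of regular forms and of exact forms both vanish, so $\res_x\omega=0$ and $\omega$ is of the second kind.
\item By construction $\omega|_{U_i}=\omega_i-\dd f_i$, so Proposition~\ref{prop:ctof} applies.
\end{itemize}

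\emph{From a form to a cocycle.} Given $\omega$ of the second kind, the one step needing an argument is the existence, for each $i$, of a rational $f_i$ with $\omega+\dd f_i$ regular on $U_i$.
\begin{itemize}
\item Since $\omega$ has no residues and we are in characteristic~$0$, at each of the finitely many poles $x$ of $\omega$ in $U_i$ the polar part of $\omega$ is $\dd$ of a polar part $g_x$ in $K(X)/\O_{X,x}$. This is obtained by termwise integration, which is possible because the $t^{-1}\,\dd t$ coefficient vanishes.
\item Because $U_i$ is affine, $H^1(U_i,\O_X)=0$. Using the flasque resolution $K(X)\to\bigoplus_x i_{x,\ast}K(X)/\O_{X,x}$ from the proof of Theorem~\ref{secondkindthm}, this says that $K(X)\to\bigoplus_{x\in U_i}K(X)/\O_{X,x}$ is surjective.
\item Hence there is a rational $h_i$ with these prescribed polar parts $g_x$ and regular elsewhere on $U_i$. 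Setting $f_i=-h_i$ makes $\omega_i=\omega+\dd f_i$ regular on $U_i$. Alternatively, one can simply quote Proposition~\ref{prop:ctof}, given that the class of $\omega$ is represented by some cocycle.
\end{itemize}
Now put $f_{ij}=f_j-f_i$.
\begin{itemize}
\item The cocycle condition is automatic.
\item On $U_{ij}$ we have $\dd f_{ij}=\omega_j-\omega_i$, which is regular there.
\item In characteristic~$0$, a rational function whose differential is regular at $x$ is itself regular at $x$: a pole of order $m\ge1$ would produce a term $-m\,a\,t^{-m-1}\,\dd t$ with $a\neq0$. So $f_{ij}$ lies in $\O_X(U_{ij})$.
\end{itemize}
Thus $(\omega_i,f_{ij})$ is a \v Cech cocycle, and $\omega|_{U_i}=\omega_i-\dd f_i$ shows through Proposition~\ref{prop:ctof} that it represents the class of $\omega$.

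\emph{Main obstacle.} The only non-formal step is producing the $f_i$ in the reverse direction. It rests on two facts: the characteristic-$0$ integrability of polar parts of second-kind forms, and the vanishing of $H^1$ of $\O_X$ on an affine open. I would cite these rather than redo them. The other point needing care is the regularity of $f_{ij}$, which again uses characteristic~$0$; all remaining steps are routine checks.
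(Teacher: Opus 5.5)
Your proposal is correct and follows the paper's argument. In the forward direction you split $f_{ij}$ in $K(X)$ by flasqueness and glue the $\omega_i-\dd f_i$; in the reverse direction you set $f_{ij}=f_j-f_i$ and use that in characteristic~$0$ a rational function with regular differential is regular. The only difference is that you also construct the $f_i$ directly, by integrating polar parts and using $H^1(U_i,\O_X)=0$, whereas the paper deduces their existence from Proposition~\ref{prop:ctof}; both routes are valid.
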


As a useful corollary we have the following.

\begin{corollary}\label{rest}
    Let $\omega$ be a form of the second kind on $X$ and let $U\subset X$
    be an open affine such that $\omega$ is holomorphic on $U$. Then the
    restriction of $[\omega]$ to $U$ is represented by $\omega|_U$
    in~$ \hdr^1(U/K) $.
\end{corollary}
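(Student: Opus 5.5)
The statement to prove is Corollary~\ref{rest}: for a form $\omega$ of the second kind on $X$ and an open affine $U$ on which $\omega$ is holomorphic, the restriction of $[\omega]$ to $\hdr^1(U/K)$ is represented by $\omega|_U$. The plan is to pass through the \v Cech description of Proposition~\ref{prop:ctof1}, with a covering chosen to contain $U$.

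First, fix a finite affine covering $U_1,\dots,U_n$ of $X$ with $U_1 = U$. This is allowed because \v Cech cohomology of quasi-coherent sheaves on a separated scheme is independent of the affine covering, so $\hdr^1(X/K)$ is computed by the total complex of $C^\bullet(\Omega_X^\bullet)$ for this covering.

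Next, apply the reverse direction of Proposition~\ref{prop:ctof1}. Choose rational functions $f_i$ with $\omega + \dd f_i$ holomorphic on $U_i$. For $i=1$, since $\omega$ is already holomorphic on $U$, take $f_1 = 0$. This yields a cocycle $(\omega_i, f_{ij})$ with $\omega_1 = \omega|_U$, and by Proposition~\ref{prop:ctof} it represents $[\omega]$.

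Then compute the restriction map $\hdr^1(X/K) \to \hdr^1(U/K)$ at the level of \v Cech complexes. Use the covering of $U$ by the single open $U$; it is refined by the covering $\{U_i\cap U\}$, and the refinement map sends the cocycle to its component indexed by $i=1$. That component is $(\omega_1)$ with no transition functions. Since $U$ is affine, $\hdr^1(U/K) = \Omega^1(U)/\dd\O(U)$, and so the class of $(\omega_1)$ is exactly that of $\omega|_U$.

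The one point needing care is justifying that restriction to $U$ commutes with these \v Cech identifications, i.e.\ that restricting the cocycle along the refinement $\{U_i\cap U\} \to \{U\}$ computes the functorial restriction map. This follows from the standard compatibility of \v Cech complexes with refinements and pullback to an open subset. Granting that, the argument is essentially immediate.
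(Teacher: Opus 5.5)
Your proposal is correct and is essentially the argument the paper intends: the paper writes no separate proof and presents the statement as an immediate consequence of Proposition~\ref{prop:ctof1}, and the key point is exactly yours, namely to put $U$ in the covering and choose $f_U=0$, so that the cocycle's $U$-component is $\omega|_U$. (One harmless slip: the map picking out the component indexed by $i=1$ is the refinement map for $\{U\}$ viewed as a refinement of $\{U_i\cap U\}$, not the other way round; this map is available because $U_1\cap U=U$.)
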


\begin{remark}
  It follows from Proposition~\ref{prop:ctof} that the choice of $f_i$ in both
  directions of Proposition~\ref{prop:ctof1} can be arbitrary, as long as the conditions
  are satisfied. This can be seen directly. Going from cocycles to forms,
  other choices of the~$f_i$ differ by the addition of
  a rational function $f$, and $\omega$ changes with~$ \dd f$, which
  does not change the cohomology class. In the other direction, other choices
  of the~$f_i$ such that $\omega+ \dd f_i$ is holomorphic on $U_i$ add
  to each~$f_i$ a holomorphic function on~$U_i$, resulting in
  cohomologous \v Cech cocycles.
\end{remark}
\begin{prop}\label{seckindincech}
The trace of a class in $\hdr^2(X /K)$ represented by a \v Cech cocycle $(\omega_{ij},f_{ijk})$
can be computed as follows. There exists a choice of rational 1-forms $\theta_i$ and rational
functions $f_{ij}$ such that
\begin{equation}\label{tracecond}
  \omega_{ij}=\theta_j-\theta_i - \dd f_{ij}\;.
\end{equation}
For any such choice,
\begin{equation}\label{traceformula}
  \tr (\omega_{ij},f_{ijk})= -\sum_x \res_x(\theta_i)
\;,
\end{equation}
where, for each $x$, one takes
  any $i$ such that $x$ is in~$U_i$.
\end{prop}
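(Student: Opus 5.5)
The plan is to transport the \v Cech cocycle into the complex $\Gamma(X,T^\bullet)$ through the middle row of~\eqref{difhtwo}, and then read off the trace from~\eqref{eq:trdef}.

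\textbf{Existence of $\theta_i$ and $f_{ij}$.} By Remark~\ref{ratflsq}, \v Cech cohomology with coefficients in the constant sheaves $K(X)$ and $\Omega^1_{K(X)}$ vanishes in positive degrees. So we first choose rational $f_{ij}$ with $f_{ijk}=f_{jk}-f_{ik}+f_{ij}$. Then $\omega_{ij}+\dd f_{ij}$ is a \v Cech $1$-cocycle in rational $1$-forms. Indeed, its coboundary is
\[
\omega_{jk}-\omega_{ik}+\omega_{ij}+\dd(f_{jk}-f_{ik}+f_{ij}) = -\dd f_{ijk}+\dd f_{ijk}=0.
\]
Since this cocycle is a coboundary, there are rational $\theta_i$ with $\omega_{ij}+\dd f_{ij}=\theta_j-\theta_i$, which is~\eqref{tracecond}.

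\textbf{Constructing the cochain in the middle row.} Map $(\omega_{ij},f_{ijk})$ into the middle row and subtract the coboundary of a suitable degree-$1$ element. The natural candidate is $(g_{i,x},\theta_i,f_{ij})$, where $g_{i,x}=0$. Using~\eqref{second-row-diff-1} and the analogous degree-$2$ formula, its image is
\[
\bigl(-(\theta_i)_x,\; -f_{ij,x},\; \theta_j-\theta_i-\dd f_{ij},\; f_{jk}-f_{ik}+f_{ij}\bigr) = \bigl(-(\theta_i)_x,\; -f_{ij,x},\; \omega_{ij},\; f_{ijk}\bigr).
\]
Hence the cocycle is cohomologous to $\bigl((\theta_i)_x,\, f_{ij,x},\, 0,\, 0\bigr)$, up to the overall sign convention.

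Next we kill the $C^1(T^1)$ component $f_{ij,x}$ so that the class lies in the image of $\Gamma(X,T^2)$. Choose the $g_{i,x}$ as polar parts with $g_{j,x}-g_{i,x}=f_{ij,x}$. This is possible because on $U_{ij}$ the $f_{ij}$ are regular, while the $f_{ij,x}$ form a cocycle in the flasque sheaf $\oplus_x i_{x,*}K(X)/\O_{X,x}$. Concretely, for each $x$ fix an index $i_0$ with $x\in U_{i_0}$ and set $g_{i,x}=f_{i_0 i,x}$. Adjusting by the coboundary of these $g$'s changes the $C^0(T^2)$ component to $(\theta_i)_x-\dd g_{i,x}$.

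\textbf{Reading off the trace.} The component $(\theta_i)_x-\dd g_{i,x}$ is independent of $i$ modulo regular forms, so it comes from $\Gamma(X,T^2)$. By~\eqref{eq:trdef}, its trace is $-\sum_x\res_x\bigl(\theta_i-\dd g_{i,x}\bigr)=-\sum_x\res_x(\theta_i)$, because residues of exact forms vanish. Independence of the choice of $i$ at each $x$ is clear: $\theta_j-\theta_i=\omega_{ij}+\dd f_{ij}$ is regular plus exact near $x$ when $x\in U_{ij}$. Independence of the choice of $(\theta_i,f_{ij})$ then follows from well-definedness of the trace, or directly: changing $f_{ij}$ by a cocycle $h_j-h_i$ changes $\theta_i$ by $\dd h_i$ plus a global rational form, and the residue theorem disposes of the global form.

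\textbf{Main obstacle.} The delicate part is the sign bookkeeping: the total complex uses the differential $d_1+(-1)^pd_2$, and there is a sign in~\eqref{eq:trdef}. I would verify the overall sign by tracking each map in~\eqref{difhtwo} and the zigzag in the three-row diagram carefully, since any slip flips the sign in~\eqref{traceformula}.
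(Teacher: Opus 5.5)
This is essentially the paper's proof run in the constructive direction. Like the paper, you pass through the middle row of \eqref{difhtwo} with the degree-one element $(g_{i,x},\theta_i,f_{ij})$, \eqref{second-row-diff-1} and \eqref{eq:trdef}, and with those conventions the signs come out exactly as in \eqref{traceformula}; the only difference is that you produce $\theta_i,f_{ij}$ by flasqueness and $g_{i,x}=f_{i_0i,x}$ explicitly, instead of extracting them from the $\eta_x$ supplied by the comparison of rows.

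Two small repairs are needed. First, $(f_{ij,x})$ is a cocycle in $\bigoplus_x i_{x,*}K(X)/\O_{X,x}$ because $f_{ijk}$ is regular on $U_{ijk}$, not because $f_{ij}$ is regular on $U_{ij}$ (if it were, the $g_{i,x}$ would be unnecessary). Second, \eqref{tracecond} alone only makes $f_{jk}-f_{ik}+f_{ij}-f_{ijk}$ constant, so your coboundary-change argument does not cover every choice; prove independence directly instead: for two choices, $\alpha_i=\theta_i'-\theta_i$ has $\alpha_j-\alpha_i$ exact, so $\res_x(\alpha_i)$ is independent of $i$ at every $x$, and $\sum_x\res_x(\alpha_i)=0$ by the residue theorem.
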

\begin{proof}
We first show the existence of such a choice for which the formula holds. 
We look at the groups in degree~2 in~\eqref{difhtwo}. The \v Cech cocycle $(\omega_{ij},f_{ijk})$
gives the same cohomology class in the middle row of~\eqref{difhtwo} as an element comprised of germs $\eta_x$
of meromorphic 1-forms. This means that
\begin{equation} \label{desired-shape}
  (- \eta_{i,x} ,0,\omega_{ij},f_{ijk})
\end{equation}
is the differential of an element $   ( g_{i,x} , \theta_i,f_{ij})$ as in~\eqref{second-row-diff-1}. In particular, we see that $\omega_{ij}=\theta_j-\theta_i - \dd f_{ij}$ and,
for each~$x\in U_i$, that~$\eta_{i,x} = \theta_{i,x}$
up to a differential of a function in~$ \O_{X,x} $.
Hence $\res_x (\eta_x)
  = \res_x (\theta_i)$, and~\eqref{eq:trdef} shows that~\eqref{traceformula} holds for this
  choice of $\theta_i$ and $f_{ij}$.

It remains to show that the trace is the same for any choice of rational forms $\theta_i $ and rational functions $f_{ij} $ satisfying~\eqref{tracecond}. By remark~\ref{ratflsq} we can find rational functions~$f_i$ such that $f_{ij}=f_j - f_i$. Taking  $\theta_i^\prime = \theta_i - \dd f_i$, we obtain a new choice satisfying~\eqref{tracecond}, with $f_{ij}^\prime = 0 $. This has the same trace since exact forms do not have residues. It is therefore sufficient to consider choices satisfying~\eqref{tracecond} with $f_{ij}= 0 $. But such choices clearly differ by the addition of a global meromorphic form and the trace is unchanged
  by the residue theorem.
\end{proof}

\begin{remark}\label{modrem}
    It follows from the above that the trace of a cocycle can be read off from its~$\omega_{ij}$-part.
    The trace is unchanged if we add $\dd h_{ij} +\alpha_j - \alpha_i $ to
    $\omega_{ij}$, with $h_{ij}$ meromorphic and $\alpha_i$ holomorphic on
    $U_i$.
\end{remark}

\begin{proof}[Proof of Theorem~\ref{seckindcup}]
Using Proposition~\ref{prop:ctof} we get, from the forms $\omega$ and $\eta$, rational functions
$f_i$ and $g_i$ such that $\omega_i=\omega+ \dd f_i$ and $\eta_i=\eta+\dd g_i$ are holomorphic on~$U_i$,
and the classes $[\omega]$ and $[\eta]$ are represented
by cocycles  $(\omega_i, f_j-f_i)$ and $(\eta_i,g_j-g_i)$.
We now use the explicit formula for the cup product, as explained
in~\href{https://stacks.math.columbia.edu/tag/01FP}{Tag 01FP}),
based on the map of complexes in~\href{https://stacks.math.columbia.edu/tag/07MB}{Tag 07MB}).
The
$C^1(\Omega_X^1)$-component of the cup product is
$$
-\omega_i (g_j-g_i)+(f_j-f_i)\eta_j.
$$
Changing the~$ g_i $ if necessary,
we may assume that $\eta_i$ vanishes to such a large order at a finite
number of prescribed points that the following two
conditions are satisfied.
\begin{enumerate}
\item at every $x$ in $ U_i$ we have
\begin{equation}\label{setcond}
  \res_x (-g_i \omega)  =  \res_x (\omega \tint \eta)  
\end{equation}
\item the forms $ f_i \eta_i$ are holomorphic on $U_i$.
\end{enumerate}
This is because, by Riemann-Roch, one has a
function with prescribed Laurent expansion modulo a given power of the
uniformiser at each of a finite number of points, provided this function is allowed
to have poles of high enough order at prescribed other points.
  To compute $\tr ([\omega]\cup [\eta])$ we are allowed, by Remark~\ref{modrem}
to modify the 1-forms on the~$U_{ij}$ by
either $\alpha_j-\alpha_i$, with ~$\alpha_i$ a holomorphic form on~$U_i$, or
by $\dd h_{ij}$, with $h_{ij}$ a meromorphic function on~$ U_{ij} $.
Taking~$\alpha_i = f_i \eta_i $ we may replace $(f_j-f_i)\eta_j$ by
$f_i(\eta_i-\eta_j)= f_i (\dd g_i-\dd g_j)$. Subtracting $\dd (f_i(g_i-g_j))$ we may
replace this with $(g_j-g_i) \dd f_i$, so our 1-form on~$ U_{ij} $ is transformed into 
\begin{equation*}
  -\omega_i (g_j-g_i)+ (g_j-g_i) \dd f_i = -\omega(g_j-g_i) = -\omega g_j +
  \omega g_i
,  
\end{equation*}
We are therefore in the situation of Proposition~\ref{seckindincech} with $\theta_i =  -\omega g_i $. From~\eqref{traceformula} we get
\begin{equation*}
  \tr ([\omega]\cup [\eta]) = - \sum_{x}\res_x (-g_i \omega ) = -\sum_x \res_x
    \left(\omega \tint \eta  \right)
\end{equation*}
by~\eqref{setcond}. This is equal to the expression in the theorem by the well-known formula
\begin{equation*}
  \res_x (\omega \tint \eta) +
  \res_x (\eta \tint \omega) = \res_x \left( \dd ((\tint \omega )(\tint \eta)) \right) = 0
.
\end{equation*}
\end{proof}

\bibliographystyle{plain}
\bibliography{References}

\end{document}